\pgfplotsset{width = 5cm, compat = 1.17}
\numberwithin{equation}{section} 
\renewcommand{\P}{\mathbb{P}}
\renewcommand{\Pr}{\mathrm{P}}
\newcommand{\R}{\mathbb{R}}
\newcommand{\E}{\mathbb{E}}
\newcommand{\var}{\text{Var}}
\newcommand{\bm}{\boldsymbol}
\newcommand{\Xvec}{\mbox{\bf X}}
\newcommand{\tvec}{\mbox{\bf t}}
\newcommand{\sigmat}{\mbox{\boldmath $\Sigma$}}
\newcommand{\muvec}{\mbox{\boldmath $\mu$}}
\DeclareMathOperator*{\argmin}{arg\,min}
\theoremstyle{definition}
\newtheorem{thm}{Theorem}[section]
\newtheorem{exa}{Example}[section]
\newtheorem{rem}{Remark}
\newtheorem{lemma}{Lemma}[section]
\newtheorem{cor}{Corollary}
\title{Exact distribution-free tests of spherical symmetry\\ applicable to high dimensional data}
\author{Bilol Banerjee and Anil K. Ghosh}
\affil{Theoretical Statistics and Mathematics Unit,\\ Indian Statistical Institute, Kolkata\\
	Emails: \{banerjeebilol, anilkghosh\}@gmail.com}
\date{}
\newcommand{\tikzsymbol}[2][circle]{\tikz[baseline=-0.5ex]\node[inner
	sep=2pt,shape=#1,draw,#2]{};}%
\definecolor{applegreen}{rgb}{0.55,0.71,0.0}
\begin{document}
	
	\maketitle
	
	\vspace{-0.15in}
	\begin{abstract}
		We develop some graph-based tests for spherical symmetry of a multivariate distribution using a method based on data augmentation. 
		These tests are constructed using a new notion of signs and ranks that are computed along a path obtained by optimizing an objective function based on pairwise dissimilarities among the observations in the augmented data set. The resulting tests based on these signs and ranks have the exact distribution-free property, and irrespective of the dimension of the data, the null distributions of the test statistics remain the same. These tests can be conveniently used for high-dimensional data, even when the dimension is much larger than the sample size. Under appropriate regularity conditions, we prove the consistency of these tests in high dimensional asymptotic regime, where the dimension grows to infinity while the sample size may or may not grow with the dimension. We also propose a generalization of our methods to take care of the situations, where the center of symmetry is not specified by the null hypothesis. Several simulated data sets and a real data set are analyzed to demonstrate the utility of the proposed tests.
		
		\vspace{0.05in}
		\noindent
		{\sc Keywords:} Data augmentation; Distribution-free property; High-dimensional asymptotics; Linear rank statistics; Runs test; Sign test.
	\end{abstract}
	\maketitle
	
	\section{Introduction}
	\label{Sec: intro}
	
	A $d$-dimensional ($d>1$) random vector ${\bf X}$ is said to follow a spherically symmetric distribution (about the origin) if ${\bf X}$ and ${\bf H}{\bf X}$ have the same distribution (i.e., ${\bf X} \stackrel{D}{=}{\bf H}{\bf X}$)
	for all $d \times d$ orthogonal matrix ${\bf H}$ \citep[see][]{fang1990book}. It is an important class of distributions in statistics. Motivated by the spherical symmetry or elliptic symmetry (i.e., spherical symmetry after standardization) of the underlying distributions, several statistical methods have been developed. Robust measures of location and scale \citep[see, e.g.,][]{van2009minimum}, tests for multivariate location \citep[see, e.g.,][]{randles1989distribution, chaudhuri1993sign}, Stein estimation \citep[see, e.g.,][]{fourdrinier2018shrinkage}, classification \citep[see, e.g.,][]{ghosh2005maximum,li2012dd} and clustering \citep[see, e.g.,][]{jornsten2004clustering} are some examples of its widespread applications. So, testing the sphericity of the underlying distribution is an important statistical problem, and several methods have been proposed for it. 
	\cite{smith1977} developed a test for bivariate data that uses the $L_2$ distance between the empirical distribution function and the theoretical distribution function under spherical symmetry. \cite{baringhaus1991} derived an alternative formulation of this test and generalized it for multivariate data. \cite{henze2014} proposed a test based on the fact that if ${\bf X}$ follows a spherically symmetric distribution, the characteristic function $\varphi_{X}(\tvec)$ remains constant over all $\tvec$ lying on the surface of a sphere with the center at the origin. \cite{albisetti2020} proposed a Kolmogorov-Smirnov-type test using the fact that ${\bf X}$ is spherically symmetric if and only if $\E\{{\bf u}^{\top} {\bf X}\mid {\bf v}^{\top}{\bf X}\} = 0$ for all ${\bf u}$ and ${\bf v}$ with ${\bf u}^{\top}{\bf v}=0$. \cite{kolchinskii1998} proposed a test using the difference between the empirical spatial rank function and the theoretical spatial rank function under spherical symmetry, where the unknown components of these theoretical ranks were estimated from the data. These tests have consistency in the classical asymptotic regime, i.e., for any fixed alternative, the powers of these tests converge to unity as the sample size diverges to infinity. However, they become computationally prohibitive even for moderately high-dimensional data.
	
	\cite{fang1993} constructed a necessary test that computes 
	the Wilcoxon-Mann-Whitney statistic for several pairs of orthogonal projection directions and considers the minimum over all such projection pairs as the test statistic. \cite{diks1999} proposed a Monte Carlo test based on the difference between the density of the data generating distribution and its rotations averaged over all possible orthogonal matrices computed using Haar measure. \cite{liang2008} proposed some tests based on the fact that under spherical symmetry,  ${\bf X}/\|{\bf X}\|$ is uniformly distributed on ${\mathcal S}^{d-1}$, the surface of the unit sphere in $\R^d$, but they did not consider the independence between and $\|{\bf X}\|$ and  ${\bf X}/\|{\bf X}\|$, which is another important property of a spherical distribution. So, this test often fails to differentiate between spherically symmetric and angular symmetric distributions. Recently, \cite{huang2023multivariate} proposed tests for different notions of symmetry including spherical symmetry using optimal transport, but these tests often have poor performance 
	in high dimension. 
	
	Some tests are also available for high-dimensional data. Assuming ellipticity of the underlying distribution, \cite{zou2014} and \cite{feng2017high} proposed some tests of sphericity based on the multivariate sign function. \cite{ding2020some} also assumed elliptic symmetry of the underlying distribution and proposed a test based on the ratio of traces of different powers of the sample covariance matrix. The authors showed the consistency of their test against a fairly general class of alternatives, even when the dimension grows with the sample size at an appropriate rate. However, a study on their behavior for non-elliptic distributions is missing from the literature. Moreover, nothing is known about their behavior in the high-dimension, low-sample-size (HDLSS) asymptotic regime, where the sample size is assumed to be fixed while the dimension grows to infinity. Recently, \cite{banerjee2024consistent} proposed a test of spherical symmetry based on data augmentation, which is 
	applicable to high dimensional data. But that test does not have the distribution-free property. Use of the resampling method for calibration makes it computationally demanding. 
	Also, its behavior in the HDLSS asymptotic regime has not been studied.
	
	To overcome these limitations, in this article, we propose some graph-based tests constructed using a new notion of signs and ranks. Our tests are distribution-free, and the null distributions of the tests statistics remain the same in all dimensions. The main contributions of this article are summarized as follows:
	
	\begin{itemize}
		\item In Section 2, we provide a characterization of spherical symmetry of a distribution in terms of pairwise distances between the original and simulated random variables. We observe that the difference in pairwise distances mainly comes from differences in inner products. Motivated by this finding, we introduce a new notion of signs and ranks computed along a covering path obtained by minimizing a suitable function of these inner products. These signs and ranks have the same null distributions in all dimensions. We also investigate their behavior for non-spherical distributions (Theorem \ref{thm:sign-rank-dist}). Using these signs and ranks, we construct some exact distribution-free tests of spherical symmetry.
		
		
		\item In Section 3, we study the high dimensional behavior of our tests. We first investigate their behavior in the HDLSS asymptotic regime, where the dimension is assumed to grow to infinity while the sample size remains fixed. We show that in this setup, under the usual assumptions on moments and weak dependence \citep[see, e.g.,][]{jung2009pca}, any test of spherical symmetry based on inner products
		has a fundamental problem (see Theorem \ref{thm:HDLSS-difficulty}). We establish the consistency of our tests against a suitable class of alternatives that includes spiked covariance models (see Theorem \ref{thm:HDLSS-consistency} and Corollary \ref{cor:spiked-HDLSS}). We also investigate
		the performance of our tests in situations 
		where the sample size grows with the dimension and derived some relaxed conditions for consistency of our tests (see Theorem \ref{thm:limit-null-distribution} and \ref{thm:limit-alt-convergence}).
		
		\item We know that in the case of a spherical distribution, all diagonal elements of the scatter matrix are equal and all off-diagonal elements are zero. In Section 4, we observe that our proposed tests have excellent performance for alternatives having significant correlations among the variances. But when they are uncorrelated and difference is mainly in the scales of the variables, their performance may not be satisfactory. To take of care of this problem, we use a new objective function to construct the path and compute the signs and ranks along it. The resulting tests can detect changes in scales (see Theorem \ref{thm:modified-rank-tests}) but usually  yields poor performance if the signal against the null hypothesis mainly comes from the off-diagonal elements. So, we propose to combine the two approaches to develop some modified tests that perform well in both situations (see Theorem \ref{thm:HDLSS-boosted-tests}). 
		
		\item In Section 5, we analyze some simulated data sets and a real data set on `Earthquakes' to compare the empirical performance of our tests against state-of-the-art methods. We observe that our modified tests significantly outperforms its competitors for a wide variety of alternatives. 
		
		\item In Section 6, we propose a generalization of our method to test for spherical symmetry when the null hypothesis does not specify the center of of the distribution. Centering the observations based on an empirical estimate often leads to an inflated Type I error if the dimension is much larger than the sample size. As a remedy, we propose a generalization based on a key lemma (Lemma \ref{lemma:key-lemma}). The resulting tests control the Type I error at the desired level and also retains the desirable theoretical properties under a fairly general assumption. 
	\end{itemize}
	
	Section 7 contains some concluding remarks and ends with a discussion on some possible directions for future research. All proofs and some auxiliary theoretical results are given in the Appendix.
	
	\section{Sign test and runs test for spherical symmetry}
	\label{sec:definition}
	As we have mentioned before, a random vector ${\bf X}$ is said to follow a spherically symmetric distribution if its distribution remains invariant  over all orthogonal transformations (i,e,, $\bm X \stackrel{D}{=}{\bm H}{\bm X}$ for any orthogonal matrix ${\bm H}$).
	The following lemma provides an alternative characterization
	of spherical symmetry.
	\begin{lemma}  \label{lemma:ss-variant}
		A $d$-dimensional random vector ${\bf X}$ has a spherically symmetric distribution if and only if ${\bf X}$ and $\|{\bf X}\|{\bf U}$ has the same distribution (i.e., ${\bf  X} \stackrel{D}{=} \| {\bf X}\| {\bf U}$), where $ {\bf U}$ is independent of ${\bf X}$ and it is uniformly distributed over $\mathcal{S}^{d-1}$, the surface of the $d$-dimensional unit sphere.
	\end{lemma}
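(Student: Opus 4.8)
The plan is to establish the two directions separately; the ``if'' part is a short computation, while the ``only if'' part uses a Haar-distributed random rotation to manufacture the claimed representation.

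For the ``if'' direction, suppose ${\bf X}\stackrel{D}{=}\|{\bf X}\|{\bf U}$ with ${\bf U}$ uniform on $\mathcal{S}^{d-1}$ and independent of ${\bf X}$, and fix any orthogonal matrix ${\bf H}$. Since $\|{\bf X}\|$ is a function of ${\bf X}$, it is independent of ${\bf U}$ and hence of ${\bf H}{\bf U}$; moreover the uniform law on $\mathcal{S}^{d-1}$ is invariant under orthogonal maps, so ${\bf H}{\bf U}\stackrel{D}{=}{\bf U}$. Thus $(\|{\bf X}\|,{\bf H}{\bf U})$ and $(\|{\bf X}\|,{\bf U})$ have the same (product) distribution, and applying the measurable map $(r,{\bf u})\mapsto r{\bf u}$ gives ${\bf H}{\bf X}\stackrel{D}{=}{\bf H}(\|{\bf X}\|{\bf U})=\|{\bf X}\|({\bf H}{\bf U})\stackrel{D}{=}\|{\bf X}\|{\bf U}\stackrel{D}{=}{\bf X}$. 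Since ${\bf H}$ was arbitrary, ${\bf X}$ is spherically symmetric.

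For the converse, let ${\bf H}$ be a random orthogonal matrix with the Haar distribution on the orthogonal group, chosen independent of ${\bf X}$. First, conditioning on ${\bf H}$ and using that ${\bf H}_0{\bf X}\stackrel{D}{=}{\bf X}$ for each fixed orthogonal ${\bf H}_0$, one gets $\E[g({\bf H}{\bf X})]=\E\big[\E[g({\bf H}{\bf X})\mid{\bf H}]\big]=\E[g({\bf X})]$ for every bounded measurable $g$, so ${\bf H}{\bf X}\stackrel{D}{=}{\bf X}$. Second, on $\{{\bf X}\ne{\bf 0}\}$ set $R=\|{\bf X}\|$ and ${\bf V}={\bf H}{\bf X}/\|{\bf H}{\bf X}\|={\bf H}({\bf X}/R)$; because ${\bf H}$ is Haar, ${\bf H}{\bf w}$ is uniform on $\mathcal{S}^{d-1}$ for every fixed unit vector ${\bf w}$, and this law does not depend on ${\bf w}$. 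Hence, conditionally on ${\bf X}$, ${\bf V}$ is uniform on $\mathcal{S}^{d-1}$; as this conditional distribution is free of ${\bf X}$, the vector ${\bf V}$ is independent of ${\bf X}$, in particular of $R$ (and note $\|{\bf H}{\bf X}\|=R$). On the null event $\{{\bf X}={\bf 0}\}$ both ${\bf X}$ and $R{\bf V}$ vanish, so it is immaterial. Combining the two facts, ${\bf X}\stackrel{D}{=}{\bf H}{\bf X}=\|{\bf H}{\bf X}\|{\bf V}=R{\bf V}$ with ${\bf V}$ uniform on $\mathcal{S}^{d-1}$ and independent of $R=\|{\bf X}\|$; since the law of $\|{\bf X}\|{\bf U}$ with ${\bf U}$ an independent uniform depends only on the marginal of $\|{\bf X}\|$, we may realise such a ${\bf U}$ independent of ${\bf X}$ and conclude ${\bf X}\stackrel{D}{=}\|{\bf X}\|{\bf U}$.

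I expect the only delicate point to be the second step of the converse: showing that the directional part ${\bf H}{\bf X}/\|{\bf H}{\bf X}\|$ is simultaneously uniform on $\mathcal{S}^{d-1}$ and independent of the radial part $\|{\bf X}\|$. The observation that makes this go through is that a Haar-random rotation maps every fixed unit vector to a uniform point on the sphere with a distribution that is the same for all such vectors; one then invokes the elementary fact that a random vector whose conditional law given ${\bf X}$ does not depend on ${\bf X}$ is independent of ${\bf X}$. The remaining bookkeeping --- the event $\{{\bf X}={\bf 0}\}$ and passing from ``independent of $\|{\bf X}\|$'' to ``independent of ${\bf X}$'' as stated --- is routine.
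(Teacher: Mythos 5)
Your proof is correct. Note that the paper does not prove this lemma at all --- it simply cites page 31 of Fang, Kotz and Ng (1990), where the stochastic representation ${\bf X}\stackrel{D}{=}R{\bf U}$ is established via characteristic functions: one shows that the characteristic function of a spherical law is a scale mixture $\int\Omega_d(r^2\|{\bf t}\|^2)\,dF(r)$ of the characteristic function $\Omega_d$ of the uniform distribution on $\mathcal{S}^{d-1}$, and reads off the representation from the mixing distribution $F$. Your argument reaches the same conclusion by a purely probabilistic route: randomizing with an independent Haar-distributed rotation ${\bf H}$, using ${\bf H}{\bf X}\stackrel{D}{=}{\bf X}$, and observing that ${\bf H}{\bf X}/\|{\bf H}{\bf X}\|$ is conditionally uniform given ${\bf X}$ (hence unconditionally uniform and independent of ${\bf X}$). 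This avoids Fourier analysis entirely and makes the independence of the radial and directional parts transparent; the classical proof, in exchange, yields the explicit form of the characteristic function, which is useful elsewhere in the theory of spherical laws. The two delicate points you flag --- handling $\{{\bf X}={\bf 0}\}$ (e.g.\ by setting ${\bf V}={\bf H}{\bf e}_1$ there, so that $R{\bf V}={\bf H}{\bf X}$ holds everywhere) and passing from a conditional law free of ${\bf X}$ to independence --- are indeed routine and are handled adequately. The ``if'' direction is also correct as written.
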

	We refer the reader to page 31 of \cite{fang1990book} for a proof of this lemma. In this article, we refer to the random vector $\bm X^\prime = \|\bm X\|\bm U$, (where $\bm U\sim \textnormal{Unif}(S^{d-1})$ and  independent of $\bm X$) as the spherically symmetric variant of $\bm X$. Motivated by the distributional equality of $\bm X$ and $\bm X^\prime$ under the null hypothesis of spherical symmetry, \cite{banerjee2024consistent} 
	provided a connection between a test for spherical symmetry and a energy statistic based two-sample test for distributional equality. In this context, we have the following result, which is a direct analog of Theorem 2 of \cite{maa1996}.
	
	\begin{lemma}\label{lemma:distance-characterization}
		Let $\bm X_1,\bm X_2$ be two independent realizations of $\bm X \sim P$, and $\bm X_1^\prime,\bm X_2^\prime$ be their spherically symmetric variants. Assume that $P$ is absolutely continuous with square integrable density function $p(.)$. Then for any function $h(\cdot, \cdot)$ that satisfies (a) $h(\bm x,\bm y)=0$ if and only if $\bm x = \bm y$, (b) $h(\bm x+\bm a, \bm y+\bm a) = h(\bm x,\bm y)$ for all $\bm a\in \R^d$ and (c) the class of sets $S_t=\{\bm x|h(0,\bm x)\leq t\}$ regularly shrinks towards $0$ as $t\downarrow 0$, we have
		\begin{align*}
			h(\bm X_1,\bm X_2)\stackrel{D}{=}h(\bm X_1,\bm X_2^\prime)\stackrel{D}{=}h(\bm X_1^\prime,\bm X_2^\prime)~~~~\text{ if and only if ~~~~$\mathrm P$ is spherically symmetric.}
		\end{align*}
	\end{lemma}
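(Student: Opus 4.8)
The plan is to deduce this lemma from Theorem~2 of \cite{maa1996}, applied to $P$ and its spherical symmetrization, together with Lemma~\ref{lemma:ss-variant}. Write $P^\prime$ for the law of $\bm X^\prime=\|\bm X\|\bm U$, where $\bm U\sim\textnormal{Unif}(\mathcal S^{d-1})$ is independent of $\bm X$. Since the unit vectors used to build $\bm X_1^\prime$ and $\bm X_2^\prime$ are drawn independently of one another and of $(\bm X_1,\bm X_2)$, we have $\bm X_1,\bm X_2\stackrel{\mathrm{iid}}{\sim}P$ and $\bm X_1^\prime,\bm X_2^\prime\stackrel{\mathrm{iid}}{\sim}P^\prime$, with $\bm X_1$ independent of $\bm X_2^\prime$ and $\bm X_1^\prime$ independent of $\bm X_2^\prime$. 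Thus $h(\bm X_1,\bm X_2)$, $h(\bm X_1,\bm X_2^\prime)$ and $h(\bm X_1^\prime,\bm X_2^\prime)$ are precisely the within-population, cross-population and within-population interpoint dissimilarities associated with the pair of distributions $(P,P^\prime)$, which is exactly the object handled in \cite{maa1996}.

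For the ``if'' direction, assume $P$ is spherically symmetric. Lemma~\ref{lemma:ss-variant} gives $\bm X\stackrel{D}{=}\bm X^\prime$, hence $P=P^\prime$; then each of the three pairs above consists of two independent draws from $P$, so the three dissimilarities share a common law. This direction requires nothing beyond Lemma~\ref{lemma:ss-variant} and careful bookkeeping of the independence structure.

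For the ``only if'' direction I would invoke Theorem~2 of \cite{maa1996} with the two populations taken to be $P$ and $P^\prime$. Two hypotheses need checking. First, $h$ must satisfy (a)--(c); this is assumed. Second, both populations must be absolutely continuous with square integrable densities. For $P$ this is assumed; for $P^\prime$, conditioning on $\bm X$ shows its density is $p^\prime(\bm z)=\int p(\bm H\bm z)\,d\bm H$, the average of $p$ over the orthogonal group in Haar measure, since $\|\bm X\|\bm U$ given $\bm X$ is uniform on the sphere of radius $\|\bm X\|$ and the orbit map pushes Haar measure forward to that uniform law. Applying Jensen's inequality to $t\mapsto t^2$, then Fubini's theorem, and then the invariance of Lebesgue measure under orthogonal maps gives $\int p^\prime(\bm z)^2\,d\bm z\le\int\!\!\int p(\bm H\bm z)^2\,d\bm H\,d\bm z=\int p(\bm z)^2\,d\bm z<\infty$, so $P^\prime$ also has a square integrable density. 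Theorem~2 of \cite{maa1996} then yields $P=P^\prime$, i.e.\ $\bm X\stackrel{D}{=}\|\bm X\|\bm U$, and a final appeal to Lemma~\ref{lemma:ss-variant} shows $P$ is spherically symmetric.

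The only genuinely non-routine point is the reduction itself: recognizing that the three stated equalities in distribution are exactly the interpoint-dissimilarity equalities for the pair $(P,P^\prime)$, and that translation invariance (b) is what prevents the cross term $h(\bm X_1,\bm X_2^\prime)$ from degenerating, so that Theorem~2 of \cite{maa1996} applies verbatim. Verifying that $P^\prime$ inherits a square integrable density is the only computation, and it is the one-line Jensen--Fubini estimate above; beyond that I anticipate no real obstacle.
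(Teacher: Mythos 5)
Your proof is correct, and it rests on the same analytic core as the paper's; the difference is one of packaging. You invoke Theorem~2 of \cite{maa1996} as a black box for the pair $(P,P^\prime)$, which is legitimate once you have checked the two things that actually need checking: (i) the laws of $h(\bm X_1,\bm X_2)$, $h(\bm X_1,\bm X_2^\prime)$ and $h(\bm X_1^\prime,\bm X_2^\prime)$ coincide with the within- and cross-population dissimilarities for independent pairs with marginals $P$ and $P^\prime$ --- true because $\bm U_1,\bm U_2$ are independent of each other and of $(\bm X_1,\bm X_2)$, even though $\bm X_i^\prime$ is not independent of $\bm X_i$; and (ii) $P^\prime$ has a square-integrable density, which your Jensen--Fubini estimate $\int p^{\prime 2}\le\int p^2$ delivers (and then $\int p\,p^\prime<\infty$ by Cauchy--Schwarz, so all of the cited theorem's integrability hypotheses hold). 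The paper instead reproves the argument of \cite{maa1996} from scratch: it computes $\lim_{t\downarrow 0}\P[h(\cdot,\cdot)\le t]\big/\int_{S_t}d\bm v$ for each of the three statistics via the Lebesgue differentiation theorem for regularly shrinking sets, obtaining $\int p^2$, $\int p\,p^\prime$ and $\int p^{\prime 2}$ respectively, and concludes $p=p^\prime$ a.e.\ by Cauchy--Schwarz. Your route is shorter and makes transparent that the lemma is exactly the cited theorem specialized to $(P,P^\prime)$ combined with Lemma~\ref{lemma:ss-variant}; the paper's version is self-contained and exhibits the explicit $L^2$ identities. Both hinge on the same nontrivial verification --- the square integrability of $p^\prime$ --- which you handle correctly.
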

	We refer the reader to Chapter 7 of \cite{wheeden1977measure} for regularly shrinking sets and their consequences in the Lebesgue point theorem. In the literature, $h(\bm x,\bm y)$ is popularly chosen as the $\ell_2$ distance between $\bm x$ and $\bm y$. Hence we have
	\begin{align}
		\|\bm X_1-\bm X_2\|\stackrel{D}{=}\|\bm X_1-\bm X_2^\prime\|\stackrel{D}{=}\|\bm X_1^\prime-\bm X_2^\prime\|~~~~ \text{if and only if ~~~~$\mathrm P$ is spherically symmetric.}		
	\label{eq:equivalence}
	\end{align}


	\begin{figure}[t]
	\centering
	\setlength{\tabcolsep}{-2pt}
	\begin{tabular}{cc}
		(a) $\log$(Euclidean distance) for $d=10$ & (b) $\log$(squared inner-product) for $d=10$ \\
		\vspace{-0.175in}\\
		\includegraphics[height = 2.0in, width =0.45\linewidth]{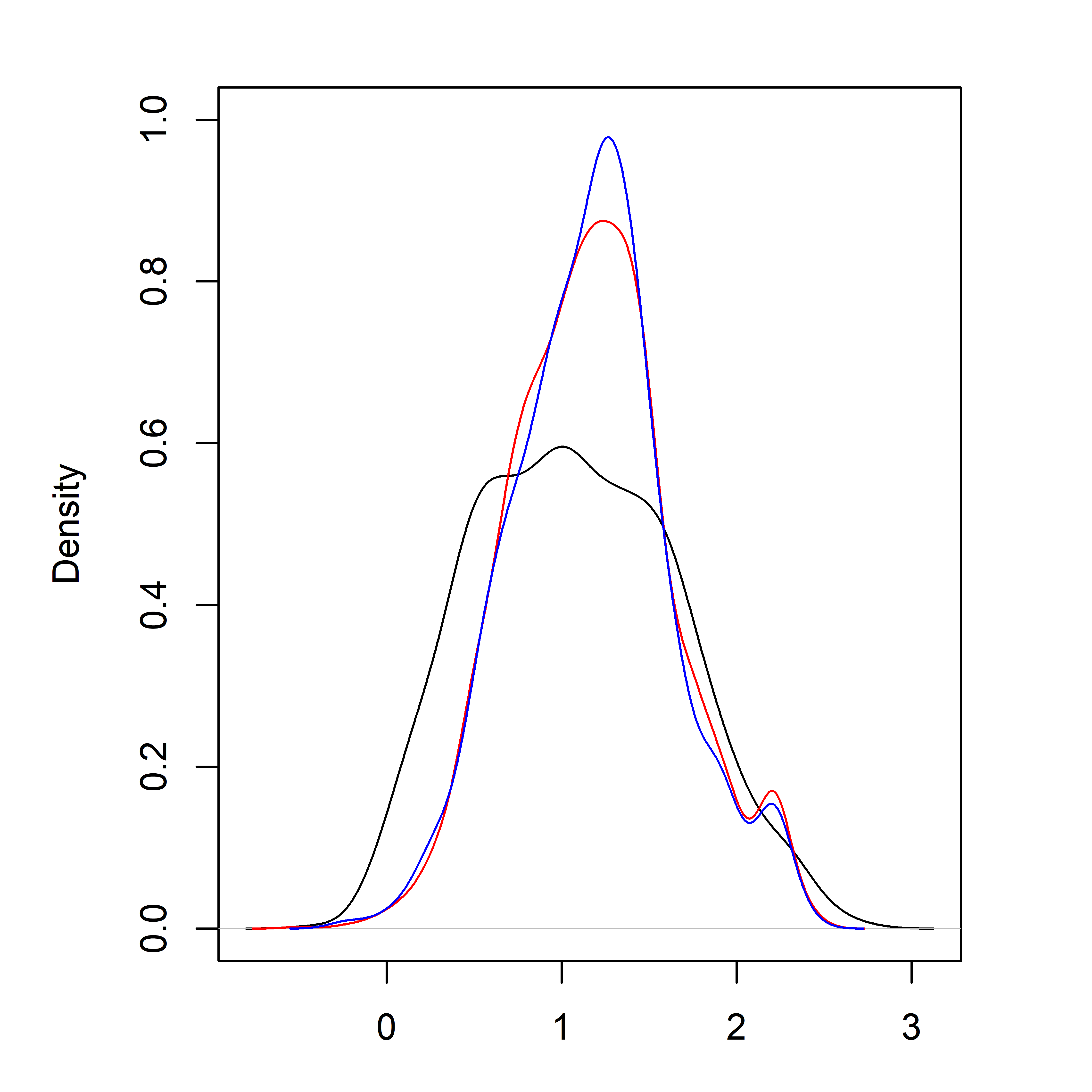} & \hspace{-0.2in}\includegraphics[height = 2.0in, width =0.45\linewidth]{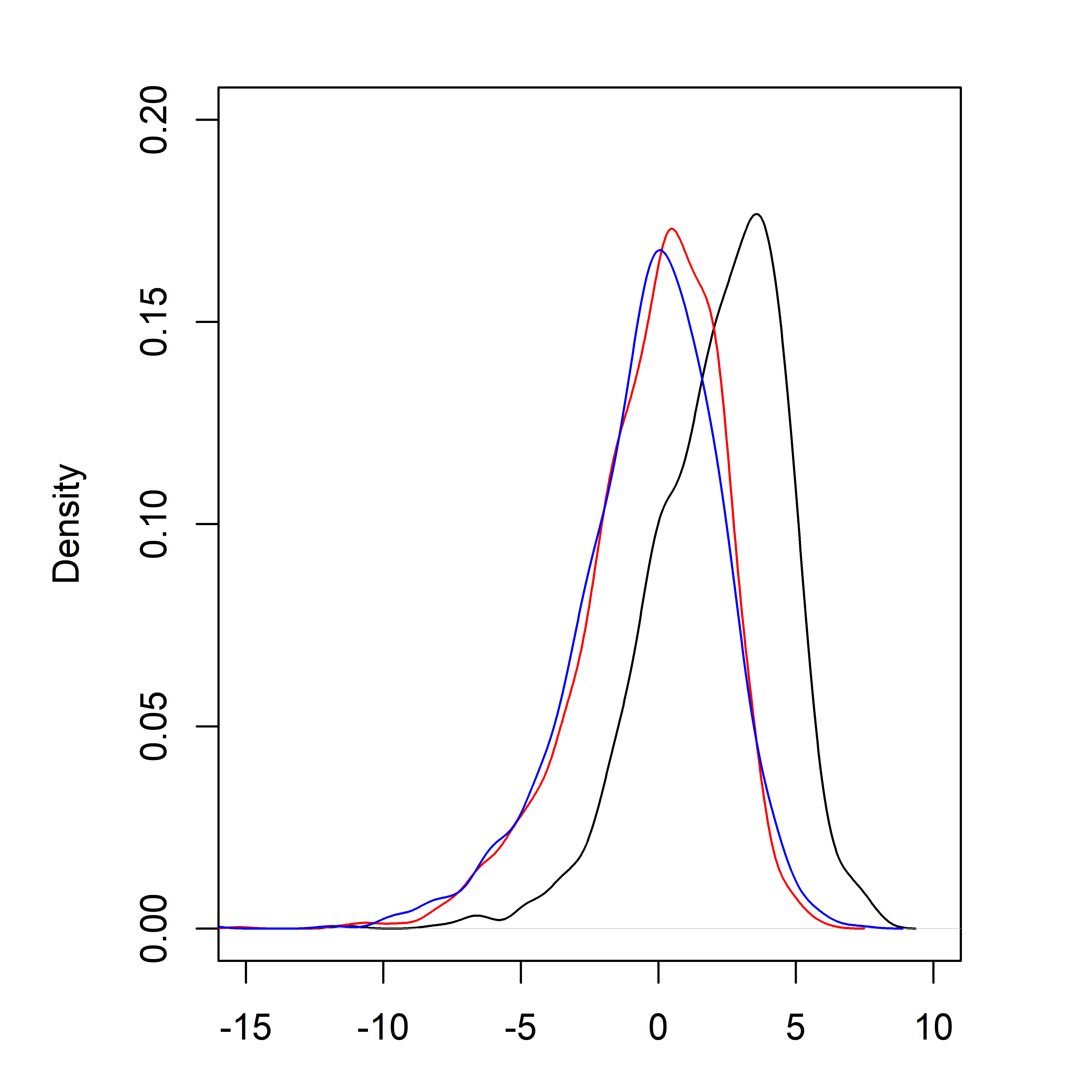}\\
		(c) $\log$(Euclidean distance) for $d=100$ & (d) $\log$(squared inner-product) for $d=100$\\
		\includegraphics[height = 2.0in, width =0.45\linewidth]{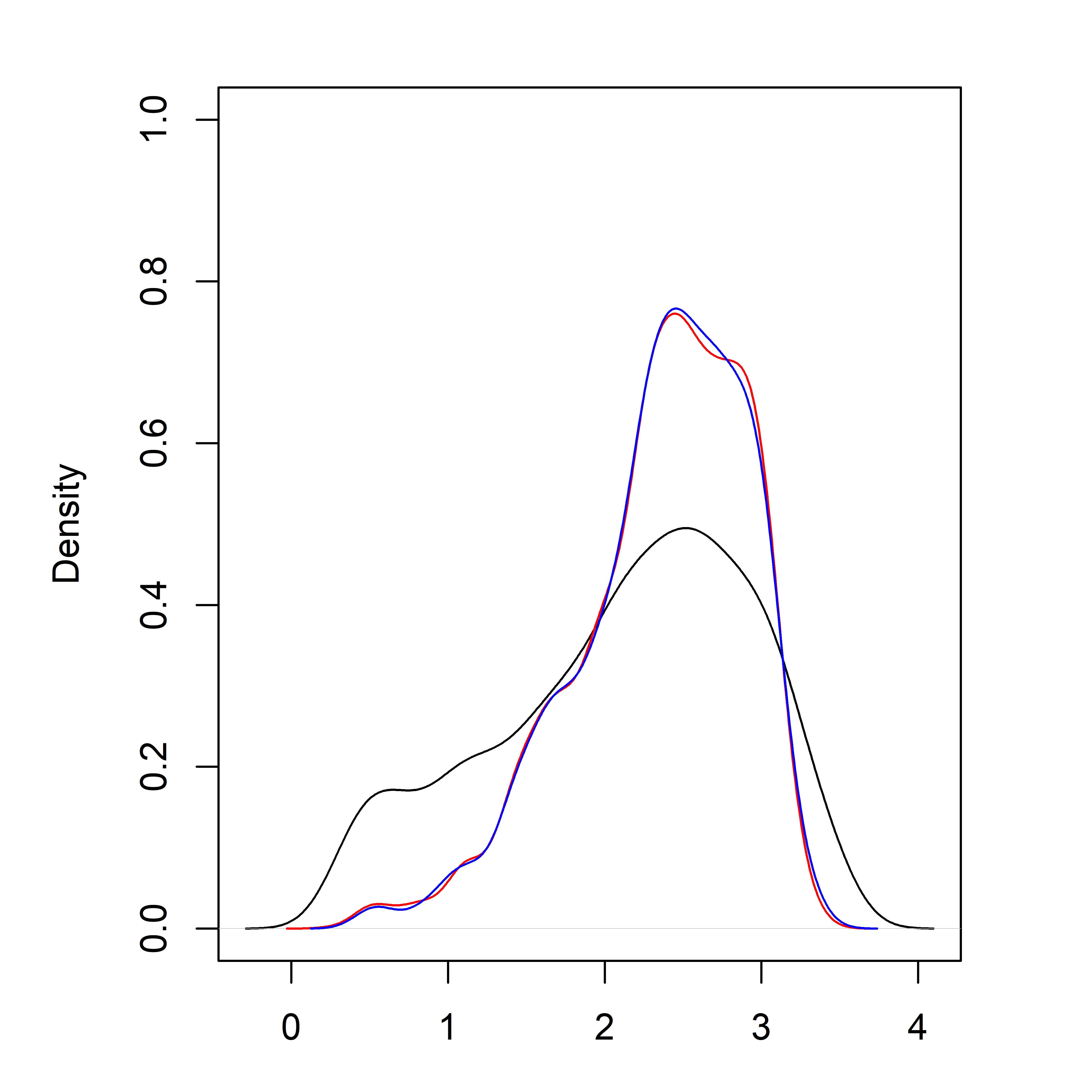} & \hspace{-0.2in}\includegraphics[height = 2.0in, width =0.45\linewidth]{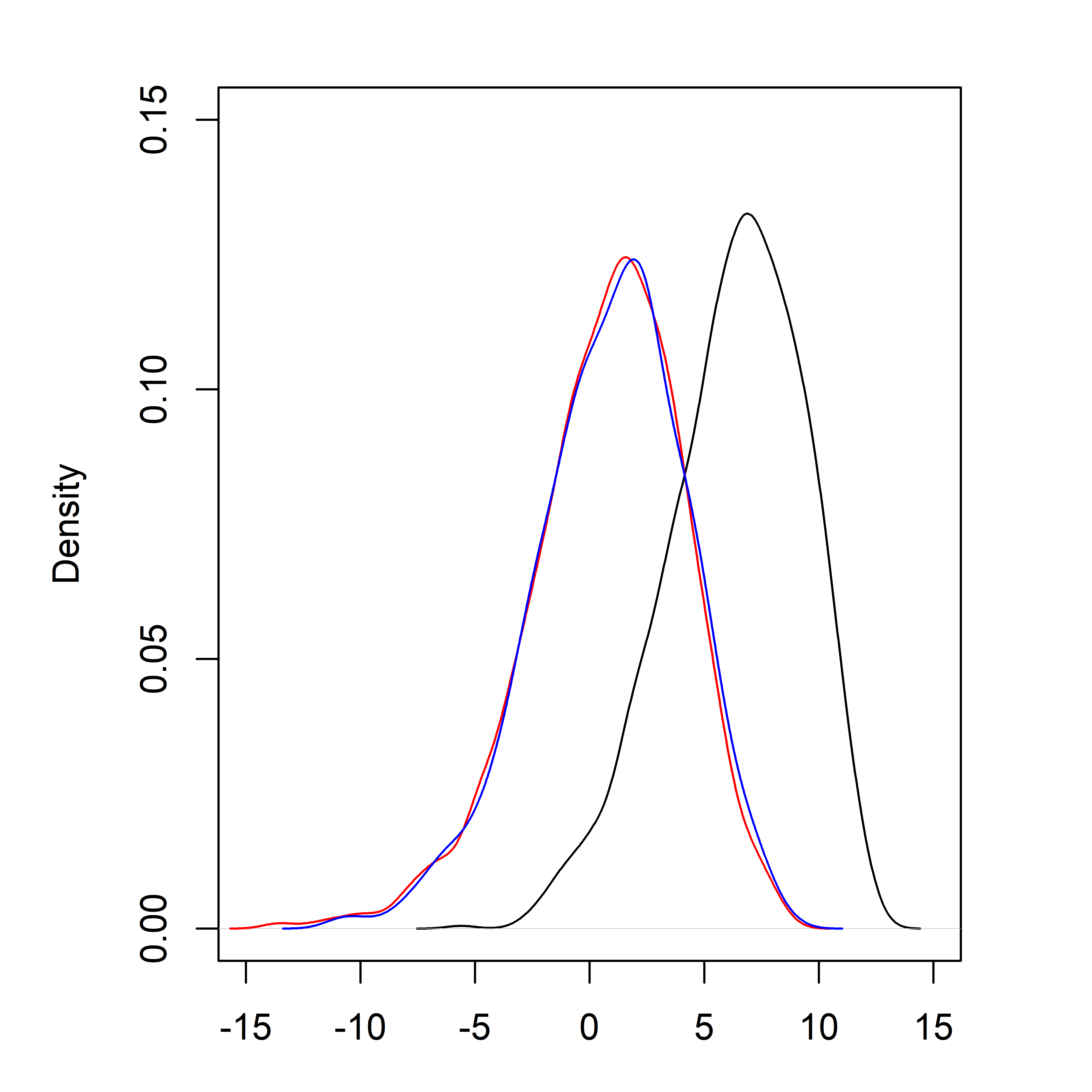}\\
	\end{tabular}
\vspace{-0.1in}
\caption[width=\linewidth]{Densities of the logarithm of $\|\bm X_1-\bm X_2\|$ (black), $\|\bm X_1-\bm X_2^\prime\|$ (blue) and $\|\bm X_1^\prime-\bm X_2^\prime\|$ (red) and those of the logarithm of $(\bm X_1^\top\bm X_2)^2$ (black), $(\bm X_1^\top\bm X_2^\prime)^2$ (blue) and $(\bm X_1^{\prime\top}\bm X_2^\prime)^2$ (red) when 50 observations are generated from $d$-dimensional normal distribution with mean ${\bf 0}$ and covariance matrix $\sigmat=((\sigma_{ij}))$ with $\sigma_{ij}=1$ for $i=j$ and $\sigma_{ij}=(d-1)/d$ for $i \neq j$.} 
	\label{fig:density-estimates}
\end{figure}
	
The left column in Figure \ref{fig:density-estimates} gives the density estimates of the logarithm of these three types of Euclidean distances when 50 observations are generated from the $d$-dimensional normal distribution (with $d=10$ and $d=100$) having mean ${\bf 0}$ and covariance matrix $\sigmat$, which has all diagonal entries $1$ and all off-diagonal entries $(d-1)/d$ (here we take the log transformation of the distances to avoid the restriction on their supports). The difference between these three distributions is evident from this figure. Note that since $\|\Xvec_1\|=\|\Xvec_1^{\prime}\|$ and $\|\Xvec_2\|=\|\Xvec_2^{\prime}\|$, the difference between $\|\bm X_1-\bm X_2\|$, $\|\bm X_1-\bm X_2^\prime\|$ and $\|\bm X_1^\prime-\bm X_2^\prime\|$ mainly comes from the inner products
$\bm X_1^\top\bm X_2$, $\bm X_1^\top\bm X_2^\prime$ and $\bm X_1^{\prime\top}\bm X_2^\prime$. However, under the assumption on $\Pr$ in Lemma \ref{lemma:distance-characterization}, we have $\E[\bm X_1^\top\bm X_2]=\E[\bm X_1^\top\bm X_2^\prime]=\E[\bm X_1^{\prime\top}\bm X_2^\prime] = 0$. But,  Lemma \ref{lem:expectations} (see Appendix B) shows that as $d$ increases, the asymptotic order of $\E(\bm X_1^{\top}\bm X_2)^2$ turns out to be higher than those of $\E(\bm X_1^{\top}\bm X_2^\prime)^2$ and 
$\E({\bm X_1^\prime}^{\top}\bm X_2^\prime)^2$. Figure \ref{fig:density-estimates} also supports that(see the right column in Figure \ref{fig:density-estimates}). Clearly, this difference becomes more prominent if we look at the densities of the logarithm of squares of corresponding inner products. The right column in Figure \ref{fig:density-estimates} shows
that $(\bm X_1^{\top}\bm X_2)^2$ is stochastically larger $(\bm X_1^{\top}\bm X_2^\prime)^2$ and 
$({\bm X_1^\prime}^{\top}\bm X_2^\prime)^2$, and the difference is more significant in the case of $d=100$. This phenomenon gives us the motivation to construct tests based on the squares of the inner products, which are described in the following subsections.

	\subsection{String signs and string ranks}
	Suppose that $\bm X_1,\bm X_2,\ldots,\bm X_n \stackrel{iid}{\sim} \mathrm P$, and $\bm X_i^\prime=\|\bm X_i\| \bm U_i$ is the spherically symmetric variant of $\bm X_i$ ($i=1,2,\ldots,n$) as defined before. Define $\bm Z_i = \bm X_i$ and $\bm Z_{n+i} = \bm X_{i}^\prime$ for $i =1,2,\ldots,n$. To test whether $\mathrm P$ is spherically symmetric, we consider an edge weighted undirected complete graph $\mathcal{K}_{2n}$ on the $2n$ vertices $\bm Z_1,\bm Z_{2},\ldots,\bm Z_{2n}$, where  $\theta(\bm Z_i,\bm Z_j) = \exp\{-(\frac{1}{d}\bm Z_i^\top\bm Z_j)^2\}$ is the weight (cost) associated with the edge joining $\bm Z_i$ and $\bm Z_j$ ($1\le i<j\le 2n$). We consider a path of length $n-1$ that traverses through either $\bm X_i$ or $\bm X_i^\prime$ for each $i=1,2,\ldots,n$, and we call it a covering path. 
	Clearly, there are $2^n n!$ many covering paths. But for every path, there exists another path in the reverse order. If we consider them as the same path, the number of distinct covering paths turns out to be $2^{n-1}n!$. When the observations  come from a continuous distributions, each of these $2^{n-1}n!$ paths have different costs (the cost of a path is defined as the sum of costs of its edges) with probability one. Among these paths, we choose the one with the minimum cost and it is called the shortest covering path 
	$\mathcal{P}$ \citep[see][ for the use of shortest covering path in the context of one-sample location problem]{biswas2015onesample}. Note that finding this path is equivalent to finding
	\begin{align}\label{eq:optimization}
		({\bm S},{\bm \Pi}) = \argmin_{\substack{\bm s\in\{0,1\}^n\\\bm\pi\in \mathcal{S}_n}} \left[\sum_{i=1}^{n-1} \theta(\bm Y_{s_{\pi_i},\pi_i},\bm Y_{s_{\pi_{i+1}},\pi_{i+1}})\right],
	\end{align}
	where ${\mathcal S}_n$ is the set of all permutations of $\{1,2,\ldots,n\}$ and for any ${\bm s}=(s_1,s_2,\ldots,s_n)$, $\bm Y_{s_i,i} = s_i \bm X_i + (1-s_i)\bm X_i^\prime$ (i.e., $\bm Y_i={\bm X}_i$ if $s_i=1$ and $\bm Y_i={\bm X}_i^\prime$ if $s_i=0$) for all $i=1,2,\ldots,n$. Here ${\bm \Pi}$ gives us the arrangement of observation numbers along $\mathcal{P}$. 
	This leads to a new notion of ranks. The position of $\bm X_i$ (or $\bm X_i^\prime$) along  ${\mathcal P}$ is called the rank of $\bm X_i$, and it is denoted by $R_i$ ($i=1,2,\ldots,n$). One can notice that 
	${\bm \Pi}^{-1}$, the inverse permutation of ${\bm \Pi}$, gives the rank vector $\bm R= (R_1,R_2,\ldots,R_n)$. Similarly, ${\bm S}$ can be viewed as a sign vector (instead of $1$ and $-1$, each of its elements takes the values $1$ and $0$), where $S_i$ gives the information whether $\bm X_i$ or $\bm X_i^\prime$ lies on the path ${\mathcal P}$. Since these signs and ranks are computed along the shortest covering path ${\mathcal P}$, which can be viewed as a string joining $n$ observations or their spherical analogs, we shall refer to them as string signs and string ranks, respectively. Note that they satisfy the following properties.
	
	\begin{thm}
		Let $\bm X_1,\bm X_2,\ldots,\bm X_n$ be independent realizations of a random vector $\bm X$  following a continuous distribution $ \mathrm P$. Also, define ${\bm S}$ and $\bm R$ as the vector of string signs and string ranks of $\{\bm X_1,\bm X_2,\ldots,\bm X_n\}$. Then, we have the following results.
		\begin{itemize}
			\item[\textnormal{(a)}] If $\mathrm P$ is spherically symmetric, ${\bm S}\sim\textnormal{Unif}(\{0,1\}^n)$,  $\bm R \sim\textnormal{Unif}({\mathcal S}_n)$ and they are independent.
			
			\item[\textnormal{(b)}] If $\mathrm P$ is not spherically symmetric, $\bm R \sim$  {Unif}$(S_n)$ and for any given $\bm R$ or ${\bm \Pi}=( \pi_1,\pi_2,\ldots,\pi_n)$, we have the following weak dependence structure of $\bm S$.
			\begin{align*}
				{S}_{\pi_1}\mid {S}_{\pi_2},\cdots,{S}_{\pi_n}& \stackrel{D}{=}{S}_{\pi_1}\mid {S}_{\pi_2},\\
				{S}_{\pi_n}\mid {S}_{\pi_1},\cdots,{S}_{\pi_{n-1}} & \stackrel{D}{=}{S}_{\pi_n}\mid {S}_{\pi_{n-1}},\\
				{S}_{\pi_i}\mid {S}_{\pi_1},\cdots,{S}_{\pi_{i-1}},
				{S}_{\pi_{i+1}},\cdots,{S}_{\pi_n} & \stackrel{D}{=}{S}_{\pi_i} \mid {S}_{\pi_{i-1}},{S}_{\pi_{i+1}} \mbox{ for } i=2,\ldots,n-1.
			\end{align*}
		\end{itemize}
		\label{thm:sign-rank-dist}
	\end{thm}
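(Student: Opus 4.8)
The plan is to exploit the symmetries of the construction. Write $\bm Y_{s,i}=s\bm X_i+(1-s)\bm X_i'$; since $\mathrm P$ is continuous the minimiser in~\eqref{eq:optimization} is almost surely unique, so $(\bm S,\bm\Pi)$ is a well-defined measurable function of $(\bm Z_1,\dots,\bm Z_{2n})$. The observation that makes everything go is that the shortest covering path $\mathcal P$ — seen as the set of $n$ vertices it uses (one from each pair $\{\bm X_i,\bm X_i'\}$) together with the order in which it visits them — depends only on the $n$ \emph{unordered} pairs $\{\bm X_i,\bm X_i'\}$: it refers neither to the index $i$ nor to which element of a pair is the genuine $\bm X_i$, because the cost of a covering path is a function of its vertex set and traversal order alone.

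I would prove part~(a) by a two-group symmetrisation. Consider (i) the symmetric group $\mathcal S_n$ acting by simultaneously permuting $(\bm X_1,\dots,\bm X_n)$ and $(\bm X_1',\dots,\bm X_n')$ by $\sigma$, and (ii) the group $\{0,1\}^n$ acting by interchanging $\bm X_i\leftrightarrow\bm X_i'$ for those $i$ with $g_i=1$. Because the $\bm X_i$ are i.i.d., action~(i) leaves the joint law of $(\bm Z_1,\dots,\bm Z_{2n})$ invariant; when $\mathrm P$ is spherically symmetric, action~(ii) does too, since conditionally on $\|\bm X_i\|$ the pair $(\bm X_i,\bm X_i')$ is exchangeable (both coordinates are i.i.d.\ uniform on the sphere of that radius, by the representation in Lemma~\ref{lemma:ss-variant}) and the $n$ pairs are independent. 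A direct substitution in~\eqref{eq:optimization} yields the matching equivariance of the minimiser: relabelling by $\sigma$ sends $(\bm S,\bm\Pi)$ to $(\bm S\circ\sigma,\ \sigma^{-1}\!\circ\bm\Pi)$, while the swap $\bm g$ turns $\bm Y_{s_i,i}$ into $\bm Y_{s_i\oplus g_i,i}$ and hence sends $(\bm S,\bm\Pi)$ to $(\bm S\oplus\bm g,\ \bm\Pi)$ (bitwise XOR). Equivariance under~(i) together with its distributional invariance already gives $\sigma^{-1}\!\circ\bm\Pi\stackrel D=\bm\Pi$ for every $\sigma$, so $\bm\Pi$, and therefore $\bm R=\bm\Pi^{-1}$, is uniform on $\mathcal S_n$; this step used only that the $\bm X_i$ are i.i.d., so it is exactly the first assertion of part~(b). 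Under spherical symmetry, equivariance under~(ii) gives $(\bm S\oplus\bm g,\bm\Pi)\stackrel D=(\bm S,\bm\Pi)$ for every $\bm g\in\{0,1\}^n$; letting $\bm g$ range over $\{0,1\}^n$ forces the conditional law of $\bm S$ given $\bm\Pi$ to be uniform on $\{0,1\}^n$ and not to depend on the value of $\bm\Pi$. Hence $\bm S\sim\mathrm{Unif}(\{0,1\}^n)$, $\bm R\sim\mathrm{Unif}(\mathcal S_n)$ and $\bm S\indpt\bm R$, which is part~(a).

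For the dependence structure in part~(b) I would condition on $\{\bm\Pi=\bm\pi\}$ and use the reduction above. Because $\mathcal P$ is determined by the unordered pairs, this event just records that gadget $\pi_k$ sits at the $k$-th site of $\mathcal P$; conditionally on it and on the unordered pair sitting at each site, the sign $S_{\pi_k}$ — the indicator that the on-path vertex of gadget $\pi_k$ is the genuine $\bm X_{\pi_k}$ — is an independent Bernoulli variable whose success probability is a density ratio \emph{within that one pair}, and these choices are conditionally independent across $k$. The crucial structural fact is that $S_{\pi_k}$ enters~\eqref{eq:optimization} only through the one or two path-edges incident to site $k$, namely $\theta(\bm Y_{\cdot,\pi_k},\bm Y_{\cdot,\pi_{k-1}})$ and $\theta(\bm Y_{\cdot,\pi_k},\bm Y_{\cdot,\pi_{k+1}})$. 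Combining this nearest-neighbour (chain) decomposition of the cost with the conditional independence across gadgets, one argues that the conditional law of $S_{\pi_k}$ given all the other signs is measurable with respect to the data at sites $k-1$ and $k+1$ only, which is exactly the three displayed identities, with the obvious degeneration at the two endpoints where only one neighbour is present.

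The hard part is this last step. The conditioning event $\{\bm\Pi=\bm\pi\}$, and the event fixing the remaining signs, are defined through the \emph{global} optimisation in~\eqref{eq:optimization}, so one must argue carefully that conditioning on them does not feed back long-range information about gadget $\pi_k$'s internal configuration beyond what its two path-neighbours on $\mathcal P$ already carry. Unwinding the reduction, the conditional law of $\bm S$ given $\bm\Pi=\bm\pi$ is a mixture — over the (conditionally distributed) sequence of unordered pairs encountered along $\mathcal P$ — of product Bernoulli laws; the substantive content of part~(b) is that this mixture is Markov along the path, and establishing it requires analysing how the on-path-versus-off-path choice at a site is governed by its immediate neighbours on $\mathcal P$ together with the conditional independence of the within-gadget choices. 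Parts~(a) and the uniformity of $\bm R$, by contrast, are essentially immediate once the two equivariances of~\eqref{eq:optimization} are recorded.
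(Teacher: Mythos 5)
Your argument for part (a) and for the uniformity of $\bm R$ in part (b) is correct and is, in substance, the paper's own proof: the paper phrases your two group actions as the exchangeability of the family of costs $\{\mathcal{T}(\bm s,\bm\pi)\}$ under swaps $(\bm X_i,\bm X_i^\prime)\mapsto(\bm X_i^\prime,\bm X_i)$ (valid under $H_0$) and under relabelling of the i.i.d.\ pairs (valid always), and then deduces that $\P[\bm S=\bm s_0,\bm\Pi=\bm\pi_0]$ is constant, respectively that $\P[\bm\Pi=\bm\pi_0\mid\bm S=\bm s_0]$ is constant. Your equivariance bookkeeping ($\bm S\mapsto\bm S\oplus\bm g$, $\bm\Pi\mapsto\sigma^{-1}\circ\bm\Pi$) is a clean way to record the same facts, and your observation that the path depends only on the index-labelled unordered pairs is sound.

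The gap is in the weak dependence structure of part (b), which is the substantive content of that part, and you acknowledge it yourself ("the hard part is this last step"). You set up the conditional law of $\bm S$ given $\bm\Pi=\bm\pi$ as a mixture over the unordered pairs of product Bernoulli laws, and then assert that one "argues" this mixture is Markov along the path --- but that is precisely what has to be proved, and a mixture of product measures is not Markov in general. The missing ingredient is the local flip identity that the paper's proof is built on: because $(\bm S,\bm\Pi)$ is the \emph{global} minimiser of \eqref{eq:optimization} and flipping the single coordinate $S_{\pi_k}$ (keeping $\bm\Pi$ and the other signs fixed) changes the total cost only through the at most two edges incident to site $k$, one has the almost-sure identity
\begin{align*}
\mathrm{I}\{S_{\pi_k}=1\}=\mathrm I\big\{\theta(\bm Y_{S_{\pi_{k-1}},\pi_{k-1}},\bm X_{\pi_k})+\theta(\bm X_{\pi_k},\bm Y_{S_{\pi_{k+1}},\pi_{k+1}})\leq \theta(\bm Y_{S_{\pi_{k-1}},\pi_{k-1}},\bm X_{\pi_{k}}^\prime)+\theta(\bm X_{\pi_{k}}^\prime,\bm Y_{S_{\pi_{k+1}},\pi_{k+1}})\big\}
\end{align*}
for $2\le k\le n-1$, with the one-edge analogues at $k=1$ and $k=n$ (ties are excluded by continuity). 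This exhibits $S_{\pi_k}$ as a deterministic function of the pair at index $\pi_k$ and of the on-path vertices $\bm Y_{S_{\pi_{k\pm1}},\pi_{k\pm1}}$ at the two neighbouring sites only; the stated conditional-distribution identities then follow by taking conditional expectations given $\bm\Pi$. You gesture at the right fact ("$S_{\pi_k}$ enters only through the incident path-edges") but never convert it into this identity, and without it the Markov-along-the-path claim is left unsupported. To complete your proof you should state and justify the flip identity from optimality and then carry out the conditioning step, as the paper does.
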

	\subsection{Tests based on sting signs and string ranks}
	From part (a) of  Theorem  \ref{thm:sign-rank-dist}, it is clear that under the null hypothesis of spherical symmetry, the distribution of $({\bm S},~{\bm R})$ 
	matches with the joint null distribution of univariate signs and runs used in one-sample testing problem
	(e.g., sign test or signed rank test). So, any test statistic computed based on these string signs and sting ranks has the distribution-free property and its null distribution matches with that of the corresponding univariate statistic based on usual signs and ranks. For instance, one can consider a test based on the sign statistic $T_S=\sum_{i=1}^{n} S_i$ or the runs statistic $T_R=1+\sum_{i=1}^{n-1} \mathrm{I}\{S_{\pi_i} \neq S_{\pi_{i+1}}\}$, where $\delta$ denotes the the indicator function. Note that the values of $T_S$ and $T_R$ remain the same if the path ${\mathcal P}$ is traversed in the reverse order. For any linear rank statistic of the form $T_{LR} =\sum_{i=1}^{n} S_i a(R_i)=\sum_{i=1}^{n}S_{\pi_i}a(i)$, we have this property if the score function $a(\cdot)$ satisfies $a(i)=a(n-i+1)$ for all $i=1,2,\ldots,n$ (note that taking $a(i)=1 ~\forall i$, we get $T_S$). In the case of signed rank test, we have $a(i)=i$, which does not satisfy this property. So, we do not recommend using this test. 
	
	\begin{figure}[b!]
		\includegraphics[height = 2.5in, width = 3.0in]{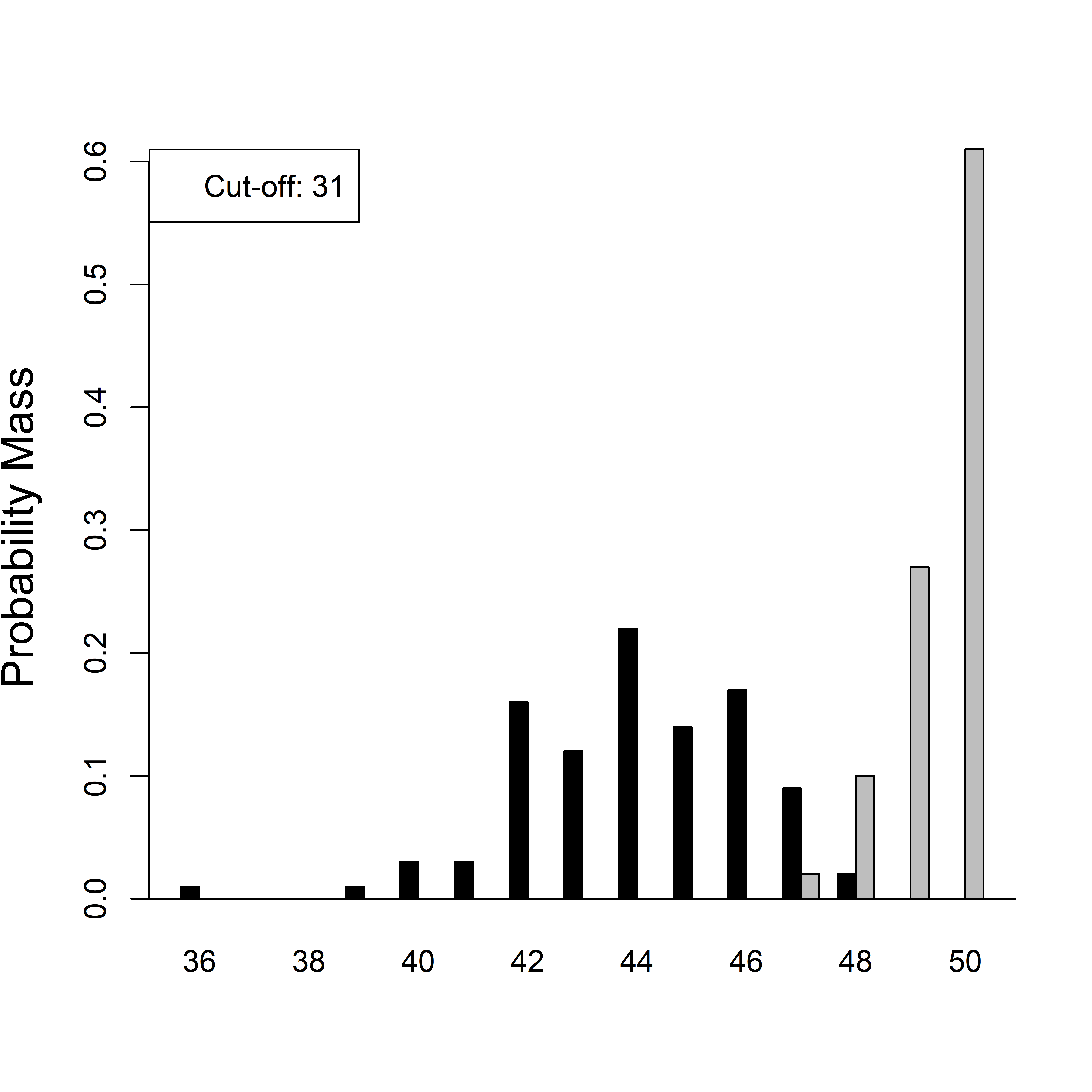}
		\includegraphics[height = 2.5in, width = 3.0in]{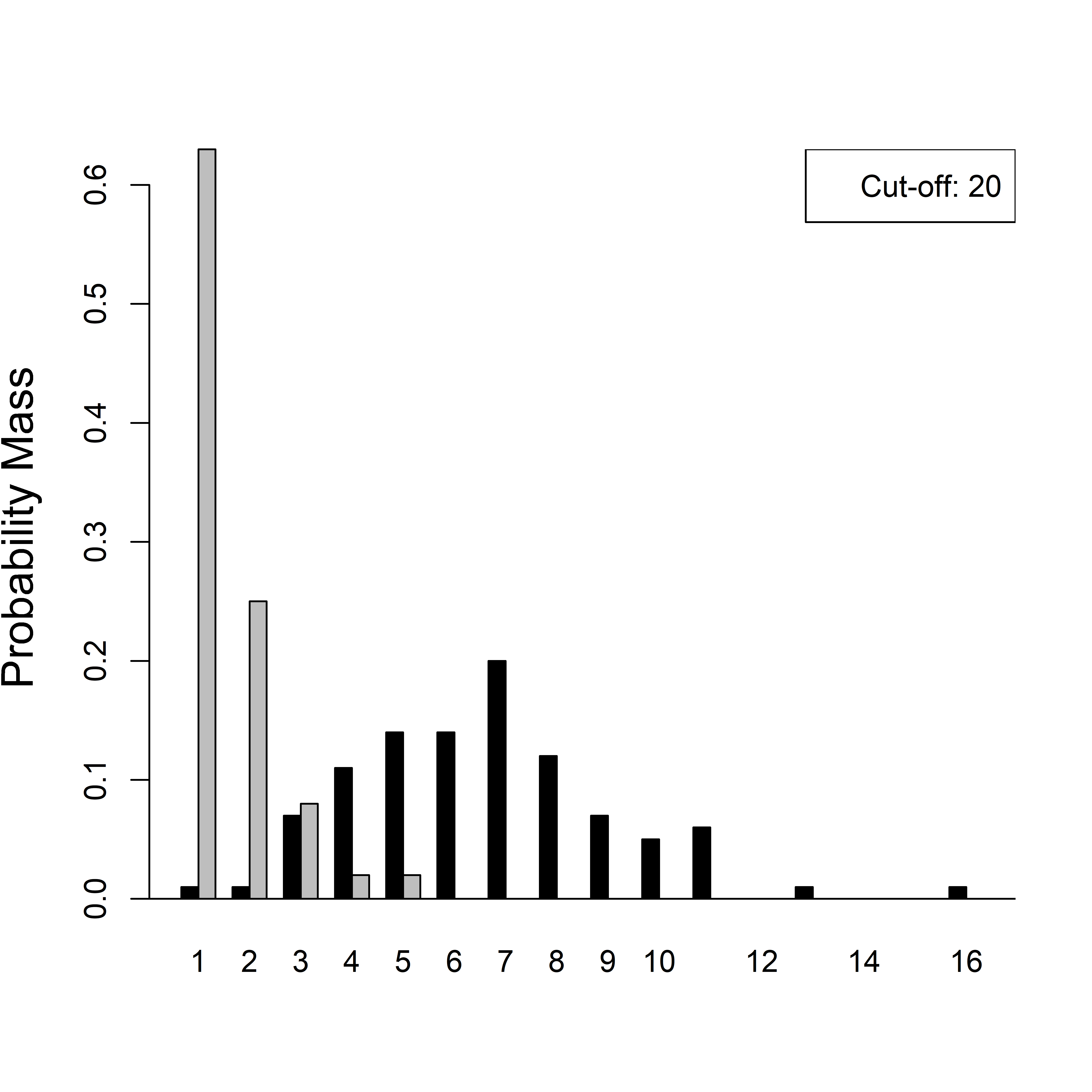}	
		\vspace{-0.25in}
		\caption{Distribution of $T_S$ and $T_R$ over 100 simulations for $d=10$ (black bar) and $d=100$ (grey bar) in the example involving normal distribution considered in Figure \ref{fig:density-estimates}.}
		\label{fig:rose-histograms-3}
	\end{figure}

	Part (b) of Theorem \ref{thm:sign-rank-dist} gives us some idea about the behavior of $\bm S$ and $\bm R$ under the alternative hypothesis (i.e., when $\mathrm P$ is not spherical). From Figure \ref{fig:density-estimates} and also from Lemma \ref{lem:expectations} (see Appendix B), it is quite transparent that when $\mathrm P$ is not spherical, the total cost of any covering path will be small if most of the edges are of the form $(\bm X_i,\bm X_j)$. 
	So, ${\mathcal P}$ is supposed to contain more
	$\bm X_i$s than ${\bm X}_i^\prime$s.
	As a result, the most of the elements of the sign vector $\bm S$ turn out to be $1$ and that leads to a higher value of $T_S$ and a lower value of $T_R$. So, we can reject the null hypothesis accordingly, where the cut-offs can be obtained from the statistical tables available for the corresponding univarite nonparametric tests. 
	To demonstrate this let us recall the example involving 50 observations from $10$-dimensional and $100$-dimensional normal distributions. In each case, we repeated the experiment $1000$ times.
	Figure \ref{fig:rose-histograms-3} shows the bar diagram of the observed values $T_S$ and $T_R$ in these $1000$ cases. We can see that in all these cases, $T_S$ took higher values than the cut-off ($31$) for the sign test at $5\%$ level. Similarly, $T_R$ turned out to be smaller than the corresponding cut-off ($20$) in all cases. Interestingly, both for sign and runs tests, the evidence was stronger for $d=100$. 
		
	\subsection{Algorithm for finding the shortest covering path}
	\label{sec:prim-algo}
	
	For finding the shortest covering path, one needs to consider $2^{n-1}n!$ distinct covering paths and choose the one with the minimum cost. So, unless the sample size is small, finding the shortest covering path $\mathcal{P}$ by complete enumeration becomes computationally infeasible. In fact, the optimization problem in \eqref{eq:optimization} is equivalent to the well known traveling salesman's problem, which is NP-hard \citep[see][]{npcompletebook}. However, following \cite{biswas2015onesample}, we can use a heuristic method based on Prim's algorithm \citep[][]{primalgo} for finding $\mathcal{P}$. Consider the undirected complete  graph ${\mathcal K}_{2n}$ with the cost matrix  $\Theta = (\theta(\bm Z_i,\bm Z_j))_{1\leq i,j\leq 2n}$ where $\bm Z_i = \bm X_i$ and $\bm Z_{n+i} = \bm X_i^\prime$ for $i=1,2,\ldots,n$ as defined before. First we select the pair $(i,j)$ ($i \neq j$ and $|i-j|\not = n$) such that $\theta(\bm Z_i,\bm Z_j)$ is minimum among the cost associated with such edges. We consider the edge $(\bm Z_i,\bm Z_j)$ as a path ${\mathcal P}$ of length $1$ with ${\mathcal E}=\{i,j\}$ as the two end points. We define the sets $A_0 = \{i,j\}$ and
	$A_1=\{k: ~k \neq \ell \mbox{ and } |k-\ell|\neq n \mbox{ for any } \ell \in A_0\}$. At the next step,
	we find $q \in A_1$ and $r \in {\mathcal E}$ such that $\theta(\bm Z_q, \bm Z_r) =\min_{~k \in A_1, \ell \in {\mathcal E}} \theta(\bm Z_k, \bm Z_{\ell})$. We
	join the edge $(\bm Z_q, \bm Z_r)$ to ${\mathcal P}$ to get a path ${\mathcal P} \leftarrow {\mathcal P} \cup (\bm Z_q, \bm Z_r)$ of length $2$. The sets 
	of visited nodes $A_0$ and the end points ${\mathcal E}$ of the path ${\mathcal P}$ are updated as $A_0 \leftarrow A_0\cup \{q\}$ and ${\mathcal E}\leftarrow \big({\mathcal E} \cup\{q\}\big) \setminus \{r\}$. The set $A_1$ is updated accordingly. We use this method repeatedly until we get $|A_0|=n$ and 
	$|A_1|=0$. The path ${\mathcal P}$ of length $n-1$ thus obtained is considered as the shortest covering path.
	Clearly, ${\mathcal P}$ contains either $\bm X_i$ or its spherically symmetric variant $\bm X_i^\prime$ for every $i=1,2,\ldots,n$.
	
	We use a toy example with $5$ bivariate observations to demonstrate this algorithm. Figure \ref{fig:scp-construction}(a) shows these $5$ observations (blue dots) 
	and their spherically symmetric variants (red dots). First, we join $\bm Z_3=\bm X_3$ and $\bm Z_2=\bm X_2$, the pair having the minimum cost (note that we do not consider the pairs ($\bm Z_i=\bm X_i, \bm Z_{5+i}=\bm X_i^\prime$) for $i=1,2,\ldots,5$) and consequently remove their spherical variants $\bm Z_8 = \bm X_3^\prime$ and $\bm Z_7 = \bm X_2^\prime$ from future considerations. So, we have $A_0=\{2,3\}$, $A_1=\{1,4,5,6,9,10\}$ and 
	${\mathcal E}=\{3,2\}$. At the next step, we join $\bm Z_4=\bm X_4$ and
	$\bm Z_3=\bm X_3$. This leads to $A_0=\{2,3,4\}$, $A_1=\{1,5,6,10\}$ and ${\mathcal E}=\{4,2\}$ (see \ref{fig:scp-construction}(b)). Next, we join $\bm Z_5=\bm X_5$ and $\bm Z_4=\bm X_4$ to get $A_0=\{2,3,4,5\}$, $A_1=\{1,6\}$ and ${\mathcal E}=\{5,2\}$ (see \ref{fig:scp-construction}(c)). Finally, $\bm Z_2=\bm X_2$ and $\bm Z_6=\bm X_1^\prime$ are joined  (see \ref{fig:scp-construction}(d)). As a result, we get $\bm X_5-\bm X_4-\bm X_3-\bm X_2-\bm X_1^\prime$
	(or $\bm X_1^\prime-\bm X_2-\bm X_3-\bm X_4-\bm X_5$) as the shortest covering path ${\mathcal P}$. So, we have string signs $S_1 = 0, S_2 = S_3 = S_4 = S_5 = 1$ and string ranks $R_1=5, R_2=4, R_3=3,R_4=2, R_5=1$ (or $R_1=1, R_2=2, R_3=3,R_4=4, R_5=5$ if traversed in the reverse order). Therefore, the sign statistic (number of blue dots on ${\mathcal P}$) $T_S$ turns out to be 4 and the runs statistic (the number of runs in the sequence  red and blue dots) $T_R$ turns out to be $2$. 
	
	\begin{figure}[t]
		\centering
		\setlength{\tabcolsep}{-2pt}
		\begin{tabular}{cccc}
			(a) Step 1& (b) Step 2 \\
			\includegraphics[width=3in,height=2.1in]{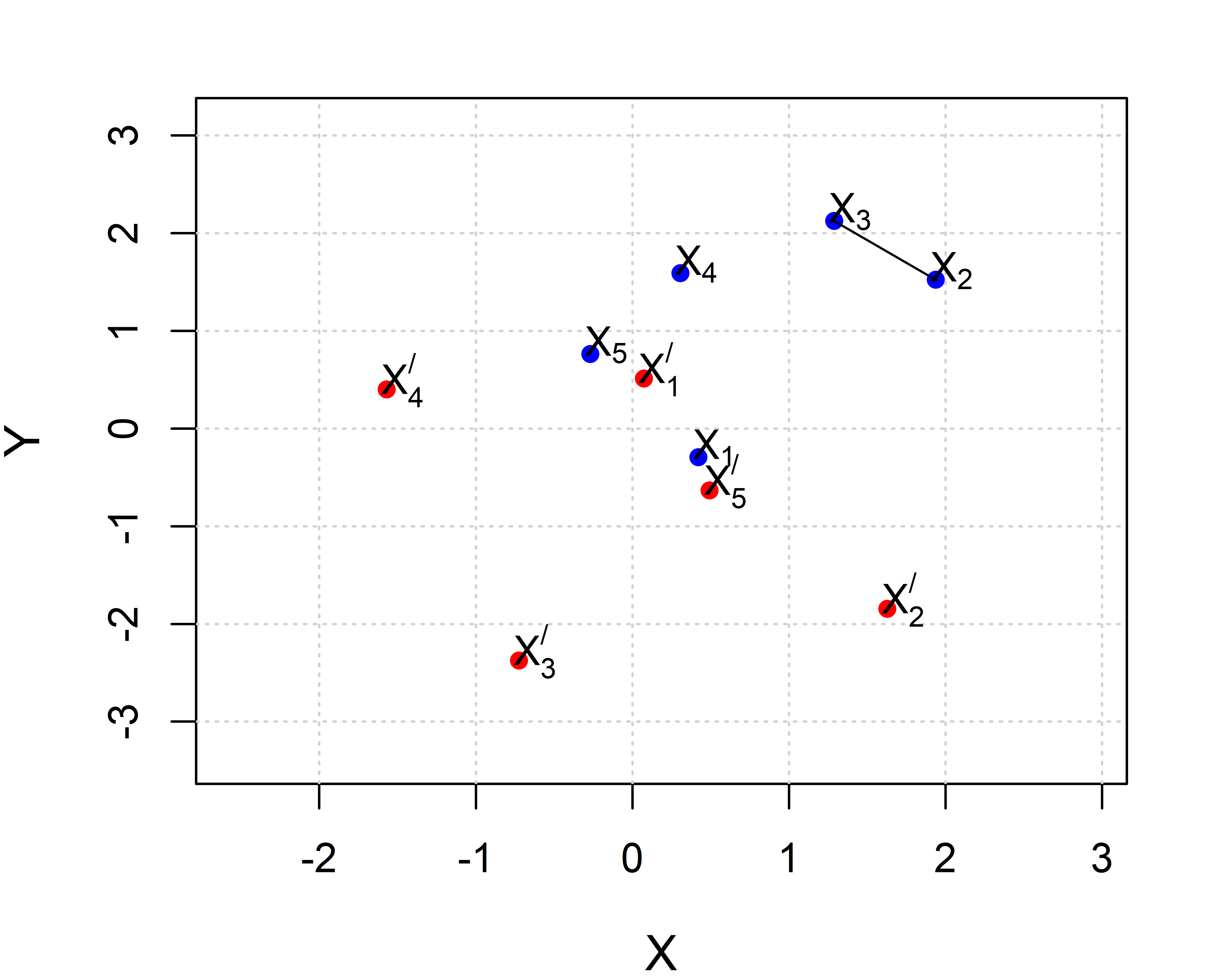} & \includegraphics[width=3in,height=2.1in]{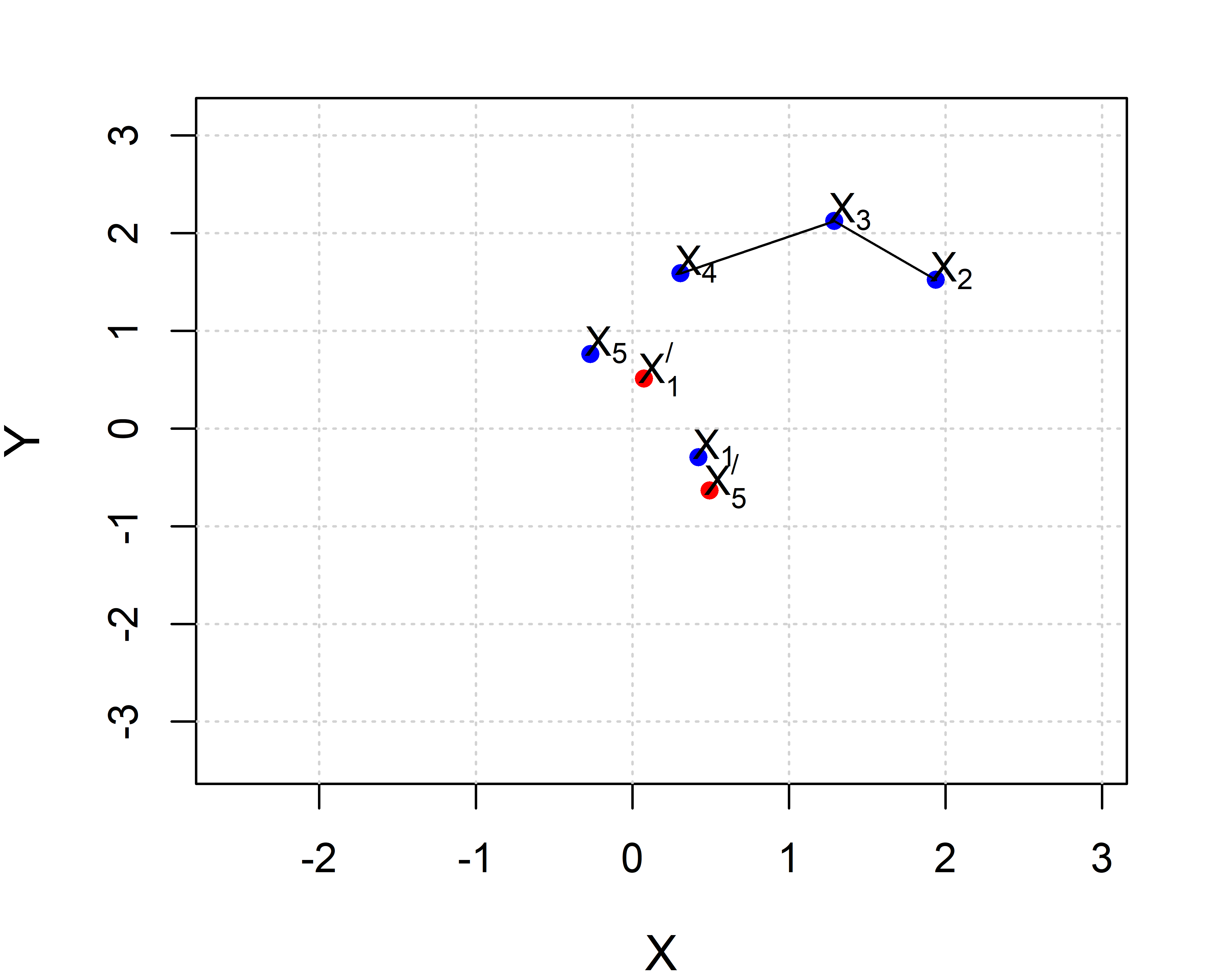}\\ 
			(c) Step 3& (d) Step 4 \\
			\includegraphics[width=3in,height=2.1in]{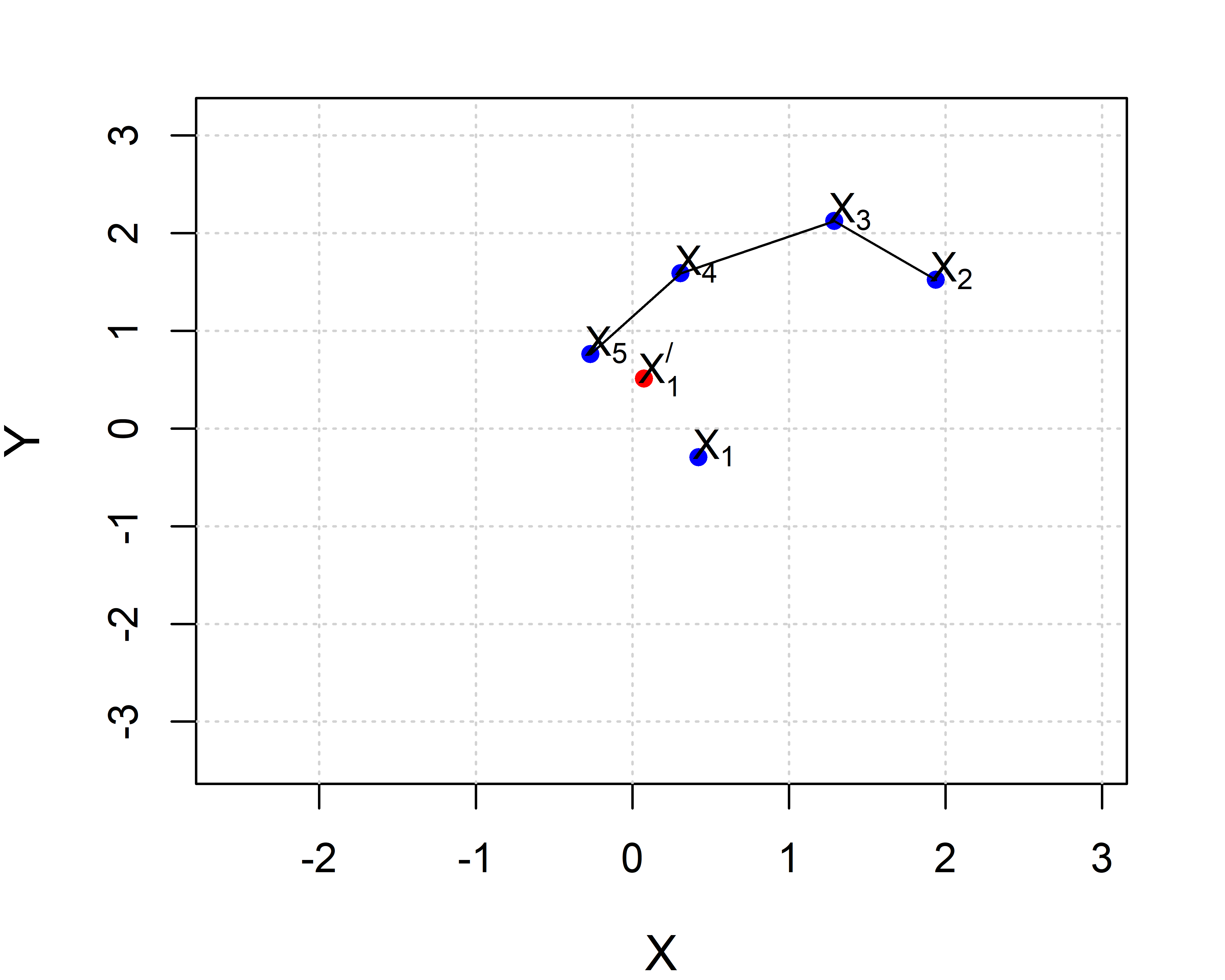} & \includegraphics[width=3in,height=2.1in]{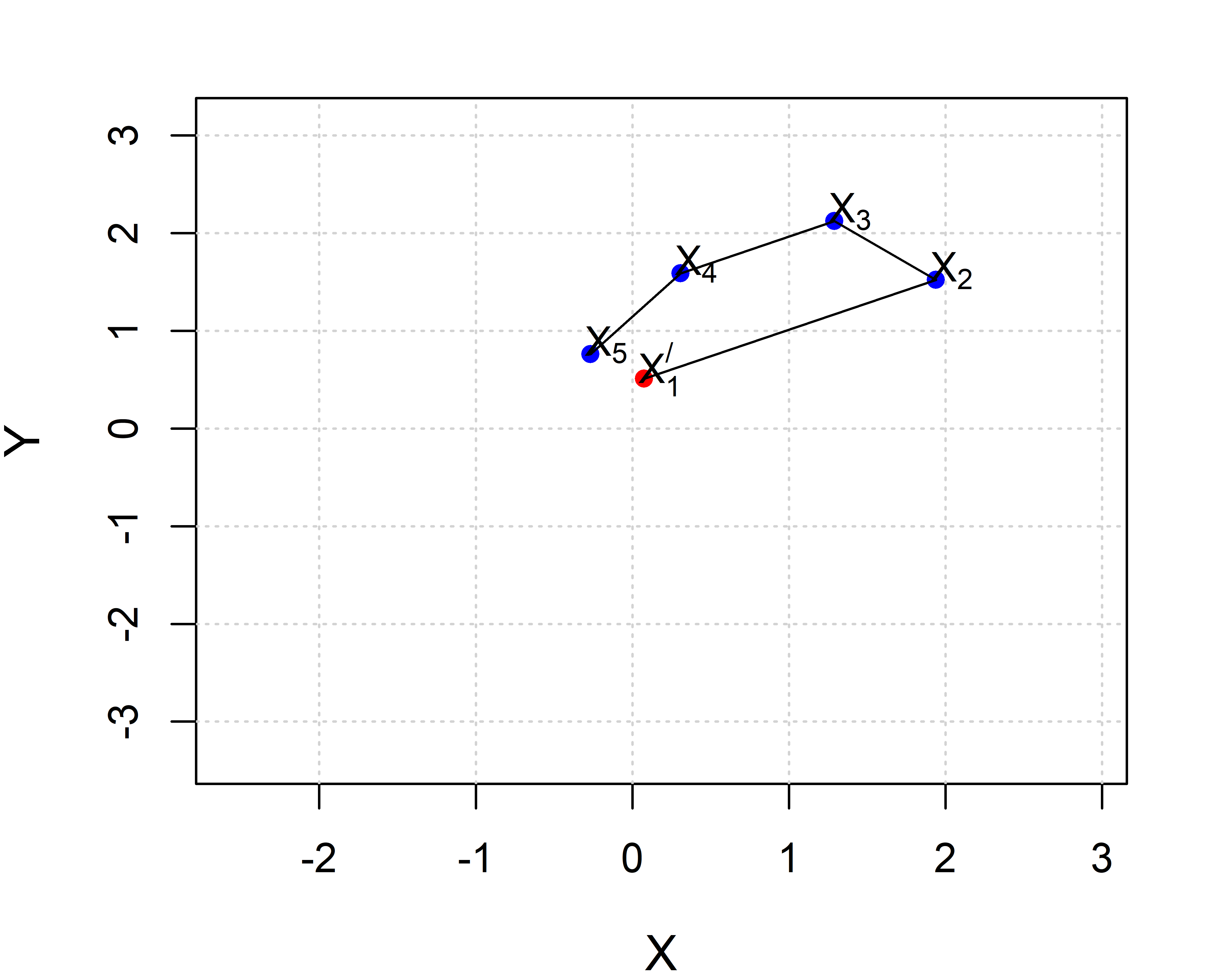}  
		\end{tabular}
	
	\vspace{-0.1in}
		\caption{Algorithm for constructing the shortest covering path based on  $\bm X_1,\ldots, \bm X_5\stackrel{i.i.d.}{\sim} \mathcal{N}(0,0,1,1,0.5)$ and their spherically symmetric variants $\bm X_1^\prime,\ldots,\bm X_5^\prime$
			when $\Theta$ is used as the cost matrix.
		}
		\label{fig:scp-construction}
	\end{figure}
	\begin{figure}[!b]
		\centering
		\setlength{\tabcolsep}{-2pt}
		\begin{tabular}{cc}
			(a) Difference of Sign Statistic ~~~& (b) Difference of Runs Statistic \\
			\includegraphics[width=3in,height=2.0in]{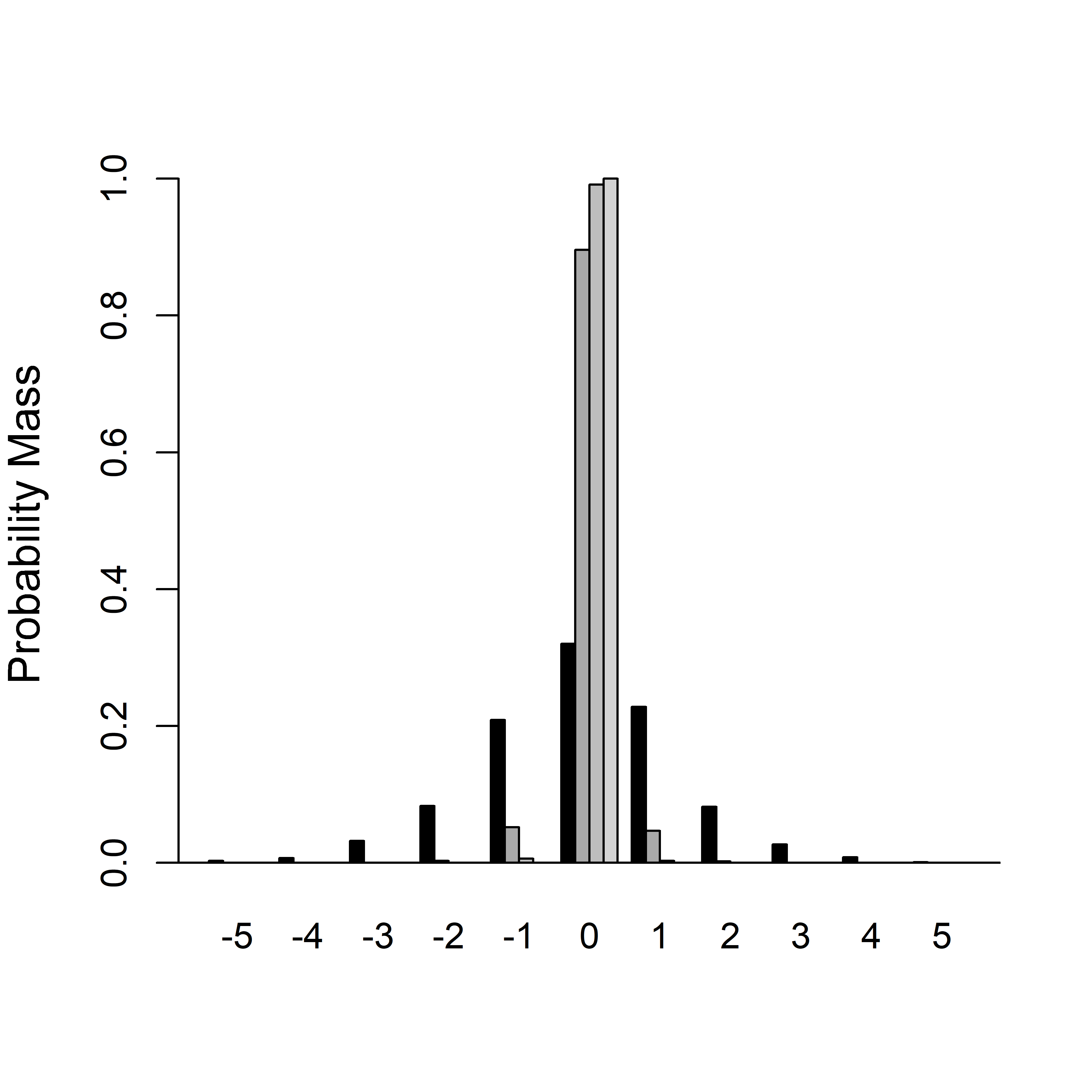} & \includegraphics[width=3in,height=2.0in]{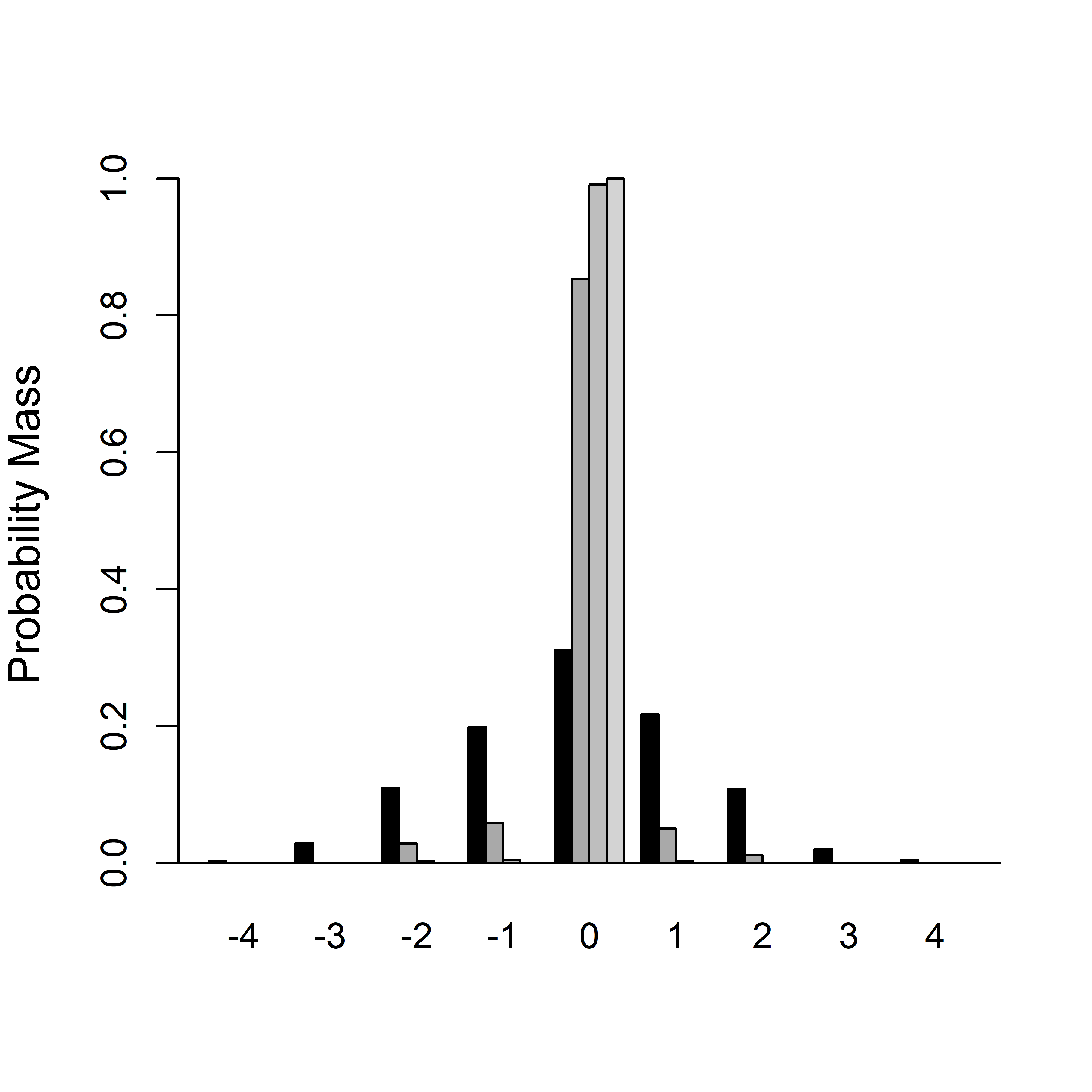}\\ 
		\end{tabular}
		\vspace{-0.25in}
		\caption{The barplot of the difference between (a) the sign statistic and (b) the runs statistic constructed based on $\mathcal{P}$ and ${\mathcal{P}}_0$, when $\bm X_1,\ldots, \bm X_5$ are generated independently from $\mathcal{N}(0,D)$ where $D = \textnormal{diag}(d,1,1,\ldots,1)$ for $d$ = 3 (black), 30 (dark gray), 300 (gray), and 3000 (light gray).}
		\label{fig:statistic-difference}
	\end{figure}

	Clearly, this is a heuristic algorithm and it may lead to a sub-optimal solution of \eqref{eq:optimization} in some cases.  However, it is clear that under $H_0$, $\bm S$ and $\bm R$ have the same distribution irrespective of whether they are computed along the actual shortest covering path $\mathcal{P}$ or along the shortest covering path computed using the algorithm (call it ${\mathcal P}_0$). This is because of the exchangeability of $\bm X_1,\bm X_2,\ldots,\bm X_n$ and their 
	symmetric variants. But, in some cases, the values of $T_S$ and $T_R$ may differ if they are computed along these two paths. To investigate this, we generate 5 observations from a $d$-variate normal distribution with mean at the center and covariance matrix being $\textnormal{diag}(d,1,1,\ldots,1)$.
	In this case it is possible to find the actual ${\mathcal P}$ by complete enumeration. We compute $T_S$ and $T_R$ along the paths $\mathcal{P}$ and ${\mathcal{P}}_0$ and calculate the corresponding differences. For each value of $d$ ($3,30,300,3000$), the experiment is repeated $100$ times, and the distributions of the differences are given by bar plots in Figure \ref{fig:statistic-difference}. We observe that both for $T_S$ and $T_R$, with increasing dimensions the difference concentrates around zero. So, in moderate and higher dimensions, the test statistic computed along ${\mathcal P}$ matches with that computed using
	${\mathcal P}_0$ in almost all cases. We have seen that in high dimensions, the actual shortest covering path ${\mathcal P}$ is supposed to contain all $\bm X_i$s. In almost all cases ${{\mathcal P}}_0$ also leads to a similar path, but the arrangements of the $\bm X_i$s along these two paths differ
	in some cases. Therefore, though our heuristic algorithm leads to a sub-optimal solution in terms of the cost of the path, in most of the cases, ${\mathcal P}_0$ and actual $\mathcal{P}$ lead to the same values of the test statistics. This justifies the use of our heuristic algorithm for finding the shortest covering path and computing the test statistics, especially for moderate or large dimensional data.

	\subsection{Inner products vs. cosine similarities}
	
	Since, $\|\bm X_i\| = \|\bm X_i^\prime\|$ for all $i=1,2,\ldots, n$, instead of using the cost based on squared inner products $(\bm X_i^{\top}\bm X_j)^2$, $(\bm X_i^{\top}\bm X_j^\prime)^2$ and $(\bm X_i^{\top}\bm X_j)^2$, one may be tempted to use a cost based on squared cosine similarities $C_{ij}^2 = \Big({\frac{\bm X_i^{\top}\bm X_j}{\|\bm X_i\|\|\bm X_j\|}}\Big)^2$, $C_{ij^\prime}^2 = \Big({\frac{\bm X_i^{\top}\bm X_j^{\prime}}{\|\bm X_i\|\|\bm X_j^{\prime}\|}}\Big)^2$ and $C_{i^\prime j^\prime}^2 = \Big({\frac{\bm X_i^{\prime}{\top}\bm X_j^\prime}{\|\bm X_i^\prime\|\|\bm X_j^\prime\|}}\Big)^2$ ($1\le i,j \le n$). If the underlying distribution is spherically symmetric,
	these scaled versions of inner products $C_{ij}, C_{ij^\prime}$ and $C_{i^\prime j^\prime}$ have the same distribution as ${\bm U_1}^{\top}{\bm U}_2$, where $\bm U_1$ and $\bm U_2$ are independent and identically distributed as uniform random vectors on ${\cal S}^{d-1}$. But, they have the same property for a class of angular symmetric distribution (i.e., $\bm X/\|\bm X\| \stackrel{D}{=} -\bm X/\|\bm X\|$) that are not spherically symmetric. In such cases, the resulting test fails to reject the null hypothesis. 
	
	To demonstrate this, we consider a simple example involving an angular symmetric distribution.
	We generate $200$ independent observations on a bivariate random vector $\bm X = R\bm U$ where $\bm U = (U_1,U_2)$ is uniformly distributed on the perimeter of the unit circle (i.e., $(U_1,U_2)=(\cos(\theta), \sin(\theta))$, where $\theta \sim U(0, 2\pi)$) and $R = R_1 \mathrm{I}\{U_1U_2>0\} + \mathrm{I}\{U_1U_2\leq0\}$, for $R_1$ being uniformly distributed over the interval $[1,5]$ independent of $\bm U$. Using these observations, we computed $T_S$ and $T_R$ based on squared inner products and those based on squared cosine similarities. The boxplots in Figure~\ref{fig:rose-histograms-22} show the distribution of these sign and runs statistics based on 1000 repetitions of the experiment. 
	Note that here $R$ and $\bm U$ are not independent. So, the distribution of $\bm X$ is not spherically symmetric. But the sign and runs statistics based on the squared cosine similarities could not figure it out. The distributions of these statistics were same as their corresponding null distributions.
	But for our proposed cost function, the sign statistic had higher values and the runs statistics had lower values leading to
	the rejection of the null hypothesis in most of the cases. This example clearly shows that any test based on $\bm X_i/\|\bm X_i\|$ ($i=1,2,\ldots,n$)
	fails to detect deviation from spherical symmetry if $\bm X_i/\|\bm X_i\|\sim$ Unif$(\mathcal{S}^{d-1})$ . It only tests whether the distribution of $\bm X/\|\bm X\|$ is uniform, but does not test for the independence between $\|\bm X\|$and $\bm X/\|\bm X\|$. The tests proposed by \cite{zou2014} and \cite{feng2017high} have a similar problem. This may be reason why these authors proposed their tests 
	assuming elliptic symmetry of the underlying distribution.
	
	\begin{figure}[h]
		\centering
		\setlength{\tabcolsep}{-2pt}
		\begin{tabular}{cccc}
			(a) Sign statistic& (b) Runs statistic
			\\
			\vspace{-0.15in}
			\includegraphics[width=3.25in,height=2.5in]{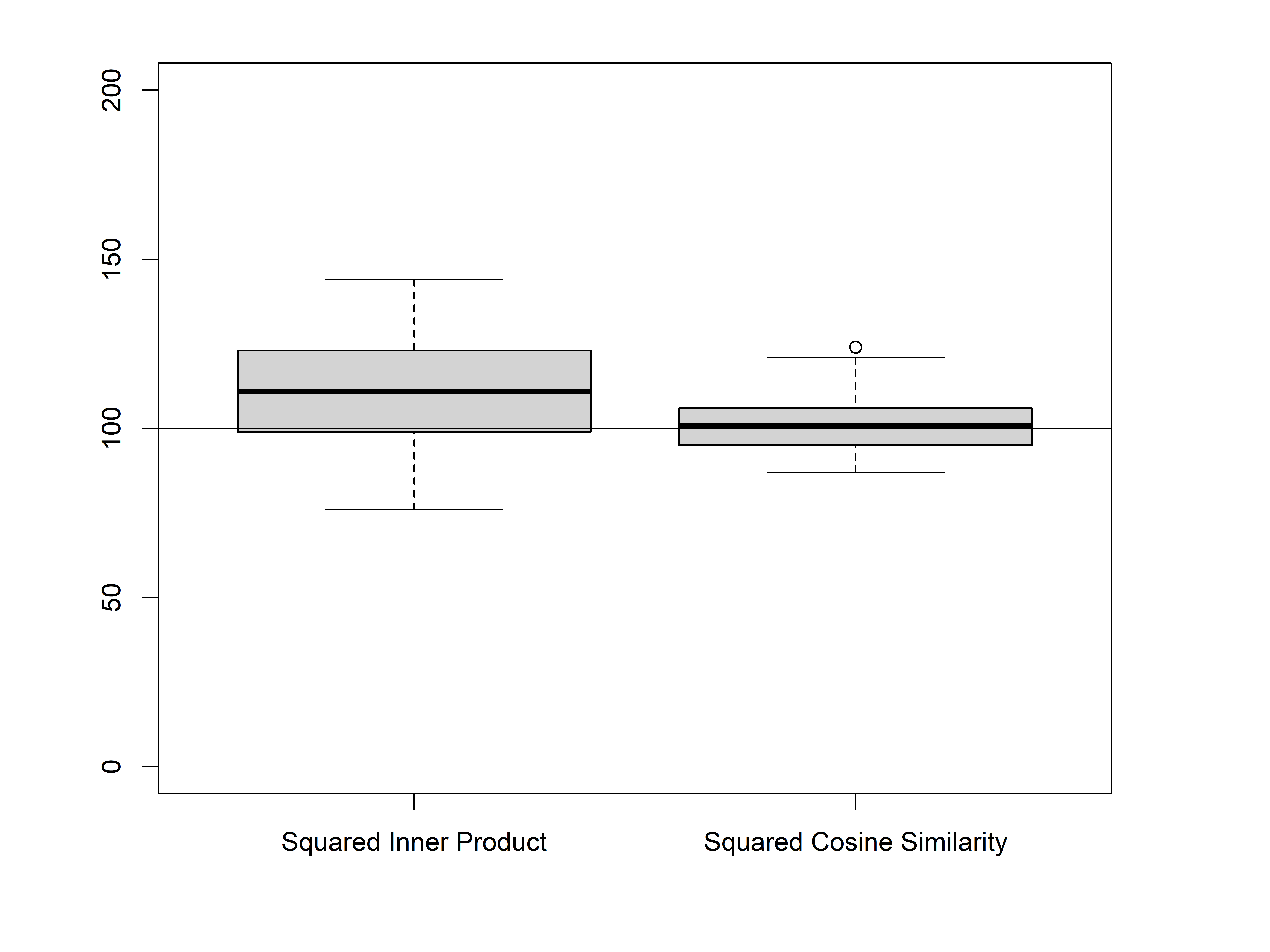} & \includegraphics[width=3.25in,height=2.5in]{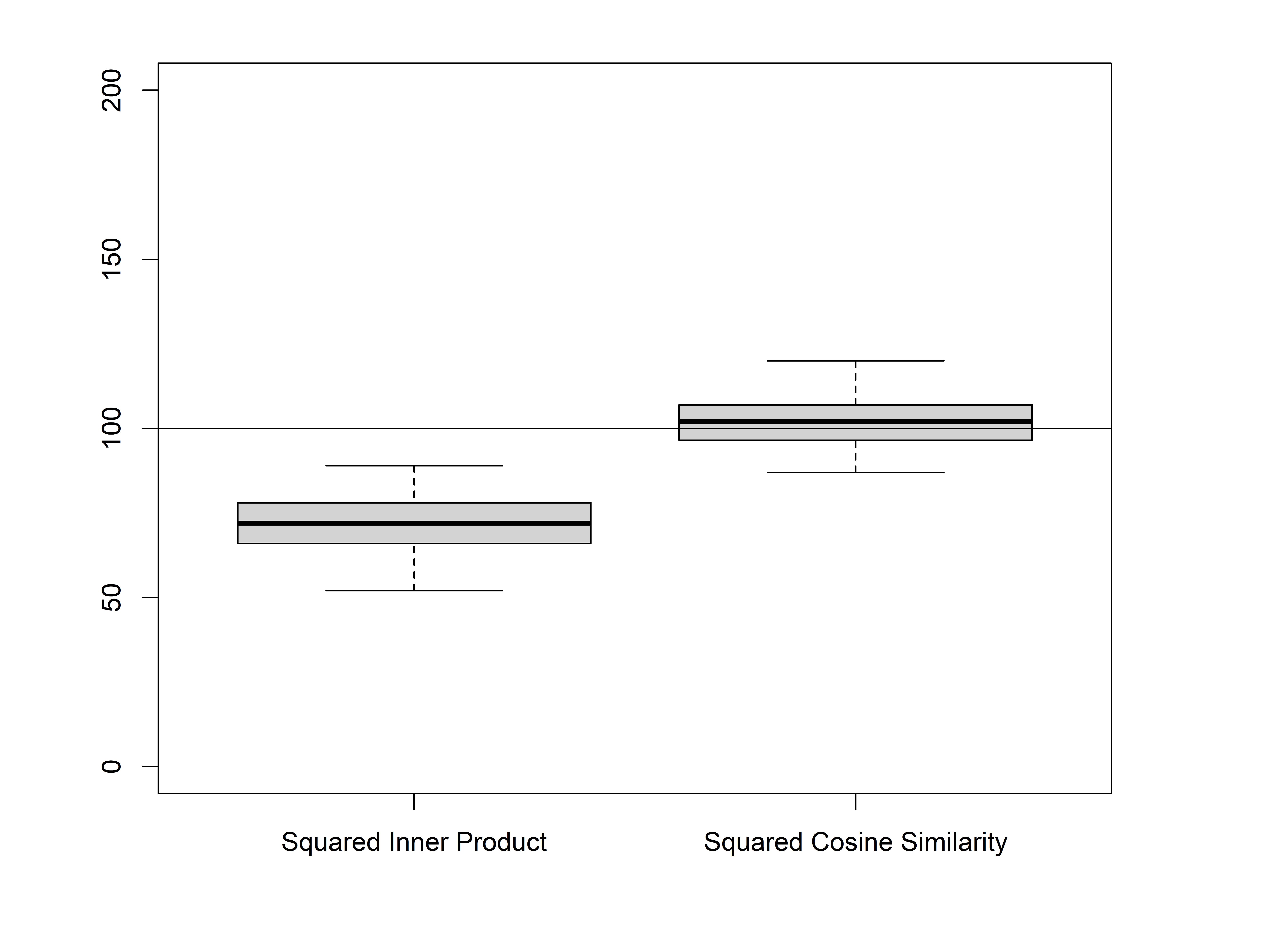}
		\end{tabular}
		\vspace{-0.15in}
		\caption{Boxplot of Sign and Runs statistics (based on 100 replications) when $200$ observations are generated from angular symmetric distribution. The solid line indicates the expectation of the test statistic
			under spherical symmetry.
		}
		\label{fig:rose-histograms-22}
	\end{figure}
	
	\section{High dimensional behavior of the proposed tests}
	\label{sec:tests}
	
	Let $\mathcal{D}= \{\bm X_1,\bm X_2,\ldots,\bm X_n\}$ be a data set consisting of $n$ independent observations form a $d$-dimensional distribution $\Pr$. We have already seen that if $\Pr$ is spherical, any function of the vectors of string signs $\bm S$ and string ranks ${\bm R}$ has the exact  distribution-free property (see Theorem \ref{thm:sign-rank-dist}). In particular, we consider the runs statistic $T_R$ and the sign statistic $T_S$. The later one can be viewed as a linear rank statistic of the form $T_{LR}=\sum_{i=1}^{n}S_{\pi_i} a(i)$, where the score function $a(\cdot)$ is given by $a(i) = 1$ or for all $i=1,2,\ldots,n$. From part (a) of Theorem \ref{thm:sign-rank-dist}, it is transparent that for any given score function $a(\cdot)$, the finite sample null distribution and the of $T_{LR}$ is same as the null distribution of the corresponding univariate linear rank statistic. So, its large sample distribution also matches with that of its univariate analog. 
	
	From the description of our tests, it is clear that they can be conveniently used for high-dimensional data even when the dimension is much larger than the sample size. In the next two sub-sections, we study the asymptotic behavior of these tests when the dimension grows to infinity while sample size either remains fixed or grows with the dimension. These two asymptotic regimes will be referred to as the high-dimension low sample size (HDLSS) and the high-dimension, high sample size (HDHSS) asymptotic regimes, respectively.

	\subsection{Behavior of the proposed tests for HDLSS data}\label{HDLSS}
	
	Following the seminal work by \cite{hall2005geometric}, investigation of the behavior of different statistical methods in HDLSS asymptotic regime has gained considerable interest. \cite{hall2005geometric} showed that under certain regularity conditions on moments and weak dependence of the measurement variables, observations from a high dimensional distribution tend to lie on the vertices of a regular simplex when the dimension diverges to infinity. \cite{ahn2007high,jung2009pca, yata2012} also provided another set of conditions based on the covariance matrix for a similar geometry of high dimensional data. 
	This geometric feature of the data cloud is used extensively to study the behavior of several one and two sample tests \citep[see, e.g., ][]{liu2011triangle,biswas2014distribution,biswas2015onesample,ghosh2016distribution,wei2016direction,tsukada2019high,kim2020robust,banerjee2025high} nonparametric classifiers   \citep[see, e.g.,][]{chan2009avgclassifier,dutta2016ghoshclassifier,pal2016high,roy2022hdlssclassification} and clustering methods \citep[see,e.g.][]{ahn2012clustering,sarkar2019perfect}. But, the HDLSS behavior of the tests of spherical symmetry is somewhat missing from the literature. There is a fundamental difficulty in this context. 
	Before discussing it, let us recall the following theorem by \cite{jung2009pca}.
		
	{\begin{thm}\label{thm:HDLSS-difficulty}
			Let $\bm X_1,\bm X_2$ be two independent copies of $\bm X \sim \Pr$, a $d$-dimensional distribution with the following properties.
			\begin{enumerate}
				\item[(A1)] $E(\bm X)={\bf 0}$
				\item[(A2)] Let 	$\sigmat = {\bf U}{\bf \Lambda}{\bf U}^\top$ be the spectral decomposition of $\sigmat=\var(\bm X)$, where ${\bf \Lambda}=\textnormal{diag}(\lambda_{1},\lambda_{2},\ldots,\lambda_d)$ is the diagonal matrix containing the eigenvalues $\lambda_1\geq \lambda_2\geq \cdots\geq \lambda_d$ of $\sigmat$, and ${\bf U}=[{\bf u}_1,{\bf u}_2,\ldots,{\bf u}_d]$ is the orthogonal matrix whose columns are the corresponding eigenvectors. The coordinates of $\bm Z = {\bf \Lambda}^{-1/2}{\bf U}^\top \bm X$ have uniformly bounded fourth moments and the $\rho$-mixing property under some permutation.
			\end{enumerate}
			Also assume that 
			$$\epsilon = \frac{\sum_{i=1}^d \lambda_{i}^2}{(\sum_{i=1}^d\lambda_{i})^2}\rightarrow 0\textnormal{ as } d\rightarrow\infty.$$ 
			Then $c_d^{-1} S_D \stackrel{P}{\rightarrow} \mathrm{\bf I}_n\textnormal{ as  } d\rightarrow\infty$ where $S_D = (\bm X_i^\top \bm X_j)_{1\leq i,j\leq n}$, $ \mathrm{\bf I}_n$ is the $n\times n$ identity matrix and $c_d = \sum_{i=1}^d \lambda_i$.
	\end{thm}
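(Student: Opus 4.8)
The result is due to \cite{jung2009pca}; the plan is to reconstruct the argument, which reduces everything to a second-moment computation in the sphered coordinates. Write $\bm Z_i = {\bf\Lambda}^{-1/2}{\bf U}^\top\bm X_i$, so that $\bm X_i = {\bf U}{\bf\Lambda}^{1/2}\bm Z_i$ and, by (A1)--(A2), $\E\bm Z_i = {\bf 0}$, $\var(\bm Z_i) = {\bf I}_d$, the coordinates $Z_{ik}$ have fourth moments bounded by some $M<\infty$, and after the stipulated permutation of coordinates the sequence $(Z_{i1},\dots,Z_{id})$ is $\rho$-mixing with coefficients $\rho(m)$ that we may take non-increasing with $\rho(m)\to 0$; this permutation merely relabels the $\lambda_k$ and leaves $\sum_k\lambda_k$ and $\sum_k\lambda_k^2$ unchanged, so I will suppress it. Because $n$ is fixed, it suffices to prove entrywise convergence in probability, i.e. $c_d^{-1}\|\bm X_i\|^2\stackrel{P}{\rightarrow}1$ for each $i$ and $c_d^{-1}\bm X_i^\top\bm X_j\stackrel{P}{\rightarrow}0$ for each $i\neq j$.

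For the diagonal entries, $\|\bm X_i\|^2 = \bm Z_i^\top{\bf\Lambda}\bm Z_i = \sum_{k=1}^d\lambda_k Z_{ik}^2$, so $\E\|\bm X_i\|^2 = \sum_k\lambda_k = c_d$ and $\var(\|\bm X_i\|^2) = \sum_{k,l}\lambda_k\lambda_l\,\mathrm{Cov}(Z_{ik}^2,Z_{il}^2)$. I would estimate this double sum by splitting it at a lag threshold $m_0$. For $|k-l|>m_0$ the $\rho$-mixing inequality gives $|\mathrm{Cov}(Z_{ik}^2,Z_{il}^2)|\le\rho(m_0)\sqrt{\var(Z_{ik}^2)\var(Z_{il}^2)}\le M\rho(m_0)$, so the corresponding part is at most $M\rho(m_0)(\sum_k\lambda_k)^2 = M\rho(m_0)\,c_d^2$. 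For $|k-l|\le m_0$ (including $k=l$), bound the covariance crudely by $M$ and use $\lambda_k\lambda_l\le\tfrac12(\lambda_k^2+\lambda_l^2)$ to bound this part by $M(2m_0+1)\sum_k\lambda_k^2$. Dividing by $c_d^2$ gives $\var(\|\bm X_i\|^2)/c_d^2\le M[(2m_0+1)\epsilon+\rho(m_0)]$. Given $\delta>0$, first pick $m_0$ with $M\rho(m_0)<\delta/2$ and then use $\epsilon\to 0$ to make the remaining term $<\delta/2$ for all large $d$; thus the bound tends to $0$ and Chebyshev's inequality yields $c_d^{-1}\|\bm X_i\|^2\stackrel{P}{\rightarrow}1$.

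The off-diagonal entries go the same way. With $\bm X_i^\top\bm X_j = \sum_k\lambda_k Z_{ik}Z_{jk}$ and using $\bm X_i\indpt\bm X_j$, $\E\bm Z={\bf 0}$, one gets $\E[\bm X_i^\top\bm X_j]=0$ and
\begin{equation*}
\var(\bm X_i^\top\bm X_j)=\sum_{k,l}\lambda_k\lambda_l\,\E[Z_{ik}Z_{il}]\,\E[Z_{jk}Z_{jl}]=\sum_{k,l}\lambda_k\lambda_l\,r_{kl}^2,
\end{equation*}
where $r_{kl}=\E[Z_{1k}Z_{1l}]$ satisfies $r_{kk}=1$ and $|r_{kl}|\le\rho(|k-l|)$ for $k\neq l$. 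Since $\rho(m)^2\le\rho(m)$, the same lag-splitting estimate applies (the $k=l$ part contributing exactly $\sum_k\lambda_k^2$), so $\var(\bm X_i^\top\bm X_j)/c_d^2\to0$ and $c_d^{-1}\bm X_i^\top\bm X_j\stackrel{P}{\rightarrow}0$. Collecting the finitely many entrywise limits gives $c_d^{-1}S_D\stackrel{P}{\rightarrow}{\bf I}_n$.

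The only genuinely delicate step is the variance bound: the naive estimate of the off-diagonal part of $\sum_{k,l}\lambda_k\lambda_l\mathrm{Cov}(\cdot,\cdot)$ is merely $O(c_d^2)$, which is useless, and it is precisely the interplay between the decay of the mixing coefficients and the hypothesis $\epsilon\to0$, balanced through the choice of $m_0$, that improves this to $o(c_d^2)$. Everything else — the moment identities, the reduction to sphered coordinates, and the passage from entrywise to matrix convergence at fixed $n$ — is routine.
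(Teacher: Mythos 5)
Your argument is correct, but be aware that the paper does not actually prove this theorem: its entire ``proof'' is a pointer to Theorem 1 of \cite{jung2009pca}, so what you have written is a self-contained reconstruction rather than a parallel of anything in the text. The reconstruction is sound and is essentially the standard mechanism behind the cited result: reduce to the sphered coordinates $\bm Z_i$, expand $\var(\|\bm X_i\|^2)=\sum_{k,l}\lambda_k\lambda_l\,\textnormal{Cov}(Z_{ik}^2,Z_{il}^2)$, split at a lag $m_0$, and balance the near-diagonal contribution $M(2m_0+1)\sum_k\lambda_k^2$ against the far contribution $M\rho(m_0)c_d^2$, choosing $m_0$ from the mixing rate first and then letting $\epsilon\rightarrow 0$ kill the rest; Chebyshev and a union bound over the finitely many entries finish it. One simplification you missed: the mixing hypothesis is not needed for the off-diagonal entries at all. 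Since $\var(\bm Z)={\bf I}_d$ by construction, $r_{kl}=\E[Z_{1k}Z_{1l}]$ equals $1$ for $k=l$ and exactly $0$ for $k\neq l$, so $\var(\bm X_i^\top\bm X_j)=\sum_k\lambda_k^2=\epsilon\,c_d^2$ and $c_d^{-1}\bm X_i^\top\bm X_j\stackrel{P}{\rightarrow}0$ follows from the sphericity condition alone; your bound $|r_{kl}|\le\rho(|k-l|)$ is true but vacuous there. The fourth-moment and $\rho$-mixing assumptions are genuinely doing work only on the diagonal, where $\textnormal{Cov}(Z_{ik}^2,Z_{il}^2)$ need not vanish for $k\neq l$. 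Your handling of the ``under some permutation'' caveat is also fine, since the lag-splitting bound only uses $\lambda_k\lambda_l\le\frac{1}{2}(\lambda_k^2+\lambda_l^2)$ termwise and both $c_d$ and $\sum_k\lambda_k^2$ are permutation-invariant.
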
}
	
    The condition $\epsilon \rightarrow 0$ as $d \rightarrow \infty$ is also known as the sphericity condition. As a consequence of Theorem \ref{thm:HDLSS-difficulty}, under the given conditions $c_d^{-1}\|\bm X_1\|^2$ and $c_d^{-1} (\bm X_1^\top\bm X_2)$ converges in probability to one and zero, respectively. 
	So, the data cloud from $\Pr$
	behaves like as if they are coming from a spherical distribution. Therefore, any test of spherical symmetry based on pairwise distances or inner products has asymptotic power close the nominal level $\alpha$ in high-dimensions. Hence, for good performance of a test of spherical symmetry in HDLSS situations, one needs to operate outside this sphericity condition. The following theorem gives us a direction in the context.
	
	{
		\begin{thm}\label{thm:HDLSS-consistency}
			Let $\bm X_1,\bm X_2,\ldots,\bm X_n$ be $n$ independent copies of $\bm X$, which follows a $d$-dimensional non-spherical distribution $\Pr$. Also assume that as $d$ diverges to infinity 
			\begin{align}\label{eq:HDLSS-condition}
				\P\left[\frac{d(\bm X_1^\top\bm X_2)^2}{\|\bm X_1\|^2\|\bm X_2\|^2}>M\right]\rightarrow 1~~~~\textnormal{for all $M>0$}.
			\end{align}
			Then $\bm S$, the vector string signs, converges to ${\bf 1}_n = (1,1,\ldots,1)$ in probability as $d$ diverges to infinity. 
		\end{thm}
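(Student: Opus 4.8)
The plan is to prove the stronger fact that $\P(\bm S = \mathbf 1_n)\to 1$; since $\bm S$ takes values in the finite set $\{0,1\}^n$, this is equivalent to the asserted convergence in probability. I would organise the argument around a single ``dominance'' event for the squared inner products, followed by a deterministic comparison of covering paths on that event.

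First I would introduce, writing $\bm X_i^\prime=\|\bm X_i\|\bm U_i$ with $\bm U_i\sim\textnormal{Unif}(\mathcal S^{d-1})$ independent of the $\bm X_j$'s and of each other, the event
\[
E_d := \Big\{(\bm X_a^\top\bm X_b)^2 > \max\big((\bm X_a^\top\bm X_b^\prime)^2,\ ({\bm X_a^\prime}^\top\bm X_b)^2,\ ({\bm X_a^\prime}^\top\bm X_b^\prime)^2\big)\ \text{for all }1\le a<b\le n\Big\},
\]
and show $\P(E_d)\to 1$. Fixing a pair $\{a,b\}$ and dividing each of the three comparisons by $\|\bm X_a\|^2\|\bm X_b\|^2/d$, the left side becomes $d(\bm X_a^\top\bm X_b)^2/(\|\bm X_a\|^2\|\bm X_b\|^2)$, which diverges to $+\infty$ in probability by hypothesis \eqref{eq:HDLSS-condition}, while each right side becomes $d$ times a ``squared cosine'' term of the form $(\bm V^\top\bm U)^2$ with $\bm U\sim\textnormal{Unif}(\mathcal S^{d-1})$ and $\bm V$ a unit vector (either $\bm X_a/\|\bm X_a\|$, or $\bm X_b/\|\bm X_b\|$, or a second independent uniform); conditioning on $\bm X_a,\bm X_b$ gives this term conditional expectation $1$, hence tightness, uniformly in $d$, by Markov's inequality. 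Applying $\P(A_d\cap B_d)\ge\P(A_d)+\P(B_d)-1$ with $A_d$ the event that the left side exceeds $M$ and $B_d$ the event that the right side is at most $M$, and then letting $M\to\infty$, gives each comparison with probability tending to $1$; a union bound over the finitely many pairs and the three comparisons yields $\P(E_d)\to 1$.

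Next I would show that on $E_d$ every minimizer of \eqref{eq:optimization} has $\bm S=\mathbf 1_n$. Since $\theta(\bm z,\bm w)=\exp\{-(d^{-1}\bm z^\top\bm w)^2\}$ is strictly decreasing in $(\bm z^\top\bm w)^2$, on $E_d$ every $(\bm X_a,\bm X_b)$-edge is strictly cheaper than the same edge with one or both endpoints replaced by a spherically symmetric variant. If an optimal covering path used $\bm X_i^\prime$ for the indices in some nonempty set $J$, I would replace, simultaneously and in place (keeping the ordering $\bm\Pi$ fixed), each such $\bm X_i^\prime$ by $\bm X_i$: the result is again a covering path, its edges touching no index of $J$ are unchanged, and every edge touching at least one index of $J$ becomes a genuine $(\bm X_a,\bm X_b)$-edge, hence strictly cheaper on $E_d$. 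Since $J\ne\emptyset$ there is at least one such edge, so the total cost strictly drops, contradicting optimality; thus $\bm S=\mathbf 1_n$ on $E_d$ and $\P(\bm S=\mathbf 1_n)\ge\P(E_d)\to1$.

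The only genuinely delicate point — and the reason the path comparison unprimes all of $J$ at once rather than one vertex at a time — is that a single-vertex swap need not lower the cost: if $\bm X_i^\prime$ has a \emph{primed} neighbour $\bm X_j^\prime$, the affected edge merely changes from $\|\bm X_i\|^2\|\bm X_j\|^2(\bm U_i^\top\bm U_j)^2$ to $\|\bm X_i\|^2\|\bm X_j\|^2\big((\bm X_i/\|\bm X_i\|)^\top\bm U_j\big)^2$, i.e.\ from one $O_P(1/d)$ term to another, with no guaranteed improvement. It is only once every primed vertex of the path has been replaced that the changed edges become $(\bm X_a,\bm X_b)$-edges, whose squared inner products dominate the primed ones at the rate quantified by \eqref{eq:HDLSS-condition}. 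Everything else is routine: the moment identity $\E[(\bm V^\top\bm U)^2\mid\bm X_a,\bm X_b]=1/d$ for a unit $\bm V$ and an independent uniform $\bm U$, Markov's inequality, and the union bound over a fixed number of pairs.
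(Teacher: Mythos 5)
Your proposal is correct and follows essentially the same route as the paper: both establish that, with probability tending to one, every unprimed squared inner product dominates its primed variants (you via the conditional moment identity and Markov's inequality, the paper via the exact Beta$(\tfrac12,\tfrac{d-1}{2})$ law of $U_{21}^2$), and then conclude by a path-comparison that any optimal covering path must carry all-unprimed signs. Your global dominance event with a union bound over pairs, together with the explicit simultaneous-unpriming argument, is a slightly cleaner packaging of the same idea the paper implements by conditioning on $\bm \Pi$ and applying dominated convergence.
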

		Since $\left\{\frac{d(\bm X_1^\top\bm X_2^\prime)^2}{\|\bm X_1\|^2\|\bm X_2^\prime\|^2}\right\}_{d\ge 1}$
		and $\left\{\frac{d({\bm X_1^\prime}^\top\bm X_2^\prime)^2}{\|{\bm X_1^\prime}\|^2\|\bm X_2^\prime\|^2}\right\}_{d\ge 1}$ are two tight sequences of random variables (see the proof of Theorem \ref{thm:HDLSS-consistency}), condition \eqref{eq:HDLSS-condition} ensures that  $\theta(\bm X_1,\bm X_2)$ becomes smaller than $\theta(\bm X_1,\bm X_2^\prime)$ and $\theta(\bm X_1^\prime,\bm X_2^\prime)$ with probability tending to $1$ as $d$ grows to infinity. One can show that condition 
		\eqref{eq:HDLSS-condition} holds for the spiked covariance model considered in \cite{jung2009pca}. So, as a corollary, we have the following result.
		
		\begin{cor}\label{cor:spiked-HDLSS}
			Let $\bm X_1,\bm X_2$ be two independent random variables from a $d$-dimensional distribution $\Pr$ satisfying (A1) and (A2) mentioned in Theorem \ref{thm:HDLSS-difficulty}. Also assume that
			\begin{itemize}
				\item[(a)] $\lambda_{1}/d^{\alpha}\rightarrow c_1$ for some $\alpha\geq 1$ and $c_1>0$,
				\item[(b)] $\sum_{i=2}^d \lambda_{i}^2/(\sum_{i=2}^d \lambda_{i})^2\rightarrow 0$ as $d\rightarrow \infty$ and $\sum_{i=2}^d \lambda_{i} = \mathcal{O}(d)$.
			\end{itemize}
			Then $\bm S$ converges to ${\bf 1}_n = (1,1,\ldots,1)$ in probability as $d$ diverges to infinity. 
		\end{cor}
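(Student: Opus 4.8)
The plan is to derive Corollary \ref{cor:spiked-HDLSS} as an immediate consequence of Theorem \ref{thm:HDLSS-consistency}: it suffices to check that assumptions (a) and (b) imply condition \eqref{eq:HDLSS-condition}, and then Theorem \ref{thm:HDLSS-consistency} gives $\bm S \to \mathbf{1}_n$ in probability. So I would work with two independent copies $\bm X_1,\bm X_2$ and the ratio $d(\bm X_1^\top\bm X_2)^2/(\|\bm X_1\|^2\|\bm X_2\|^2)$. Writing $\bm X_k = \mathbf{U}\mathbf{\Lambda}^{1/2}\bm Z_k$ as in (A2) and separating the leading (``spike'') coordinate from the rest, set $R_{12} = \sum_{i=2}^d \lambda_i Z_{1i}Z_{2i}$ and $Q_k = \sum_{i=2}^d \lambda_i Z_{ki}^2$ ($k=1,2$), so that $\bm X_1^\top\bm X_2 = \lambda_1 Z_{11}Z_{21} + R_{12}$ and $\|\bm X_k\|^2 = \lambda_1 Z_{k1}^2 + Q_k$. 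Dividing numerator and denominator by $\lambda_1^2$, and using that $Z_{11},Z_{21}\ne 0$ almost surely (since $\Pr$, hence the law of $\bm Z$, is continuous), one gets the identity
\[\frac{d(\bm X_1^\top\bm X_2)^2}{\|\bm X_1\|^2\|\bm X_2\|^2} \;=\; \frac{d\,\bigl(1 + R_{12}/(\lambda_1 Z_{11}Z_{21})\bigr)^2}{\bigl(1 + Q_1/(\lambda_1 Z_{11}^2)\bigr)\bigl(1 + Q_2/(\lambda_1 Z_{21}^2)\bigr)},\]
so everything reduces to showing that the ``bulk'' correction terms here are negligible ($R_{12}$) or merely bounded in probability ($Q_1,Q_2$).

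Next I would control the bulk terms using (a) and (b). Since $\bm Z_k = \mathbf{\Lambda}^{-1/2}\mathbf{U}^\top\bm X_k$ has $\var(\bm Z_k)=\mathbf{I}_d$, the coordinates of $\bm Z_k$ are uncorrelated with unit variance and $\bm Z_1\indpt\bm Z_2$; hence $\E[R_{12}]=0$, $\E[R_{12}^2] = \sum_{i=2}^d \lambda_i^2$ and $\E[Q_k] = \sum_{i=2}^d \lambda_i$. By the first part of (b) together with $\sum_{i\ge 2}\lambda_i = \mathcal{O}(d)$ from the second part, $\E[R_{12}^2] = o\bigl((\sum_{i\ge2}\lambda_i)^2\bigr) = o(d^2)$, so $R_{12} = o_P(d)$ by Chebyshev's inequality, while $Q_k = O_P(d)$ by Markov's inequality. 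On the other hand (a) with $\alpha\ge 1$ gives $\lambda_1 \ge (c_1/2)d$ for all large $d$, so $R_{12}/\lambda_1 = o_P(1)$ and $Q_k/\lambda_1 = O_P(1)$. Because $Z_{11},Z_{21}$ are fixed and almost surely nonzero, dividing a negligible (resp. tight) sequence by $Z_{11}Z_{21}$ (resp. by $Z_{k1}^2$) again yields a negligible (resp. tight) sequence; thus $R_{12}/(\lambda_1 Z_{11}Z_{21}) = o_P(1)$ and $V_k := Q_k/(\lambda_1 Z_{k1}^2)$ is a tight sequence.

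Finally I would assemble the pieces. Substituting into the displayed identity,
\[\frac{d(\bm X_1^\top\bm X_2)^2}{\|\bm X_1\|^2\|\bm X_2\|^2} \;=\; \frac{d\,(1+o_P(1))}{(1+V_1)(1+V_2)},\]
and since $(1+V_1)(1+V_2)$ is a tight sequence of positive random variables while $d\to\infty$, the right-hand side exceeds any prescribed $M>0$ with probability tending to $1$; this is exactly \eqref{eq:HDLSS-condition}, and Theorem \ref{thm:HDLSS-consistency} completes the proof. I expect the real content to lie in this reduction rather than in any single hard estimate: the crux is that condition (b) is precisely what makes the bulk cross term $R_{12}$ negligible relative to the spike $\lambda_1$, and the only mild subtlety is that, because the squared norms appear in the denominator (and $\alpha$ may equal $1$, so the spike and the bulk can be of the same order $d$), it is not enough to control the inner product alone — one must verify that the full normalized ratio diverges, which is where the almost-sure positivity of the leading coordinates $Z_{11},Z_{21}$ (i.e.\ continuity of $\Pr$) and the elementary ``tight-divided-by-a.s.-positive-is-tight'' facts enter.
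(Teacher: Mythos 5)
Your proposal is correct and follows essentially the same route as the paper's proof: reduce the corollary to verifying condition \eqref{eq:HDLSS-condition}, split $\bm X_1^\top\bm X_2$ and $\|\bm X_k\|^2$ in the principal-component coordinates into the spike term ($\lambda_1 Z_{11}Z_{21}$, resp.\ $\lambda_1 Z_{k1}^2$) plus a bulk remainder, kill the bulk cross term via the variance identity $\E[R_{12}^2]=\sum_{i\ge 2}\lambda_i^2=o\big((\sum_{i\ge 2}\lambda_i)^2\big)$ supplied by (b), and then invoke Theorem \ref{thm:HDLSS-consistency}. The only deviations are minor: you control the denominator bulk $Q_k$ by nonnegativity and Markov's inequality (tightness of $Q_k/d$) rather than by the $\rho$-mixing law of large numbers the paper uses, which lets you treat $\alpha=1$ and $\alpha>1$ uniformly instead of by cases, and your remark that $Z_{11},Z_{21}$ are ``fixed'' is loose (they are triangular-array quantities depending on $d$), though the paper's own closing lower bound rests on the same implicit non-degeneracy of the leading scores.
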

		
		As a consequence of Theorem \ref{thm:HDLSS-consistency}, for any given sequence of scores $\{a(i)\}_{1\leq i\leq n}$, $T_{LR} = \sum_{i=1}^n S_{\pi_i} a(i)$ converges to $\sum_{i=1}^n a(i)$ in probability as $d$ diverges to infinity. So, if these scores $\{a(i)\}_{1\leq i\leq n}$ are non-negative, which is usually the case, $T_{LR}$ takes its largest value with probability tending to $1$ as $d$ diverges to infinity. In the case of the sign statistics $T_S$, we have $a(i)=1$ for all $i=1,2,\ldots,n$. Therefore, $P(T_S=n) \rightarrow 1$ as $d \rightarrow \infty$. Now, under $H_0$, we have $P(T_S \ge n)= 1/2^n$. So, for any fixed level $\alpha$ ($0<\alpha<1$), unless the sample size is very small (i.e. $2^n<1/\alpha$), the power of the proposed sign test converges to $1$
		as the dimension increases. Similarly, under the condition of Theorem  \ref{thm:HDLSS-consistency},
		the runs statistic $T_R$ converges to $1$ in probability. Now, under $H_0$, we have $P(T_R \le 1)=1/2^{n-1}$. So, if $2^{n-1}>1/\alpha$, we have the consistency of the proposed runs test of level $\alpha$ in the HDLSS asymptotic regime.
		
		Now, we consider three simple examples involving normal distributions to study the empirical performance of the proposed tests in high dimensions, when the nominal levels of the tests are taken as 0.05.
		
		\begin{exa}\label{exa:first}
			Observations are generated from a $d$-variate normal distribution with mean ${\bf 0}$ and variance covariance matrix $\bm \Sigma = ((\sigma_{ij}))$, where $\sigma_{ij}$ is $1$ if $i=j$ and $0.6$ if $i\not=j$.
		\end{exa}
		
		\begin{exa}\label{exa:second}
			Here we consider a $d$-variate normal distribution with mean ${\bf 0}$ and a diagonal variance covariance matrix $\bm \Sigma = ((\sigma_{ij}))$, where $\sigma_{ii}=1$ for $1\leq i\leq [d/2]$ and $\sigma_{ii} = 2$ for $i\geq [d/2]+1$. 
		\end{exa}
		
		\begin{exa}\label{exa:third}
			Here also, we deal with a $d$-variate normal distribution with mean ${\bf 0}$ and a diagonal variance covariance matrix. The matrix has the first diagonal element $d$ and rest equal to $1$. 
		\end{exa}
		
		For each example, we consider 10 different choices of $d$ ($d=2^i$ for $i=1,2,\ldots, 10$) but a fixed value of $n$ ($n=50$). Each experiment is repeated $500$ times, and the empirical power of a test is computed as the proportion of times it rejects $H_0$. The results are reported in Figure \ref{fig:sim-1}.
		
		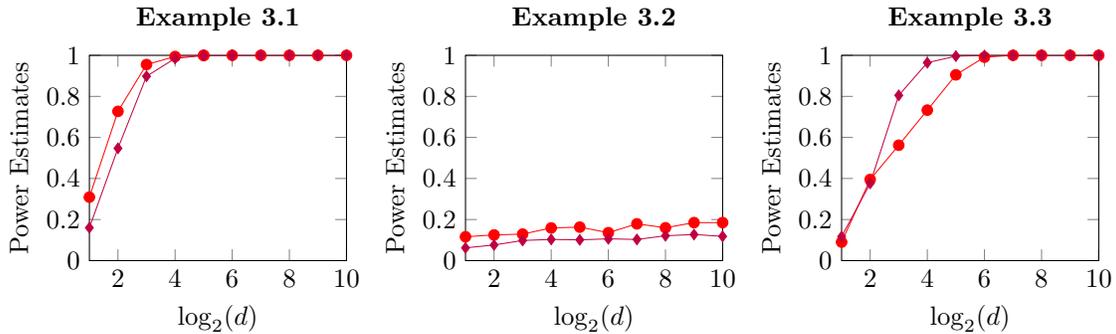
\begin{figure}[h]
			\centering
			\begin{tikzpicture}
				\begin{axis}[xmin = 1, xmax = 10, ymin = 0, ymax = 1, xlabel = {$\log_2(d)$}, ylabel = {Power Estimates}, title = {\bf Example \ref{exa:first}}]
					\addplot[color = red,   mark = *, step = 1cm,very thin]coordinates{(1,0.309)(2,0.727)(3,0.955)(4,0.994)(5,0.999)(5,1)(6,1)(7,1)(8,1)(9,1)(10,1)};
					
					
					\addplot[color = purple,   mark = diamond*, step = 1cm,very thin]coordinates{(1,0.16)(2,0.547)(3,0.898)(4,0.985)(5,0.999)(6,1)(6,1)(7,1)(8,1)(9,1)(10,1)};
					
				\end{axis}
			\end{tikzpicture}
			\begin{tikzpicture}
				\begin{axis}[xmin = 1, xmax = 10, ymin = 0, ymax = 1, xlabel = {$\log_2(d)$}, ylabel = {Power Estimates}, title = {\bf Example \ref{exa:second}}]
					\addplot[color = red, mark = *, step = 1cm,very thin]coordinates{(1,0.116)(2,0.125)(3,0.129)(4,0.159)(5,0.163)(6,0.136)(7,0.179)(8,0.16)(9,0.185)(10,0.185)};
					
					
					\addplot[color = purple,   mark = diamond*, step = 1cm,very thin]coordinates{(1,0.062)(2,0.076)(3,0.098)(4,0.103)(5,0.101)(6,0.106)(7,0.103)(8,0.121)(9,0.127)(10,0.118)};

				\end{axis}
			\end{tikzpicture}
			\begin{tikzpicture}
				\begin{axis}[xmin = 1, xmax = 10, ymin = 0, ymax = 1, xlabel = {$\log_2(d)$}, ylabel = {Power Estimates}, title = {\bf Example \ref{exa:third}}]
					\addplot[color = red, mark = *, step = 1cm,very thin]coordinates{(1,0.09)(2,0.395)(3,0.562)(4,0.733)(5,0.905)(6,0.991)(7,1)(8,1)(9,1)(10,1)};
					
					
					\addplot[color = purple,   mark = diamond*, step = 1cm,very thin]coordinates{(1,0.117)(2,0.376)(3,0.805)(4,0.964)(5,0.996)(6,0.999)(7,1)(8,1)(9,1)(10,1)};

				\end{axis}
			\end{tikzpicture}
			\caption{Observed power of the Sign test (\tikzsymbol[circle]{minimum width=2pt,fill=red}) 
				and Runs test (\tikzsymbol[diamond]{minimum width=2pt,fill=purple}) when $50$ observations are generated from the $d$-variate normal distributions (with $d=2^i, i=1,2,\ldots,10$) considered in Examples 3.1-3.3.}
			\label{fig:sim-1}
		\end{figure}
		
		One can check the normal distributions in Examples 3.1 and 3.3 satisfy conditions (a) and (b) of Corollary \ref{cor:spiked-HDLSS}. So, as expected, in these two cases, the powers of the sign and runs tests sharply raised to $1$. But in Example 3.2, we had a diametrically opposite picture, where both sign and runs tests failed to have satisfactory performance. Note that in this example, the sphericity condition (see Theorem \ref{thm:HDLSS-difficulty}) is satisfied, and this should be the reason behind the poor performance of these tests.
		
		\subsection{Behavior of the proposed tests in the HDHSS asymptotic regime}
		\label{sec:HDHSS}
		
		In thus section, we investigate the behavior of the proposed tests for high dimensional data sets having large number of observations. This type of data sets commonly arises in the field of biology, ecology, and medical sciences. Here we study the asymptotic behavior of the tests when the dimension and the sample size grow simultaneously, but their divergence rates of are arbitrary. 
		Since the null distributions of $T_{LR}$ (or $T_S$ in particular) and $T_R$ do not depend on the dimension of the data, their limiting null distributions in the HDHSS regime remains the same as they are in the classical asymptotic regime. 
		The asymptotic null
		distribution of the linear rank statistic $T_{LR}$ is given by the following theorem.
		
		\begin{thm}\label{thm:limit-null-distribution}
			Let $\bm X_1,\bm X_2,\ldots,\bm X_n$ be independent realizations of a $d$-dimensional random vector $\bm X \sim \Pr$. Assume that $\Pr$ is spherically symmetric and the  sequence of scores $\{a(i)\}_{1\leq i\leq n}$ satisfies the following conditions
			\begin{align}\label{eq:score-conditions}
				\sum_{i=1}^n a^2(i)\rightarrow \infty ~~~~\textnormal{and}~~~~ \max_{1\leq i\leq n}\frac{a^2(i)}{\sum_{i=1}^n a^2(i)}\rightarrow 0 ~~~~\mbox{as}~~ n\rightarrow\infty.
			\end{align}
			Then, as $n$ and $d$ both grow to infinity, we have
			$$\frac{T_{LR} - \frac{1}{2}\sum_{i=1}^n a(i)}{\sqrt{\sum_{i=1}^n a^2(i)}} \stackrel{D}{\rightarrow}\mathcal{N}\left(0,\frac{1}{4}\right).$$
		\end{thm}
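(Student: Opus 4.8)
The plan is to exploit the exact distribution-free property from part (a) of Theorem \ref{thm:sign-rank-dist}, which collapses the statement to a classical central limit theorem for a weighted sum of independent Bernoulli variables. In particular the dimension $d$ plays no role at all: under spherical symmetry the joint law of $(\bm S,\bm R)$ (hence of $T_{LR}$) does not depend on $d$, so the joint limit as $n,d\rightarrow\infty$ is the same as the limit along $n\rightarrow\infty$ alone, and I would say so explicitly at the outset.

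First I would record the reduction. By Theorem \ref{thm:sign-rank-dist}(a), under $H_0$ we have $\bm S\sim\textnormal{Unif}(\{0,1\}^n)$ independent of $\bm\Pi\sim\textnormal{Unif}(\mathcal{S}_n)$. Fix $\pi\in\mathcal{S}_n$; the coordinate-shuffle map $(s_1,\ldots,s_n)\mapsto(s_{\pi_1},\ldots,s_{\pi_n})$ is a bijection of $\{0,1\}^n$, so conditionally on $\bm\Pi=\pi$ the vector $(S_{\pi_1},\ldots,S_{\pi_n})$ is again uniform on $\{0,1\}^n$, i.e. its coordinates are i.i.d. $\textnormal{Bernoulli}(1/2)$, and this conditional law is the same for every $\pi$. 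Hence, unconditionally,
$$T_{LR}=\sum_{i=1}^n S_{\pi_i}a(i)\stackrel{D}{=}\sum_{i=1}^n a(i)B_i,\qquad B_1,\ldots,B_n\stackrel{iid}{\sim}\textnormal{Bernoulli}(1/2).$$

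Next I would compute moments and invoke Lindeberg--Feller. One gets $\E[T_{LR}]=\tfrac12\sum_{i=1}^n a(i)$ and $\var(T_{LR})=\tfrac14\sum_{i=1}^n a^2(i)$, so the centered, scaled statistic $W_n:=\big(T_{LR}-\tfrac12\sum_i a(i)\big)/\sqrt{\sum_i a^2(i)}$ has mean $0$ and variance $\tfrac14$ for every $n$. Writing $W_n=\sum_{i=1}^n X_{ni}$ with $X_{ni}=a(i)(B_i-\tfrac12)/\sqrt{\sum_j a^2(j)}$, the summands are independent, centered, with $|X_{ni}|\le\tfrac12\max_{1\le j\le n}|a(j)|/\sqrt{\sum_j a^2(j)}$. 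The second condition in \eqref{eq:score-conditions} forces this uniform bound to $0$, so for every $\varepsilon>0$ and all large $n$ one has $\max_i|X_{ni}|<\varepsilon$, whence the Lindeberg sum $\sum_i\E[X_{ni}^2\,\mathbf{1}\{|X_{ni}|>\varepsilon\}]$ is eventually zero, while $\sum_i\var(X_{ni})=\tfrac14$ for all $n$; the first condition $\sum_i a^2(i)\rightarrow\infty$ just guarantees the scaling is non-degenerate (and holds automatically when, e.g., $a(\cdot)$ is bounded away from $0$, as for $a\equiv 1$, which recovers $T_S$). Lindeberg--Feller then gives $W_n\stackrel{D}{\rightarrow}\mathcal{N}(0,\tfrac14)$, which is the assertion.

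There is no genuine obstacle here: the whole force of the construction is that distribution-freeness reduces the problem to a textbook CLT. The only place that needs care is the reduction in the second paragraph (checking that the bijection argument really yields a conditional law free of $\pi$), together with the mild bookkeeping that \eqref{eq:score-conditions} is exactly the Noether/Lindeberg condition. Alternatively one could simply cite the classical asymptotic normality of simple linear rank statistics (e.g. H\'ajek's theorem), since under $H_0$ the statistic $T_{LR}$ agrees in law with the univariate signed statistic $\sum_i a(i)B_i$; I would nonetheless keep the short self-contained Lindeberg--Feller argument above.
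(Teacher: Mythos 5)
Your proposal is correct and follows essentially the same route as the paper: both reduce the problem, via the distribution-free property in Theorem \ref{thm:sign-rank-dist}(a), to a weighted sum of i.i.d.\ Bernoulli$(1/2)$ variables and then apply a triangular-array CLT (the paper verifies Lyapunov's condition where you verify Lindeberg's, an immaterial difference). Your explicit bijection argument showing that $(S_{\pi_1},\ldots,S_{\pi_n})$ is conditionally uniform given $\bm\Pi=\pi$ is a slightly more careful justification of the independence of the summands than the paper's, but the substance is identical.
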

		
		In particular we have $n^{-1/2}(T_S-n/2)\stackrel{D}{\rightarrow}\mathcal{N}(0,0.25)$. Theorem \ref{thm:limit-null-distribution} holds even when $d$ is fixed and $n$ diverges to infinity. So, irrespective of the value of $d$, when  the sample size is large, this test can be calibrated using the quantiles of the normal distribution. Now, we investigate the asymptotic behavior of $T_{LR}$ for non-spherical distribution. In this context, we have the following result.
		


		\begin{thm}\label{thm:limit-alt-convergence}
			Let $\bm X_1,\bm X_2,\ldots,\bm X_n$ be $n$ independent copies of a   $d$-dimensional random vector $\bm X \sim \Pr$. Assume that the sequence of scores $\{a(i)\}_{1\leq i\leq n}$ satisfies condition \eqref{eq:score-conditions} and as $n \rightarrow\infty$,
			\begin{align}\label{eq:consistency-cond}
				\frac{\sum_{i=1}^{n-1}a(i)a(i+1)}{\sum_{i=1}^n a^2(i)}\rightarrow C,
			\end{align}
			for some $C>0$. If $\Pr$ is a non-spherical distribution, then there exist finite constants $\sigma_{11}$ and $\sigma_{12}$.
			such that $$\limsup_{n,d\rightarrow\infty}\var\left[\frac{T_{LR} - \E[T_{LR}]}{\sqrt{\sum_{i=1}^n a^2(i)}}\right] = \sigma_{11} + 2C \sigma_{12}.$$
		\end{thm}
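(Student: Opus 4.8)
The plan is to express $\var(T_{LR})$ in terms of the string signs read along the shortest covering path and then recognise it as the variance of a weighted partial sum of a two-state Markov chain. Write $T_{LR}=\sum_{i=1}^{n}a(i)W_i$, where $W_i:=S_{\pi_i}\in\{0,1\}$ is the string sign carried by the observation occupying position $i$ of the path, and put $A_n:=\sum_{i=1}^{n}a^2(i)$. Since the $n$ pairs $\{(\bm X_i,\bm X_i^\prime)\}_{i=1}^{n}$ are i.i.d., relabelling the observations maps one covering path to another of the same cost, and tracking how labels and chosen versions transform shows that the conditional law of $\bm W=(W_1,\dots,W_n)$ given $\bm\Pi$ is the same for every value of $\bm\Pi$; hence $\bm W$ is independent of $\bm\Pi$, and by part~(b) of Theorem~\ref{thm:sign-rank-dist} it is a Markov chain on $\{0,1\}$. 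Therefore
\begin{align*}
\var(T_{LR})=\sum_{i=1}^{n}a^2(i)\var(W_i)+2\sum_{k=1}^{n-1}\sum_{i=1}^{n-k}a(i)a(i+k)\,\mathrm{Cov}(W_i,W_{i+k}),
\end{align*}
and, for a two-state Markov chain, $|\mathrm{Cov}(W_i,W_{i+k})|\le\tfrac14\gamma^{k}$, where $\gamma\in[0,1)$ is the modulus of the second eigenvalue of the one-step transition matrix; in the stationary bulk of the path one has in fact $|\mathrm{Cov}(W_i,W_{i+k})|=\var(W_i)\gamma^{k}$.

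The first step is to bound the limit superior. From $|a(i)a(i+k)|\le\tfrac12(a^2(i)+a^2(i+k))$ we get $\sum_i|a(i)a(i+k)|\le A_n$, so $\var(T_{LR})\le\tfrac14 A_n(1+\gamma)/(1-\gamma)$ plus $O(1)$ corrections near the two endpoints of the path, which the condition $\max_i a^2(i)/A_n\to0$ in \eqref{eq:score-conditions} renders $o(A_n)$; hence $\limsup_{n,d\to\infty}\var(T_{LR})/A_n$ is finite as soon as $\gamma$ stays bounded away from $1$ (or, in a degenerate regime, $\var(W_i)/(1-\gamma)$ stays bounded). Next, split the double sum into the diagonal term ($k=0$), contributing $\var(W_i)$ (for an interior index $i$) after division by $A_n$; the nearest-neighbour term ($k=1$),
\begin{align*}
\frac{2}{A_n}\sum_{i=1}^{n-1}a(i)a(i+1)\,\mathrm{Cov}(W_i,W_{i+1})=2\,\frac{\sum_{i=1}^{n-1}a(i)a(i+1)}{A_n}\,\mathrm{Cov}(W_i,W_{i+1})\longrightarrow 2C\sigma_{12}
\end{align*}
by \eqref{eq:consistency-cond}, provided the common lag-one covariance of the chain converges to a limit $\sigma_{12}$; and the remainder ($k\ge2$), which is geometrically small. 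Passing to a subsequence along which $\var(T_{LR})/A_n$ attains its limit superior and along which $\var(W_i)$, $\gamma$ and the lag-one covariance all converge, the diagonal contribution and the geometrically bounded remainder together tend to a finite constant $\sigma_{11}$, and we obtain $\limsup\var\big[(T_{LR}-\E T_{LR})/\sqrt{A_n}\big]=\sigma_{11}+2C\sigma_{12}$ with $\sigma_{11},\sigma_{12}$ finite.

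The main obstacle is precisely the uniform control of the sign chain along the joint regime $n,d\to\infty$: one must show (i) that the one-step transition law of the chain is asymptotically homogeneous in the bulk of the path and converges, so that $\var(W_i)$ and the lag-one covariance have genuine limits; and (ii) that either $\gamma$ stays bounded away from $1$, or else one is in the degenerate regime where the optimal path asymptotically consists entirely of the $\bm X_i$'s (so $\var(W_i)\to0$) but then $\var(W_i)/(1-\gamma)$ remains bounded. Both reduce to quantitative control of the joint law of consecutive string signs along the shortest covering path, which I would derive from the pairwise-distance characterisation of Section~\ref{sec:definition} together with the tightness of the sequences $\{d(\bm X_1^{\top}\bm X_2^\prime)^2/(\|\bm X_1\|^2\|\bm X_2^\prime\|^2)\}_{d\ge1}$ and $\{d({\bm X_1^\prime}^{\top}\bm X_2^\prime)^2/(\|{\bm X_1^\prime}\|^2\|\bm X_2^\prime\|^2)\}_{d\ge1}$ used in the proof of Theorem~\ref{thm:HDLSS-consistency}: tightness keeps the edge weights $\theta(\bm X_i,\bm X_j^\prime)$ and $\theta(\bm X_i^\prime,\bm X_j^\prime)$ from degenerating, hence the flip and no-flip probabilities of the chain from collapsing, which is what pins down $\sigma_{11}$ and $\sigma_{12}$. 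The other condition $\sum_i a^2(i)\to\infty$ in \eqref{eq:score-conditions} is used so that the $O(1)$ boundary terms and the non-stationarity near the ends of the path are negligible against the normalisation.
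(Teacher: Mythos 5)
There is a genuine gap, and it sits exactly where you park the ``geometrically small'' remainder. The paper's proof hinges on the claim, read off from part~(b) of Theorem~\ref{thm:sign-rank-dist}, that $\mathrm{Cov}(S_{\pi_i},S_{\pi_j})=0$ for \emph{all} $|i-j|>1$, so that the variance decomposition collapses to the diagonal term plus the lag-one term only:
\begin{align*}
\var[T_{LR}]=\sum_{i=1}^{n}a^2(i)\,\sigma_{11}^{(d)}+\Big(2\sum_{i=1}^{n-1}a(i)a(i+1)\Big)\,\sigma_{12}^{(d)},
\end{align*}
after which \eqref{eq:consistency-cond} and boundedness of $\sigma_{11}^{(d)},\sigma_{12}^{(d)}$ give $\sigma_{11}+2C\sigma_{12}$ directly. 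You instead model the sign sequence as a two-state Markov chain and bound $|\mathrm{Cov}(W_i,W_{i+k})|\le\tfrac14\gamma^{k}$. But a Markov chain has \emph{nonzero} covariances at every lag in general, and your bound on the $k\ge2$ block is only $O\big(A_n\,\gamma^{2}/(1-\gamma)\big)$ --- the same order as the main term, not $o(A_n)$. Moreover each lag-$k$ term carries the weight $\sum_{i}a(i)a(i+k)/A_n$, which is controlled by \eqref{eq:consistency-cond} only for $k=1$; for $k\ge2$ these ratios need not converge and depend on the score sequence, so the $k\ge2$ contribution cannot be absorbed into a score-free constant $\sigma_{11}$. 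Your argument would at best yield a limit of the form $\sigma_{11}+2\sum_{k\ge1}C_k\sigma_{1,k+1}$ under extra assumptions on the scores, which is not the statement being proved. The missing ingredient is precisely the exact vanishing of covariances beyond lag one.

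A secondary point: the apparatus you flag as ``the main obstacle'' --- asymptotic homogeneity of the transition law in the bulk, a spectral gap $\gamma$ bounded away from $1$, convergence of $\var(W_i)$ and the lag-one covariance --- is not needed and is harder than the problem. The paper gets constancy of $\var(S_{\pi_i})$ over $i$ and of $\mathrm{Cov}(S_{\pi_i},S_{\pi_{i+1}})$ over $i$ exactly, for every $n$ and $d$, from the uniformity of $\bm\Pi$ (exchangeability), and then only needs these bounded sequences to have a $\limsup$; no convergence of a transition kernel, no stationarity argument, and no control of the joint law of consecutive signs via the tightness estimates from Theorem~\ref{thm:HDLSS-consistency} is required. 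Also, your intermediate claim that $\bm W$ is independent of $\bm\Pi$ under the alternative is stronger than what Theorem~\ref{thm:sign-rank-dist}(b) provides and is not needed for the constancy-in-$i$ that the proof actually uses.
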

		
		\begin{rem}
			For any fixed dimension, one can derive the large sample distribution of $T_{LR}$ against a sequence of contiguous alternatives \citep[see Chapter 12][ for contiguous alternatives]{lehmanntesting} and prove its Pitman efficiency (see Theorem \ref{thm:pitman-efficiency} in Appendix). This is in sharp contrast to the results in \cite{bhattacharya2019general}, where the author proved that in multivariate setup, most of the graph based distribution-free two sample tests turn out to be inefficient in the classical asymptotic regime. 
		\end{rem}
		
		{
			Now note that for the sign statistic $T_S = \sum_{i=1}^n S_i$, 
			 we have $[T_S-E(T_S)]/\sqrt{n}=O_p(1)$ both under the null and alternative hypothesis. Hence, we have the probability convergence of $|T_S-E(T_S)|/n$ to $0$. One can also show that
			\begin{align*}\Big|\frac{1}{n}E(T_S) - \P\left[\theta(\bm Y_{S_1,1},\bm X_2)+\theta(\bm X_2,\bm Y_{S_3,3})\leq \theta(\bm Y_{\pi_1,1},\bm X_2^\prime)+\theta(\bm X_2^\prime,\bm Y_{S_3,3})\right]\Big| \rightarrow 0 \mbox { as } n,d \rightarrow \infty,
			\end{align*}
			where $\bm Y_i=S_i\bm X_i+(1-S_i)\bm X_i^\prime$ for $i=1,2,3$ (see Lemma \ref{lemma:sign-stat-convergence} for the proof). Let us define 
			$$p_S= \liminf_{d \rightarrow \infty} \P\left[\theta(\bm Y_{S_1,1},\bm X_2)+\theta(\bm X_2,\bm Y_{S_3,3})\leq \theta(\bm Y_{\pi_1,1},\bm X_2^\prime)+\theta(\bm X_2^\prime,\bm Y_{S_3,3})\right]\Big.$$
			as the limiting value of the probability of inclusion of any $\bm X_i$ ($i=1,2,\ldots,n$) (or the probability of non-inclusion of $\bm X_i^\prime$) in the shortest covering path $\mathcal{P}$. Under $H_0$, because of the exchangeability of
			$\bm X_i$ and $\bm X_i^\prime$, this probability turns out to be $0.5$. However, in view of Theorem \ref{thm:HDLSS-consistency} and Lemma \ref{lem:expectations},  under $H_1$, we expect $\theta(\bm X_1,\bm X_{2})$ to be  stochastically smaller than $\theta(\bm X_1,\bm X_{2}^\prime)$ and $\theta(\bm X_1^\prime,\bm X_{2}^\prime)$. So, $p_S$ is expected to be higher than $0.5$. Since we reject $H_0$ for higher values of $T_S$, under the condition $p_S>0.5$, the consistency of this sign test follows from Theorem \ref{thm:limit-alt-convergence}. }
		
		In the HDHSS setup, the limiting null distribution of the runs statistic $T_R$ remains the same as in the classical asymptotic regime. It is given by the following theorem.
		
		\begin{thm}\label{thm:large-dist-runs}
			Let $\bm X_1,\bm X_2,\ldots,\bm X_n$ be $n$ independent realizations of a $d$-dimensional random vector $\bm X \sim \Pr$. If $\Pr$ is spherically symmetric, as $n,d\rightarrow\infty$, we have
			$$\frac{T_R - (n+1)/2}{\sqrt{n}} \stackrel{D}{\rightarrow}\mathcal{N}\left(0,\frac{1}{4}\right)$$
		\end{thm}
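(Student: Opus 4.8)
The plan is to reduce $T_R$ to the number of runs in a sequence of i.i.d.\ fair Bernoulli variables and then apply a central limit theorem for $1$-dependent stationary sequences. By part (a) of Theorem~\ref{thm:sign-rank-dist}, under spherical symmetry $\bm S\sim\mathrm{Unif}(\{0,1\}^n)$ and $\bm\Pi$ are independent. Conditioning on $\bm\Pi=\bm\pi$, the vector $(S_{\pi_1},\ldots,S_{\pi_n})$ is merely a deterministic reshuffling of i.i.d.\ $\mathrm{Bernoulli}(1/2)$ coordinates, hence is itself a sequence of $n$ i.i.d.\ $\mathrm{Bernoulli}(1/2)$ variables; since this conditional law does not depend on $\bm\pi$, the same holds unconditionally. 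Write $B_i:=S_{\pi_i}$, so that $B_1,\ldots,B_n$ are i.i.d.\ $\mathrm{Bernoulli}(1/2)$ and $T_R=1+\sum_{i=1}^{n-1}W_i$ with $W_i:=\mathrm{I}\{B_i\neq B_{i+1}\}$, i.e.\ $T_R$ is exactly the number of runs in $(B_1,\ldots,B_n)$.

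Next I would record the moment structure of $(W_i)_{1\le i\le n-1}$. It is strictly stationary, and $W_i$ depends only on $(B_i,B_{i+1})$, so $W_i$ is independent of $\{W_j:|i-j|\ge 2\}$; thus the sequence is $1$-dependent. Elementary computations give $\E[W_i]=\P(B_i\neq B_{i+1})=1/2$, whence $\E[T_R]=1+(n-1)/2=(n+1)/2$, and $\var(W_i)=1/4$. The only slightly delicate point is the adjacent covariance: conditioning on $B_{i+1}$ makes $\{B_i\neq B_{i+1}\}$ and $\{B_{i+2}\neq B_{i+1}\}$ conditionally independent, each of probability $1/2$, so $\E[W_iW_{i+1}]=1/4=\E[W_i]\E[W_{i+1}]$ and hence $\mathrm{Cov}(W_i,W_{i+1})=0$. (Equivalently, writing $\epsilon_i=2B_i-1$, one has $W_i=(1-\epsilon_i\epsilon_{i+1})/2$, and the products $\epsilon_i\epsilon_{i+1}$ form a pairwise-independent, $1$-dependent Rademacher sequence.) Consequently the long-run variance is $\sigma^2:=\var(W_1)+2\,\mathrm{Cov}(W_1,W_2)=1/4>0$.

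Then I would invoke the central limit theorem for $m$-dependent stationary sequences (Hoeffding--Robbins, or Diananda) with $m=1$: since the summands are bounded and $\sigma^2>0$, $(n-1)^{-1/2}\bigl(\sum_{i=1}^{n-1}W_i-(n-1)/2\bigr)\stackrel{D}{\to}\mathcal N(0,1/4)$. Finally, absorbing the additive constant $1$ and replacing $\sqrt{n-1}$ by $\sqrt n$ (both harmless by Slutsky's theorem, as $\sqrt{(n-1)/n}\to 1$) yields $(T_R-(n+1)/2)/\sqrt n\stackrel{D}{\to}\mathcal N(0,1/4)$, as claimed. Note that nothing in this argument uses the dimension $d$ beyond the distribution-free property of Theorem~\ref{thm:sign-rank-dist}, so the conclusion holds along any joint divergence of $(n,d)$ — indeed already for fixed $d$, matching the classical asymptotic regime.

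The main obstacle is bookkeeping rather than anything deep: one must correctly argue that conditioning on $\bm\Pi$ leaves the $S_{\pi_i}$'s i.i.d.\ (and therefore does so unconditionally), and one must pin down the limiting variance, which hinges on the mildly surprising fact that adjacent increments $W_i,W_{i+1}$ are uncorrelated for fair coins. Once the $1$-dependent representation is in place, the CLT is off the shelf.
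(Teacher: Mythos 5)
Your proof is correct, but it reaches the conclusion by a different classical route than the paper. The paper sets up the triangular array of martingale differences $Y^{(d)}_{n,i}=\mathrm{I}\{S_{\pi_i}\neq S_{\pi_{i+1}}\}-1/2$ adapted to the filtration generated by $(S_{\pi_1},\ldots,S_{\pi_t})$, observes that the conditional variances are deterministic and equal to $1/4$, checks the Lindeberg-type condition trivially from boundedness, and invokes Brown's martingale CLT. You instead exploit the fact (from part (a) of Theorem~\ref{thm:sign-rank-dist}) that $(S_{\pi_1},\ldots,S_{\pi_n})$ are i.i.d.\ fair Bernoulli, so that $T_R$ is literally the number of runs in $n$ fair coin tosses, and then apply the Hoeffding--Robbins CLT for stationary $1$-dependent sequences after computing the long-run variance $\var(W_1)+2\,\mathrm{Cov}(W_1,W_2)=1/4$. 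Both arguments turn on exactly the same two facts --- the distribution-free i.i.d.\ structure of the signs under $H_0$ and the vanishing of the adjacent covariance (equivalently, the martingale property of the centered partial sums) --- so the variance computation is identical; your route is arguably the more elementary one, recovering the textbook runs-test CLT directly (indeed, via the Rademacher representation $W_i=(1-\epsilon_i\epsilon_{i+1})/2$ the increments are not merely $1$-dependent but jointly independent, so even the plain i.i.d.\ CLT would suffice), whereas the paper's martingale formulation is the one it recycles for the behavior under the alternative (Theorem~\ref{thm:large-dist-runs-alt}) and for the joint limit of the modified statistics (Theorem~\ref{thm:HDHSS-modified-tests-null}). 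Your closing observations --- that conditioning on $\bm\Pi$ must be handled via the independence of $\bm S$ and $\bm\Pi$, and that $d$ enters only through the distribution-free property so the result holds for fixed $d$ as well --- match the paper's remarks.
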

		
		This asymptotic null distribution of $T_R$ ramins the same even for any fixed $d$ as $n$ grows to infinity. So, when the sample size is large, whatever be the dimension of the data, this runs test can be calibrated using the quatiles of a Gaussian distribution, and for any fixed nominal level $\alpha$, the cut-off remains the same for all values of $d$. Now, one may be curious to know about the asymptotic behavior of $T_R$ for non-spherical distributions when $n$ and $d$ both diverges to infinity. This is specified in the following theorem.
		
		{\begin{thm}\label{thm:large-dist-runs-alt}
				Let $\bm X_1,\bm X_2,\ldots,\bm X_n$ be $n$ independent copies of a $d$-dimensional random vector $\bm X \sim \Pr$. If $\Pr$ is non-spherical, there exists a positive constant $\sigma^2$ such that
				$$\limsup_{n,d\rightarrow\infty}\var\left[\frac{T_R - \E[T_R]}{\sqrt{n}}\right] = \sigma^2.$$
				Also, if the condition \eqref{eq:HDLSS-condition} is satisfied, $T_R$ converges in probability to one as $d$ diverges to infinity. 
		\end{thm}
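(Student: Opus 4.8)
The plan is to establish the two assertions separately, disposing of the probability-convergence claim first, since it follows at once from the HDLSS analysis. Write $T_R = 1 + \sum_{i=1}^{n-1} W_i$ with $W_i = \mathrm{I}\{S_{\pi_i}\neq S_{\pi_{i+1}}\}$, so that $0\le W_i\le 1$ and $1\le T_R\le n$. Suppose condition \eqref{eq:HDLSS-condition} holds. By Theorem \ref{thm:HDLSS-consistency}, $\bm S\stackrel{P}{\rightarrow}{\bf 1}_n$ as $d\rightarrow\infty$, and since $\bm S$ takes values in the finite set $\{0,1\}^n$ this is the same as $\P(\bm S = {\bf 1}_n)\rightarrow 1$. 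On the event $\{\bm S = {\bf 1}_n\}$ every $W_i$ vanishes, so $T_R = 1$; hence $\P(T_R = 1)\ge\P(\bm S = {\bf 1}_n)\rightarrow 1$, i.e.\ $T_R\stackrel{P}{\rightarrow}1$.

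For the variance statement I would condition on the arrangement $\bm\Pi$ along the shortest covering path and use $\var(T_R) = \E[\var(T_R\mid\bm\Pi)] + \var(\E[T_R\mid\bm\Pi])$. The structural input is Theorem \ref{thm:sign-rank-dist}(b): its three conditional-independence identities say that, given $\bm\Pi$, the sequence $(S_{\pi_1},\ldots,S_{\pi_n})$ obeys the local Markov property along the path, hence is a (time-inhomogeneous) two-state Markov chain. Therefore each $W_i$ is a function of $(S_{\pi_i},S_{\pi_{i+1}})$, and for $k\ge 1$ we have $W_i\indpt W_{i+k}\mid S_{\pi_{i+1}}$; consequently $\mathrm{Cov}(W_i,W_{i+k}\mid\bm\Pi)$ is the covariance of two functions of the single binary variable $S_{\pi_{i+1}}$ and is bounded in modulus by $\tfrac14$ times the product of the oscillations of $s\mapsto\E[W_i\mid S_{\pi_{i+1}}=s]$ and $s\mapsto\E[W_{i+k}\mid S_{\pi_{i+1}}=s]$, the latter oscillation being at most the product of the subdominant-eigenvalue moduli of the one-step transition matrices between positions $\pi_{i+1}$ and $\pi_{i+k}$. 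If these moduli are bounded by some $\rho<1$ uniformly in the position and in $\bm\Pi$, the conditional covariances decay geometrically in $k$, whence $\var(T_R\mid\bm\Pi)\le \tfrac{n}{4}+\tfrac{2n}{1-\rho}$ uniformly in $\bm\Pi$ and $\E[\var(T_R\mid\bm\Pi)] = O(n)$. For the other term, $\E[T_R\mid\bm\Pi] = 1+\sum_{i=1}^{n-1}\P(S_{\pi_i}\neq S_{\pi_{i+1}}\mid\bm\Pi)$, and exchangeability of the pairs $(\bm X_i,\bm X_i^\prime)$ renders each summand invariant under relabeling of the indices, so $\E[T_R\mid\bm\Pi]$ depends on $\bm\Pi$ only through the order along the path and varies by $O(n)$, giving $\var(\E[T_R\mid\bm\Pi]) = O(n)$. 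Combining, $\var(T_R) = O(n)$, so $\limsup_{n,d\rightarrow\infty}\var[(T_R-\E[T_R])/\sqrt{n}]$ equals a finite constant $\sigma^2$; its positivity is obtained by exhibiting a non-spherical sequence along which the sign chain stays non-degenerate (a boundary case lying outside both the sphericity regime of Theorem \ref{thm:HDLSS-difficulty} and condition \eqref{eq:HDLSS-condition}) and evaluating the resulting Markov-chain variance limit, in the same spirit as the extraction of $p_S>1/2$ for the sign statistic. Structurally this mirrors Theorem \ref{thm:limit-alt-convergence}.

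The hard part is the uniform bound $\rho<1$ on the subdominant eigenvalues of the conditional chain $\bm S\mid\bm\Pi$. This is where the geometry of the problem must enter: conditional on the path order, $\bm S$ is the minimizer of the trellis cost $\sum_i\theta(\bm Y_{S_{\pi_i},\pi_i},\bm Y_{S_{\pi_{i+1}},\pi_{i+1}})$ with $\theta(\bm x,\bm y)=\exp\{-(\bm x^\top\bm y/d)^2\}$, and one must convert the stochastic ordering of the inner products in Lemma \ref{lem:expectations}, together with the tightness statements following Theorem \ref{thm:HDLSS-consistency}, into a uniform lower bound on the probability that the minimizing assignment forgets the coordinate $s_{\pi_i}$ within a bounded number of steps. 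Should a clean pointwise $\rho<1$ be out of reach, a more careful accounting that controls how often along the path the effective spectral gap can degrade — on a high-probability event, while using the trivial bound $|W_i|\le 1$ on its complement — still yields $\var(T_R) = O(n)$, which is all that the stated $\limsup$ requires.
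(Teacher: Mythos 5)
Your treatment of the second assertion is fine and matches what the paper leaves implicit: condition \eqref{eq:HDLSS-condition} gives $\P(\bm S={\bf 1}_n)\rightarrow 1$ via Theorem \ref{thm:HDLSS-consistency}, and on that event $T_R=1$. The variance assertion, however, is where your argument diverges from the paper and where it has genuine gaps. The paper's proof is a direct expansion: it writes $\var(T_R)$ as the sum of the $n-1$ variances of $W_i=\mathrm{I}\{S_{\pi_i}\neq S_{\pi_{i+1}}\}$ plus the pairwise covariances, invokes Theorem \ref{thm:sign-rank-dist}(b) to conclude that $\textnormal{Cov}(W_i,W_j)=0$ exactly for $|i-j|>2$, and is then left with $O(n)$ uniformly bounded terms, which after dividing by $n$ identifies $\sigma^2$ explicitly as $\limsup_d\big[\var(W_1)+\textnormal{Cov}(W_1,W_2)+\textnormal{Cov}(W_1,W_3)\big]$. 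You instead condition on $\bm\Pi$ and try to prove geometric decay of the conditional covariances via a spectral-gap argument for the two-state chain $\bm S\mid\bm\Pi$.

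The load-bearing step of your route --- a bound $\rho<1$ on the subdominant eigenvalues of the one-step transition kernels, uniform in the position along the path, in $\bm\Pi$, and in $d$ --- is never established; you concede this yourself, and the fallback (``a more careful accounting \ldots still yields $\var(T_R)=O(n)$'') is not an argument. Without it, $\E[\var(T_R\mid\bm\Pi)]=O(n)$ does not follow. A second concrete error: you claim $\E[T_R\mid\bm\Pi]$ ``varies by $O(n)$, giving $\var(\E[T_R\mid\bm\Pi])=O(n)$''; a random variable with range of order $n$ has variance of order $n^2$, so this implication is false as stated. (The correct observation, which you half-make, is that by exchangeability of the i.i.d.\ pairs $(\bm X_i,\bm X_i^\prime)$ the conditional probability $\P(S_{\pi_i}\neq S_{\pi_{i+1}}\mid\bm\Pi=\bm\pi)$ does not depend on $\bm\pi$ at all, so this term is exactly zero.) Finally, positivity of $\sigma^2$ cannot be obtained by ``exhibiting a non-spherical sequence'': the theorem asserts it for the given $\Pr$, and your argument at best yields finiteness of the $\limsup$ without identifying the constant, whereas the paper pins down $\sigma^2$ as a specific combination of a variance and two covariances.
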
}
		
		From, Theorem \ref{thm:large-dist-runs-alt}, we have the probability convergence of $|T_R - \E(T_R)|/n$ to $0$ as $n,d \rightarrow \infty$.
		We know that under $H_0$, the limiting value of $E(T_R)/n$ and hence that of $T_R/n$ is $0.5$ (follows from Theorem \ref{thm:large-dist-runs}). But,
		as we have seen before, under the alternative most of the string signs are expected to be one, and as a result, $p_R=\limsup_{n,d \rightarrow\infty} E(T_R/n)$ is expected to be small. Since we reject $H_0$ for small values of $T_R$, when $p_r<0.5$, the runs test turns out to be consistent in the HDHSS regime.
		

				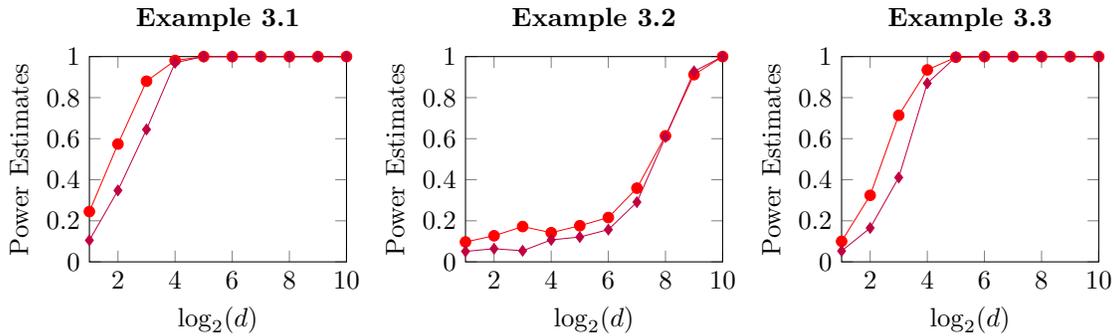
\begin{figure}[h]
					\centering
					\begin{tikzpicture}
						\begin{axis}[xmin = 1, xmax = 10, ymin = 0, ymax = 1, xlabel = {$\log_2(d)$}, ylabel = {Power Estimates}, title = {\bf Example \ref{exa:first}}]
							\addplot[color = red,   mark = *, step = 1cm,very thin]coordinates{(1,0.245)(2,0.574)(3,0.88)(4,0.981)(5,1)(6,1)(7,1)(8,1)(9,1)(10,1)};

							\addplot[color = purple,   mark = diamond*, step = 1cm,very thin]coordinates{(1,0.105)(2,0.348)(3,0.645)(4,0.97)(5,1)(6,1)(7,1)(8,1)(9,1)(10,1)};
							
						\end{axis}
					\end{tikzpicture}
					\begin{tikzpicture}
						\begin{axis}[xmin = 1, xmax = 10, ymin = 0, ymax = 1, xlabel = {$\log_2(d)$}, ylabel = {Power Estimates}, title = {\bf Example \ref{exa:second}}]
							\addplot[color = red, mark = *, step = 1cm,very thin]coordinates{(1,0.097)(2,0.127)(3,0.172)(4,0.142)(5,0.176)(6,0.216)(7,0.359)(8,0.614)(9,0.912)(10,1)};
							
							
							\addplot[color = purple,   mark = diamond*, step = 1cm,very thin]coordinates{(1,0.051)(2,0.064)(3,0.054)(4,0.107)(5,0.121)(6,0.157)(7,0.291)(8,0.608)(9,0.928)(10,1)};

						\end{axis}
					\end{tikzpicture}
					\begin{tikzpicture}
						\begin{axis}[xmin = 1, xmax = 10, ymin = 0, ymax = 1, xlabel = {$\log_2(d)$}, ylabel = {Power Estimates}, title = {\bf Example \ref{exa:third}}]
							\addplot[color = red, mark = *, step = 1cm,very thin]coordinates{(1,0.1)(2,0.324)(3,0.714)(4,0.935)(5,0.997)(6,1)(7,1)(8,1)(9,1)(10,1)};
							
							
							\addplot[color = purple,   mark = diamond*, step = 1cm,very thin]coordinates{(1,0.053)(2,0.165)(3,0.411)(4,0.87)(5,0.998)(6,1)(7,1)(8,1)(9,1)(10,1)};
							
						\end{axis}
					\end{tikzpicture}
					
					\caption{Observed power of the Sign test (\tikzsymbol[circle]{minimum width=2pt,fill=red}) and Runs test (\tikzsymbol[diamond]{minimum width=2pt,fill=purple}) when $n = d + 20$ observations are generated from the $d$-variate normal distributions (with $d=2^i, i=1,2,\ldots,10$) considered in Examples 3.1-3.3.}
					\label{fig:sim-1.1}
				\end{figure}

				We study the performance of sign and runs tests in Examples 3.1-3.3 for $10$ values of $d$ (i.e., $d=2^i$ for $i=1,2,\ldots,10$) as in Section \ref{HDLSS}, but this time instead of considering a fixed value of $n$, we take $n=d+20$ so that it also grows with the dimension. Each example is repeated to compute the empirical powers of the tests as before. In this three examples, we have $p_S$ larger than 0.5 (1, 0.577 and 1, respectively) and $p_R$ smaller than 0.5 (0, 0.422 and 0, respectively). So, as expected, in all there cases, the sign and the runs tests performed well for large values of $n$ and $d$. In Examples 3.1 and 3.3, powers of these tests raised sharply as before. But unlike what happened in the HDLSS set up, powers of these tests increased to $1$ in Example 3.2 as well. Since we get more information as the sample size size increases, such results are quite expected.
				
				\section{Further modifications of the proposed tests}
				\label{sec:improvements}
				
				Due to the sphericity condition in the HDLSS setup, our proposed sign and runs tests failed to have satisfactory performance in Example 3.2 even though the diagonal elements of the covariance matrix were different. This motivated us to look for further modifications of these tests. Note that the computation of $T_S$ and $T_R$ and the performance of the resulting tests depend on the construction of the shortest covering path using $\Theta$ as the cost matrix. One can notice that the ordering of  $\theta(\bm X_1,\bm X_2)$, $\theta(\bm X_1,\bm X_2^\prime)$ and $\theta(\bm X_1^\prime,\bm X_2^\prime)$ depends on that of $(\bm X_1^\top \bm X_2)^2$, $(\bm X_1^\top \bm X_2^\prime)^2$ 
				and $({\bm X_1^\prime}^\top \bm X_2^\prime)^2$. Now, we can break $(\bm X_1^\top \bm X_2)^2$ into two parts containing square terms and cross-product terms as
				\begin{align*}
					(\bm X_1^\top \bm X_2)^2 = \sum_{i=1}^d X_{1i}^2X_{2i}^2 + \sum_{1\leq i\not=j\leq d} X_{1i}X_{2i} X_{1j}X_{2j}.
				\end{align*}
				If $\bm X_1$ and $\bm X_2$ are $d$-variate i.i.d. random variables with mean ${\bf 0}$ and dispersion matrix $\bm \Sigma=((\sigma_{ij}))$, expectations of these two parts are $a_d=\sum_{i=1}^{d} \sigma_{ii}^2$ and $c_d=\sum_{i =1}^{d}\sum_{j (\neq i)=1}^{d} \sigma_{ij}^2$, respectively. However, 
				for $(\bm X_1^\top\bm X_2^{\prime})^2$
				(and also for $({\bm X_1^\prime}^\top\bm X_2^{\prime})^2$), these two expected values are $b_d=(\sum_{i=1}^{d}\sigma_{ii})^2/d$ and $0$, respectively (see Lemma \ref{lem:expectations}). So, the differences turn out to be $v_d=a_d-b_d=\sum_{i=1}^{d}[\sigma_{ii}-(\frac{1}{d}\sum_{i=1}^{d} \sigma_{ii})]^2$ and $c_d$, respectively. While the first one measures the variation among the diagonal elements of $\bm \Sigma$, the second one tells us how different the off-diagonal elements of $\bm\Sigma$ are from $0$. In Example 3.1, where we get signal from the second part (i.e. $c_d>0$), the proposed tests worked well. However, in Example 3.2, we
				have no signal from the second part. So, the difference in cross-products serves as a noise and its order is higher ($\mathcal{O}_P(d^2)$) than the order of the signal obtained from the difference in square terms (i.e., $v_d = \mathcal{O}(d)$). Therefore, the proposed test could not have satisfactory performance. In Example 3.3 also, the difference in cross-products serves as a noise of order $\mathcal{O}_P(d^2)$, but unlike Example 3.2, here the signal $v_d$ is of the order $O(d^2)$. So, the power of the sign and runs tests increased with the dimension. However, from the above discussion, it is clear that if there is no signal from the off-diagonal part (i.e. $c_d=0$), our tests can perform better if we can get rid of this noise term involving cross-products. One possible option is to ignore the cross-product terms and construct the shortest covering path based on a different cost function. 
				We can consider an edge-weighted complete graph
				${\mathcal K}_{2n}$ on $2n$ vertices'
				$\bm Z_1,\bm Z_2,\ldots, \bm Z_{2n}$ }as before but use 
			${\widetilde \theta}(\bm Z_i,\bm Z_j)=\exp\{-\frac{1}{d}\sum_{q=1}^{d}Z_{iq}^2Z_{jq}^2\}$ as the cost of the edge joining $\bm Z_i$ and $\bm Z_j$ ($1\le i<j\le 2n$).
			Using this cost function, we can construct the shortest covering path as before and define the sign and rank vectors along that path as in Section \ref{sec:definition}. So, we look for new sign and rank vectors ${\bm {\widetilde S}}$  and ${\bm {\widetilde R}}= {\bm {\widetilde \Pi}}^{-1}$, where
			\begin{align}\label{eq:optimization-1}
				\vspace{-0.05in}
				({\bm {\widetilde S}},{\bm {\widetilde \Pi}}) = \argmin_{\substack{\bm s\in\{0,1\}^n\\\bm\pi\in \mathcal{S}_n}} \Big[\sum_{i=1}^{n-1} {\widetilde \theta}(\bm Y_{s_{\pi_i},\pi_i},\bm Y_{s_{\pi_{i+1}},\pi_{i+1}})\Big].
			\end{align}
			
			\vspace{-0.05in}
			\noindent
			It is easy to check that the null distribution of $({\bm {\widetilde S}}.{\bm{\widetilde{R}}})$  matches with that of $(\bm S, \bm R)$ (see part (a) of Theorem \ref{thm:modified-rank-tests}). In fact Theorem \ref{thm:sign-rank-dist} holds for ${\bm {\widetilde S}}=({\widetilde S}_1,{\widetilde S}_2,\ldots,{\widetilde S}_n)$, ${\bm{\widetilde{R}}}=({\widetilde R}_1,{\widetilde R}_2,\ldots,{\widetilde R}_n)$ and ${\bm {\widetilde \Pi}}=({\widetilde \pi}_1,{\widetilde \pi}_2,\ldots,{\widetilde \pi}_n)$. So, the null distributions of the corresponding sign statistic ${\widetilde T}_S=\sum_{i=1}^{n} {\widetilde S}_i$ (or any linear rank statistic ${\widetilde T}_{LR}=\sum_{i=1}^{n} {\widetilde S}_i a({\widetilde R}_i)=\sum_{i=1}^{n} {\widetilde S}_{{\widetilde \pi}_i} a(i)$) and runs statistic ${\widetilde T}_R=1
			+\sum_{i=1}^{n-1} \mathrm I({\widetilde S}_{\tilde{\pi}_i}\neq {\widetilde S}_{\tilde{\pi}_{i+1}})$ match with the corresponding univariate statistics, and the cut-offs can be obtained as before from the statistical tables
			available for the univariate sign (or linear rank)
			and runs tests.
			
			\begin{figure}[h]
				\centering
				\begin{tikzpicture}
					\begin{axis}[xmin = 1, xmax = 10, ymin = 0, ymax = 1, xlabel = {$\log_2(d)$}, ylabel = {Power Estimates}, title = {\bf Example \ref{exa:first}}]
						\addplot[color = red,   mark = *, step = 1cm,very thin]coordinates{(1,0.028)(2,0.039)(3,0.041)(4,0.029)(5,0.036)(6,0.028)(7,0.032)(8,0.038)(9,0.038)(10,0.033)};
						
						
						\addplot[color = purple,   mark = diamond*, step = 1cm,very thin]coordinates{(1,0.036)(2,0.046)(3,0.042)(4,0.047)(5,0.047)(6,0.038)(7,0.038)(8,0.035)(9,0.047)(10,0.039)};
						
					\end{axis}
				\end{tikzpicture}
				\begin{tikzpicture}
					\begin{axis}[xmin = 1, xmax = 10, ymin = 0, ymax = 1, xlabel = {$\log_2(d)$}, ylabel = {Power Estimates}, title = {\bf Example \ref{exa:second}}]
						\addplot[color = red,   mark = *, step = 1cm,very thin]coordinates{(1,0.103)(2,0.131)(3,0.173)(4,0.246)(5,0.408)(6,0.62)(7,0.859)(8,0.97)(9,1)(10,1)};
						
						
						\addplot[color = purple,   mark = diamond*, step = 1cm,very thin]coordinates{(1,0.076)(2,0.107)(3,0.144)(4,0.233)(5,0.38)(6,0.588)(7,0.803)(8,0.965)(9,0.999)(10,1)};
						
					\end{axis}
				\end{tikzpicture}
				\begin{tikzpicture}
					\begin{axis}[xmin = 1, xmax = 10, ymin = 0, ymax = 1, xlabel = {$\log_2(d)$}, ylabel = {Power Estimates}, title = {\bf Example \ref{exa:third}}]
						\addplot[color = red,   mark = *, step = 1cm,very thin]coordinates{(1,0.092)(2,0.437)(3,0.855)(4,0.988)(5,0.999)(6,1)(7,1)(8,1)(9,1)(10,1)};
						
						
						\addplot[color = purple,   mark = diamond*, step = 1cm,very thin]coordinates{(1,0.088)(2,0.349)(3,0.847)(4,0.987)(5,1)(6,1)(7,1)(8,1)(9,1)(10,1)};
						
					\end{axis}
				\end{tikzpicture}
				\caption{Observed power of the tests based on ${\widetilde T}_S$ (\tikzsymbol[circle]{minimum width=2pt,fill=red}) 
					and ${\widetilde T}_R$  (\tikzsymbol[diamond]{minimum width=2pt,fill=purple}) when $50$ observations are generated from the $d$-variate normal distributions (with $d=2^i$ for $i=1,2,\ldots,10$) considered in Examples 3.1-3.3.}
				\label{fig:sim-2}
			\end{figure}
			
			Figure \ref{fig:sim-2} shows the performance of the sign and runs tests based on ${\widetilde T}_S$ and
			${\widetilde T}_R$ in Examples 3.1-3.3. In Example 3.3, where the signal comes from the diagonal part, they performed better than our previous sign and runs tests. They performed well also in Example 3.2, where the previous sign and runs tests did not have satisfactory performance.
			However, in Example 3.1, where we do have no signals from the diagonal part, this new sign and runs tests had poor performance. 
			
			This high-dimensional behavior of the tests based of ${\widetilde T}_S$ and ${\widetilde T}_R$ can be further explained by part (b) of Theorem~\ref{thm:modified-rank-tests}. But for that we need the following technical assumption.

				\begin{itemize}
					\item[(A3)] Let $\bm X_1,\bm X_2$ be two independent copies of $\bm X \sim \Pr$ and $\bm X_1^\prime,\bm X_2^\prime$ be their spherically symmetric variants. There exists an $\alpha>0$ such that for $W = d^{-\alpha}~\sum_{i=1}^d (\bm X_{1})_j^2(\bm X_{2})_j^2$, $d^{-\alpha}~ \sum_{i=1}^d (\bm X_{1})_j^2(\bm X_{2}^\prime)_j^2$ and $d^{-\alpha}~\sum_{i=1}^d (\bm X_{1}^\prime)_j^2(\bm X_{2}^\prime)_j^2$, $\big| W - \E[W]\big|\stackrel{P}{\rightarrow} 0$ as $d\rightarrow \infty$, and at least one of the limit is non-zero. 
				\end{itemize}
				Similar assumptions were also considered by \cite{hall2005geometric, jung2009pca, yata2012, sarkar2019perfect, banerjee2025high, dutta2016multiscale} for studying high dimensional behaviour of different statistical methods. This assumption is satisfied in Examples 3.2 and 3.3 for $\alpha=1$ and $2$, respectively. Under this assumption we have the following result. 				
				\begin{thm}\label{thm:modified-rank-tests}
					Let $\bm X_1,\bm X_2,\ldots,\bm X_n$ be $n$ independent realizations $\bm X$ following a $d$-dimensional continuous distribution $\Pr$. Then, we have the following results.
					\begin{itemize}
						\item[(a)] If $\Pr$ is spherically symmetric, ${\bm {\widetilde S}}\sim$ Unif$(\{0,1\}^n)$, ${\bm {\widetilde R}} \sim$ Unif$({\mathcal S}_n)$, and they are independent.
						\item[(b)] Let ${\bm \Sigma}$ denote the covariance matrix of $\bm X$ and ${\bf D} = \textnormal{diag}({\bm \Sigma})$. Suppose that $\Pr$ is not spherically symmetric and it satisfies Assumption (A3). If
						\begin{align}\label{eq:magic}
							\liminf_{d\rightarrow\infty}\left\{\frac{1}{d^\alpha}\textnormal{Tr}({\bf D}^2) - \frac{1}{d^{1+\alpha}}\Big(\textnormal{Tr}({\bf D})\Big)^2\right\}>0,
						\end{align} then the sign vector $\widetilde{\bm {S}}$ converges to ${\bf 1}_n = (1,1,\ldots,1)$ in probability as $d$ diverges to infinity. 
					\end{itemize} 
			\end{thm}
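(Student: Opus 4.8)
The plan is to treat (a) and (b) separately.

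\emph{Part (a).} I would obtain this from exactly the exchangeability argument behind Theorem~\ref{thm:sign-rank-dist}(a). Under the null, Lemma~\ref{lemma:ss-variant} gives $\bm X_i\stackrel{D}{=}\|\bm X_i\|\bm V_i$ with $\bm V_i=\bm X_i/\|\bm X_i\|\sim\textnormal{Unif}(\mathcal S^{d-1})$ independent of $\|\bm X_i\|$, while $\bm X_i^\prime=\|\bm X_i\|\bm U_i$ with $\bm U_i$ an independent uniform vector, so $(\bm X_i,\bm X_i^\prime)\stackrel{D}{=}(\bm X_i^\prime,\bm X_i)$ and these pairs are i.i.d.\ over $i$. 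Hence the law of $(\bm Z_1,\dots,\bm Z_{2n})$ is invariant under the group generated by permutations of the pairs and intra-pair swaps $\bm X_i\leftrightarrow\bm X_i^\prime$. Since the minimizer in \eqref{eq:optimization-1} is a deterministic, group-equivariant function of the cost matrix $\big(\widetilde\theta(\bm Z_i,\bm Z_j)\big)$ --- just as $\big(\theta(\bm Z_i,\bm Z_j)\big)$ is in Section~\ref{sec:definition} --- the induced law of $(\widetilde{\bm S},\widetilde{\bm R})$ is invariant under the corresponding action on $\{0,1\}^n\times\mathcal S_n$, and transitivity forces the uniform-and-independent conclusion verbatim as in Theorem~\ref{thm:sign-rank-dist}(a), the path/reverse-path identification being handled identically. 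The particular form of the cost plays no role here.

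\emph{Part (b): moments and concentration.} Fix $\bm X_1,\bm X_2$ and set $A_d=d^{-\alpha}\sum_{q=1}^d (\bm X_1)_q^2(\bm X_2)_q^2$, together with the primed analogues $B_d=d^{-\alpha}\sum_q (\bm X_1)_q^2(\bm X_2^\prime)_q^2$, $B_d^\prime=d^{-\alpha}\sum_q (\bm X_1^\prime)_q^2(\bm X_2)_q^2$ and $C_d=d^{-\alpha}\sum_q (\bm X_1^\prime)_q^2(\bm X_2^\prime)_q^2$. Using independence of the two observations, $\E[\bm X]=\bm 0$, and $\E[(\bm X_i^\prime)_q^2]=\tfrac1d\E[\|\bm X_i\|^2]=\tfrac1d\textnormal{Tr}(\mathbf D)$ (the direction $\bm U_i$ being uniform on $\mathcal S^{d-1}$ and independent of $\|\bm X_i\|$), a short computation gives $\E[A_d]=d^{-\alpha}\textnormal{Tr}(\mathbf D^2)$ and $\E[B_d]=\E[B_d^\prime]=\E[C_d]=d^{-1-\alpha}(\textnormal{Tr}(\mathbf D))^2$. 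Hence the bracket in \eqref{eq:magic} is precisely the common mean-gap $\E[A_d]-\E[B_d]=\E[A_d]-\E[B_d^\prime]=\E[A_d]-\E[C_d]$, which \eqref{eq:magic} bounds below by some $\delta>0$ for all large $d$. By Assumption~(A3), applied with the common exponent $\alpha$ to each of the three choices of $W$, the quantities $A_d,B_d,B_d^\prime,C_d$ each concentrate at their means, so $\P(A_d>B_d)$, $\P(A_d>B_d^\prime)$, $\P(A_d>C_d)\to1$. Since $\widetilde\theta(\bm Z_i,\bm Z_j)$ is strictly decreasing in $\sum_q (\bm Z_i)_q^2(\bm Z_j)_q^2$ and $d^{-\alpha}>0$, this means that $\widetilde\theta(\bm X_1,\bm X_2)$ is the strict minimum of the four $\widetilde\theta$-values attached to the pair $(1,2)$ with probability tending to one; by the i.i.d.\ structure the same holds for every pair $(i,j)$, and as $n$ is fixed, intersecting over the $\binom n2$ pairs yields an event $E_d$ with $\P(E_d)\to1$ on which $\widetilde\theta(\bm X_i,\bm X_j)<\min\{\widetilde\theta(\bm X_i,\bm X_j^\prime),\widetilde\theta(\bm X_i^\prime,\bm X_j),\widetilde\theta(\bm X_i^\prime,\bm X_j^\prime)\}$ for all $i<j$.

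\emph{Part (b): combinatorial conclusion and the main obstacle.} On $E_d$ I would show that every minimizer of \eqref{eq:optimization-1} has sign vector $\mathbf 1_n$. Given any covering path with permutation $\bm\pi$ and signs $\bm s\ne\mathbf 1_n$, compare it with the all-unprimed path of the \emph{same} shape: edge by edge, $\widetilde\theta(\bm Y_{s_{\pi_i},\pi_i},\bm Y_{s_{\pi_{i+1}},\pi_{i+1}})$ is replaced by $\widetilde\theta(\bm X_{\pi_i},\bm X_{\pi_{i+1}})$, which on $E_d$ is $\le$ the original weight and strictly smaller whenever $s_{\pi_i}=0$ or $s_{\pi_{i+1}}=0$; since $\bm s\ne\mathbf 1_n$ and the path meets every index, at least one edge is incident to a primed index, so the total cost strictly decreases. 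Thus the minimizer has $\widetilde{\bm S}=\mathbf 1_n$, whence $\P(\widetilde{\bm S}=\mathbf 1_n)\ge\P(E_d)\to1$, i.e.\ $\widetilde{\bm S}\stackrel{P}{\rightarrow}\mathbf 1_n$ (the minimizer is a.s.\ unique since $\Pr$ is continuous, as in Section~\ref{sec:definition}). The delicate point --- and the one I expect to be the main obstacle --- is that this swap must be performed on the \emph{whole} sign vector at once, not one primed index at a time: flipping a single primed index while a neighbour stays primed would require comparing a ``$B$-type'' sum $\sum_q (\bm X_1)_q^2(\bm X_2^\prime)_q^2$ against a ``$C$-type'' sum $\sum_q (\bm X_1^\prime)_q^2(\bm X_2^\prime)_q^2$, and these share the asymptotic mean $d^{-1-\alpha}(\textnormal{Tr}(\mathbf D))^2$, so neither dominates the other and the flip need not lower the cost. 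The global swap sidesteps this because every modified edge is compared against $\widetilde\theta(\bm X_i,\bm X_j)$, which on $E_d$ is the minimum over all four configurations of that pair.
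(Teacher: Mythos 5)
Your proposal is correct and follows essentially the same route as the paper: part (a) by the exchangeability argument of Theorem \ref{thm:sign-rank-dist}(a), and part (b) by computing the means $d^{-\alpha}\textnormal{Tr}({\bf D}^2)$ versus $d^{-1-\alpha}(\textnormal{Tr}({\bf D}))^2$, invoking Assumption (A3) for concentration so that $\widetilde\theta(\bm X_i,\bm X_j)$ is the strict minimum of the four configurations with probability tending to one, and then reusing the edge-by-edge (global swap) argument from the proof of Theorem \ref{thm:HDLSS-consistency}. Your closing observation about why the swap must be performed globally rather than one primed index at a time is a correct reading of why that argument is structured as it is.
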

			Note that using Jensen's inequality we have {$d~\textnormal{Tr}({\bf D}^2) \geq (\textnormal{Tr}({\bf D}))^2$, or $\frac{1}{d^\alpha}\textnormal{Tr}({\bf D}^2) - \frac{1}{d^{1+\alpha}}\Big(\textnormal{Tr}({\bf D})\Big)^2\geq 0$, where the equality holds if and only if {all diagonal elements of ${\bf D}$} are equal. So, the condition \eqref{eq:magic} holds when the variance {among the diagonal elements of ${\bf D}$} remains bounded away from $0$ as the dimension grows to infinity. This condition is satisfied in Examples 3.2 and 3.3 but not in Example 3.1. This explains the difference in the performance of the tests based on ${\widetilde T}_S$ and ${\widetilde T}_R$ in these three examples.
			
			Therefore, in the HDLSS setup, while the tests based on $T_S$ and $T_R$ may fail to detect weak signals in the diagonal part (for instance, in Example 3.2, where all diagonal elements of the covariance matrix are not same but the sphericity condition holds), those based on ${\widetilde T}_S$ and ${\widetilde T}_R$ cannot detect signals present in the off-diagonal part (for instance in Example 3.1, where we have non-zero off-diagonals elements). To overcome these limitations, one can think of combining the strengths of these two types of tests. For instance, we can use $T^M_{S} = \max\{T_{S},{\widetilde T}_{S}\}$ and $T^M_{R} = \min\{T_{R},{\widetilde T}_{R}\}$ as the test statistics to boost the performance of the sign and runs tests for a larger class of alternatives. Naturally, we reject the null hypothesis of spherical symmetry for large values of the modified sign statistic $T^M_{S}$ or small values of the modified runs statistic $ T^M_{R}$. The cut-offs can be computed using the an appropriate resampling method, but
			to keep our tests simple and computationally efficient, here  we use Bonferroni's method for calibration. Note that under the null hypothesis, $T_{S}$ and ${\widetilde T}_{S}$ (respectively, $T_{R}$ and ${\widetilde T}_{R}$) are identically distributed. So, the cut-off for $T^M_{S}$ (respectively, $T^M_{R}$) can be easily obtained from statistical tables and packages. One can use tests based on 
			$T^S_{S} = T_{S} + {\widetilde T}_{S}$ and $T^S_{R} = T_{R} + {\widetilde T}_{R}$ as well, but in those cases, Bonferroni's method cannot be used, and the user needs to go for the resampling algorithm for calibration. So, these tests become computationally expensive and we do not consider them in this article.
			
			Figure \ref{fig:sim-3} shows the performance of the modified sign and runs tests in Examples 3.1-3.3. In all three examples, they had excellent performance. So, it seems reasonable to use $T^M_S$ or $T^M_R$ as the test statistic.
			The following theorem establishes the consistency of the resulting tests in the HDLSS asymptotic regime.
			
			\begin{figure}[h]
				\centering
				\begin{tikzpicture}
					\begin{axis}[xmin = 1, xmax = 10, ymin = 0, ymax = 1, xlabel = {$\log_2(d)$}, ylabel = {Power Estimates}, title = {\bf Example \ref{exa:first}}]
						\addplot[color = red,   mark = *, step = 1cm,very thin]coordinates{(1,0.159)(2,0.53)(3,0.864)(4,0.98)(5,0.998)(6,1)(7,1)(8,1)(9,1)(10,1)};
						
						
						\addplot[color = purple,   mark = diamond*, step = 1cm,very thin]coordinates{(1,0.106)(2,0.454)(3,0.839)(4,0.97)(5,0.999)(6,1)(7,1)(8,1)(9,1)(10,1)};
						
					\end{axis}
				\end{tikzpicture}
				\begin{tikzpicture}
					\begin{axis}[xmin = 1, xmax = 10, ymin = 0, ymax = 1, xlabel = {$\log_2(d)$}, ylabel = {Power Estimates}, title = {\bf Example \ref{exa:second}}]
						\addplot[color = red,   mark = *, step = 1cm,very thin]coordinates{(1,0.1)(2,0.134)(3,0.152)(4,0.213)(5,0.341)(6,0.567)(7,0.801)(8,0.956)(9,1)(10,1)};
						
						
						\addplot[color = purple,   mark = diamond*, step = 1cm,very thin]coordinates{(1,0.071)(2,0.107)(3,0.145)(4,0.194)(5,0.311)(6,0.47)(7,0.737)(8,0.949)(9,0.998)(10,1)};
						
					\end{axis}
				\end{tikzpicture}
				\begin{tikzpicture}
					\begin{axis}[xmin = 1, xmax = 10, ymin = 0, ymax = 1, xlabel = {$\log_2(d)$}, ylabel = {Power Estimates}, title = {\bf Example \ref{exa:third}}]
						\addplot[color = red,   mark = *, step = 1cm,very thin]coordinates{(1,0.089)(2,0.456)(3,0.882)(4,0.99)(5,1)(6,1)(7,1)(8,1)(9,1)(10,1)};
						
						
						\addplot[color = purple,   mark = diamond*, step = 1cm,very thin]coordinates{(1,0.079)(2,0.369)(3,0.854)(4,0.992)(5,1)(6,1)(7,1)(8,1)(9,1)(10,1)};
						
					\end{axis}
				\end{tikzpicture}
				\caption{Observed power of the Modified sign test (\tikzsymbol[circle]{minimum width=2pt,fill=red}) 
					and the Modified runs test (\tikzsymbol[diamond]{minimum width=2pt,fill=purple}) when $50$ observations are generated from the $d$-variate normal distributions (with $d=2^i$ for $i=1,2,\ldots,10$) in Examples 3.1-3.3.}
				\vspace{-0.05in}
				\label{fig:sim-3}
			\end{figure}
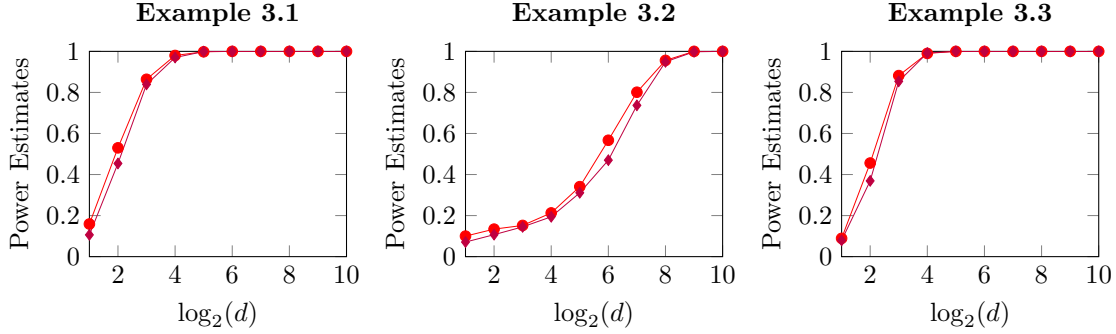
			
			{\begin{thm}\label{thm:HDLSS-boosted-tests}
					Let $\bm X_1,\bm X_2,\ldots,\bm X_n$ be $n$ independent realizations of $\bm X$, which follows a $d$-dimensional distribution $\Pr$ which is not spherically symmetric. Let ${\bm \Sigma}$ be the covariance matrix of $\bm X$. Define ${\bm D} = \textnormal{diag}({\bm \Sigma})$. Also assume that one of the following conditions.
					\begin{itemize}
						\item[(a)] for all $M>0$, $\P\left[\frac{d(\bm X_1^\top\bm X_2)^2}{\|\bm X_1\|^2\|\bm X_2\|^2}>M\right]\rightarrow 1$ as $d$ diverges to infinity,
						\item[(b)] Assumption (A3) holds and  $\liminf_{d\rightarrow\infty}\left\{\frac{1}{d^\alpha}\textnormal{Tr}({\bf D}^2) - \frac{1}{d^{1+\alpha}}\Big(\textnormal{Tr}({\bf D})\Big)^2\right\}>0$.
					\end{itemize}
					Then $T^M_{S}$ converges in probability to its maximum value $n$ and $T^M_{R}$ converges in probability to its minimum value $1$ as $d$ diverges to infinity. 
			\end{thm}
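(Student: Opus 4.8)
The plan is to obtain the statement as a short consequence of Theorem \ref{thm:HDLSS-consistency} and part (b) of Theorem \ref{thm:modified-rank-tests}, combined with the deterministic ranges of the four statistics involved. First I would record the elementary bounds that hold with probability one for every $d$: since $T_S=\sum_{i=1}^n S_i$ and ${\widetilde T}_S=\sum_{i=1}^n {\widetilde S}_i$ are sums of $n$ Bernoulli variables, $0\le T_S\le n$ and $0\le {\widetilde T}_S\le n$; and since $T_R = 1+\sum_{i=1}^{n-1}\mathrm{I}\{S_{\pi_i}\ne S_{\pi_{i+1}}\}$ and ${\widetilde T}_R = 1+\sum_{i=1}^{n-1}\mathrm{I}\{{\widetilde S}_{{\widetilde \pi}_i}\ne {\widetilde S}_{{\widetilde \pi}_{i+1}}\}$, we have $T_R\ge 1$ and ${\widetilde T}_R\ge 1$. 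Hence $T^M_S=\max\{T_S,{\widetilde T}_S\}\le n$ and $T^M_R=\min\{T_R,{\widetilde T}_R\}\ge 1$ always.

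Next I would treat the two hypotheses separately. Suppose condition (a) holds. Then Theorem \ref{thm:HDLSS-consistency} gives $\bm S\stackrel{P}{\rightarrow}{\bf 1}_n$ as $d\to\infty$; since $\bm S$ takes values in the finite set $\{0,1\}^n$, this is the same as $\P[\bm S={\bf 1}_n]\to 1$. On the event $\{\bm S={\bf 1}_n\}$ we have $T_S=n$, and every indicator in the definition of $T_R$ vanishes regardless of the ordering $\bm \Pi$, so $T_R=1$; hence $\P[T_S=n]\to 1$ and $\P[T_R=1]\to 1$. Combining with the deterministic bounds, $T_S\le T^M_S\le n$ forces $T^M_S\stackrel{P}{\rightarrow}n$, and $1\le T^M_R\le T_R$ forces $T^M_R\stackrel{P}{\rightarrow}1$. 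If instead condition (b) holds, the argument is identical with $\widetilde{\bm S}$ in place of $\bm S$: part (b) of Theorem \ref{thm:modified-rank-tests} applies (this is where Assumption (A3) and the condition \eqref{eq:magic} are used) and yields $\widetilde{\bm S}\stackrel{P}{\rightarrow}{\bf 1}_n$, hence $\P[{\widetilde T}_S=n]\to 1$ and $\P[{\widetilde T}_R=1]\to 1$, and then ${\widetilde T}_S\le T^M_S\le n$ and $1\le T^M_R\le {\widetilde T}_R$ give the conclusion.

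There is no genuinely hard step here: all the probabilistic content has already been carried out in the two cited theorems, and the present proof merely packages it. The one point to be careful about is that under (a) we have no control over $\widetilde{\bm S}$ (and under (b) none over $\bm S$), so the argument cannot proceed via both components simultaneously; the max/min construction is precisely what makes this harmless, since $T^M_S$ is sandwiched between the ``good'' statistic and the deterministic ceiling $n$, and $T^M_R$ between the deterministic floor $1$ and the ``good'' statistic. I would also note that the Bonferroni calibration mentioned in the surrounding discussion is not part of the formal claim and requires no proof; it rests only on the fact that, under $H_0$, each of $T_S,{\widetilde T}_S$ and each of $T_R,{\widetilde T}_R$ has the classical univariate sign, respectively runs, null distribution (part (a) of Theorems \ref{thm:sign-rank-dist} and \ref{thm:modified-rank-tests}).
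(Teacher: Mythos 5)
Your proposal is correct and follows essentially the same route as the paper: under (a) you invoke Theorem \ref{thm:HDLSS-consistency} to get $\bm S\stackrel{P}{\rightarrow}{\bf 1}_n$ (hence $T_S=n$, $T_R=1$ with probability tending to one), under (b) you invoke part (b) of Theorem \ref{thm:modified-rank-tests} for $\widetilde{\bm S}$, and in either case the deterministic bounds $T^M_S\le n$ and $T^M_R\ge 1$ sandwich the modified statistics at their extremes. The only cosmetic difference is that you derive $T_R=1$ directly from the event $\{\bm S={\bf 1}_n\}$ rather than citing the runs-statistic convergence result, which is if anything slightly cleaner.
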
}
			
			\begin{rem}
				Under $H_0$, we have $P(T^M_S \ge n) \le P(T_S = n)+P({\widetilde T}_S = n)= 1/2^{n-1}$. So, for a any fixed level $\alpha$ ($0<\alpha<1$), if $n$ exceeds 
				$-\log_2(\alpha)+1$, 
				the power of the modified sign test converges to $1$
				as the dimension increases. Under $H_0$, we also have $P(T^M_R \le 1) \le P(T_R = 1)+P({\widetilde T}_R = 1)= 1/2^{n-2}$. So, for the consistency of the modified runs test in the HDLSS asymptotic regime, we need $n>-\log_2(\alpha)+2$.
			\end{rem}
			
			From our discussion in this section, it is clear that if the covariance matrix of the underlying distribution differs from a scalar multiple of the identity matrix, our modified sign and runs tests may work well in the HDLSS regime. Now, one may be curious to know what happens if the underlying distribution is not spherically symmetric but the covariance matrix is a scalar multiple of the identity matrix, for instance if the coordinate variables are independent and identically distributed (i.i.d.)  but the distribution is not spherical. To investigate it, we consider two simple examples.   
			
			\begin{exa}\label{exa:iid-uniform}
				Here we deal with a uniform distribution on the $d$-dimensional hypercube $[-1,1]^d$.
			\end{exa}
			
			\begin{exa}\label{exa:iid-coordinates}
				We generate observations on $\bm X = (X_1,\ldots, X_d)$, where the coordinate variables are independent and identically distributed as Laplace ($0,1$) variates.
			\end{exa}
			
			For each of these examples, we consider a sample of size $50$ and use six different value of $d$ ($d=2^i$ for $i=1,\ldots, 6$). Each experiment is carried out $100$ times to compute the empirical powers of different tests and they are reported in Figure \ref{fig:sim-3.1}. 
			
			\begin{figure}[h]
				\centering
				\begin{tikzpicture}
					\begin{axis}[xmin = 1, xmax = 6, ymin = 0, ymax = 1, xlabel = {$\log_2(d)$}, ylabel = {Power Estimates}, title = {\bf Example \ref{exa:iid-uniform}}]
						
						\addplot[color = blue,   mark = *, step = 1cm,very thin]coordinates{(1,0.087)(2,0.085)(3,0.078)(4,0.073)(5,0.061)(6,0.069)};
						
						\addplot[color = blue,   mark = diamond*, step = 1cm,very thin]coordinates{(1,0.066)(2,0.075)(3,0.06)(4,0.05)(5,0.041)(6,0.056)};
						
						\addplot[color = red,   mark = *, step = 1cm,very thin]coordinates{(1,0.029)(2,0.028)(3,0.027)(4,0.018)(5,0.017)(6,0.021)};
						
						\addplot[color = red,   mark = diamond*, step = 1cm,very thin]coordinates{(1,0.085)(2,0.191)(3,0.371)(4,0.596)(5,0.768)(6,0.848)};

						
					\end{axis}
				\end{tikzpicture}
				\begin{tikzpicture}
					\begin{axis}[xmin = 1, xmax = 6, ymin = 0, ymax = 1, xlabel = {$\log_2(d)$}, ylabel = {Power Estimates}, title = {\bf Example \ref{exa:iid-coordinates}}]
						
						\addplot[color = blue,   mark = *, step = 1cm,very thin]coordinates{(1,0.077)(2,0.072)(3,0.09)(4,0.075)(5,0.094)(6,0.075)};
						
						\addplot[color = blue,   mark = diamond*, step = 1cm,very thin]coordinates{(1,0.063)(2,0.04)(3,0.061)(4,0.048)(5,0.045)(6,0.045)};
						
						\addplot[color = red,   mark = *, step = 1cm,very thin]coordinates{(1,0.081)(2,0.087)(3,0.158)(4,0.263)(5,0.476)(6,0.651)};
						
						\addplot[color = red,   mark = diamond*, step = 1cm,very thin]coordinates{(1,0.06)(2,0.051)(3,0.072)(4,0.072)(5,0.135)(6,0.183)};

						
					\end{axis}
				\end{tikzpicture}
				\caption{Power of the sign test (\tikzsymbol[circle]{minimum width=2pt,fill=blue}), runs test, (\tikzsymbol[diamond]{minimum width=2pt,fill=blue}), modified sign test (\tikzsymbol[circle]{minimum width=2pt,fill=red}), and modified runs test (\tikzsymbol[diamond]{minimum width=2pt,fill=red}) in Examples \ref{exa:iid-uniform}
					and \ref{exa:iid-coordinates} for $n=50$ and $d = 2^i$ for $i=1,\ldots, 6$. }
				\label{fig:sim-3.1}
			\end{figure}
			
     		The sign test and the runs test discussed in the previous section have very poor performance in these examples. In Example \ref{exa:iid-uniform}, the modified sign test also failed but the modified runs test had an excellent performance. However, in Example \ref{exa:iid-coordinates}, the modified sign test performed well. The power of the modified runs test also increased with the dimension though the rate of increment was relatively slower. This result shows that the modification helped to improve the performance of the tests substantially in some cases.
			
			To understand the reason behind this behavior of modified sign and runs tests, we look at the distributions of $\widetilde{\theta}(\bm X_i,\bm X_j),$ $\widetilde{\theta}(\bm X_i, \bm X_j^\prime)$ and $\widetilde{\theta}(\bm X_i^\prime, \bm X_j^\prime)$s, which are shown in Figure \ref{fig:density-estimate-lp-1} for $d=1000$. From our previous discussions at the beginning of this section, one can show that under Assumption (A3), all of them converge to the same value as $d$ increases. But Figure \ref{fig:density-estimate-lp-1} shows us an interesting phenomenon. Note that the left tails of these distributions play an important role in our methods as the shortest covering path construction algorithm starts the pair of observations corresponding to the smallest value of ${\widetilde \theta}$ and then the other pairs are joined subsequently. Figure \ref{fig:density-estimate-lp-1} shows that in  Example \ref{exa:iid-uniform}, we are likely to start with a pair of the form $(\bm X_i^\prime,\bm X_j^\prime)$, and subsequently join more observations from the set $\{\bm X_1^\prime, \bm X_2^\prime,\ldots,\bm X_n^\prime\}$. As a result,
			both the runs statistic and the sign statistic take 
			smaller values. So, the runs test based on ${\widetilde T}_R$ and hence the modified runs test work well, but the sign test that rejects $H_0$ for larger values of ${\widetilde T}_S$ performs poorly, and so does the modified sign test. But we observed an opposite picture in Example \ref{exa:iid-coordinates}. Here the shortest covering path is likely to start with a pair of the form $(\bm X_i,\bm X_j)$, and we are expected to have a dominance of the original observations on the path. So, the sign test based on ${\widetilde T}_S$ and hence the modified sign test performed well. The power of the modified runs test also showed an increasing trend but its performance was relatively inferior compared to the modified sign test.
			
			\begin{figure}[h]
				\centering
				\setlength{\tabcolsep}{-2pt}
				\begin{tabular}{cc}
					(a) Example 4.1 & (b) Example 4.2\\
					\includegraphics[width=3.25in,height=2.5in]{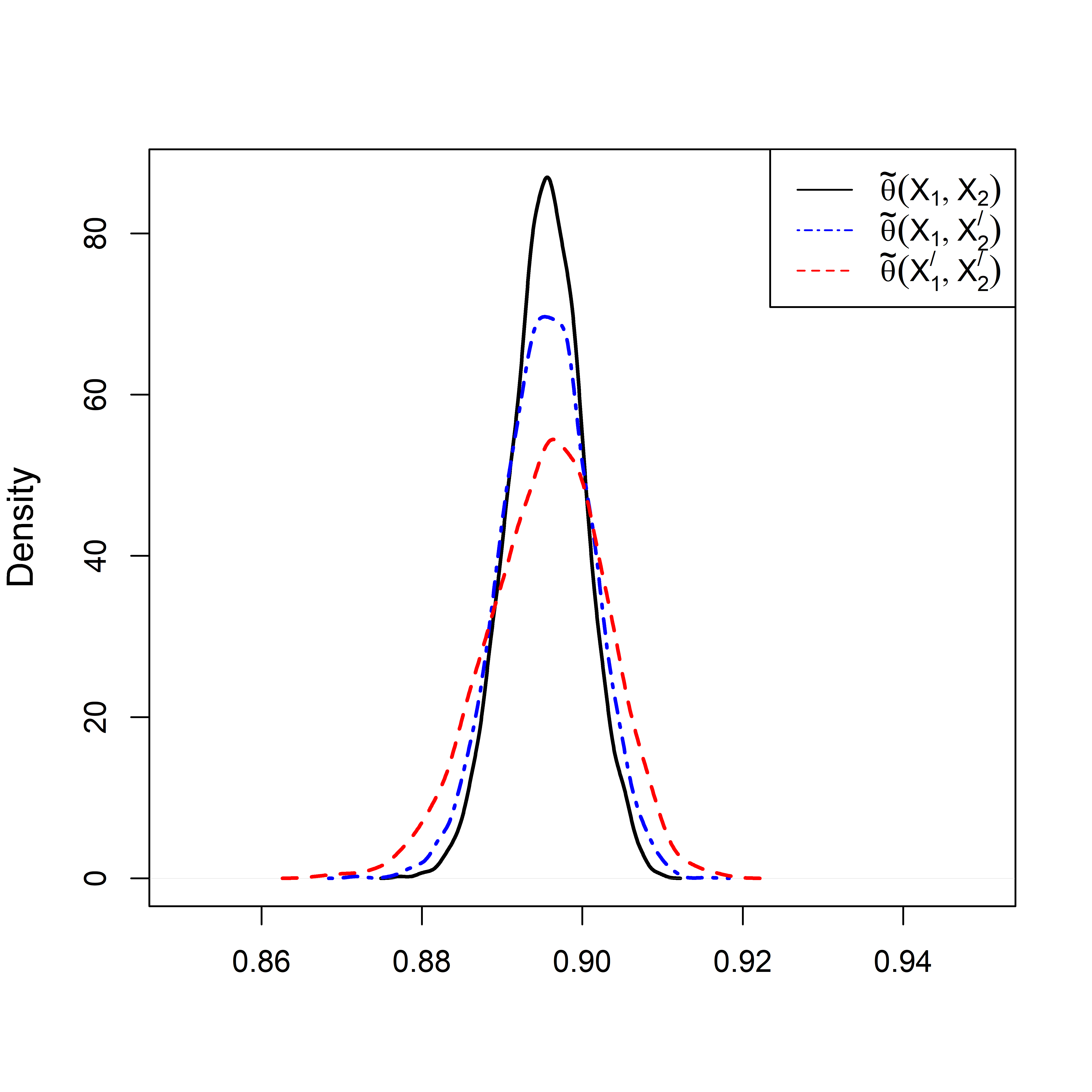} &
					\includegraphics[width=3.25in,height=2.5in]{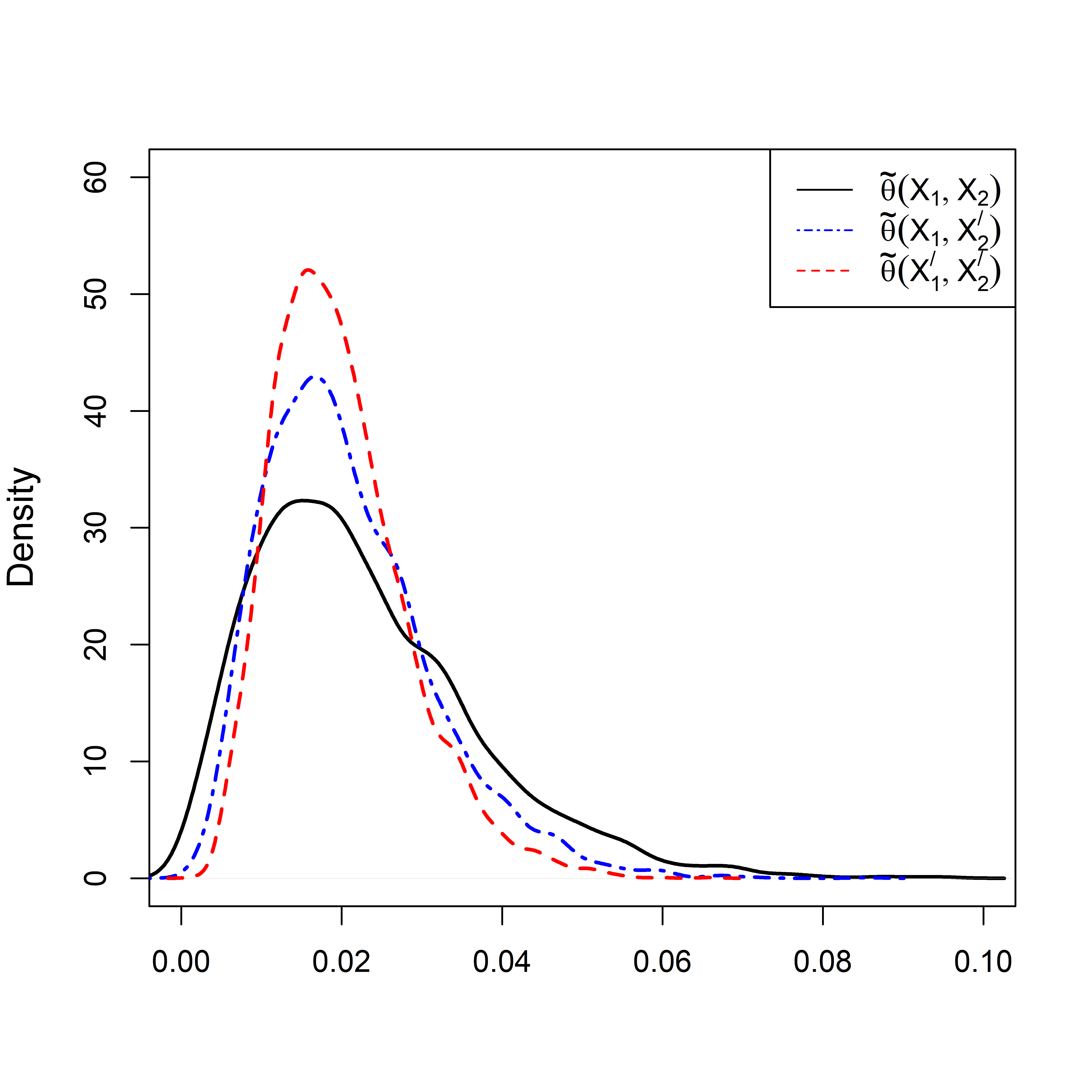}
				\end{tabular}
				\vspace{-0.25in}
				\caption{The density estimates of  $\tilde{\theta}(.,.)$ when $\bm X_1,\ldots, \bm X_n$ are generated independently as described in Examples \ref{exa:iid-uniform} and
				 \ref{exa:iid-coordinates}.}
				\label{fig:density-estimate-lp-1}
			\end{figure}
			
			However, powers of all these tests show increasing trends when the sample size also increases with the dimension (see Figure \ref{fig:sim-3.2}). 
			In both examples, modified sign and runs tests had much better performance than what we observed in Figure \ref{fig:sim-3.1}. We also observed the same for the sign and runs tests based on $T_S$ and $T_R$. We have already studied the HDHSS behavior of these two tests in Section 3. Now, we briefly investigate the HDHSS behavior of our modified tests.
			
			\begin{figure}[h]
				\centering
				\begin{tikzpicture}
					\begin{axis}[xmin = 1, xmax = 5, ymin = 0, ymax = 1, xlabel = {$\log_2(d)$}, ylabel = {Power Estimates}, title = {\bf Example \ref{exa:iid-uniform}}]
						
						\addplot[color = blue,   mark = *, step = 1cm,very thin]coordinates{(1,0.093)(2,0.101)(3,0.083)(4,0.162)(5,0.256)};
						
						\addplot[color = blue,   mark = diamond*, step = 1cm,very thin]coordinates{(1,0.05)(2,0.064)(3,0.069)(4,0.142)(5,0.282)};
						
						\addplot[color = red,   mark = *, step = 1cm,very thin]coordinates{(1,0.019)(2,0.032)(3,0.035)(4,0.091)(5,0.159)};
						
						\addplot[color = red,   mark = diamond*, step = 1cm,very thin]coordinates{(1,0.042)(2,0.106)(3,0.767)(4,1)(5,1)};

						
					\end{axis}
				\end{tikzpicture}
				\begin{tikzpicture}
					\begin{axis}[xmin = 1, xmax = 5, ymin = 0, ymax = 1, xlabel = {$\log_2(d)$}, ylabel = {Power Estimates}, title = {\bf Example \ref{exa:iid-coordinates}}]
						
						\addplot[color = blue,   mark = *, step = 1cm,very thin]coordinates{(1,0.073)(2,0.081)(3,0.071)(4,0.2)(5,0.721)};
						
						\addplot[color = blue,   mark = diamond*, step = 1cm,very thin]coordinates{(1,0.047)(2,0.049)(3,0.061)(4,0.163)(5,0.699)};
						
						\addplot[color = red,   mark = *, step = 1cm,very thin]coordinates{(1,0.029)(2,0.057)(3,0.354)(4,1)(5,1)};
						
						\addplot[color = red,   mark = diamond*, step = 1cm,very thin]coordinates{(1,0.034)(2,0.048)(3,0.094)(4,0.786)(5,1)};

						
					\end{axis}
				\end{tikzpicture}
				\caption{Power of the sign test (\tikzsymbol[circle]{minimum width=2pt,fill=blue}), runs test, (\tikzsymbol[diamond]{minimum width=2pt,fill=blue}), modified sign test (\tikzsymbol[circle]{minimum width=2pt,fill=red}), and modified runs test (\tikzsymbol[diamond]{minimum width=2pt,fill=red}) in Examples \ref{exa:iid-uniform} and \ref{exa:iid-coordinates} for $n= d^2 + 20 $ and $d = 2^i$ for $i=1,\ldots, 5$. }
				\label{fig:sim-3.2}
			\end{figure}

			Note that the null distributions of the test based on ${\widetilde T}_S$ and ${\widetilde T}_R$ are identical to those based on ${T}_S$ and ${T}_R$, respectively. Theorem \ref{thm:limit-null-distribution} and Theorem \ref{thm:large-dist-runs} give the asymptotic null distributions of ${T}_S$ and ${T}_R$ when the dimension and the sample size both tend to infinity (these results hold even when the dimension is fixed and the sample size diverges
			to infinity). One can show that  ${\widetilde T}_S$  and ${\widetilde T}_R$ have the same asymptotic behavior. However, studying the asymptotic behavior of the modified tests turns out to be a bit complicated due to the dependence between these two sign statistics and that between two runs statistics. The following theorem summarizes the asymptotic null behavior of the modified test statistics.
			
			\begin{thm}
				\label{thm:HDHSS-modified-tests-null}
				Let $\bm X_1,\ldots, \bm X_n$ be independent realizations of a $d$-dimensional random vector $\bm X\sim \Pr$. Assume that $\Pr$ is spherically symmetric, then, we have the following results:
				\begin{itemize}
					\item[(a)] Assume that $\sigma_s^2 = \lim_{d\rightarrow\infty} \P[S_1=1; \tilde{S}_1=1]$ exists. Then, $n$ and $d$ both grow to infinity, we have
    				\begin{align*}
						\frac{\max\{T_S, \tilde{T}_S\} - \frac{n}{2}}{\sqrt{n}} \stackrel{D}{\rightarrow} \max\{Z_1,Z_2\}.
					\end{align*}
				    where $(Z_1,Z_2)$ follows a bivariate normal distribution with mean at the origin, equal variance $\frac{1}{4}$ and covariance $\big(\sigma_s^2-\frac{1}{4}\big)$.
					
					\item[(b)] Assume that $\sigma_r^2 = \lim_{d\rightarrow\infty} \textnormal{Cov}\big(\mathrm{I}\{S_{\pi_1}\not = S_{\pi_2}\}, \mathrm{I}\{\tilde{S}_{\tilde{\pi}_1} \not = \tilde{S}_{\tilde{\pi}_2}\}\big)$ exists. Then,  $n$ and $d$ both grow to infinity, we have
					\begin{align*}
						 \frac{\min\{T_R, \tilde{T}_R\} - \frac{n+1}{2}}{\sqrt{n}} \stackrel{D}{\rightarrow} \min\{Z_1^\prime,Z_2^\prime\}
					\end{align*}
					$(Z_1^\prime, Z_2^\prime)$ also follows a bivariate normal distribution with same marginals as $(Z_1,Z_2)$, but its covariance is $\big(\sigma_r^2-\frac{1}{4}\big)$.
				\end{itemize}
			\end{thm}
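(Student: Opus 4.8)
\medskip\noindent\textbf{Proof proposal.}\ The plan is to obtain the joint weak limit of the pair $\big((T_S-n/2)/\sqrt n,\ (\widetilde T_S-n/2)/\sqrt n\big)$ (and of the analogous pair built from $T_R$ and $\widetilde T_R$) and then to apply the continuous mapping theorem, using that $(x,y)\mapsto\max\{x,y\}$ and $(x,y)\mapsto\min\{x,y\}$ are continuous. The marginal limits are already in hand: Theorem~\ref{thm:limit-null-distribution} with $a(\cdot)\equiv1$ gives $(T_S-n/2)/\sqrt n\stackrel{D}{\rightarrow}\mathcal N(0,1/4)$, the same holds for $\widetilde T_S$ because $\widetilde{\bm S}\sim\mathrm{Unif}(\{0,1\}^n)$ under $H_0$, and Theorem~\ref{thm:large-dist-runs} supplies the corresponding statements for $T_R$ and $\widetilde T_R$. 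So the real work is to identify the limiting cross-covariance and to show that the joint limit is genuinely Gaussian rather than a mixture of Gaussians.

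For part (a) the key tool is an exact symmetry valid under $H_0$: for each $j$, relabelling the pair $(\bm X_j,\bm X_j^\prime)$ as $(\bm X_j^\prime,\bm X_j)$ leaves the joint law of the augmented sample unchanged (under $H_0$, $(\bm X_j,\bm X_j^\prime)\stackrel{D}{=}(\bm X_j^\prime,\bm X_j)$), preserves both covering paths as geometric objects and hence preserves $\bm\Pi$ and $\widetilde{\bm\Pi}$, and flips exactly the two coordinates $S_j\mapsto1-S_j$ and $\widetilde S_j\mapsto1-\widetilde S_j$, leaving all other signs fixed. Applying this flip at a single index $j\neq i$ gives $(S_i,\widetilde S_j)\stackrel{D}{=}(S_i,1-\widetilde S_j)$, so $\mathrm{Cov}(S_i,\widetilde S_j)=0$ for $i\neq j$, while $\mathrm{Cov}(S_i,S_j)=\mathrm{Cov}(\widetilde S_i,\widetilde S_j)=0$ for $i\neq j$ since each of $\bm S,\widetilde{\bm S}$ is exactly uniform. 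Hence $n^{-1}\mathrm{Cov}(T_S,\widetilde T_S)=n^{-1}\sum_{i=1}^n\mathrm{Cov}(S_i,\widetilde S_i)$, and by exchangeability of the observations $\mathrm{Cov}(S_i,\widetilde S_i)=\P[S_1=1;\widetilde S_1=1]-\tfrac14\rightarrow\sigma_s^2-\tfrac14$, which is the claimed covariance. To get the Gaussian joint limit I would condition on the disagreement vector $\bm C=\bm S\oplus\widetilde{\bm S}$: the above flip leaves $\bm C$ invariant and acts transitively on $\bm S$ given $\bm C$, so $\bm S\mid\bm C\sim\mathrm{Unif}(\{0,1\}^n)$; writing $k=\sum_iC_i$ one obtains the exact conditional identity $\big(T_S-\tfrac n2,\ \widetilde T_S-\tfrac n2\big)=\big(\bar A+\bar B,\ \bar A-\bar B\big)$, where $\bar A\mid\bm C$ is a centred $\mathrm{Bin}(n-k,\tfrac12)$ and $\bar B\mid\bm C$ an independent centred $\mathrm{Bin}(k,\tfrac12)$. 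A conditional (binomial) bivariate CLT then produces, given $\bm C$, a bivariate normal limit whose covariance matrix is a function of $k/n$ only.

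This is where the main obstacle sits: the conditional covariance matrix depends on the \emph{random} fraction $k/n=n^{-1}\sum_i\mathrm I\{S_i\neq\widetilde S_i\}$, so without further argument the unconditional limit is a scale mixture of bivariate normals; to land on the stated single bivariate normal one must prove $k/n\stackrel{P}{\rightarrow}1-2\sigma_s^2$. Since $\mathbb E[k/n]=1-2\,\P[S_1=1;\widetilde S_1=1]\rightarrow1-2\sigma_s^2$ already, what is needed is a variance bound $\var(k)=o(n^2)$, i.e.\ control of $\sum_{i\neq j}\mathrm{Cov}(C_i,C_j)$ --- equivalently, a statement that the de Finetti mixing parameter of the exchangeable sequence $\{(S_i,\widetilde S_i)\}_i$ degenerates as $d\rightarrow\infty$. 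This is the point at which the hypothesis ``$\sigma_s^2=\lim_d\P[S_1=1;\widetilde S_1=1]$ exists'' must be leveraged, presumably together with a stability estimate for the covering-path construction under a single-observation perturbation. Granting this, a standard characteristic-function computation shows the joint limit is $\mathcal N(0,\Sigma)$ with $\Sigma_{11}=\Sigma_{22}=\tfrac14$ and $\Sigma_{12}=\sigma_s^2-\tfrac14$, and the continuous mapping theorem applied to $\max$ finishes part (a).

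Part (b) runs along the same lines. The bijection $\bm S\mapsto(S_{\pi_1},D_1,\dots,D_{n-1})$ with $D_a=\mathrm I\{S_{\pi_a}\neq S_{\pi_{a+1}}\}$ shows $T_R-1=\sum_{a=1}^{n-1}D_a$ is, under $H_0$, a sum of i.i.d.\ $\mathrm{Bernoulli}(1/2)$ variates (likewise $\widetilde T_R-1=\sum_a\widetilde D_a$), re-deriving the marginals. Conditioning on the geometry ($\bm C$ together with the two path orderings), $\bm S$ is again uniform, and a short calculation with the XOR representations shows that the conditional covariance of $T_R$ and $\widetilde T_R$ equals $\tfrac14\sum_{\{u,v\}\in\mathcal P\cap\widetilde{\mathcal P}}(-1)^{C_u}(-1)^{C_v}$, i.e.\ it is driven solely by the edges shared by the two covering paths (which reflects the vanishing, via the same coordinate-flip symmetry, of $\mathrm{Cov}(D_a,\widetilde D_b)$ whenever the edges $\{\pi_a,\pi_{a+1}\}$ and $\{\widetilde\pi_b,\widetilde\pi_{b+1}\}$ do not coincide). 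One then argues, exactly as in part (a), that this averaged quantity concentrates in probability, and matching expectations forces the limit to be $\sigma_r^2-\tfrac14$ with $\sigma_r^2=\lim_d\mathrm{Cov}(\mathrm I\{S_{\pi_1}\neq S_{\pi_2}\},\mathrm I\{\widetilde S_{\widetilde\pi_1}\neq\widetilde S_{\widetilde\pi_2}\})$; a conditional bivariate CLT gives the Gaussian joint limit with the stated marginals, and the continuous mapping theorem applied to $\min$ yields the result. As in part (a), the crux is the stabilisation (convergence in probability to a constant) of the averaged conditional second moments, which is precisely what the assumed existence of $\sigma_r^2$ is there to supply.
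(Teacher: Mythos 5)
Your proposal does not reach the stated conclusion, and you say so yourself: after conditioning on the disagreement vector $\bm C=\bm S\oplus\widetilde{\bm S}$ the conditional covariance matrix depends on the random fraction $k/n=n^{-1}\sum_i \mathrm{I}\{S_i\neq \widetilde S_i\}$, so the unconditional limit is a mixture of bivariate normals unless $k/n$ concentrates at $1-2\sigma_s^2$. That concentration is exactly the step you leave as ``granting this''. The hypothesis that $\sigma_s^2=\lim_d \P[S_1=1;\widetilde S_1=1]$ exists does not supply it: it controls $\E[k/n]$ only, whereas you need $\var(k)=o(n^2)$, i.e.\ control of $\sum_{i\neq j}\mathrm{Cov}(C_i,C_j)$. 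Your coordinate-flip symmetry gives the diagonal $(\mathbb{Z}/2)^n$-invariance of the law of $(\bm S,\widetilde{\bm S})$, hence $\bm S\mid \bm C\sim\mathrm{Unif}(\{0,1\}^n)$ and $\mathrm{Cov}(S_i,\widetilde S_j)=0$ for $i\neq j$, but it says nothing about the joint law of $C_1,\dots,C_n$; pairwise uncorrelatedness is not enough for a CLT. So as written you have the correct marginals and the correct limiting covariance $\sigma_s^2-\tfrac14$, but not joint asymptotic normality. Part (b) has the same unclosed step (concentration of the averaged conditional second moments along the two paths).

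The paper's proof is much shorter and rests on one structural claim you never invoke: under $H_0$ the pairs $\{(S_i,\widetilde S_i)\}_{1\le i\le n}$ are mutually independent across $i$ (only the within-pair dependence survives). Given that, $W_i^{(d)}=t_1(S_i-\tfrac12)+t_2(\widetilde S_i-\tfrac12)$ is a row-wise i.i.d.\ bounded triangular array, Lyapunov's CLT applies trivially, and Cram\'er--Wold plus the continuous mapping theorem finish part (a); part (b) is handled analogously with the bivariate martingale-difference array $t_1\big(\mathrm{I}\{S_{\pi_i}\neq S_{\pi_{i+1}}\}-\tfrac12\big)+t_2\big(\mathrm{I}\{\widetilde S_{\widetilde\pi_i}\neq\widetilde S_{\widetilde\pi_{i+1}}\}-\tfrac12\big)$ and the martingale CLT of Brown. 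Under that cross-index independence your $C_i$ are i.i.d., $\var(k)=O(n)$, and your mixture collapses to the stated single Gaussian --- the independence of the pairs is precisely the ``de Finetti degeneracy'' you identified as missing. To complete your version you must either prove that cross-index independence of $(S_i,\widetilde S_i)$ (extending the exchangeability argument behind Theorem \ref{thm:sign-rank-dist}(a) to the joint law along both paths) or establish $\var(k)=o(n^2)$ by some other means; without one of these the argument is incomplete.
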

			
			While the cut-off for the tests based on $T_S$ or ${\widetilde T}_S$ (note that they have the same cut-off) can be computed easily, finding the cut-off for the modified sign test is difficult to obtain from this asymptotic null distribution unless one finds a consistent estimator for the covariance. So, here also, we use the Bonferroni's method for implementing the modified sign test. The same strategy is used for the modified runs test as well.
			From the above discussion it is quite clear that if the test based on $T_S$ (respectively, $T_R$) or that based on ${\widetilde T}_S$ (respectively, ${\widetilde T}_R$), at least one of them is consistent, the modified sign test (respectively, the modified runs test) turns out to be consistent in the HDHSS set up. This result is stated in the following theorem.
			
			\begin{thm}
				\label{thm:HDHSS-modified-tests-alt}
				Let $\bm X_1,\ldots, \bm X_n$ be independent copies of a $d$-dimensional random vector $\bm X\sim \Pr$. If $\Pr$ is not spherically symmetric, we have the following results as $n$ and $d$ both diverge to infinity.
				\begin{itemize}
					\item[(a)] If $\max\big\{\liminf\limits_{n,d \rightarrow \infty} E[T_{S}/n], \liminf\limits_{n,d \rightarrow \infty} \E[\tilde{T}_{S}/n]\big\}>0.5$ the power of the modified sign test converges to $1$.
					\item[(b)] If $\min\big\{\limsup\limits_{n,d \rightarrow \infty} \E[T_R], \limsup\limits_{n,d \rightarrow \infty} \E[\tilde{T}_R/n]\big\}<0.5$, the power of the modified runs test converges $1$.
				\end{itemize}
			\end{thm}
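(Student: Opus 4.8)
The plan is to reduce the consistency of each modified test to the concentration of one of its two component statistics on the rejecting side of a Bonferroni cut-off whose value, after dividing by $n$, tends to $1/2$.

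For part (a), I would first record the null behaviour. By part (a) of Theorem \ref{thm:modified-rank-tests}, $\widetilde{\bm S}\sim\mathrm{Unif}(\{0,1\}^n)$, so $\widetilde T_S$ and $T_S$ share a common null law, and Theorem \ref{thm:limit-null-distribution} (with $a(i)\equiv 1$) gives $n^{-1/2}(T_S-n/2)\stackrel{D}{\to}\mathcal N(0,1/4)$ for this law. Consequently the Bonferroni cut-off $c_S^M$ of the modified sign test at level $\alpha$ --- the smallest integer $c$ with $\P_{H_0}(T_S\ge c)\le\alpha/2$, so that the rejection region is $\{\max\{T_S,\widetilde T_S\}\ge c_S^M\}$ --- satisfies $c_S^M/n\to 1/2$. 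On the alternative side, the discussion around Theorem \ref{thm:limit-alt-convergence} (together with Lemma \ref{lemma:sign-stat-convergence}) already gives $|T_S-\E(T_S)|/n\stackrel{P}{\to}0$, and the same argument applied with the cost function $\widetilde\theta$ gives $|\widetilde T_S-\E(\widetilde T_S)|/n\stackrel{P}{\to}0$. Assuming now $\max\{\liminf_{n,d}\E[T_S/n],\liminf_{n,d}\E[\widetilde T_S/n]\}>1/2$, at least one component, say $T_S$, satisfies $\liminf_{n,d}\E[T_S/n]>1/2$; I would extract from this a $\delta>0$ with $\E[T_S/n]>1/2+\delta$ for all large $n,d$ and combine it with the concentration to get $\P(T_S/n>1/2+\delta/2)\to 1$. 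Since $T_S^M=\max\{T_S,\widetilde T_S\}\ge T_S$ and eventually $c_S^M/n<1/2+\delta/2$, this yields $\P(T_S^M\ge c_S^M)\ge\P(T_S/n>1/2+\delta/2)\to 1$, which is the asserted convergence of power to $1$.

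Part (b) I would handle by the mirror argument. Theorem \ref{thm:large-dist-runs} gives, for the common null law of $T_R$ and $\widetilde T_R$, that $n^{-1/2}(T_R-(n+1)/2)\stackrel{D}{\to}\mathcal N(0,1/4)$, so the lower Bonferroni cut-off $c_R^M$ (the largest $c$ with $\P_{H_0}(T_R\le c)\le\alpha/2$) satisfies $c_R^M/n\to 1/2$; Theorem \ref{thm:large-dist-runs-alt} and its $\widetilde\theta$-analog give $|T_R-\E(T_R)|/n\stackrel{P}{\to}0$ and $|\widetilde T_R-\E(\widetilde T_R)|/n\stackrel{P}{\to}0$; and under $\min\{\limsup_{n,d}\E[T_R/n],\limsup_{n,d}\E[\widetilde T_R/n]\}<1/2$ one component, say $T_R$, has $\limsup_{n,d}\E[T_R/n]<1/2$, whence $\E[T_R/n]<1/2-\delta$ eventually for some $\delta>0$ and thus $\P(T_R/n<1/2-\delta/2)\to 1$. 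Using $T_R^M=\min\{T_R,\widetilde T_R\}\le T_R$ and $c_R^M/n>1/2-\delta/2$ eventually, this gives $\P(T_R^M\le c_R^M)\to 1$.

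The bulk of the routine work is auxiliary: I would need to verify that the $\widetilde\theta$-versions of Theorems \ref{thm:limit-null-distribution}, \ref{thm:limit-alt-convergence}, \ref{thm:large-dist-runs} and \ref{thm:large-dist-runs-alt} hold verbatim, which should follow because their proofs use only the exchangeability of the pairs $\bm X_i,\bm X_i^\prime$ and the Markov-type dependence structure of Theorem \ref{thm:sign-rank-dist}, all of which survive replacing $\theta$ by $\widetilde\theta$. The only genuinely delicate point in the main argument is converting the $\liminf$ (resp.\ $\limsup$) hypothesis into the uniform bound ``$\E[T_S/n]>1/2+\delta$'' (resp.\ ``$\E[T_R/n]<1/2-\delta$'') for all large $n,d$ before applying the concentration estimates; once that is in place, the elementary inequalities $T_S^M\ge T_S$ and $T_R^M\le T_R$ make the conclusion immediate, so I do not expect a serious obstacle beyond this bookkeeping.
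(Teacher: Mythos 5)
Your proposal is correct and follows essentially the same route as the paper: show the (Bonferroni) cut-offs scaled by $n$ tend to $1/2$ under $H_0$, use the $O_p(\sqrt{n})$ concentration of $T_S,\widetilde T_S,T_R,\widetilde T_R$ about their means from Theorems \ref{thm:limit-alt-convergence} and \ref{thm:large-dist-runs-alt} (and their $\widetilde\theta$-analogues), and conclude via the monotonicity $T_S^M\ge T_S$, $T_R^M\le T_R$. Your write-up is in fact slightly more careful than the paper's, which leaves the extraction of a uniform margin $\delta$ from the $\liminf$/$\limsup$ hypotheses and the comparison with the cut-off implicit.
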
   

			\section{Analysis of simulated and real data sets}
			\label{sec:empirical-analysis}
			In this section, we analyze some high-dimensional simulated and real data sets to compare the empirical performance of our proposed tests with the test based on optimal transport \cite{huang2023multivariate}, and that based on density functions \cite{diks1999}. 
			 These two tests are consistent in the classical asymptotic regime under general alternatives, computationally efficient and can be conveniently used even when the dimension is larger than the sample size. They will be referred to as the OT test and the DT test, 
			 respectively. Henceforth by sign and runs tests, we shall refer to the tests proposed in Section \ref{sec:definition} and the modified versions considered in Section \ref{sec:improvements} will be referred to as the modified sign and modified runs tests, respectively.  Throughout this section, all test are considered to have the nominal level $\alpha = 0.05$. Note that the OT test 
			 is distribution free under $H_0$. However, following the suggestion of the authors, we use the asymptotic distribution of the test statistic for calibration. For the DT test, we use a resampling algorithm as proposed in that article, where the cut-off is computed based on 500 independent random iterations of the resampling algorithm. For the DT test, we also introduce the bandwidth $(0.25)^2 \hat\sigma_0^2$ (where $\hat\sigma_0^2 = \frac{1}{n(n-1)} \sum_{1\leq i< j\leq n} \|\bm X_i-\bm X_j\|^2$) in the test statistic for scale adjustment in higher dimensions. Without that adjustment, it performs poorly for high dimensional data. Each experiment is repeated 1000 times to compute the power of the tests by the proportion of times they reject $H_0$. The R codes of all these tests are available in the supplementary material.
			
			
			First we consider some examples (see Examples 5.1-5.6) involving high dimension, low sample size data. In each of these examples, we generate samples of size 50 and carry out our experiment for $10$ different choices $d$ ($d=2^i$ for $i=1,2,\ldots,10$) as before. In Examples \ref{exa:hd-1} and \ref{exa:hd-1A}, we deal with elliptic distributions with equi-correlated structure. 
			Example \ref{exa:hd-1} is same as Example \ref{exa:first}, and in Example \ref{exa:hd-1A}, we replace the normal distribution by Cauchy distribution. Descriptions of these two examples are given below.
			
			\begin{exa}\label{exa:hd-1} We
				consider a $d$-variate normal distribution with the location at the origin and the scatter matrix $\bm\Sigma = 0.4 \mathrm{\bf I}_d + 0.6 \mathrm{\bf J}_d$, where $\mathrm{\bf I}_d$ is the $d\times d$ identity matrix and $\mathrm{\bf J}_d$ is the $d\times d$ matrix with all entries equal to one.
						\end{exa}
					
			\begin{exa}\label{exa:hd-1A} Observations are generated from
				a $d$-variate Cauchy distribution with the same location and scatter matrix as in Example \ref{exa:hd-1}.			\end{exa}

			\begin{figure}[h]
				\vspace{-0.1in}
				\centering
				\begin{tikzpicture}[]
					\begin{axis}[xmin = 1, xmax = 10, ymin = 0, ymax = 1, xlabel = {$\log_2(d)$}, ylabel = {Power Estimates}, title = {\bf Example \ref{exa:hd-1}}]
						
						\addplot[color = ForestGreen,   mark = square*, mark size = 2pt, step = 1cm,very thin]coordinates{(1,0.39)(2,0.718)(3,0.824)(4,0.904)(5,0.983)(6,0.999)(7,1)(8,1)(9,1)(10,1)};

						\addplot[color = blue,   mark = *, mark size = 2pt, step = 1cm,very thin]coordinates{(1,0.309)(2,0.727)(3,0.955)(4,0.994)(5,0.999)(6,1)(7,1)(8,1)(9,1)(10,1)};
						
						\addplot[color = blue,   mark = diamond*, mark size = 2pt, step = 1cm,very thin]coordinates{(1,0.16)(2,0.547)(3,0.898)(4,0.985)(5,0.999)(6,1)(7,1)(8,1)(9,1)(10,1)};
						
						\addplot[color = red,   mark = *, mark size = 2pt, step = 1cm,very thin]coordinates{(1,0.159)(2,0.53)(3,0.864)(4,0.98)(5,0.998)(6,1)(7,1)(8,1)(9,1)(10,1)};
						
						\addplot[color = red,   mark = diamond*, mark size = 2pt, step = 1cm,very thin]coordinates{(1,0.106)(2,0.454)(3,0.839)(4,0.97)(5,0.999)(6,1)(7,1)(8,1)(9,1)(10,1)};
						
						
						\addplot[color = black,   mark = triangle*, mark size = 2pt, step = 1cm,very thin]coordinates{(1,0.054)(2,0.079)(3,0.105)(4,0.137)(5,0.166)(6,0.186)(7,0.236)(8,0.259)(9,0.281)(10,0.264)};
						
						\addplot[dashed, color = black,   mark = , step = 1cm,very thin]coordinates{(1,0.05)(2,0.05)(3,0.05)(4,0.05)(5,0.05)(6,0.05)(7,0.05)(8,0.05)(9,0.05)(10,0.05)};
						
					\end{axis}
				\end{tikzpicture}
				\begin{tikzpicture}[]
					\begin{axis}[xmin = 1, xmax = 10, ymin = 0, ymax = 1, xlabel = {$\log_2(d)$}, ylabel = {Power Estimates}, title = {\bf Example \ref{exa:hd-1A}}]

						\addplot[color = ForestGreen,   mark = square*, mark size = 2pt, step = 1cm,very thin]coordinates{(1,0.199)(2,0.532)(3,0.764)(4,0.854)(5,0.892)(6,0.913)(7,0.924)(8,0.937)(9,0.94)(10,0.945)};

						\addplot[color = blue,   mark = *, mark size = 2pt, step = 1cm,very thin]coordinates{(1,0.169)(2,0.432)(3,0.727)(4,0.947)(5,0.987)(6,1)(7,1)(8,1)(9,1)(10,1)};
						
						\addplot[color = blue,   mark = diamond*, mark size = 2pt, step = 1cm,very thin]coordinates{(1,0.117)(2,0.353)(3,0.748)(4,0.949)(5,0.994)(6,1)(7,1)(8,1)(9,1)(10,1)};
						
						\addplot[color = red,   mark = *, mark size = 2pt, step = 1cm,very thin]coordinates{(1,0.081)(2,0.248)(3,0.573)(4,0.876)(5,0.975)(6,0.997)(7,1)(8,1)(9,1)(10,1)};
						
						\addplot[color = red,   mark = diamond*, mark size = 2pt, step = 1cm,very thin]coordinates{(1,0.081)(2,0.285)(3,0.66)(4,0.917)(5,0.989)(6,0.999)(7,1)(8,1)(9,1)(10,1)};
						
						
						\addplot[color = black,   mark = triangle*, mark size = 2pt, step = 1cm,very thin]coordinates{(1,0.049)(2,0.053)(3,0.088)(4,0.117)(5,0.156)(6,0.19)(7,0.229)(8,0.246)(9,0.26)(10,0.274)};
						
						\addplot[dashed, color = black,   mark = , step = 1cm,very thin]coordinates{(1,0.05)(2,0.05)(3,0.05)(4,0.05)(5,0.05)(6,0.05)(7,0.05)(8,0.05)(9,0.05)(10,0.05)};
						
					\end{axis}
				\end{tikzpicture}
				\caption{Power of the sign test (\tikzsymbol[circle]{minimum width=2pt,fill=blue}), the runs test (\tikzsymbol[diamond]{minimum width=2pt,fill=blue}), the modified sign test (\tikzsymbol[circle]{minimum width=2pt,fill=red}), the modified runs test (\tikzsymbol[diamond]{minimum width=2pt,fill=red}), the OT test ($\textcolor{black}{\blacktriangle}$) and the DT test ($\textcolor{ForestGreen}{\blacksquare}$) in Examples \ref{exa:hd-1} and \ref{exa:hd-1A}. The dashed line indicates the nominal level $\alpha = 0.05$.}
				\label{fig:sim-6}
			\end{figure}
			
			Figure \ref{fig:sim-6} shows that in these two examples, the OT test had much lower power than all other tests considered here. The rest of the tests had comparable performance in Example~\ref{exa:hd-1}. They also had satisfactory performance in Example~\ref{exa:hd-1A}. However, in this example, our proposed tests performed better than the DT test in higher dimensions. 
			
			Next we consider two examples, where all off-diagonal elements of the dispersion matrix are $0$, but the diagonal elements 
			are not equal. Example \ref{exa:hd-2} is similar to Example \ref{exa:third} but here we have a weaker signal against spherical symmetry. Example \ref{exa:hd-3} is same as Example \ref{exa:second}, where the sign and runs tests had power close to the nominal level, but their modified versions had much better performance. Brief descriptions of these two examples are given below.
			
			\begin{exa}\label{exa:hd-2}
				We consider a $d$-dimensional normal distribution with the mean vector ${\bf 0}_d=(0,0,\ldots,0)$ and the diagonal covariance matrix $\bm \Sigma = \textnormal{diag}(d^{0.3},1,1,\ldots, 1)$.
			\end{exa}
			
			\begin{exa}\label{exa:hd-3} Here also, we consider a $d$-dimensional normal distribution with the mean vector ${\bf 0}_d$ and a diagonal covariance matrix $\bm \Sigma$, which has the first $[d/2]$ diagonal elements equal to $1$ and the rest equal to $2$.    
			\end{exa}
			
			\begin{figure}[h]
				\centering
				\begin{tikzpicture}[]
					\begin{axis}[xmin = 1, xmax = 10, ymin = 0, ymax = 1, xlabel = {$\log_2(d)$}, ylabel = {Power Estimates}, title = {\bf Example \ref{exa:hd-2}}]
						\addplot[color = ForestGreen,   mark = square*, mark size = 2pt, step = 1cm,very thin]coordinates{(1,0.05)(2,0.07)(3,0.066)(4,0.068)(5,0.055)(6,0.074)(7,0.095)(8,0.105)(9,0.111)(10,0.089)};
						
						\addplot[color = blue,   mark = *, mark size = 2pt, step = 1cm,very thin]coordinates{(1,0.074)(2,0.077)(3,0.081)(4,0.11)(5,0.132)(6,0.138)(7,0.135)(8,0.107)(9,0.127)(10,0.095)};
						
						\addplot[color = blue,   mark = diamond*, mark size = 2pt, step = 1cm,very thin]coordinates{(1,0.054)(2,0.052)(3,0.06)(4,0.066)(5,0.087)(6,0.083)(7,0.093)(8,0.077)(9,0.057)(10,0.07)};
						
						\addplot[color = red,   mark = *, mark size = 2pt, step = 1cm,very thin]coordinates{(1,0.046)(2,0.052)(3,0.076)(4,0.12)(5,0.138)(6,0.235)(7,0.27)(8,0.329)(9,0.391)(10,0.432)};
						
						\addplot[color = red,   mark = diamond*, mark size = 2pt, step = 1cm,very thin]coordinates{(1,0.058)(2,0.071)(3,0.086)(4,0.118)(5,0.166)(6,0.188)(7,0.278)(8,0.299)(9,0.387)(10,0.416)};
						
						
						\addplot[color = black,   mark = triangle*, mark size = 2pt, step = 1cm,very thin]coordinates{(1,0.045)(2,0.056)(3,0.052)(4,0.047)(5,0.056)(6,0.078)(7,0.076)(8,0.054)(9,0.062)(10,0.058)};
						
						\addplot[dashed, color = black,   mark = , step = 1cm,very thin]coordinates{(1,0.05)(2,0.05)(3,0.05)(4,0.05)(5,0.05)(6,0.05)(7,0.05)(8,0.05)(9,0.05)(10,0.05)};
						
					\end{axis}
				\end{tikzpicture}
				\begin{tikzpicture}[]
					\begin{axis}[xmin = 1, xmax = 10, ymin = 0, ymax = 1, xlabel = {$\log_2(d)$}, ylabel = {Power Estimates}, title = {\bf Example \ref{exa:hd-3}}]
						
						\addplot[color = ForestGreen,   mark = square*, mark size = 2pt, step = 1cm,very thin]coordinates{(1,0.087)(2,0.099)(3,0.115)(4,0.096)(5,0.099)(6,0.135)(7,0.163)(8,0.252)(9,0.243)(10,0.209)};

						\addplot[color = blue,   mark = *, mark size = 2pt, step = 1cm,very thin]coordinates{(1,0.098)(2,0.116)(3,0.136)(4,0.163)(5,0.166)(6,0.16)(7,0.159)(8,0.179)(9,0.195)(10,0.187)};
						
						\addplot[color = blue,   mark = diamond*, mark size = 2pt, step = 1cm,very thin]coordinates{(1,0.046)(2,0.08)(3,0.093)(4,0.093)(5,0.102)(6,0.117)(7,0.116)(8,0.117)(9,0.104)(10,0.122)};
						
						\addplot[color = red,   mark = *, mark size = 2pt, step = 1cm,very thin]coordinates{(1,0.096)(2,0.079)(3,0.142)(4,0.206)(5,0.359)(6,0.557)(7,0.798)(8,0.958)(9,0.994)(10,1)};
						
						\addplot[color = red,   mark = diamond*, mark size = 2pt, step = 1cm,very thin]coordinates{(1,0.066)(2,0.096)(3,0.12)(4,0.181)(5,0.293)(6,0.523)(7,0.744)(8,0.96)(9,1)(10,1)};
						
						
						\addplot[color = black,   mark = triangle*, mark size = 2pt, step = 1cm,very thin]coordinates{(1,0.045)(2,0.074)(3,0.071)(4,0.07)(5,0.055)(6,0.045)(7,0.059)(8,0.051)(9,0.042)(10,0.048)};
						
						\addplot[dashed, color = black,   mark = , step = 1cm,very thin]coordinates{(1,0.05)(2,0.05)(3,0.05)(4,0.05)(5,0.05)(6,0.05)(7,0.05)(8,0.05)(9,0.05)(10,0.05)};
						
					\end{axis}
				\end{tikzpicture}
				
				\caption{Power of the sign test (\tikzsymbol[circle]{minimum width=2pt,fill=blue}), the runs test (\tikzsymbol[diamond]{minimum width=2pt,fill=blue}), the modified sign test (\tikzsymbol[circle]{minimum width=2pt,fill=red}), the modified runs test (\tikzsymbol[diamond]{minimum width=2pt,fill=red}), the OT test ($\textcolor{black}{\blacktriangle}$) 
					and the DT test ($\textcolor{ForestGreen}{\blacksquare}$) in Examples \ref{exa:hd-2} and \ref{exa:hd-3}. The dashed line indicates the nominal level $\alpha = 0.05$.}
				\label{fig:sim-7}
			\end{figure}
			
			In these examples, the DT test and the OT test had poor performance (see  Figure \ref{fig:sim-7}). The sign and the runs test also performed poorly, but their modified versions worked well, especially in Example \ref{exa:hd-3}. This superiority of the modified tests was quite expected in view of our discussion in Section \ref{sec:improvements}.
			
			Now consider two examples, where the covariance matrix of the underlying distribution is a constant multiple of identity matrix, but the distribution is not spherical. Recall that we consider two such examples (see Examples \ref{exa:iid-uniform} and \ref{exa:iid-coordinates}) in Section
			\ref{sec:improvements}. Here we revisit them as Examples \ref{exa:iid-unif-sim} and \ref{exa:iid-coordinates-sim}, respectively.
			
			\begin{exa}\label{exa:iid-unif-sim}
				We consider a $d$-dimensional distribution with i.i.d. Unif$(-1,1)$ coordinates.
			\end{exa}
			
			\begin{exa}\label{exa:iid-coordinates-sim}
				Observations are generated from a $d$-dimensional distribution, where the coordinate variables are i.i.d. with p.d.f. $f(x) =\frac{1}{2} \exp\{-|x|\}$.
			\end{exa}

			 In Figure \ref{fig:sim-8}, we observed that in Example \ref{exa:iid-unif-sim} while modified runs test had excellent performance, all other competing tests had powers close to the nominal level.
			 In \ref{exa:iid-coordinates-sim}, the modified sign test had the best performance followed by the modified runs tests. But all other tests performed poorly.  The reasons for such performance of the modified tests
			 has already been discussed in Section \ref{sec:improvements}. 	
			 
			\begin{figure}[h]
				\centering
				\begin{tikzpicture}[]
					\begin{axis}[xmin = 1, xmax = 10, ymin = 0, ymax = 1, xlabel = {$\log_2(d)$}, ylabel = {Power Estimates}, title = {\bf Example \ref{exa:iid-unif-sim}}]
						\addplot[color = ForestGreen,   mark = square*, mark size = 2pt, step = 1cm,very thin]coordinates{(1,0.113)(2,0.128)(3,0.094)(4,0.085)(5,0.061)(6,0.059)(7,0.044)(8,0.059)(9,0.043)(10,0.052)};
						
						\addplot[color = blue,   mark = *, mark size = 2pt, step = 1cm,very thin]coordinates{(1,0.087)(2,0.085)(3,0.078)(4,0.073)(5,0.061)(6,0.069)(7,0.064)(8,0.071)(9,0.061)(10,0.057)};
						
						\addplot[color = blue,   mark = diamond*, mark size = 2pt, step = 1cm,very thin]coordinates{(1,0.066)(2,0.075)(3,0.06)(4,0.05)(5,0.041)(6,0.056)(7,0.031)(8,0.036)(9,0.042)(10,0.041)};
						
						\addplot[color = red,   mark = *, mark size = 2pt, step = 1cm,very thin]coordinates{(1,0.029)(2,0.028)(3,0.027)(4,0.018)(5,0.017)(6,0.021)(7,0.027)(8,0.016)(9,0.018)(10,0.017)};
						
						\addplot[color = red,   mark = diamond*, mark size = 2pt, step = 1cm,very thin]coordinates{(1,0.085)(2,0.191)(3,0.371)(4,0.596)(5,0.768)(6,0.848)(7,0.868)(8,0.907)(9,0.907)(10,0.911)};
						
						
						\addplot[color = black,   mark = triangle*, mark size = 2pt, step = 1cm,very thin]coordinates{(1,0.057)(2,0.058)(3,0.05)(4,0.045)(5,0.055)(6,0.047)(7,0.051)(8,0.057)(9,0.044)(10,0.044)};
						
						\addplot[dashed, color = black,   mark = , step = 1cm,very thin]coordinates{(1,0.05)(2,0.05)(3,0.05)(4,0.05)(5,0.05)(6,0.05)(7,0.05)(8,0.05)(9,0.05)(10,0.05)};
						
					\end{axis}
				\end{tikzpicture}
				\begin{tikzpicture}[]
					\begin{axis}[xmin = 1, xmax = 10, ymin = 0, ymax = 1, xlabel = {$\log_2(d)$}, ylabel = {Power Estimates}, title = {\bf Example \ref{exa:iid-coordinates-sim}}]
						
						\addplot[color = ForestGreen,   mark = square*, mark size = 2pt, step = 1cm,very thin]coordinates{(1,0.064)(2,0.075)(3,0.057)(4,0.061)(5,0.052)(6,0.047)(7,0.049)(8,0.045)(9,0.044)(10,0.045)};

						\addplot[color = blue,   mark = *, mark size = 2pt, step = 1cm,very thin]coordinates{(1,0.077)(2,0.072)(3,0.09)(4,0.075)(5,0.094)(6,0.075)(7,0.074)(8,0.059)(9,0.059)(10,0.077)};
						
						\addplot[color = blue,   mark = diamond*, mark size = 2pt, step = 1cm,very thin]coordinates{(1,0.063)(2,0.04)(3,0.061)(4,0.048)(5,0.045)(6,0.046)(7,0.039)(8,0.049)(9,0.04)(10,0.042)};
						
						\addplot[color = red,   mark = *, mark size = 2pt, step = 1cm,very thin]coordinates{(1,0.081)(2,0.087)(3,0.158)(4,0.263)(5,0.476)(6,0.651)(7,0.814)(8,0.918)(9,0.958)(10,0.986)};
						
						\addplot[color = red,   mark = diamond*, mark size = 2pt, step = 1cm,very thin]coordinates{(1,0.06)(2,0.051)(3,0.072)(4,0.072)(5,0.135)(6,0.183)(7,0.296)(8,0.432)(9,0.569)(10,0.689)};
						
						
						\addplot[color = black,   mark = triangle*, mark size = 2pt, step = 1cm,very thin]coordinates{(1,0.056)(2,0.055)(3,0.054)(4,0.048)(5,0.051)(6,0.048)(7,0.039)(8,0.051)(9,0.038)(10,0.041)};
						
						\addplot[dashed, color = black,   mark = , step = 1cm,very thin]coordinates{(1,0.05)(2,0.05)(3,0.05)(4,0.05)(5,0.05)(6,0.05)(7,0.05)(8,0.05)(9,0.05)(10,0.05)};
						
					\end{axis}
				\end{tikzpicture}
				
				\caption{Power of the sign test (\tikzsymbol[circle]{minimum width=2pt,fill=blue}), the runs test (\tikzsymbol[diamond]{minimum width=2pt,fill=blue}), the modified sign test (\tikzsymbol[circle]{minimum width=2pt,fill=red}), the modified runs test (\tikzsymbol[diamond]{minimum width=2pt,fill=red}), the OT test ($\textcolor{black}{\blacktriangle}$) 
				and the DT test ($\textcolor{ForestGreen}{\blacksquare}$) 
				in Examples \ref{exa:iid-unif-sim} and \ref{exa:iid-coordinates-sim}. The dashed line indicates the nominal level $\alpha = 0.05$.}
				\label{fig:sim-8}
			\end{figure}
			
			

			\begin{figure}[h]
				\centering
				\begin{tikzpicture}[]
					\begin{axis}[xmin = 1, xmax = 5, ymin = 0, ymax = 1, xlabel = {$\log_2(d)$}, ylabel = {Power Estimates}, title = {\bf Example \ref{exa:hd-2}}]
						\addplot[color = ForestGreen,   mark = square*, mark size = 2pt, step = 1cm,very thin]coordinates{(1,0.056)(2,0.071)(3,0.078)(4,0.217)(5,0.928)};

						\addplot[color = blue,   mark = *, mark size = 2pt, step = 1cm,very thin]coordinates{(1,0.078)(2,0.069)(3,0.074)(4,0.221)(5,0.679)};
						
						\addplot[color = blue,   mark = diamond*, mark size = 2pt, step = 1cm,very thin]coordinates{(1,0.043)(2,0.051)(3,0.072)(4,0.178)(5,0.688)};
						
						\addplot[color = red,   mark = *, mark size = 2pt, step = 1cm,very thin]coordinates{(1,0.026)(2,0.042)(3,0.093)(4,0.359)(5,0.93)};
						
						\addplot[color = red,   mark = diamond*, mark size = 2pt, step = 1cm,very thin]coordinates{(1,0.025)(2,0.05)(3,0.106)(4,0.346)(5,0.968)};
						
						
						\addplot[color = black,   mark = triangle*, mark size = 2pt, step = 1cm,very thin]coordinates{(1,0.045)(2,0.056)(3,0.053)(4,0.059)(5,0.069)};
						
						\addplot[dashed, color = black,   mark = , step = 1cm,very thin]coordinates{(1,0.05)(2,0.05)(3,0.05)(4,0.05)(5,0.05)};
						
					\end{axis}
				\end{tikzpicture}
				\begin{tikzpicture}[]
					\begin{axis}[xmin = 1, xmax = 5, ymin = 0, ymax = 1, xlabel = {$\log_2(d)$}, ylabel = {Power Estimates}, title = {\bf Example \ref{exa:hd-3}}]
						
						\addplot[color = ForestGreen,   mark = square*, mark size = 2pt, step = 1cm,very thin]coordinates{(1,0.08)(2,0.1)(3,0.149)(4,0.6)(5,1)};

						\addplot[color = blue,   mark = *, mark size = 2pt, step = 1cm,very thin]coordinates{(1,0.119)(2,0.113)(3,0.149)(4,0.383)(5,0.965)};
						
						\addplot[color = blue,   mark = diamond*, mark size = 2pt, step = 1cm,very thin]coordinates{(1,0.063)(2,0.071)(3,0.09)(4,0.389)(5,0.983)};
						
						\addplot[color = red,   mark = *, mark size = 2pt, step = 1cm,very thin]coordinates{(1,0.051)(2,0.087)(3,0.2)(4,0.744)(5,1)};
						
						\addplot[color = red,   mark = diamond*, mark size = 2pt, step = 1cm,very thin]coordinates{(1,0.057)(2,0.074)(3,0.181)(4,0.794)(5,1)};
						
						
						\addplot[color = black,   mark = triangle*, mark size = 2pt, step = 1cm,very thin]coordinates{(1,0.063)(2,0.068)(3,0.047)(4,0.054)(5,0.057)};
						
						\addplot[dashed, color = black,   mark = , step = 1cm,very thin]coordinates{(1,0.05)(2,0.05)(3,0.05)(4,0.05)(5,0.05)};
						
					\end{axis}
				\end{tikzpicture}

				\begin{tikzpicture}[]
					\begin{axis}[xmin = 1, xmax = 5, ymin = 0, ymax = 1, xlabel = {$\log_2(d)$}, ylabel = {Power Estimates}, title = {\bf Example \ref{exa:iid-unif-sim}}]
						\addplot[color = ForestGreen,   mark = square*, mark size = 2pt, step = 1cm,very thin]coordinates{(1,0.085)(2,0.111)(3,0.155)(4,0.311)(5,0.694)};
						
						\addplot[color = blue,   mark = *, step = 1cm,very thin]coordinates{(1,0.093)(2,0.101)(3,0.083)(4,0.162)(5,0.256)};
					
					\addplot[color = blue,   mark = diamond*, step = 1cm,very thin]coordinates{(1,0.05)(2,0.064)(3,0.069)(4,0.142)(5,0.282)};
					
					\addplot[color = red,   mark = *, step = 1cm,very thin]coordinates{(1,0.019)(2,0.032)(3,0.035)(4,0.091)(5,0.159)};
					
					\addplot[color = red,   mark = diamond*, step = 1cm,very thin]coordinates{(1,0.042)(2,0.106)(3,0.767)(4,1)(5,1)};
					
						
						\addplot[color = black,   mark = triangle*, mark size = 2pt, step = 1cm,very thin]coordinates{(1,0.064)(2,0.049)(3,0.046)(4,0.05)(5,0.051)};
						
						\addplot[dashed, color = black,   mark = , step = 1cm,very thin]coordinates{(1,0.05)(2,0.05)(3,0.05)(4,0.05)(5,0.05)};
						
					\end{axis}
				\end{tikzpicture}
				\begin{tikzpicture}[]
					\begin{axis}[xmin = 1, xmax = 5, ymin = 0, ymax = 1, xlabel = {$\log_2(d)$}, ylabel = {Power Estimates}, title = {\bf Example \ref{exa:iid-coordinates-sim}}]
						
						\addplot[color = ForestGreen,   mark = square*, mark size = 2pt, step = 1cm,very thin]coordinates{(1,0.068)(2,0.064)(3,0.066)(4,0.129)(5,0.374)};

					\addplot[color = blue,   mark = *, step = 1cm,very thin]coordinates{(1,0.073)(2,0.081)(3,0.071)(4,0.2)(5,0.721)};
					
					\addplot[color = blue,   mark = diamond*, step = 1cm,very thin]coordinates{(1,0.047)(2,0.049)(3,0.061)(4,0.163)(5,0.699)};
					
					\addplot[color = red,   mark = *, step = 1cm,very thin]coordinates{(1,0.029)(2,0.057)(3,0.354)(4,1)(5,1)};
					
					\addplot[color = red,   mark = diamond*, step = 1cm,very thin]coordinates{(1,0.034)(2,0.048)(3,0.094)(4,0.786)(5,1)};
						
						
						\addplot[color = black,   mark = triangle*, mark size = 2pt, step = 1cm,very thin]coordinates{(1,0.065)(2,0.042)(3,0.042)(4,0.055)(5,0.051)};
						
						\addplot[dashed, color = black,   mark = , step = 1cm,very thin]coordinates{(1,0.05)(2,0.05)(3,0.05)(4,0.05)(5,0.05)};
						
					\end{axis}
				\end{tikzpicture}
				
				\vspace{-0.1in}
				\caption{Power of the sign test (\tikzsymbol[circle]{minimum width=2pt,fill=blue}), the runs test (\tikzsymbol[diamond]{minimum width=2pt,fill=blue}), the modified sign test (\tikzsymbol[circle]{minimum width=2pt,fill=red}), the modified runs test (\tikzsymbol[diamond]{minimum width=2pt,fill=red}), the OT test ($\textcolor{black}{\blacktriangle}$) 
				and the DT test ($\textcolor{ForestGreen}{\blacksquare}$) as a function of the dimension $d$ when we generate $n=d^2+20$ many observations from Examples \ref{exa:hd-2} 
					-\ref{exa:iid-coordinates-sim}. The dashed line indicates the nominal level $\alpha = 0.05$.}
				\label{fig:sim-8.1}
			\end{figure}
			
			 In Examples 5.3-5.6, though the sign and runs tests had powers close to the nominal level in high dimensions, they can have better performance if the sample size also increases with the dimension at a suitable rate. To demonstrate this, we revisit these four examples but this time we consider the sample size $n = d^2 + 20$ that increases with the dimension $d$. The results are reported in Figure \ref{fig:sim-8.1}. 
			Unlike before, except for the OT test,
			powers of all tests showed increasing trends as the dimension increases. In Examples \ref{exa:hd-2} and \ref{exa:hd-3}, though the modified tests had a clear edge,
			all these tests had satisfactory performance 
			in high dimensions. 
		In Example \ref{exa:iid-unif-sim}, the modified runs test outperformed all other tests as before. The DT test had the second best performance, while the sign and runs tests performed better than the modified sign test. However, in Example \ref{exa:iid-coordinates-sim} the modified
		sign test had the best performance followed by the modified runs test. In this example, the sign and runs tests higher powers than the DT test in high dimensions.
	
			\subsection{Analysis of `Earthquake' data}
			For further evaluation of the performance of our tests, we analyze the `Earthquakes' data available at the Time Series Machine Learning website (\href{https://www.timeseries classification.com/}{https://www.timeseries- classification.com/dataset.php}). Data were collected from Northern California Earthquake Data Center, which were donated by Prof. Anthony Bagnall. Here each datum is the hourly average of readings on the Richter scale during 1967 and 2003. The single time series was then transformed into multi-dimensional objects by segmenting the time series by intervals of 512 hours. Any reading over 5 on the Richter scale is defined as a major event. However, such events are often followed by aftershocks. Hence, a segment of the time series is considered to be a positive case if there is a major event in that segment that is not preceded by another major event for at least 512 hours. Any reading below 4 that is preceded by at least 20 non-zero readings in the previous 512 hours is considered as a negative case. After this initial processing, this dataset has 512 hourly readings on 368 negative cases and 93  positive cases. 
			We consider these two groups containing (a) Positive cases and (b) Negative cases separately and test whether their underlying distributions are spherically symmetric. 
			
			However, if we use the full data set for testing, any test will either accept or reject the null hypothesis. Based on that single experiment, it is difficult to compare among different test procedures. Therefore, to compare our tests with the other methods, we adopt a sub-sampling approach, where we take a random sub-sample containing $p$ ($0<p<1$) proportion of observations and apply the tests on that the sub-sample. 
			For each of the 5 values of $p$ (0.2, 0.4, 0.6,0.8 amd 0.95), this experiment is carried out 1000 times to compute the power of the tests by the proportion of times they reject $H_0$. The results are reported in Figure \ref{fig:real-1}.
			
			\begin{figure}[h]
				\centering
				\begin{tikzpicture}[]
					\begin{axis}[xmin = 0.2, xmax = 0.95, ymin = 0, ymax = 1, xlabel = {$p$}, ylabel = {Power Estimates}, title = {\bf (a) Positive Case}]
						\addplot[color = ForestGreen,   mark = square*, mark size = 2pt, step = 1cm,very thin]coordinates{(0.2,0.1)(0.4,0.184)(0.6,0.287)(0.8,0.455)(0.95,0.738)};
						
						\addplot[color = blue,   mark = *, mark size = 2pt, step = 1cm,very thin]coordinates{(0.2,0.15)(0.4,0.123)(0.6,0.076)(0.8,0.099)(0.95,0.092)};
						
						\addplot[color = blue,   mark = diamond*, mark size = 2pt, step = 1cm,very thin]coordinates{(0.2,0.019)(0.4,0.046)(0.6,0.06)(0.8,0.058)(0.95,0.07)};
						
						\addplot[color = red,   mark = *, mark size = 2pt, step = 1cm,very thin]coordinates{(0.2,0.109)(0.4,0.387)(0.6,0.526)(0.8,0.819)(0.95,0.91)};
						
						\addplot[color = red,   mark = diamond*, mark size = 2pt, step = 1cm,very thin]coordinates{(0.2,0.063)(0.4,0.092)(0.6,0.201)(0.8,0.259)(0.95,0.325)};
						
						
						\addplot[color = black,   mark = triangle*, mark size = 2pt, step = 1cm,very thin]coordinates{(0.2,0.066)(0.4,0.057)(0.6,0.073)(0.8,0.034)(0.95,0.003)};
						
						
						
						\addplot[dashed, color = black,   mark = , step = 1cm,very thin]coordinates{(0.2,0.05)(0.4,0.05)(0.6,0.05)(0.8,0.05)(1,0.05)};
						
					\end{axis}
				\end{tikzpicture}
				\begin{tikzpicture}[]
					\begin{axis}[xmin = 0.2, xmax = 0.95, ymin = 0, ymax = 1, xlabel = {$p$}, ylabel = {Power Estimates}, title = {\bf (b) Negative Case}]
						\addplot[color = ForestGreen,   mark = square*, mark size = 2pt, step = 1cm,very thin]coordinates{(0.2,0.615)(0.4,0.994)(0.6,1)(0.8,1)(0.95,1)};
						
						\addplot[color = blue,   mark = *, mark size = 2pt, step = 1cm,very thin]coordinates{(0.2,0.08)(0.4,0.15)(0.6,0.227)(0.8,0.353)(0.95,0.408)};
						
						\addplot[color = blue,   mark = diamond*, mark size = 2pt, step = 1cm,very thin]coordinates{(0.2,0.05)(0.4,0.103)(0.6,0.173)(0.8,0.271)(0.95,0.333)};
						
						\addplot[color = red,   mark = *, mark size = 2pt, step = 1cm,very thin]coordinates{(0.2,1)(0.4,1)(0.6,1)(0.8,1)(0.95,1)};
						
						\addplot[color = red,   mark = diamond*, mark size = 2pt, step = 1cm,very thin]coordinates{(0.2,0.892)(0.4,0.999)(0.6,1)(0.8,1)(0.95,1)};
						
						
						\addplot[color = black,   mark = triangle*, mark size = 2pt, step = 1cm,very thin]coordinates{(0.2,0.24)(0.4,0.564)(0.6,0.915)(0.8,0.999)(0.95,1)};
						
						
						
						\addplot[dashed, color = black,   mark = , step = 1cm,very thin]coordinates{(0.2,0.05)(0.4,0.05)(0.6,0.05)(0.8,0.05)(1,0.05)};

					\end{axis}
				\end{tikzpicture}
				\caption{Powers of the sign test (\tikzsymbol[circle]{minimum width=2pt,fill=blue}), the runs test (\tikzsymbol[diamond]{minimum width=2pt,fill=blue}), the modified sign test (\tikzsymbol[circle]{minimum width=2pt,fill=red}), the modified runs test (\tikzsymbol[diamond]{minimum width=2pt,fill=red}), the OT test ($\textcolor{black}{\blacktriangle}$) 
					and the DT test ($\textcolor{ForestGreen}{\blacksquare}$) based on varying proportions of observations ($p$) from the positive and the negative cases in the `Earthquakes' dataset. The dashed line indicates the nominal level $\alpha = 0.05$.}
				\label{fig:real-1}
			\end{figure}
			
			\begin{figure}[!b]
				\centering
				\includegraphics[width=0.44\linewidth]{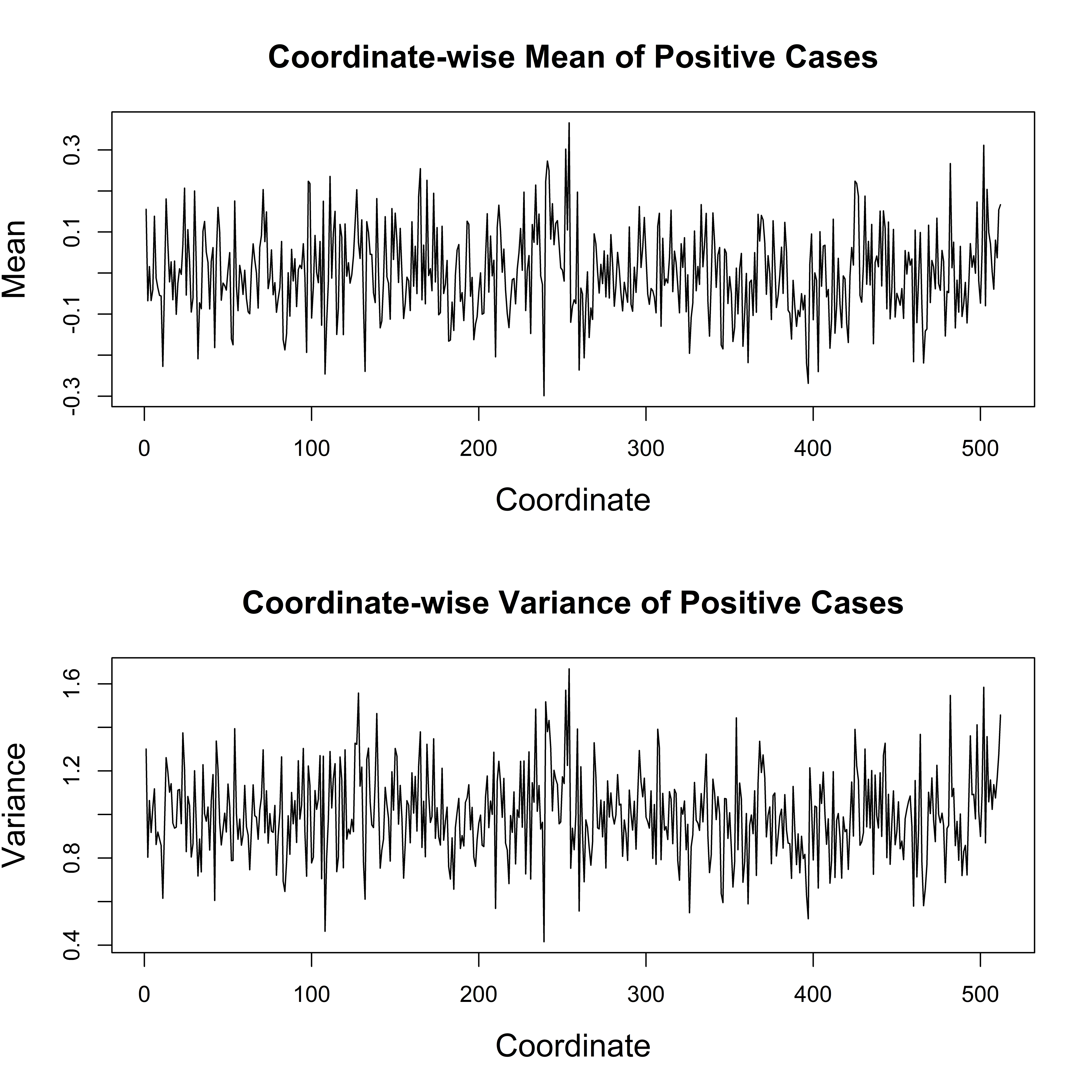}
				\includegraphics[width=0.44\linewidth]{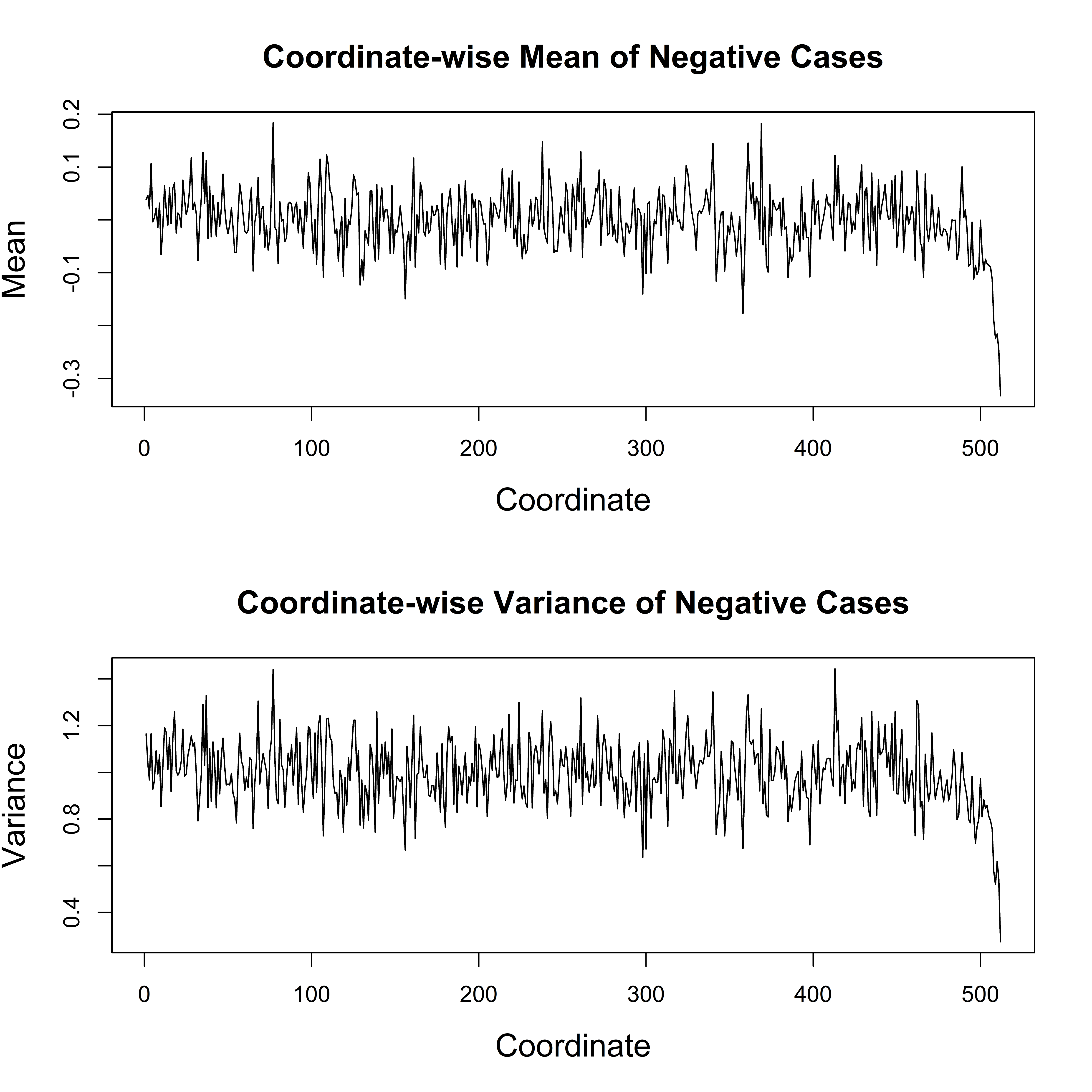}
				\caption{Coordinate-wise mean and variance of the feature vectors in the `Earthquakes' data set divided into two groups of positive and negative cases.}
				\label{fig:mean-var}
			\end{figure}

			We observed an interesting phenomenon in this dataset. For the group containing Positive Cases, only our modified tests, and the DT test are able to detect the deviation from spherical asymmetry. 
			In this example, the modified sign test outperformed all other tests, while the powers of the DT test and the modified runs test slowly increased with $p$. For the group of Negative Cases,
			powers of all tests steadily increased with $p$. Here also, the modified sign test significantly outperformed its all other competitors. The modified runs test (based on $T_S$ and $T_R$) had the second best performance closely followed by he DT test.  The OT test exhibited satisfactory performance. However our sign test and runs test had relatively low powers.
			
			The above result indicates that the distribution of the Negative Cases deviates more from spherical symmetry compared to the distribution of the Positive Cases. This is confirmed by Figure \ref{fig:mean-var}, which shows the plots of the coordinate-wise sample mean and sample variance for the two groups.
			For the Positive Cases, the sample mean is more or less stationary about zero, but for the Negative Cases, the sample mean as well as sample variance have a sharp drop at the right end. This sharp drop can be a potential reason behind the high powers of the tests in Figure \ref{fig:real-1} (b). However, this sharp drop in the mean and variance of the data could be a subjective bias at the data curation step. Therefore, to eliminate such possible bias, we truncate the feature vectors (both for Positive and Negative cases) by removing 32 features from the end and carry out our experiment with the first $480$ coordinates (which corresponds to $20$ days hourly readings on the Richter scale). Powers of different tests are computed based on 1000 random sub-samples as before. Our findings are reported in Figure \ref{fig:real-2}. 
			
			\begin{figure}[h]
				\centering
				\begin{tikzpicture}[]
					\begin{axis}[xmin = 0.2, xmax = 0.95, ymin = 0, ymax = 1, xlabel = {$p$}, ylabel = {Power Estimates}, title = {\bf (a) Positive Case}]
						\addplot[color = ForestGreen,   mark = square*, mark size = 2pt, step = 1cm,very thin]coordinates{(0.2,0.098)(0.4,0.116)(0.6,0.125)(0.8,0.083)(0.95,0.042)};
						
						\addplot[color = blue,   mark = *, mark size = 2pt, step = 1cm,very thin]coordinates{(0.2,0.144)(0.4,0.109)(0.6,0.076)(0.8,0.127)(0.95,0.109)};
						
						\addplot[color = blue,   mark = diamond*, mark size = 2pt, step = 1cm,very thin]coordinates{(0.2,0.019)(0.4,0.04)(0.6,0.053)(0.8,0.066)(0.95,0.083)};
						
						\addplot[color = red,   mark = *, mark size = 2pt, step = 1cm,very thin]coordinates{(0.2,0.107)(0.4,0.337)(0.6,0.482)(0.8,0.776)(0.95,0.869)};
						
						\addplot[color = red,   mark = diamond*, mark size = 2pt, step = 1cm,very thin]coordinates{(0.2,0.042)(0.4,0.096)(0.6,0.171)(0.8,0.242)(0.95,0.302)};
						
						
						\addplot[color = black,   mark = triangle*, mark size = 2pt, step = 1cm,very thin]coordinates{(0.2,0.055)(0.4,0.052)(0.6,0.058)(0.8,0.021)(0.95,0.0)};
						
						
						
						\addplot[dashed, color = black,   mark = , step = 1cm,very thin]coordinates{(0.2,0.05)(0.4,0.05)(0.6,0.05)(0.8,0.05)(1,0.05)};
						
					\end{axis}
				\end{tikzpicture}
				\begin{tikzpicture}[]
					\begin{axis}[xmin = 0.2, xmax = 0.95, ymin = 0, ymax = 1, xlabel = {$p$}, ylabel = {Power Estimates}, title = {\bf (b) Negative Case}]
						\addplot[color = ForestGreen,   mark = square*, mark size = 2pt, step = 1cm,very thin]coordinates{(0.2,0.371)(0.4,0.773)(0.6,0.99)(0.8,1)(0.95,1)};
						
						\addplot[color = blue,   mark = *, mark size = 2pt, step = 1cm,very thin]coordinates{(0.2,0.071)(0.4,0.151)(0.6,0.214)(0.8,0.296)(0.95,0.369)};
						
						\addplot[color = blue,   mark = diamond*, mark size = 2pt, step = 1cm,very thin]coordinates{(0.2,0.05)(0.4,0.086)(0.6,0.139)(0.8,0.221)(0.95,0.308)};
						
						\addplot[color = red,   mark = *, mark size = 2pt, step = 1cm,very thin]coordinates{(0.2,0.998)(0.4,1)(0.6,1)(0.8,1)(0.95,1)};
						
						\addplot[color = red,   mark = diamond*, mark size = 2pt, step = 1cm,very thin]coordinates{(0.2,0.845)(0.4,0.997)(0.6,1)(0.8,1)(0.95,1)};
						
						
						\addplot[color = black,   mark = triangle*, mark size = 2pt, step = 1cm,very thin]coordinates{(0.2,0.069)(0.4,0.057)(0.6,0.039)(0.8,0.018)(0.95,0)};
						
						
						
						\addplot[dashed, color = black,   mark = , step = 1cm,very thin]coordinates{(0.2,0.05)(0.4,0.05)(0.6,0.05)(0.8,0.05)(1,0.05)};

					\end{axis}
				\end{tikzpicture}
				\caption{Power of the sign test (\tikzsymbol[circle]{minimum width=2pt,fill=blue}), the runs test (\tikzsymbol[diamond]{minimum width=2pt,fill=blue}), the modified sign test (\tikzsymbol[circle]{minimum width=2pt,fill=red}), the modified runs test (\tikzsymbol[diamond]{minimum width=2pt,fill=red}), the OT test ($\textcolor{black}{\blacktriangle}$) 
					and the DT test ($\textcolor{ForestGreen}{\blacksquare}$) based on varying proportions of observations ($p$) from the positive and the negative cases in the truncated `Earthquakes' dataset. The dashed line indicates the nominal level $\alpha = 0.05$.}
				\label{fig:real-2}
			\end{figure}
			
			Here also, the modified sign test significantly outperformed its competitors both for Positive and Negative cases. For the Positive cases, unlike before, the power of the DT test did not show any increasing pattern. Here, only the modified sign and runs tests had powers increasing with $p$. For the Negative Cases, the OT test had very poor performance. Our sign and runs tests (based on $T_S$ and $T_R$) also failed to achieve satisfactory performance. However, the other tests showed a similar pattern as observed in Figure \ref{fig:real-1} (b). 
			

			\section{Tests of spherical symmetry about an unknown center}
			\label{sec:centering-effect}
			So far, we have considered the null hypothesis that specifies the center of symmetry of the underlying distribution. Without loss of generality, the origin was taken as the specified center. However, if null hypothesis does not specify the center, it calls for a test of spherical symmetry about an unknown center ${\bm \mu}$. In such cases, one can think of estimating the center from the data and test for the spherical symmetry about that estimated center ${\bm {\hat \mu}}$. For instance, we can use the sample mean as ${\bm {\hat \mu}}$, subtract it from the original observations for centering, and then apply our tests on the centered data. Of course, one can also use other robust estimates of ${\bm \mu}$ like the MCD estimate
			\citep[see, e.g.,][]{rousseeuw1999fast} or the MVE estimate \citep[see, e.g.,][]{van2009minimum} for this purpose. Depth based estimates like the spatial median \citep[see, e.g.][]{chaudhuri1996, kolchinskii1998} can be used as well. This method of centering works well when the sample size is large compared to the dimension of the data. But, in high-dimensional problem, it may lead to an inflated type I error, especially when the dimension exceeds the sample size. This is illustrated using the following example.
			
			\begin{exa}\label{exa:noncentral-1}
				We generate $n$ observations from a $d$-variate normal distribution with mean $\bf{1}_d$ and variance covariance matrix $\mathrm {\bf I}_d$,
				where $\mathrm {\bf 1}_d$ denotes the $d$-dimensional vector $(1,1,\ldots,1)$ with elements equal to $1$.
			\end{exa}
			
			\begin{figure}[h]
				\centering
				\begin{tikzpicture}[]
					\begin{axis}[xmin = 100, xmax = 500, ymin = 0, ymax = 0.25, xlabel = {Sample Size}, ylabel = {Power Estimates}, title = {\bf (a) $d = 20$}]
						
						\addplot[color = ForestGreen,   mark = square*, mark size = 2pt, step = 1cm,very thin]coordinates{(100,0.022)(200,0.023)(300,0.022)(400,0.025)(500,0.029)};
						
						\addplot[color = blue,   mark = *, mark size = 2pt, step = 1cm,very thin]coordinates{(100,0.065)(200,0.051)(300,0.068)(400,0.056)(500,0.063)};
						
						\addplot[color = blue,   mark = diamond*, mark size = 2pt, step = 1cm,very thin]coordinates{(100,0.045)(200,0.04)(300,0.04)(400,0.038)(500,0.052)};
						
						\addplot[color = red,   mark = *, mark size = 2pt, step = 1cm,very thin]coordinates{(100,0.04)(200,0.043)(300,0.042)(400,0.033)(500,0.062)};
						
						\addplot[color = red,   mark = diamond*, mark size = 2pt, step = 1cm,very thin]coordinates{(100,0.046)(200,0.052)(300,0.041)(400,0.047)(500,0.056)};
						
						
						\addplot[color = black,   mark = triangle*, mark size = 2pt, step = 1cm,very thin]coordinates{(100,0)(200,0)(300,0)(400,0)(500,0)};
						
						
						
						\addplot[dashed, color = black,   mark = , step = 1cm,very thin]coordinates{(100,0.05)(200,0.05)(300,0.05)(400,0.05)(500,0.05)};
						
					\end{axis}
				\end{tikzpicture}
				\begin{tikzpicture}[]
					\begin{axis}[xmin = 1, xmax = 10, ymin = 0, ymax = 0.25, xlabel = {$\log_2(d)$}, ylabel = {Power Estimates}, title = {\bf (b) $n = 50$}]

						\addplot[color = ForestGreen,   mark = square*, mark size = 2pt, step = 1cm,very thin]coordinates{(1,0.013)(2,0.028)(3,0.021)(4,0.03)(5,0.021)(6,0.016)(7,0)(8,0)(9,0)(10,0)};
						
						\addplot[color = blue,   mark = *, mark size = 2pt, step = 1cm,very thin]coordinates{(1,0.045)(2,0.037)(3,0.06)(4,0.06)(5,0.072)(6,0.056)(7,0.074)(8,0.066)(9,0.093)(10,	0.156)};
						
						\addplot[color = blue,   mark = diamond*, mark size = 2pt, step = 1cm,very thin]coordinates{(1,0.036)(2,0.039)(3,0.043)(4,0.041)(5,0.05)(6,0.053)(7,0.047)(8,0.051)(9,0.074)(10,0.095)};
						
						\addplot[color = red,   mark = *, mark size = 2pt, step = 1cm,very thin]coordinates{(1,0.041)(2,0.04)(3,0.041)(4,0.04)(5,0.042)(6,0.041)(7,0.057)(8,0.041)(9,0.054)(10,	0.097)};
						
						\addplot[color = red,   mark = diamond*, mark size = 2pt, step = 1cm,very thin]coordinates{(1,0.045)(2,0.04)(3,0.041)(4,0.045)(5,0.039)(6,0.044)(7,0.05)(8,0.044)(9,0.058)(10,0.077)};
						
						
						\addplot[color = black,   mark = triangle*, mark size = 2pt, step = 1cm,very thin]coordinates{(1,0.001)(2,0.003)(3,0)(4,0)(5,0)(6,0)(7,0)(8,0)(9,0)(10,0)};
						
						
						
						\addplot[dashed, color = black,   mark = , step = 1cm,very thin]coordinates{(1,0.05)(2,0.05)(3,0.05)(4,0.05)(5,0.05)(6,0.05)(7,0.05)(8,0.05)(9,0.05)(10,0.05)};
						
					\end{axis}
				\end{tikzpicture}
				\caption{Type I errors of the sign test (\tikzsymbol[circle]{minimum width=2pt,fill=blue}), the runs test (\tikzsymbol[diamond]{minimum width=2pt,fill=blue}), the modified sign test (\tikzsymbol[circle]{minimum width=2pt,fill=red}), the modified runs test (\tikzsymbol[diamond]{minimum width=2pt,fill=red}), the OT test ($\textcolor{black}{\blacktriangle}$) 
				and the DT test ($\textcolor{ForestGreen}{\blacksquare}$) in Example \ref{exa:noncentral-1} when (a) the sample size increases while the dimension is kept fixed at $20$ and (b) when the dimension increases while the sample size is kept fixed at $50$. The dashed line indicates the nominal level $\alpha = 0.05$.}
				\label{fig:sim-9}
			\end{figure}
			
			To evaluate the effect of centering on different tests, we use the spatial median as ${\bm {\hat \mu}}$ and apply the tests on the centered data. Instead of sample mean, we choose the spatial median because of its better robustness properties. First we look at the Type I errors of different tests as functions of the sample size when the dimension is kept fixed at $20$ (see In Figure \ref{fig:sim-9} (a)). In this case, Type I errors of all tests become close to the nominal level $\alpha=0.05$ as the sample size increases. With the increasing sample size, since ${\bm {\hat \mu}}$ becomes close to ${\bm \mu}$, such a phenomenon is quite expected. Next we look at their Type I errors when the sample size is kept fixed at $50$, and the dimension varies (see Figure \ref{fig:sim-9} (b)). In higher dimension, the Type I error rates of our tests became higher than the nominal level. 
			On the other hand, DT and OT tests had Type I errors converging to $0$ as the dimension increases. Note that if the  dimension is higher compared to the sample size, 
			${\bm {\hat \mu}}$ may be very different from the actual center $\bm \mu$, and this leads to the loss of the exchangeability property of the observed data points and their spherically symmetric variants (i.e., the difference between the distributions of $\bm X-{\bm {\hat\mu}}$ and $\|\bm X-{\bm {\hat\mu}}\|\bm U$ increases with the dimension), which increases the Type I error of our tests.  To take care of this problem, we can use an idea based on sample splitting, which is motivated by the result stated below.

			\begin{lemma}
				\label{lemma:key-lemma}
				Suppose that $\bm X_1$ and $\bm X_2$ are two independent copies of $\bm X \sim \Pr$, which is symmetric about ${\bm \mu}$. Then $\Pr$ is spherically symmetric about $\bm \mu$ if and only if the distribution of $\bm X_1-\bm X_2$ is spherically symmetric about the origin. 
			\end{lemma}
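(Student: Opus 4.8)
First I would set up notation and reduce everything to characteristic functions. Put $\bm Y = \bm X - \bm\mu$ and let $\phi(\bm t) = \E[e^{\mathrm i\bm t^\top\bm Y}]$; recall the standard equivalence \citep{fang1990book} that $\Pr$ is spherically symmetric about $\bm\mu$ (equivalently, $\bm Y$ is spherically symmetric about the origin) if and only if $\phi$ is radial, i.e.\ $\phi(\bm t)$ depends on $\bm t$ only through $\|\bm t\|$. Writing $\bm Y_i = \bm X_i - \bm\mu$, we have $\bm X_1 - \bm X_2 = \bm Y_1 - \bm Y_2$, whose characteristic function is $\phi(\bm t)\overline{\phi(\bm t)} = |\phi(\bm t)|^2$ because $\bm Y_1$ and $\bm Y_2$ are i.i.d.; hence the distribution of $\bm X_1-\bm X_2$ is spherically symmetric about the origin if and only if $|\phi|^2$ is radial. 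Finally, the standing hypothesis that $\Pr$ is symmetric about $\bm\mu$ says $\bm Y \stackrel{D}{=} -\bm Y$, so $\phi$ is \emph{real}-valued and therefore $|\phi|^2 = \phi^2$. The lemma is thus reduced to the purely analytic assertion: for a real-valued continuous function $\phi$ with $\phi(\bm 0)=1$, $\phi$ is radial if and only if $\phi^2$ is radial.

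The ``only if'' direction here is immediate, so this settles the forward implication of the lemma as well. For the converse --- the substantive part --- suppose $\phi^2$ is radial, say $\phi(\bm t)^2 = g(\|\bm t\|)$ with $g \ge 0$ and $g(0)=1$. Fix $r>0$ and consider the restriction of $\phi$ to the sphere $S_r = \{\bm t \in \R^d : \|\bm t\| = r\}$, which is connected since $d \ge 2$. If $g(r)=0$, then $\phi\equiv 0$ on $S_r$. If $g(r)>0$, then on $S_r$ the continuous function $\bm t \mapsto \phi(\bm t)/\sqrt{g(r)}$ takes values in $\{+1,-1\}$, hence is constant on the connected set $S_r$, so $\phi$ is constant on $S_r$. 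In either case $\phi$ is constant on every sphere $S_r$, i.e.\ $\phi$ is radial, which is exactly what is required; translating back, $\bm Y = \bm X - \bm\mu$ is spherically symmetric about the origin, i.e.\ $\Pr$ is spherically symmetric about $\bm\mu$.

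The main obstacle is precisely this ``square-root'' step: radiality of $\phi^2$ does not by itself imply radiality of $\phi$, and the assumption that $\Pr$ is symmetric about $\bm\mu$ is what rescues the argument, since it forces $\phi$ to be real and thereby replaces a possible complex phase ambiguity by a mere locally-constant sign that connectedness of the sphere then eliminates. Two minor remarks I would include: the independence of $\bm X_1$ and $\bm X_2$ is essential --- it is what makes the characteristic function of $\bm X_1-\bm X_2$ equal to $|\phi|^2$ --- and this is also why the result lends itself to a sample-splitting implementation; and the argument uses $d \ge 2$ so that $S_r$ is connected, whereas for $d=1$ spherical symmetry about $\bm\mu$ coincides with symmetry about $\bm\mu$ and there is nothing to prove.
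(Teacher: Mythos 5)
Your proof is correct and follows essentially the same route as the paper's: both reduce the statement to characteristic functions, use the symmetry about $\bm \mu$ to make the (centered) characteristic function real-valued, and observe that the characteristic function of $\bm X_1-\bm X_2$ is the square of that real function. You are in fact more careful than the paper at the one delicate step --- the paper passes from ``$g^2(\bm t)$ is radial'' directly to ``$g(\bm t)$ is radial'' without comment, whereas your connectedness-of-the-sphere argument is exactly what is needed to rule out a sign that varies over a sphere of fixed radius (and it correctly uses $d\ge 2$).
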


		\begin{figure}[!b]
			\centering
			\begin{tikzpicture}[]
				\begin{axis}[xmin = 1, xmax = 10, ymin = 0, ymax = 1, xlabel = {$\log_2(d)$}, ylabel = {Power Estimates}, title = {\bf (a) Centered Data}]
					
					\addplot[color = ForestGreen,   mark = square*, mark size = 2pt, step = 1cm,very thin]coordinates{(1,0.038)(2,0.112)(3,0.156)(4,0.238)(5,0.415)(6,0.673)(7,0.929)(8,0.991)(9,0.998)(10,1)};
					
					\addplot[color = blue,   mark = *, mark size = 2pt, step = 1cm,very thin]coordinates{(1,0.108)(2,0.194)(3,0.388)(4,0.696)(5,0.941)(6,0.984)(7,0.999)(8,1)(9,1)(10,1)};
					
					\addplot[color = blue,   mark = diamond*, mark size = 2pt, step = 1cm,very thin]coordinates{(1,0.076)(2,0.136)(3,0.302)(4,0.629)(5,0.876)(6,0.974)(7,0.998)(8,1)(9,1)(10,1)};
					
					\addplot[color = red,   mark = *, mark size = 2pt, step = 1cm,very thin]coordinates{(1,0.053)(2,0.096)(3,0.233)(4,0.52)(5,0.837)(6,0.956)(7,0.987)(8,0.999)(9,1)(10,1)};
					
					\addplot[color = red,   mark = diamond*, mark size = 2pt, step = 1cm,very thin]coordinates{(1,0.068)(2,0.096)(3,0.232)(4,0.528)(5,0.807)(6,0.951)(7,0.996)(8,0.999)(9,1)(10,1)};
					
					
					\addplot[color = black,   mark = triangle*, mark size = 2pt, step = 1cm,very thin]coordinates{(1,0)(2,0.001)(3,0)(4,0)(5,0)(6,0)(7,0)(8,0)(9,0)(10,0)};
					
					
					
					\addplot[dashed, color = black,   mark = , step = 1cm,very thin]coordinates{(1,0.05)(2,0.05)(3,0.05)(4,0.05)(5,0.05)(6,0.05)(7,0.05)(8,0.05)(9,0.05)(10,0.05)};
					
				\end{axis}
			\end{tikzpicture}
			\begin{tikzpicture}[]
				\begin{axis}[xmin = 1, xmax = 10, ymin = 0, ymax = 1, xlabel = {$\log_2(d)$}, ylabel = {Power Estimates}, title = {\bf (b) Spample Splitting}]

					\addplot[color = ForestGreen,   mark = square*, mark size = 2pt, step = 1cm,very thin]coordinates{(1,0.06)(2,0.092)(3,0.137)(4,0.193)(5,0.327)(6,0.57)(7,0.84)(8,0.961)(9,1)(10,1)};
					
					\addplot[color = blue,   mark = *, mark size = 2pt, step = 1cm,very thin]coordinates{(1,0.079)(2,0.14)(3,0.288)(4,0.551)(5,0.74)(6,0.868)(7,0.945)(8,0.991)(9,0.995)(10,0.999)};
					
					\addplot[color = blue,   mark = diamond*, mark size = 2pt, step = 1cm,very thin]coordinates{(1,0.042)(2,0.076)(3,0.166)(4,0.355)(5,0.563)(6,0.751)(7,0.908)(8,0.968)(9,0.982)(10,0.998)};
					
					\addplot[color = red,   mark = *, mark size = 2pt, step = 1cm,very thin]coordinates{(1,0.057)(2,0.096)(3,0.179)(4,0.392)(5,0.601)(6,0.776)(7,0.892)(8,0.97)(9,0.981)(10,	0.996)};
					
					\addplot[color = red,   mark = diamond*, mark size = 2pt, step = 1cm,very thin]coordinates{(1,0.025)(2,0.051)(3,0.097)(4,0.211)(5,0.397)(6,0.595)(7,0.789)(8,0.916)(9,0.962)(10,0.985)};
					
					
					\addplot[color = black,   mark = triangle*, mark size = 2pt, step = 1cm,very thin]coordinates{(1,0.048)(2,0.055)(3,0.058)(4,0.104)(5,0.1)(6,0.15)(7,0.173)(8,0.214)(9,0.245)(10,0.258)};
					
					
					
					\addplot[dashed, color = black,   mark = , step = 1cm,very thin]coordinates{(1,0.05)(2,0.05)(3,0.05)(4,0.05)(5,0.05)(6,0.05)(7,0.05)(8,0.05)(9,0.05)(10,0.05)};
					
				\end{axis}
			\end{tikzpicture}
			\caption{Powers of the sign test (\tikzsymbol[circle]{minimum width=2pt,fill=blue}), the runs test (\tikzsymbol[diamond]{minimum width=2pt,fill=blue}), the modified sign test (\tikzsymbol[circle]{minimum width=2pt,fill=red}), the modified runs test (\tikzsymbol[diamond]{minimum width=2pt,fill=red}), the OT test ($\textcolor{black}{\blacktriangle}$) 
				and the DT test ($\textcolor{ForestGreen}{\blacksquare}$) in Example \ref{exa:centering-power} when (a) the samples are centered using the spatial median and (b) when we use differences of the observations based on sample splitting. The dashed line indicates the nominal level $\alpha = 0.05$.}
			\label{fig:sim-9.1}
		\end{figure}
	
			Therefore, if the location $\bm \mu$ is unknown and the sample size $n$ is even (discard one observation, if needed), we can use our tests on the transformed data $\{\bm Z_1,\bm Z_2,\ldots, \bm Z_{n/2}\}$, where $\bm Z_i = \bm X_i-\bm X_{n/2+i}$ for $i=1,2,\ldots,n/2$. The resulting tests will have the exact distribution-free property and their asymptotic properties can be established using arguments similar to those in Section \ref{sec:tests} and \ref{sec:improvements}. Therefore, to avoid repetition, we omit those discussions here. To demonstrate the empirical performance of the resulting method, we consider the following example.
			
			\begin{exa}\label{exa:centering-power}
				We generate $50$ observations from a $d$-variate normal distribution with mean $\bf{1}_d$ and variance covariance matrix $0.7\mathrm {\bf I}_d + 0.3 \mathrm {\bf J}_d$.
			\end{exa}  
			
				We compute the power of the tests (a) when the observations are centered using the spatial median and (b) when the above idea based sample splitting is used. The results are given Figure \ref{fig:sim-9.1}. For the centered data, our tests have  higher powers than OT and DT tests. We have seen that when we use centering based on the spatial median, the DT test becomes conservative in high dimension, while our tests have a tendency to have inflated Type I error. This may be one of the reasons for the significant difference in their powers. The OT test had a very poor performance, it had almost zero power in all dimensions. This may also be due to the conservativeness of this test as observed in Figure 	\ref{fig:sim-9}. When we adopt the sample splitting idea and use the tests on the differences of the observations, the power of our tests became slightly lower than what we observed before, but still they had an edge over the DT test. 
				Surprisingly, this method helped the OT test to gain some power.
				Though its performance was inferior to other competitors, unlike before, we observed an increasing tend in its power as the dimension increases.
			
			\section{Concluding Remarks}
			
			In this article, we proposed some distribution-free methods for testing spherical symmetry of a multivariate distribution, which can be conveniently used for high-dimensional data even when the dimension is larger than the sample size. Under appropriate regularity conditions, we proved the consistency of these tests in the HDLSS and HDHSS asymptotic regimes and demonstrated their utility using several simulated and real data sets. They outperformed the state-of-the-art methods in a wide variety of high-dimensional examples.
			
			Recall that our tests were constructed using signs and ranks computed along the shortest covering path on the augmented dataset. Instead of shortest covering path, one can consider other graph based methods as well. For instance, one can consider a tree with $n-1$ edges that has the minimum cost and covers either a data point or its spherically symmetric counterpart. Though there are algorithms for constructing the minimum spanning tree, finding such a tree that covers $n$ out of $2n$ vertices (each representing one observation in the augmented data set) turns out to be an NP-complete problem. A heuristic method based on Prim's algorithm \citep{primalgo} can be used there as well, and after constructing the tree, sign statistic can be defined in the same way. This sign statistic will also have the distribution free property and its null distribution will match with that of the univariate sign statistic. The runs statistic can be computed using the idea of \cite{friedman1979multivariate}, but unfortunately the resulting test won't be distribution-free in two or higher dimensions. Similarly, one can also construct a test based on nearest neighbor type coincidences \citep[see, e.g.,][]{henze1988multivariate,schilling1986multivariate}, but that won't be distribution-free as well, and one needs to use an resampling algorithm for calibration, which will increase the computing cost. 
			
			For constructing our modified sign and runs tests, here we have used Bonferroni's method for size correction. Instead, one can also use the methods available for controlling the false discovery rate based on $p$-values \citep{benjamini1995controlling,benjamini2001control} or $e$-values \citep{wang2022false}. However, these methods did not make any visible difference in the performance of our proposed tests. 
			
			Since the main focus of this article is on high dimensional test for spherical symmetry, we did not pay much attention to the usual large sample behavior of the proposed tests in the classical asymptotic regime. However, from our discussion, it is clear that the asymptotic null distributions of the linear rank statistic and the runs statistic are same as given by Theorem \ref{thm:limit-null-distribution} and \ref{thm:large-dist-runs}. Large sample consistency of the resulting tests can be proved as well. Theorem \ref{thm:pitman-efficiency}  the Appendix also establishes the Pitman efficiency of the linear rank test, but such a result for the runs test is yet to derived.
			
			Another interesting problem would be the construction of a test for elliptic symmetry of a high dimensional probability distribution. If the sample size is large compared to dimension of the data, we can estimate the location and scatter of the underlying distribution for standardization and apply the tests of spherical symmetry on the standardized data. But this idea does not work when the dimension is larger compared to the sample size. One needs to come up with an alternative method to take care of this issue.

			\bibliographystyle{apalike}
			\bibliography{ref.bib}

\begin{thebibliography}{}

\bibitem[Ahn et~al., 2012]{ahn2012clustering}
Ahn, J., Lee, M.~H., and Yoon, Y.~J. (2012).
\newblock Clustering high dimension, low sample size data using the maximal
  data piling distance.
\newblock {\em Statist. Sinica}, 22(2):443--464.

\bibitem[Ahn et~al., 2007]{ahn2007high}
Ahn, J., Marron, J., Muller, K.~M., and Chi, Y.-Y. (2007).
\newblock The high-dimension, low-sample-size geometric representation holds
  under mild conditions.
\newblock {\em Biometrika}, 94(3):760--766.

\bibitem[Albisetti et~al., 2020]{albisetti2020}
Albisetti, I., Balabdaoui, F., and Holzmann, H. (2020).
\newblock Testing for spherical and elliptical symmetry.
\newblock {\em J. Multivariate Anal.}, 180:104667.

\bibitem[Banerjee and Ghosh, 2024]{banerjee2024consistent}
Banerjee, B. and Ghosh, A.~K. (2024).
\newblock A consistent test of spherical symmetry for multivariate and
  high-dimensional data via data augmentation.
\newblock {\em arXiv preprint arXiv:2403.12491}.

\bibitem[Banerjee and Ghosh, 2025]{banerjee2025high}
Banerjee, B. and Ghosh, A.~K. (2025).
\newblock On high dimensional behaviour of some two-sample tests based on ball
  divergence.
\newblock {\em Statist. Sinica (To appear)}, 35(3).

\bibitem[Baringhaus, 1991]{baringhaus1991}
Baringhaus, L. (1991).
\newblock Testing for spherical symmetry of a multivariate distribution.
\newblock {\em Ann. Statist.}, 19(2):899--917.

\bibitem[Benjamini and Hochberg, 1995]{benjamini1995controlling}
Benjamini, Y. and Hochberg, Y. (1995).
\newblock Controlling the false discovery rate: a practical and powerful
  approach to multiple testing.
\newblock {\em J. R. Stat. Soc. Ser. B. Stat. Methodol.}, 57(1):289--300.

\bibitem[Benjamini and Yekutieli, 2001]{benjamini2001control}
Benjamini, Y. and Yekutieli, D. (2001).
\newblock The control of the false discovery rate in multiple testing under
  dependency.
\newblock {\em Ann. Statist.}, 29(4):1165--1188.

\bibitem[Bhattacharya, 2019]{bhattacharya2019general}
Bhattacharya, B.~B. (2019).
\newblock A general asymptotic framework for distribution-free graph-based
  two-sample tests.
\newblock {\em J. R. Stat. Soc. Ser. B. Stat. Methodol.}, 81(3):575--602.

\bibitem[Biswas et~al., 2014]{biswas2014distribution}
Biswas, M., Mukhopadhyay, M., and Ghosh, A.~K. (2014).
\newblock A distribution-free two-sample run test applicable to
  high-dimensional data.
\newblock {\em Biometrika}, 101(4):913--926.

\bibitem[Biswas et~al., 2015]{biswas2015onesample}
Biswas, M., Mukhopadhyay, M., and Ghosh, A.~K. (2015).
\newblock On some exact distribution-free one-sample tests for high dimension
  low sample size data.
\newblock {\em Statist. Sinica}, 25(4):1421--1435.

\bibitem[Brown, 1971]{brown1971mclt}
Brown, B.~M. (1971).
\newblock Martingale central limit theorems.
\newblock {\em Ann. Math. Statist.}, 42:59--66.

\bibitem[Chan and Hall, 2009]{chan2009avgclassifier}
Chan, Y.-B. and Hall, P. (2009).
\newblock Scale adjustments for classifiers in high-dimensional, low sample
  size settings.
\newblock {\em Biometrika}, 96(2):469--478.

\bibitem[Chaudhuri, 1996]{chaudhuri1996}
Chaudhuri, P. (1996).
\newblock On a geometric notion of quantiles for multivariate data.
\newblock {\em J. Amer. Statist. Assoc.}, 91(434):862--872.

\bibitem[Chaudhuri and Sengupta, 1993]{chaudhuri1993sign}
Chaudhuri, P. and Sengupta, D. (1993).
\newblock Sign tests in multidimension: inference based on the geometry of the
  data cloud.
\newblock {\em J. Amer. Statist. Assoc.}, 88(424):1363--1370.

\bibitem[Diks and Tong, 1999]{diks1999}
Diks, C. and Tong, H. (1999).
\newblock A test for symmetries of multivariate probability distributions.
\newblock {\em Biometrika}, 86(3):605--614.

\bibitem[Ding, 2020]{ding2020some}
Ding, X. (2020).
\newblock Some sphericity tests for high dimensional data based on ratio of the
  traces of sample covariance matrices.
\newblock {\em Statist. Probab. Lett.}, 156:108613.

\bibitem[Dutta and Ghosh, 2016]{dutta2016ghoshclassifier}
Dutta, S. and Ghosh, A.~K. (2016).
\newblock On some transformations of high dimension, low sample size data for
  nearest neighbor classification.
\newblock {\em Mach. Learn.}, 102(1):57--83.

\bibitem[Dutta et~al., 2016]{dutta2016multiscale}
Dutta, S., Sarkar, S., and Ghosh, A.~K. (2016).
\newblock Multi-scale classification using localized spatial depth.
\newblock {\em J. Mach. Learn. Res.}, 17:Paper No. 218, 30.

\bibitem[Fang et~al., 1990]{fang1990book}
Fang, K.~T., Kotz, S., and Ng, K.~W. (1990).
\newblock {\em Symmetric {M}ultivariate and {R}elated {D}istributions},
  volume~36 of {\em Monographs on Statistics and Applied Probability}.
\newblock Chapman and Hall, London.

\bibitem[Fang et~al., 1993]{fang1993}
Fang, K.~T., Zhu, L.~X., and Bentler, P.~M. (1993).
\newblock A necessary test of goodness of fit for sphericity.
\newblock {\em J. Multivariate Anal.}, 45(1):34--55.

\bibitem[Feng and Liu, 2017]{feng2017high}
Feng, L. and Liu, B. (2017).
\newblock High-dimensional rank tests for sphericity.
\newblock {\em J. Multivariate Anal.}, 155:217--233.

\bibitem[Fourdrinier et~al., 2018]{fourdrinier2018shrinkage}
Fourdrinier, D., Strawderman, W.~E., and Wells, M.~T. (2018).
\newblock {\em Shrinkage {E}stimation}.
\newblock Springer Series in Statistics. Springer, Cham.

\bibitem[Friedman and Rafsky, 1979]{friedman1979multivariate}
Friedman, J.~H. and Rafsky, L.~C. (1979).
\newblock Multivariate generalizations of the {W}ald-{W}olfowitz and {S}mirnov
  two-sample tests.
\newblock {\em Ann. Statist.}, 7(4):697--717.

\bibitem[Garey and Johnson, 1979]{npcompletebook}
Garey, M.~R. and Johnson, D.~S. (1979).
\newblock {\em Computers and {I}ntractability}.
\newblock A Series of Books in the Mathematical Sciences. W. H. Freeman and
  Co., San Francisco, CA.

\bibitem[Ghosh and Biswas, 2016]{ghosh2016distribution}
Ghosh, A.~K. and Biswas, M. (2016).
\newblock Distribution-free high-dimensional two-sample tests based on
  discriminating hyperplanes.
\newblock {\em TEST}, 25(3):525--547.

\bibitem[Ghosh and Chaudhuri, 2005]{ghosh2005maximum}
Ghosh, A.~K. and Chaudhuri, P. (2005).
\newblock On maximum depth and related classifiers.
\newblock {\em Scand. J. Statist.}, 32(2):327--350.

\bibitem[H\'ajek et~al., 1999]{hajekranktest}
H\'ajek, J., \v~Sid\'ak, Z.~e., and Sen, P.~K. (1999).
\newblock {\em Theory of {R}ank {T}ests}.
\newblock Probability and Mathematical Statistics. Academic Press, Inc., San
  Diego, CA.

\bibitem[Hall et~al., 2005]{hall2005geometric}
Hall, P., Marron, J.~S., and Neeman, A. (2005).
\newblock Geometric representation of high dimension, low sample size data.
\newblock {\em J. R. Stat. Soc. Ser. B Stat. Methodol.}, 67(3):427--444.

\bibitem[Henze, 1988]{henze1988multivariate}
Henze, N. (1988).
\newblock A multivariate two-sample test based on the number of nearest
  neighbor type coincidences.
\newblock {\em Ann. Statist.}, 16(2):772--783.

\bibitem[Henze et~al., 2014]{henze2014}
Henze, N., Hl\'{a}vka, Z., and Meintanis, S.~G. (2014).
\newblock Testing for spherical symmetry via the empirical characteristic
  function.
\newblock {\em Statistics}, 48(6):1282--1296.

\bibitem[Huang and Sen, 2023]{huang2023multivariate}
Huang, Z. and Sen, B. (2023).
\newblock Multivariate symmetry: Distribution-free testing via optimal
  transport.
\newblock {\em arXiv preprint arXiv:2305.01839}.

\bibitem[J{\"o}rnsten, 2004]{jornsten2004clustering}
J{\"o}rnsten, R. (2004).
\newblock Clustering and classification based on the {$L_1$} data depth.
\newblock {\em J. Multivariate Anal.}, 90(1):67--89.

\bibitem[Jung and Marron, 2009]{jung2009pca}
Jung, S. and Marron, J.~S. (2009).
\newblock {PCA} consistency in high dimension, low sample size context.
\newblock {\em Ann. Statist.}, 37(6B):4104--4130.

\bibitem[Kim et~al., 2020]{kim2020robust}
Kim, I., Balakrishnan, S., and Wasserman, L. (2020).
\newblock Robust multivariate nonparametric tests via projection averaging.
\newblock {\em Ann. Statist.}, 48(6):3417--3441.

\bibitem[Koltchinskii and Li, 1998]{kolchinskii1998}
Koltchinskii, V.~I. and Li, L. (1998).
\newblock Testing for spherical symmetry of a multivariate distribution.
\newblock {\em J. Multivariate Anal.}, 65(2):228--244.

\bibitem[Lehmann and Romano, 2021]{lehmanntesting}
Lehmann, E.~L. and Romano, J.~P. (2021).
\newblock {\em Testing {S}tatistical {H}ypotheses}.
\newblock Springer Texts in Statistics. Springer, Cham.

\bibitem[Li et~al., 2012]{li2012dd}
Li, J., Cuesta-Albertos, J.~A., and Liu, R.~Y. (2012).
\newblock {$DD$}-classifier: {N}onparametric classification procedure based on
  {$DD$}-plot.
\newblock {\em J. Amer. Statist. Assoc.}, 107(498):737--753.

\bibitem[Liang et~al., 2008]{liang2008}
Liang, J., Fang, K.-T., and Hickernell, F.~J. (2008).
\newblock Some necessary uniform tests for spherical symmetry.
\newblock {\em Ann. Inst. Statist. Math.}, 60(3):679--696.

\bibitem[Liu and Modarres, 2011]{liu2011triangle}
Liu, Z. and Modarres, R. (2011).
\newblock A triangle test for equality of distribution functions in high
  dimensions.
\newblock {\em J. Nonparametr. Stat.}, 23(3):605--615.

\bibitem[Maa et~al., 1996]{maa1996}
Maa, J.-F., Pearl, D.~K., and Bartoszy\'{n}ski, R. (1996).
\newblock Reducing multidimensional two-sample data to one-dimensional
  interpoint comparisons.
\newblock {\em Ann. Statist.}, 24(3):1069--1074.

\bibitem[Pal et~al., 2016]{pal2016high}
Pal, A.~K., Mondal, P.~K., and Ghosh, A.~K. (2016).
\newblock High dimensional nearest neighbor classification based on mean
  absolute differences of inter-point distances.
\newblock {\em Pattern Recognit. Lett.}, 74:1--8.

\bibitem[Prim, 1957]{primalgo}
Prim, R.~C. (1957).
\newblock Shortest connection networks and some generalizations.
\newblock {\em Bell System Tech. J.}, 36(6):1389--1401.

\bibitem[Randles, 1989]{randles1989distribution}
Randles, R.~H. (1989).
\newblock A distribution-free multivariate sign test based on interdirections.
\newblock {\em J. Amer. Statist. Assoc.}, 84(408):1045--1050.

\bibitem[Rousseeuw and Driessen, 1999]{rousseeuw1999fast}
Rousseeuw, P.~J. and Driessen, K.~V. (1999).
\newblock A fast algorithm for the minimum covariance determinant estimator.
\newblock {\em Technometrics}, 41(3):212--223.

\bibitem[Roy et~al., 2022]{roy2022hdlssclassification}
Roy, S., Sarkar, S., Dutta, S., and Ghosh, A.~K. (2022).
\newblock On generalizations of some distance based classifiers for {HDLSS}
  data.
\newblock {\em J. Mach. Learn. Res.}, 23:Paper No. 14, 41.

\bibitem[Sarkar and Ghosh, 2020]{sarkar2019perfect}
Sarkar, S. and Ghosh, A.~K. (2020).
\newblock On perfect clustering of high dimension, low sample size data.
\newblock {\em IEEE Trans. Pattern Anal. Mach. Intell.}, 42(9):2257--2272.

\bibitem[Schilling, 1986]{schilling1986multivariate}
Schilling, M.~F. (1986).
\newblock Multivariate two-sample tests based on nearest neighbors.
\newblock {\em J. Amer. Statist. Assoc.}, 81(395):799--806.

\bibitem[Smith, 1977]{smith1977}
Smith, P.~J. (1977).
\newblock A nonparametric test for bivariate circular symmetry based on the
  empirical {CDF}.
\newblock {\em Comm. Statist.---Theory Methods}, 6(3):209--220.

\bibitem[Tsukada, 2019]{tsukada2019high}
Tsukada, S.-i. (2019).
\newblock High dimensional two-sample test based on the inter-point distance.
\newblock {\em Comput. Statist.}, 34(2):599--615.

\bibitem[Van~Aelst and Rousseeuw, 2009]{van2009minimum}
Van~Aelst, S. and Rousseeuw, P. (2009).
\newblock Minimum volume ellipsoid.
\newblock {\em WIREs Computational Statistics}, 1(1):71--82.

\bibitem[Van~der Vaart, 1998]{van2000asymptotic}
Van~der Vaart, A.~W. (1998).
\newblock {\em Asymptotic {S}tatistics}, volume~3 of {\em Cambridge Series in
  Statistical and Probabilistic Mathematics}.
\newblock Cambridge University Press, Cambridge.

\bibitem[Wang and Ramdas, 2022]{wang2022false}
Wang, R. and Ramdas, A. (2022).
\newblock False discovery rate control with e-values.
\newblock {\em J. R. Stat. Soc. Ser. B. Stat. Methodol.}, 84(3):822--852.

\bibitem[Wei et~al., 2016]{wei2016direction}
Wei, S., Lee, C., Wichers, L., and Marron, J.~S. (2016).
\newblock Direction-projection-permutation for high-dimensional hypothesis
  tests.
\newblock {\em J. Comput. Graph. Statist.}, 25(2):549--569.

\bibitem[Wheeden and Zygmund, 1977]{wheeden1977measure}
Wheeden, R.~L. and Zygmund, A. (1977).
\newblock {\em Measure and {I}ntegral: {A}n {I}ntroduction to {R}eal
  {A}nalysis.}, volume~26.
\newblock Dekker, New York.

\bibitem[Yata and Aoshima, 2012]{yata2012}
Yata, K. and Aoshima, M. (2012).
\newblock Effective pca for high-dimension, low-sample-size data with noise
  reduction via geometric representations.
\newblock {\em J. Multivariate Anal.}, 105(1):193--215.

\bibitem[Zou et~al., 2014]{zou2014}
Zou, C., Peng, L., Feng, L., and Wang, Z. (2014).
\newblock Multivariate sign-based high-dimensional tests for sphericity.
\newblock {\em Biometrika}, 101(1):229--236.

\end{thebibliography}

			\appendix

			\section{Appendix A: Proofs}
			
			\subsection{Proofs of the results stated in Section \ref{sec:definition}}
			
			\begin{proof}[\bf Proof of Lemma \ref{lemma:distance-characterization}]
				It is easy to see that if $\Pr$ is spherically symmetric $(\bm X_1,\bm X_1^\prime)\stackrel{D}{=}(\bm X_1^\prime,\bm X_1)$. Therefore, for any measurable function $h(.,.)$, $h(\bm X_1,\bm X_2) \stackrel{D}{=} h(\bm X_1,\bm X_2^\prime) \stackrel{D}{=} h(\bm X_1^\prime,\bm X_2^\prime)$
				holds trivially. To prove the only if part, first note that
				\begin{align*}
					\P[h(\bm X_1,\bm X_2)\leq t] = \int_{\{h(\bm x,\bm y)\leq t\}} \hspace{-0.2in} p(\bm x) p(\bm y) \mathrm d\bm x\mathrm d \bm y & = \int_{\{h(\bm x-\bm y,0)\leq t\}} \hspace{-0.2in} p(\bm x) p(\bm y) \mathrm d \bm x \mathrm d \bm y
					 = \int_{\{h(\bm v,0)\leq t\}} \left[\int p(\bm y+\bm v) p(\bm y) \mathrm d \bm y\right] \mathrm d \bm v.
				\end{align*}
				By the square integrability of $p~(\geq 0)$ we can say that
				$$u(\bm v) := \int p(\bm y+\bm v) p(\bm y) \mathrm d \bm y ~~~~~~~~~~\left[\leq \Big(\int p^2(\bm y+\bm v)\mathrm d\bm y\Big)^{1/2} \Big(\int p^2(\bm y)\mathrm d\bm y\Big)^{1/2} = \int p^2(\bm y)\mathrm d\bm y<\infty\right]$$
				is locally integrable on $\R^d$ (i.e., integrable on all compact subsets of $\R^d$). Hence, by Theorem 7.15 from \cite{wheeden1977measure}, we conclude that almost every point in $\R^d$ is a Lebesgue point of $u(\cdot)$. Also, by our assumption (c) and Theorem 7.16 from \cite{wheeden1977measure}, we obtain
				\begin{align*}
					\lim_{t\downarrow 0}\frac{\int_{\{h(\bm v,0)\leq t\}} |u(\bm v)-u(0)| \mathrm d \bm v}{\int_{\{h(\bm v,0)\leq t\}} \mathrm d\bm v} = 0,
				\end{align*}
				or in other words
				\begin{align*}
					\lim_{t\downarrow 0}\frac{\P[h(\bm X_1,\bm X_2)\leq t]}{\int_{\{h(\bm v,0)\leq t\}} \mathrm d\bm v} = \lim_{t\downarrow 0}\frac{\int_{\{h(\bm v,0)\leq t\}} u(\bm v) \mathrm d \bm v}{\int_{\{h(\bm v,0)\leq t\}} \mathrm d\bm v} = u(0) = \int p^2(\bm y)\mathrm d \bm y.
				\end{align*}
				Now, the random variable $\bm X_1^\prime$ has the density $p^\prime(\bm x) =  \int p(\bm H^\top \bm x)\mathrm d\nu(\bm H)$ where $\nu$ is the Haar measure on the set of all $d\times d$ orthogonal matrices \citep[see Lemma A.2][]{banerjee2024consistent}. One can show that if $p$ is square integrable, so is $p^\prime$. Therefore, using the same argument, we can show that
				\begin{align*}
					\lim_{t\downarrow 0}\frac{\P[h(\bm X_1,\bm X_2^\prime)\leq t]}{\int_{\{h(\bm v,0)\leq t\}} \mathrm d\bm v} = \int p(\bm y) p^\prime(\bm y) \mathrm d \bm y~~~~\textnormal{and}~~~~\lim_{t\downarrow 0}\frac{\P[h(\bm X_1^\prime,\bm X_2^\prime)\leq t]}{\int_{\{h(\bm v,0)\leq t\}} \mathrm d\bm v} = \int p^{\prime2}(\bm y)\mathrm d\bm y. 
				\end{align*}
				However, under the assumption $h(\bm X_1,\bm X_2)\stackrel{D}{=}h(\bm X_1,\bm X_2^\prime)\stackrel{D}{=}h(\bm X_1^\prime,\bm X_2^\prime)$ we must have
				\begin{align*}\P[h(\bm X_1^\prime,\bm X_2^\prime)\le t] = \P[h(\bm X_1^\prime,\bm X_2^\prime)\le t] =\P[h(\bm X_1^\prime,\bm X_2^\prime)\le t]~~~\forall t\in \R.
				\end{align*}
				So, combining our results we get
				\begin{equation}\label{eq:prob-limits}
					\int p^2(\bm y) \mathrm d \bm y = \int p^{\prime2}(\bm y) \mathrm d \bm y =  \int p(\bm y) p^\prime(\bm y) \mathrm d \bm y.
				\end{equation}
				By Cauchy-Schwartz inequality, the equality in \eqref{eq:prob-limits} holds if and only if $p = p^\prime$ almost surely, i.e., $\Pr$ is spherically symmetric. 
			\end{proof}
			
			
			\begin{proof}[\bf Proof of Theorem \ref{thm:sign-rank-dist}]
				Let us define $\mathcal{T}(\bm s,\bm \pi):=\sum_{i=1}^{n-1}\theta(\bm Y_{s_{\pi_i},\pi_i},\bm Y_{s_{\pi_{i+1}},\pi_{i+1}}),$ for $\bm s=(s_1,s_2,\ldots,s_n)\in\{0,1\}^n$ and ${\bm \pi}\in {\mathcal S}_n$. Let $(\bm S,\bm \Pi)$ be as defined in \eqref{eq:optimization}. Then, for any $\bm s_0\in\{0,1\}^n$ and $\bm \pi_0\in S_n$ we have
				\begin{align}\label{eq:sign-rank-prob}
				  \P[\bm S=\bm s_0; \bm \Pi = \bm \pi_0] = \P\left[\mathcal{T}(\bm s_0,\bm \pi_0) \leq \mathcal{T}(\bm s,\bm \pi)~~\forall \bm s\in\{0,1\}^n~and~\bm \pi\in {\mathcal S}_n\right].
				\end{align}
				When $\Pr$ is spherically symmetric, we have $(\bm X_i,\bm X_i^\prime)\stackrel{D}{=}(\bm X_i^\prime,\bm X_i)$ for every $i=1,2,\ldots,n$. So, the random variables $\{\mathcal{T}(\bm s,\bm \pi)\}_{\bm s\in\{0,1\}^n; \bm \pi\in \mathcal{S}_n}$ are exchangeable. Hence, the probability on the right side of \eqref{eq:sign-rank-prob} does not depend on $\bm s_0$ and $\bm \pi_0$. Therefore, using the identity $\sum_{\bm s_0\in \{0,1\}^n}\sum_{\bm \pi_0\in S_n} \P[[\bm S=\bm s_0; \bm \Pi = \bm \pi_0] = 1$, we get
					\begin{align*}
						\P[\bm S=\bm s_0; \bm \Pi = \bm \pi_0] = 2^{-n}(n!)^{-1}.
					\end{align*}
					This implies $\bm S\sim \textnormal{Unif}(\{0,1\}^n)$, $\bm \Pi \sim\textnormal{Unif}({\mathcal S}_n)$ and they are mutually independent. Since $\bm R$ is the inverse permutation of $\bm \Pi$, we also have $\bm R\sim$Unif$({\mathcal S}_n)$ and it is independent of $\bm S$. This proves part (a) of the theorem.
					
					If $\Pr$ is not spherically symmetric, $(\bm X_i,\bm X_i^\prime)$ and $(\bm X_i^\prime, \bm X_i)$ are not distributionally equal for any $i=1,2,\ldots, n$, but $\{(\bm X_i,\bm X_i^\prime)\}_{1\leq i\leq n}$ forms a sequence of i.i.d. random vectors. So, for any $\bm\pi_0\in S_n$ we have
					\begin{align*}
						\P[\bm \Pi = \bm \pi_0] & = \sum_{\bm s_0\in\{0,1\}^n}\P[\bm S=\bm s_0; \bm \Pi = \bm \pi_0] = \sum_{\bm s_0\in\{0,1\}^n}\P[\bm \Pi = \bm \pi_0 \mid \bm S=\bm s_0] ~\P[\bm S=\bm s_0].
					\end{align*}
					For any given $\bm s_0\in\{0,1\}^n$ the random variables $\{\mathcal{T}(\bm s_0,\bm \pi)\}_{\bm \pi\in {\mathcal S}_n}$ are exchangeable. Hence, $\P[\bm \Pi = \bm \pi_0|\bm S = \bm s_0]$ is constant for every $\bm \pi_0\in \mathcal{S}_n$. Using the relation $\sum_{\bm \pi_0\in S_n} \P[\bm \Pi = \bm \pi_0|\bm S = \bm s_0] = 1$ for any $\bm s_0\in \{0,1\}^n$, we get $\P[\bm \Pi = \bm \pi_0|\bm S = \bm s_0] = (n!)^{-1}$ and therefore $\P[\bm \Pi = \bm \pi_0] = (n!)^{-1}$. Here also, by the same argument as above, we have $\bm R\sim$ Unif$({\mathcal S}_n)$. Now, given $\bm \Pi = (\pi_1,\ldots, \pi_n)$ and the augmented data $\{(\bm X_i,\bm X_i^\prime)\}_{1\leq i\leq n}$ we have the following relation.
					\begin{align*}
						\mathrm \mathrm{I}\{S_{\pi_1}=1\}& = \mathrm I\big\{\theta(\bm X_{\pi_1},\bm Y_{S_{\pi_2},\pi_2})\leq \theta(\bm X_{\pi_1}^\prime,\bm Y_{S_{\pi_2},\pi_2})\big\}.
					\end{align*}
				This follows from the fact that the total cost of the path starting with $\bm X_{\pi_1}$ and that starting with  $\bm X^{\prime}_{\pi_1}$ differ only in the first term. Using similar arguments, we also get
					\begin{align*}
						 \mathrm{I}\{S_{\pi_k}=1\} & = \mathrm I\big\{\theta(\bm Y_{S_{\pi_{(k-1)}},\pi_{(k-1)}},\bm X_{\pi_k})+\theta(\bm X_{\pi_k},\bm Y_{S_{\pi_{(k+1)}},\pi_{(k+1)}})\\
						& \hspace{0.5in}\leq \theta(\bm Y_{S_{\pi_{(k-1)}},\pi_{(k-1)}},\bm X_{\pi_{k}}^\prime)+\theta(\bm X_{\pi_{k}}^\prime,\bm Y_{S_{\pi_{(k+1)}},\pi_{(k+1)}})\big\} ~\text{for } k=2,\ldots,n-1~~\text{and}\\
					 \mathrm{I}\{S_{\pi_n}=1\} & = \mathrm I\big\{\theta(\bm Y_{S_{\pi_{n-1}},\pi_{n-1}},\bm X_{\pi_n})\leq \theta(\bm Y_{S_{\pi_{n-1}},\pi_{n-1}},\bm X_{\pi_n}^\prime)\big\}.
					\end{align*}
					Now, taking conditional expectation given $\bm \Pi$, we have
     				\begin{align*}
						\P^*[S_{\pi_1}=1] & = \P^*\big[\theta(\bm X_{\pi_1},\bm Y_{S_{\pi_2},\pi_2})\leq \theta(\bm X_{\pi_1}^\prime,\bm Y_{S_{\pi_2},\pi_2})\big],\\
						\P^*[S_{\pi_k}=1] & = \P^*\big[\theta(\bm Y_{S_{\pi_{(k-1)}},\pi_{(k-1)}},\bm X_{\pi_k})+\theta(\bm X_{\pi_k},\bm Y_{S_{\pi_{(k+1)}},\pi_{(k+1)}})\\
						& \hspace{0.5in}\leq \theta(\bm Y_{S_{\pi_{(k-1)}},\pi_{(k-1)}},\bm X_{\pi_{k}}^\prime)+\theta(\bm X_{\pi_{k}}^\prime,\bm Y_{S_{\pi_{(k+1)}},\pi_{(k+1)}})\big] ~\text{for } k=2,\ldots,n-1~~\text{and}\\
						\P^*[S_{\pi_n}=1] & = \P^*\big[\theta(\bm Y_{S_{\pi_{n-1}},\pi_{n-1}},\bm X_{\pi_n})\leq \theta(\bm Y_{S_{\pi_{n-1}},\pi_{n-1}},\bm X_{\pi_n}^\prime)\big].
						\end{align*}
						where $\P^*$ denotes the conditional distribution given $\bm\Pi$. This establishes the weak dependence structure of the string signs. This completes the proof of part (b) of the theorem.
			\end{proof}

			\subsection{Proofs of the results stated in Section \ref{sec:tests}}
			\begin{proof}[\bf Proof of Theorem \ref{thm:HDLSS-difficulty}]
				For the proof of this theorem, we refer the reader to Theorem 1 in \cite{jung2009pca}. 
			\end{proof}
			
			\begin{proof}[\bf Proof of Theorem \ref{thm:HDLSS-consistency}]
		Let $\bm X_1^\prime = \|\bm X_1\|\bm U_1 $ and $\bm X_2^\prime = \|\bm X_2\|\bm U_2$ be the spherically symmetric variants of $\bm X_1$ and $\bm X_2$, respectively. Here $\bm U_1,\bm U_2$ are i.i.d. Unif$(\mathcal{S}^{d-1})$ independent of $\bm X_1$ and $\bm X_2$. Then, we have 
		\begin{align*}
			\P\left[\theta(\bm X_1,\bm X_2)<\theta(\bm X_1,\bm X_2^\prime)\right] 
			& = \P\left[(\bm X_1^\top\bm X_2)^2>(\bm X_1^\top\bm X_2^\prime)^2\right]
		    = \P\left[(\bm X_1^\top\bm X_2)^2> \|\bm X_2\|^2 (\bm X_1^\top\bm U_2)^2\right].  
		\end{align*}
		By spherical symmetry of $\bm U_2$, for any $\bm a \in {\mathbb R}^d$, we have $(\bm a^\top \bm U_2) \stackrel{D}{=} \|\bm a\| U_{21}$, where $U_{21}$ is the first coordinate of $\bm U_2$. Therefore, conditioned on $\bm X_1$ and $\bm X_2$ we have,
		\begin{align*}
			\P\left[\theta(\bm X_1,\bm X_2)<\theta(\bm X_1,\bm X_2^\prime)\Big|\bm X_1,\bm X_2\right] = \P\left[(\bm X_1^\top\bm X_2)^2> \|\bm X_2\|^2 \|\bm X_1\|^2 U_{21}^2 \Big|\bm X_1\bm X_2\right].  
		\end{align*}
		Now, taking expectations with respect to $\bm X_1$ and $\bm X_2$, we get
		\begin{align*}
			\P\left[\theta(\bm X_1,\bm X_2)<\theta(\bm X_1,\bm X_2^\prime)\right] = \P\left[ \frac{(\bm X_1^\top\bm X_2)^2}{\|\bm X_2\|^2 \|\bm X_1\|^2} >  U^2_{21} \right].  
		\end{align*}
		From the elementary theory of sampling distributions we know that $U_{21}^2$ follows a Beta$(\frac{1}{2}, \frac{d-1}{2})$ distribution over the interval $[0,1]$ \citep[see, e.g.][]{liang2008}. So, we have $\E[U_{21}^2] = \frac{1}{d}$ and $\var[U_{21}^2] = \frac{2}{d(d+2)}$. Therefore, $\{dU_{21}^2\}_{d\geq 1}$ is a tight sequence of random variables. Hence, for all $M>0$, if we have
		\begin{align*}
			\P\left[\frac{d(\bm X_1^\top\bm X_2)^2}{\|\bm X_1\|^2\|\bm X_2\|^2}>M\right]\rightarrow 1,
		\end{align*}
		we can say that $\P\Big[\frac{d(\bm X_1^\top \bm X_2)^2}{\|\bm X-1\|^2\|\bm X_2\|^2}> dU_{21}^2\Big] = \P\left[\theta(\bm X_1,\bm X_2)<\theta(\bm X_1,\bm X_2^\prime)\right]$ converges to one as $d$ diverges to infinity.  Similarly, we can also show that under the given condition, $\P\left[\theta(\bm X_1,\bm X_2)<\theta(\bm X_1^\prime,\bm X_2)\right]$ and $\P\left[\theta(\bm X_1,\bm X_2)<\theta(\bm X_1^\prime,\bm X_2^\prime)\right]$ also converge to one as $d$ diverges to infinity. 
		Now, for any $\bm \pi\in \mathcal{S}_n$, define $$E_i= \Big\{\theta(\bm X_{\pi_i},\bm X_{\pi_{i+1}})=\min\big\{\theta(\bm X_{\pi_i},\bm X_{\pi_{i+1}}),~\theta(\bm X_{\pi_i},\bm X^\prime_{\pi_{i+1}}),~\theta(\bm X^\prime_{\pi_i},\bm X_{\pi_{i+1}}),~\theta(\bm X^\prime_{\pi_i},\bm X^\prime_{\pi_{i+1}})\Big\}$$
		for $i=1,\ldots,n-1$. Clearly, $\P(E_i\mid \bm \Pi = {\bm \pi})\rightarrow 1$ for all $i=1,\ldots,n-1$ as $d \rightarrow \infty$.
		Now, let $\bm S$ be the solution of \eqref{eq:optimization} and define ${\bf 1}_n$ as the identity sequence that has all elements equal to $1$. Note that for any $\bm \pi\in \mathcal{S}_n$, 
		$$\{{\bm S}={\bf 1}_n\} \supseteq
		E_1 \cap E_2\cap \ldots\cap E_{n-1}~\text{and hence } \P({\bm S}={\bf 1}_n \mid \bm \Pi = \bm \pi) \rightarrow 1 ~\text{as } n \rightarrow \infty.$$
		Now, the result follows by a simple application of the Dominated Convergence Theorem.
	\end{proof}
			
			\begin{proof}[\bf Proof of Corollary \ref{cor:spiked-HDLSS}]
		 Here $\bm Z_i$ ($i=1,2$) can be viewed as a standardized version of $\bm X_i$, and we have $\textnormal{Var}(\bm Z_i)={\bf I}_d$, the $d \times d$ identity matrix. Now,
			\begin{align*}
			\vspace{-0.1in}
				(\bm X_1^\top \bm X_2) = \bm Z_1^\top \bm \Lambda_d\bm Z_2 = \sum_{i=1}^d \lambda_{i} Z_{1,i} Z_{2,i}\text{ and }(\bm X_1^\top \bm X_1) = \bm Z_1^\top \bm \Lambda \bm Z_1 = \sum_{i=1}^d \lambda_{i} Z_{1,i}^2.
		\end{align*}
	
	\vspace{-0.25in}
			Note that
			\begin{align*}
				 \var\left[\frac{\sum_{i=2}^d \lambda_{i} Z_{1,i} Z_{2,i}}{\sum_{i=2}^d \lambda_{i}}\right]
				= \frac{1}{\Big(\sum_{i=2}^d \lambda_{i}\Big)^2} \left[\sum_{i=2}^d \lambda_{i}^2 \var\Big[Z_{1,i} Z_{2,i}\Big] + \sum_{2\leq i\not=j\leq d}\lambda_{i}\lambda_{j} \textnormal{Cov}\Big(Z_{1,i} Z_{2,i},~ Z_{1,j} Z_{2,j}\Big)  \right].
			\end{align*}
			Since $\textnormal{Cov}\Big(Z_{1,i} Z_{2,i},~ Z_{1,j} Z_{2,j}\Big) = \E\left[Z_{1,i} Z_{2,i} Z_{1,j} Z_{2,j}\right] = \left[\E\big[Z_{1,i} Z_{1,j}\big]\right]^2  = 0.
			$ for all $i \neq j ~(i,j \ge 2)$, we have
			\begin{align*}
				& \var\left[\frac{\sum_{i=2}^d \lambda_{i} Z_{1,i} Z_{2,i}}{\sum_{i=2}^d \lambda_{i}}\right] = \frac{1}{\Big(\sum_{i=2}^d \lambda_{i}\Big)^2}\sum_{i=2}^d \lambda_{i}^2 \var\Big[Z_{1,i} Z_{2,i}\Big] =   \frac{\sum_{i=2}^d \lambda_{i}^2}{\Big(\sum_{i=2}^d \lambda_{i}\Big)^2},
			\end{align*}
			So, using assumption (b) of the corollary, as $d$ diverges to infinity, we get
			$$\sum_{i=2}^d \lambda_{i} Z_{1,i}Z_{2,i}= o_P\Big(\sum_{i=2}^d \lambda_{i}\Big) \text{ and hence }
			(\bm X_1^\top\bm X_2) = \lambda_{1} Z_{1,1}Z_{2,1} + o_P\Big(\sum_{i=2}^d\lambda_{i}\Big).$$Also, note that under the $\rho$-mixing condition in (A2) and  assumption (b) of the corollary,
			\begin{align*}
				\var\left[\frac{\sum_{i=2}^d \lambda_{i} Z_{1,i}^2}{\sum_{i=2}^d \lambda_{i}}\right] & = \frac{1}{\Big(\sum_{i=2}^d \lambda_{i}\Big)^2} \left[\sum_{i=2}^d \lambda_{i}^2 \var\Big[Z_{1,i}^2\Big] + \sum_{2\leq i\not=j\leq d} \lambda_{i}\lambda_{j} \textnormal{Cov}\Big(Z_{1,i}^2,Z_{1,j}^2\Big)\right] \rightarrow 0 
			\end{align*}
			as $d \rightarrow \infty$ (follows from Theorem 1 from \cite{jung2009pca}). So, under these assumptions, we get
			\begin{align*}
				(\bm X_1^\top\bm X_1)  = \lambda_{1,d} Z_{1,1}^2 + \sum_{i=2}^d \lambda_{i} + o_P\Big(\sum_{i=2}^d\lambda_{i}\Big).~\Big(\text{note that} E\Big[\sum_{i=2}^{d}Z_{1,i}^2\Big]=\sum_{i=2}^{d}\lambda_i\Big)
			\end{align*}
			Now if $\alpha>1$, using assumptions (a) and (b), as $d$ diverges to infinity, we get
			\begin{align*}
				\frac{(\bm X_1^\top\bm X_2)^2}{\|\bm X_1\|^2\|\bm X_2\|^2} =  \frac{(\bm X_1^\top\bm X_2)^2/d^{2\alpha}}{\|\bm X_1\|^2/d^{\alpha}\|\bm X_2\|^2/d^{\alpha}} \rightarrow\frac{c_1^2(Z_{1,1}Z_{2,1})^2}{c_1 Z_{1,1}^2 c_1 Z_{2,1}^2} = 1.
			\end{align*}
			\noindent For $\alpha = 1$, as $d$ diverges to infinity, for any $\epsilon>0$, we have
			\begin{align*}
				\P\Big[\Big|\frac{\bm X_1^\top\bm X_2}{d} - c_1Z_{1,1} Z_{2,1}\Big|>\epsilon\Big]\rightarrow 0 \text{ and } \P\Big[\Big| \frac{\bm X_1^\top \bm X_1}{d} -c_1Z_{1,1}^2 - \frac{\sum_{i=2}^{d} \lambda_i}{d} \Big| >\epsilon\Big] \rightarrow 0.
			\end{align*} 
			Since $\sum_{i=2}^d \lambda_{i}= \mathcal{O}(d)$,  $c_2 = \lim\sup_{d \rightarrow \infty}\sum_{i=2}^d \lambda_{i}/d$.  is non-negative and finite, and for large $d$, we have
				
			$$P\left(\frac{(\bm X_1^\top\bm X_2)^2}{\|\bm X_1\|^2\|\bm X_2\|^2}>t\right) \ge P\left( \frac{c^2_1Z_{1,1}^2Z_{2,1}^2}{(c_1Z_{1,1}^2+c_2)(c_1Z_{2,1}^2+c_2)}>t\right) \text{ for all } t\ge 0.$$ Note that on the right side we have a random variable which takes positive values with probability one.
		This implies both for $\alpha=1$ and  $\alpha > 1$, as $d$ diverges to infinity, we have $$P\Big[\frac{d(\bm X_1^\top\bm X_2)^2}{\|\bm X_1\|^2\|\bm X_2\|^2}>M\big] \rightarrow 1 \text{ for any } M>0.$$ Hence, under the given assumptions, the result follows from Theorem \ref{thm:HDLSS-consistency}.
		\end{proof}

			\begin{proof}[\bf Proof of Theorem \ref{thm:limit-null-distribution}]
				Let $\bm S$ and $\bm R$ be the sign and rank vectors, and $\bm \Pi$ be the vector of anti-ranks (inverse permutation of ${\bm R}$) as defined in Section \ref{sec:definition}. Then, by Theorem \ref{thm:sign-rank-dist}, $\bm S\sim$ Unif$(\{0,1\}^n)$ and ${\bm \Pi}\sim$ Unif$(\mathcal{S}_n)$ irrespective of the dimension $d$. Now, to find the distribution of the linear rank statistic $T_{LR} = \sum_{i=1}^n S_{\pi_i} a(i)$, first note that $\{a(i)(S_{\pi_i}-\frac{1}{2})\}\}_{1\leq i\leq n; n\geq 1}$ forms a triangular array of independent random variables whose distribution does not depend on the dimension of the data. Also, we have  $$\zeta_n^2 := \sum_{i=1}^n \var\big[a(i)(S_{\pi_i}-{1}/{2})\big] = \sum_{i=1}^n a^2(i) \var(S_{\pi_i}) = \frac{1}{4}\Big[\sum_{i=1}^n a^2(i)\Big]~~~\textnormal{ (since $\var[S_{\pi_i}] = {1}/{4}~~\forall i=1(1)n$)}.$$
				Since the sequence of scores $\{a(i)\}$ that satisfies \eqref{eq:consistency-cond}, as $n$ and $d$ both diverge to infinity, we have 
				\begin{align}\label{eq:suff-derivation}
					& \frac{1}{\zeta_n^{3}} \sum_{i=1}^n |a(i)|^3 \E\left[\Big|S_{\pi_i}-{1}/{2}\Big|^3\right] \leq  \frac{\sum_{i=1}^n |a(i)|^3}{\big(\sum_{i=1}^n a^2(i)\big)^{3/2}} \leq\max_{1\leq i\leq n}\frac{|a(i)|}{\big(\sum_{i=1}^n a^2(i)\big)^{1/2}} \rightarrow 0, 
				\end{align}
				
				%
				 So, the triangular array $\{a(i)(S_{\pi_i}-{1}/{2})\}\}_{1\leq i\leq n; n\geq 1}$ satisfies the Lyapunov's condition. Hence, an application of the Lyapunov's Central Limit Theorem (CLT) leads to our desired result.
			\end{proof}

			\begin{proof}[\bf Proof of Theorem \ref{thm:limit-alt-convergence}]
				Let ${\bm S}$, ${\bm R}$ and ${\bm \Pi}$ be 
				as defined in Section \ref{sec:definition}. Then, the lvariance of
				the linear rank statistic $T_{LR} = \sum_{i=1}^n S_{\pi_i} a(i)$ is given by
				\begin{align*}
					\var\left[T_{LR}\right] = \sum_{i=1}^n a^2(i) \var(S_{\pi_i}) + \sum_{1\leq i\not=j \leq n} a(i) a(j) \textnormal{Cov}(S_{\pi_i},S_{\pi_j}).
				\end{align*}
				From Theorem \ref{thm:sign-rank-dist}, we have $\textnormal{Cov}(S_{\pi_i},S_{\pi_j}) = 0$ for all $|i-j|>1$ irrespective of the dimension $d$. Therefore,
				\begin{align*}
					\var[T_{LR}] = \sum_{i=1}^n a^2(i) \var(S_{\pi_i}) + \sum_{|i-j| = 1} a(i) a(j) \textnormal{Cov}(S_{\pi_i},S_{\pi_j}).
				\end{align*}
				
				Now, the uniformity of $\bm \Pi$
				(see Theorem \ref{thm:sign-rank-dist}) suggests that  $\var(S_{\pi_i})$ is constant for all $i=1,2,\ldots,n$ and $\textnormal{Cov}(S_{\pi_i},S_{\pi_{j}})$ is constant for all $i,j$ with $|i-j|=1$. Let us call them $\sigma_{11}^{(d)}$ and $\sigma_{12}^{(d)}$, respectively.  Note that $\sigma_{11}^{(d)}$ and $\sigma_{12}^{(d)}$ are bounded and $\sum_{|i-j|=1} a(i)a(j) = 2\sum_{i=1}^{n-1}a(i)a(i+1)$. Therefore, as $n$ and $d$ diverges to infinity, for a sequence of scores $\{a(i)\}_{1\leq i\leq n}$ satisfying \eqref{eq:consistency-cond}, we have
				$$
					\limsup_{n,d\rightarrow\infty} \var\left[\frac{T-\E[T]}{\sqrt{\sum_{i=1}^n a^2(i)}}\right] = \limsup_{n,d\rightarrow\infty} \left[\frac{\sum_{i=1}^{n} a^2(i) \sigma_{11}^{(d)}}{\sum_{i=1}^n a^2(i)} + 2\frac{\sum_{i=1}^{n-1}a(i)a(i+1)}{\sum_{i=1}^n a^2(i)} \sigma_{12}^{(d)}\right] 
					 \rightarrow \sigma_{11} + 2C\sigma_{12},
				$$
				where $\sigma_{11} = \limsup_{d\rightarrow \infty} \sigma_{11}^{(d)} \leq 1$ and $\sigma_{12} = \limsup_{d\rightarrow\infty} \sigma_{12}^{(d)} \leq 1$ are finite constants. This gives us our desired result.
	\end{proof}

			\begin{proof}[\bf Proof of Theorem \ref{thm:large-dist-runs}]
				Let $\bm S$, $\bm R$ and $\bm \Pi$ be vectors of string signs, string ranks and anti-ranks as defined in Section \ref{sec:definition}. Recall that the runs statistic is given by 
				$$T_R = 1 + \sum_{i=1}^{n-1} \mathrm{I}\{S_{\pi_i}\not = S_{\pi_{i+1}}\}.$$
				When $\Pr$ is spherically symmetric, $\bm S\sim$ Unif$(\{0,1\}^n)$, $\bm \Pi\sim$ Unif$(\mathcal{S}_n)$ and they are independent (see Theorem \ref{thm:sign-rank-dist}). Let us define the filtration $\mathcal{F}^{(d)}_{n,t} = \mathcal{C}(S_{\pi_1},\ldots, S_{\pi_t})$ for all $1\leq t\leq n$ and $\mathcal{F}^{(d)}_{n,0}$ and let be the trivial $\sigma$-field. Note that the sequence $\{V^{(d)}_{n,i}\}_{0\leq i\leq n, n,d\geq 1}$, with $V^{(d)}_{n,0}=0$ and
				$V^{(d)}_{n,t} = \sum_{i=1}^{t} \mathrm{I}\{S_{\pi_i}\not = S_{\pi_{i+1}}\} - \frac{t}{2}$ for all $1\leq t\leq n$, forms a triangular array of martingales adapted to the sequence of filtration $(\mathcal{F}^{(d)}_{n,t})_{1\leq t\leq n; n, d\geq 1}$. So, we can use the martingale central limit theorem (CLT) \citep[see][]{brown1971mclt} to derive the limiting null distribution of $T_R$. 
				
				First, let us look at the triangular array of martingale difference $$Y^{(d)}_{n,i} = V^{(d)}_{n,i} - V^{(d)}_{n,i-1} = \mathrm{I}\{S_{\pi_i}\not = S_{\pi_{i+1}}\} -{1}/{2}, ~~i = 1,2,\ldots,n.$$ One can see that $\big(\sigma^{(d)}_{n,i}\big)^2 = \E[\big(Y^{(d)}_{n,i}\big)^2|\mathcal{F}_{n,i}^{(d)}] = {1}/{4}$ for all $i=1,2,\ldots,n$. Therefore, $\big(\zeta^{(d)}_n\big)^2 = \sum_{i=1}^{n} \big(\sigma_{n,i}^{(d)}\big)^2 = {n}/{4}$ is a deterministic sequence of real numbers. Also, using $|Y^{(d)}_{n,i}|\leq {1}/{2}$, for any $\epsilon>0$ we get
				\begin{align*}
					\frac{1}{\big(\zeta^{(d)}_n\big)^{2}} \sum_{j=1}^n \E\left[\big(Y^{(d)}_{n,j}\big)^2 \mathrm{I}[|Y^{(d)}_{n,j}|\geq \epsilon \zeta^{(d)}_n]\right] \leq \frac{1}{n} \sum_{j=1}^n \E\left[\mathrm{I}\Big[|Y^{(d)}_{n,j}|\geq \epsilon \frac{n}{4}\Big]\right] = \P\left[|Y^{(d)}_{n,1}|\geq \epsilon \frac{n}{4}\right]\rightarrow 0
				\end{align*}
				as $n$ and $d$ diverges to infinity. So, applying the martingale CLT \citep[Theorem 1 from][]{brown1971mclt}, we get
				\begin{align*}
					\frac{V^{(d)}_n}{\zeta^{(d)}_n} \stackrel{D}{\rightarrow} \mathcal{N}(0,1) \textnormal{ or equivalently } \frac{V^{(d)}_n}{\sqrt{n}} \stackrel{D}{\rightarrow} \mathcal{N}\left(0,\frac{1}{4}\right), \text{as $n,d \rightarrow \infty.$}
				\end{align*}
				 Since $T_R = 1+ V_{n-1} + \frac{n-1}{2} = V_{n-1} + \frac{n+1}{2}$, we have $n^{-1/2}({T_R - \frac{n+1}{2}})\stackrel{D}{\rightarrow}  \mathcal{N}\left(0,\frac{1}{4}\right)$ 
				as $n,d \rightarrow \infty$. 
				\end{proof}
				
				\noindent
				\begin{proof}[\bf Proof of Theorem \ref{thm:large-dist-runs-alt}] When $\Pr$ is not spherically symmetric, for $T_R = 1 + \sum_{i=1}^{n-1} \mathrm I[S_{\pi_i}\not = S_{\pi_{i+1}}]$, we have
				\begin{align*}
					\var\left[\frac{T_R-\E[T_R]}{\sqrt{n}}\right] = \frac{1}{n}\left[\sum_{i=1}^{n-1} \var\big[\mathrm{I}\{S_{\pi_i}\not=S_{\pi_{i+1}}\}\big] + \sum_{1\leq i\not = j\leq n-1} \textnormal{Cov}\big(\mathrm{I}\{S_{\pi_i}\not=S_{\pi_{i+1}}\}, \mathrm{I}\{S_{\pi_j}\not=S_{\pi_{j+1}}\}\big) \right].
				\end{align*}
				Now by Theorem \ref{thm:sign-rank-dist},
				$
					\textnormal{Cov}\big(\mathrm{I}\{S_{\pi_i}\not=S_{\pi_{i+1}}\}, \mathrm{I}\{S_{\pi_j}\not=S_{\pi_{j+1}}\}\big) = 0\textnormal{ for all $|i-j|>2$.}
				$.
				So, as $n,d \rightarrow \infty$, \begin{align*}
					& \limsup_{n,d\rightarrow\infty} \var\left[\frac{T_R-\E[T_R]}{\sqrt{n}}\right]\\
					& = \limsup_{n,d\rightarrow\infty}\frac{1}{n}\left[\sum_{i=1}^{n-1} \var\big[\mathrm{I}\{S_{\pi_i}\not=S_{\pi_{i+1}}\}\big] + \sum_{|i-j|\leq 2} \textnormal{Cov}\big(\mathrm{I}\{S_{\pi_i}\not=S_{\pi_{i+1}}\}, \mathrm{I}\{S_{\pi_j}\not=S_{\pi_{j+1}}\}\big) \right]
					\rightarrow \sigma^2,
				\end{align*}
				\vspace{0.05in}
				
				where $\sigma^2 = \limsup\limits_{d\to\infty}\Big[ \var\big[\mathrm{I}\{S_{\pi_1}\not=S_{\pi_2}\}\big] + \textnormal{Cov}\big(\mathrm{I}\{S_{\pi_1}\not=S_{\pi_2}\}, \mathrm{I}\{S_{\pi_2}\not=S_{\pi_3}\}\big)+ \textnormal{Cov}\big(\mathrm{I}\{S_{\pi_1}\not=S_{\pi_2}\}, \mathrm{I}\{S_{\pi_3}\not=S_{\pi_4}\}\big)\Big]$ which is a non-negative finite constant. This gives us our desired result.
			\end{proof}

			\subsection{Proofs of the results stated in Sections \ref{sec:improvements} and \ref{sec:centering-effect}}
			\begin{proof}[\bf Proof of Theorem \ref{thm:modified-rank-tests}]
	Part (a) of the theorem can be proved using arguments identical to the proof of Theorem \ref{thm:sign-rank-dist}. Therefore, to avoid repetition, we omit it and give a detailed proof of part (b) only. Note that

\noindent
\begin{align*}
	\E\left[\frac{1}{d^{\alpha}}\sum_{j=1}^d (\bm X_1)_j^2 (\bm X_2)_j^2 \right] & = \frac{1}{d^\alpha} \textnormal{Tr}({\bf D}^2) \textnormal{ and }\\ 
	\E\left[\frac{1}{d^\alpha}\sum_{j=1}^d (\bm X_1)_j^2 (\bm X_2^\prime)_j^2 \right] & = \E\left[\frac{1}{d^\alpha} \|\bm X_2\|^2\sum_{j=1}^d (\bm X_1)_j^2 (\bm U_2)_j^2 \right]
	= \frac{1}{d^\alpha} \E\left[ \|\bm X_2\|^2\right] \E\left[\sum_{j=1}^d (\bm X_1)_j^2 (\bm U_2)_j^2 \right] \\
	 & = \frac{1}{d^\alpha} \textnormal{Tr}({\bf D}) \left[ \frac{1}{d}\sum_{j=1}^d \E(\bm X_1)_j^2 \right] = \frac{1}{d^{1+\alpha}} \Big(\textnormal{Tr}({\bf D})\Big)^2.
\end{align*}
Similarly one can also show that $\E\left[\frac{1}{d^{\alpha}}\sum_{j=1}^d (\bm X_1^\prime)_j^2(\bm X_2^\prime)_j^2\right] = \frac{1}{d^{1+\alpha}}\Big(\textnormal{Tr}({\bf D})\Big)^2$.
Now, by Assumption (A3), 
$	\left|\frac{1}{d^\alpha}\sum_{j=1}^d (\bm X_1)_j^2 (\bm X_2)_j^2 - \frac{1}{d^\alpha} \textnormal{Tr}({\bf D}^2)\right|, 
$
$	\left|\frac{1}{d^\alpha}\sum_{j=1}^d (\bm X_1)_j^2(\bm X_2^\prime)_j^2 -\frac{1}{d^{1+\alpha}}\big(\textnormal{Tr}(D)\big)^2 \right|$ and  $\left|\frac{1}{d^\alpha}\sum_{j=1}^d (\bm X_1^\prime)_j^2(\bm X_2^\prime)_j^2 \right.$ \linebreak $ - \left. \frac{1}{d^{1+\alpha}}\big(\textnormal{Tr}(D)\big)^2 \right|$ converges to $0$
 in probability as $d$ diverges to infinity. Therefore, if Assumption (A3) holds and $$\liminf_{d\rightarrow\infty}\left\{\frac{1}{d^\alpha}\textnormal{Tr}(D^2)-\frac{1}{d^{1+\alpha}}\Big( \textnormal{Tr}(D)\Big)^2\right\}>0,$$
	we have $\P\left[\tilde{\theta}(\bm X_1,\bm X_2)<\tilde{\theta}(\bm X_1,\bm X_2^\prime)\right]\rightarrow 1$ as $d\rightarrow\infty$. Similarly, $\P\left[\tilde{\theta}(\bm X_1,\bm X_2)<\tilde{\theta}(\bm X_1^\prime,\bm X_2)\right]\rightarrow 1$ and $\P\left[\tilde{\theta}(\bm X_1,\bm X_2)<\tilde{\theta}(\bm X_1^\prime,\bm X_2^\prime)\right]\rightarrow 1$ as $d$ diverges to infinity. 
Now, the result can be proved using the same argument as used in the proof of Theorem \ref{thm:HDLSS-consistency}. 
			\end{proof}
			
			\begin{proof}[\bf Proof of Theorem \ref{thm:HDLSS-boosted-tests}]
				First note that the tests statistics $T_{S}$ and ${\widetilde T}_{S}$ take values in $\{0,1,\ldots, n\}$ and $T_{R},{\widetilde T}_{R}$ take values in $\{1,2,\ldots, n\}$. Now, if condition (a) holds, then by Theorem \ref{thm:HDLSS-consistency} we get $T_{S}\stackrel{P}{\rightarrow} n$ as $d \rightarrow \infty$, and by Theorem \ref{thm:large-dist-runs}, we get  $T_{R}\stackrel{P}{\rightarrow} 1$ as $d\rightarrow \infty$. Whereas, if condition (b) holds, then by Theorem \ref{thm:modified-rank-tests} (b) we get ${\widetilde T}_{S}\rightarrow n$ and ${\widetilde T}_{R}\rightarrow 1$ in probability as $d$ diverges to infinity. So, if either of conditions (a) or (b) holds, then $T_{S}^M = \max\{T_{S},  {\widetilde T}_{S}\}\stackrel{P}{\rightarrow} n$ and $T_{R}^M = \min\{T_{R}, {\widetilde T}_{R}\}\stackrel{P}{\rightarrow} 1$ as $d \rightarrow \infty$.
			\end{proof}
			
			\begin{proof}[\bf Proof of Theorem \ref{thm:HDHSS-modified-tests-null}]
				Let $\bm S$ and $\bm \Pi$ be as defined in \eqref{eq:optimization} and $\tilde{\bm S}$ and $\tilde{\bm \Pi}$ be as defined in \eqref{eq:optimization-1}. Under spherical symmetry of $\Pr$ it is easy to see that the elements of the sequence $\{(S_i,\tilde{S}_i)\}_{1\leq i\leq n}$ are mutually independent. However, for each $i=1,\ldots,n$, $S_i$ and $\tilde{S}_i$ may be dependent.
				
				Now, to find the joint limiting distribution of $(T_S, \tilde{T}_S) = \sum_{i=1}^n (S_i,\tilde{S}_i)$, we first find the joint distribution of $\frac{1}{\sqrt{n}} \sum_{i=1}^n\big(t_1 (S_i-\frac{1}{2}) + t_2(\tilde{S}_i-\frac{1}{2})\big)$ for $t_1,t_2\in \R$. Note, $\{W_{i}^{(d)}:= \big(t_1 (S_i-\frac{1}{2}) + t_2(\tilde{S}_i-\frac{1}{2})\big)\}_{1\leq i\leq n}$ forms a triangular array of row wise i.i.d. bounded random variables where $\E[W_{1}^{(d)}] = 0$ and
				\begin{align*}
					\var[W_{1}^{(d)}] = t_1^2 \var[S_1]+ t_2^2 \var[\tilde{S}_1] + 2~t_1~t_2~ \textnormal{Cov}(S_1, \tilde{S}_1) =  \frac{t_1^2}{4}+   \frac{t_2^2}{4} + 2~t_1~t_2~ \textnormal{Cov}(S_1, \tilde{S}_1).
				\end{align*}
				
				We know that for a row wise i.i.d. triangular array of bounded random variables the Lyapunov's condition holds trivially. Therefore, by Lyapunov's CLT, we get
				\begin{align*}
					\frac{1}{\sqrt{n}} \sum_{i=1}^n W_{i}^{(d)} \stackrel{D}{\rightarrow} N(0,\sigma^2),
				\end{align*}  
				as $n$ and $d$ diverge to infinity, where $\sigma^2 = \lim_{d\rightarrow \infty} \var[W_{1}^{(d)}] =  \frac{t_1^2}{4}+ \frac{t_2^2}{4} + 2~t_1~t_2~ (\sigma_s^2-\frac{1}{4}).$ Since, this distributional convergence holds irrespective of $t_1,t_2\in \R$, using Cramer-Wold device,  we get
				\begin{align*}
					\frac{1}{\sqrt{n}} \left\{(T_S,\tilde{T}_S)-\Big(\frac{n}{2},\frac{n}{2}\Big)\right\}=\frac{1}{\sqrt{n}} \sum_{i=1}^n \left\{\Big(S_i, \tilde{S}_i\Big) - \Big(\frac{1}{2},\frac{1}{2}\Big)\right\} \stackrel{D}{\rightarrow} N(\bm 0, \bm \Sigma_S),
				\end{align*} 
				where the diagonal elements of $\bm \Sigma_S$ are $\frac{1}{4}$ and the off-diagonal element is $\big(\sigma_s^2 - \frac{1}{4}\big)$. Now applying continuous mapping theorem we get,
				\begin{align*}
					\frac{1}{\sqrt{n}} \left\{\max(T_S,\tilde{T}_S)-\frac{n}{2}\right\} \stackrel{D}{\rightarrow} \max\{Z_1,Z_2\},
				\end{align*} 
				where $(Z_1,Z_2)\sim N(\bm 0,\bm \Sigma_S)$. This completes part (a) of the theorem.
				
				For finding the joint limiting distribution of $(T_R, \tilde{T}_R)$ note that when $\Pr$ is a spherically symmetric, $\{M_{n,i}^{(d)}: t_1 \big(\mathrm{I}\{S_{\pi(i)}\not = S_{\pi(i+1)}\}-\frac{1}{2}\big) + t_2 \big(\mathrm{I}\{\tilde{S}_{\tilde{\pi}(i)}\not = \tilde{S}_{\tilde{\pi}(i+1)}\}-\frac{1}{2}\big)\}_{1\leq i\leq n}$ forms a triangular array of martingale differences with respect to the filtration $\{\mathcal{F}_{n,t}^{(d)} = \mathcal{C}(\{S_{\pi(i)},\tilde{S}_{\tilde{\pi}(i)}\}_{1\leq i\leq t})\}_{1\leq t\leq n, n\geq 1, d\geq 1}$. Here, for all $1\leq i\leq n$,
				\begin{align*}
					\sigma_{n,i}^2 & = \E\big[\big(M_{n,i}^{(d)}\big)^2|\mathcal{F}_{n,i}\big]\\
					& = \frac{t_1^2}{4} + \frac{t_2^2}{4} + 2~t_1t_2~\textnormal{Cov}\big(\mathrm{I}\{S_{\pi(i)}\not = S_{\pi(i+1)}\} \mathrm{I}\{\tilde{S}_{\tilde{\pi}(i)}\not = \tilde{S}_{\tilde{\pi}(i+1)}\}\big)\\
					& = \frac{t_1^2}{4} + \frac{t_2^2}{4} + 2~t_1t_2~\textnormal{Cov}\big(\mathrm{I}\{S_{\pi(1)}\not = S_{\pi(2)}\}, \mathrm{I}\{\tilde{S}_{\tilde{\pi}(1)}\not = \tilde{S}_{\tilde{\pi}(2)}\}\big) ~~~~~\textnormal{(by Theorem \ref{thm:sign-rank-dist} and \ref{thm:modified-rank-tests})}
				\end{align*}
				
				Now using the same arguments as in Theorem \ref{thm:large-dist-runs} and applying martingale CLT, we get
				\begin{align*}
					\frac{\sum_{i=1}^n M_{n,i}^{(d)}-\frac{n+1}{2}}{\sqrt{n}} \stackrel{D}{\rightarrow} N(0, \sigma^2) 
				\end{align*}
				as $n$ and $d$ diverges to infinity, where $\sigma^2 = \frac{t_1^2}{4}+\frac{t_2^2}{4} + 2t_1t_2 \big(\sigma_r^2-\frac{1}{4}\big)$. Therefore, applying Cramer-Wold device and continuous mapping theorem, we get
				\begin{align*}
					\frac{1}{\sqrt{n}} \left\{\min(T_R,\tilde{T}_R)-\frac{n+1}{2}\right\} \stackrel{D}{\rightarrow} \min\{Z_1,Z_2\},
				\end{align*} 
				where $(Z_1,Z_2)\sim N(\bm 0,\bm \Sigma_R)$ with $\bm \Sigma_R$ having diagonal elements as $\frac{1}{4}$ and off-diagonal element $\big(\sigma_r^2-\frac{1}{4}\big)$. This completes part (b) of the theorem.
			\end{proof}
			
			\begin{proof}[\bf Proof of Theorem \ref{thm:HDHSS-modified-tests-alt}] If the underlying distribution is spherically symmetric, as $n$ and $d$ diverge to infinity, $T_S^M/n=\max\{T_S,\tilde{T}_S\}/n \stackrel{P}{\rightarrow}1/2$ and $T_R^M/n=\min\{T_R,\tilde{T}_R\}/n \stackrel{P}{\rightarrow}1/2$ (follows from Theorem
				 \ref{thm:HDHSS-modified-tests-null}). So, for these modified tests,
				 all cut-offs also converge to 0.5.
				Now, from Theorems~\ref{thm:limit-alt-convergence} and \ref{thm:large-dist-runs-alt}, we have $|T_S-E(T_S)|/n \stackrel{P}{\rightarrow} 0$ and 
				$|T_R-E(T_R)|/n \stackrel{P}{\rightarrow} 0$ as $n,d$ diverge to infinity. Following the same idea, one can prove this property for ${\widetilde T}_S$ and ${\widetilde T}_R$ as well.
				
				{(a)} Therefore, when $\liminf_{n,d\rightarrow\infty} \E[T_S/n]>0.5$ or $\liminf_{n,d\rightarrow\infty} \E[\tilde{T}_S/n]>0.5$, $T_S^M$ takes value bigger than $0.5$ with probability converging to one.  This implies that the power of the modified sign test based on $T_S^M$ converges to one as $n$ and $d$ diverge to infinity.
				
				{(b)} Similarly, as $d$ and $n$ grows to infinity, we have the consistency of modified runs test based on $T_R^M$ when $\limsup_{n,d\rightarrow\infty} \E[T_R/n]<0.5$ or $\limsup_{n,d\rightarrow\infty} \E[\tilde{T}_R/n]<0.5$.
			\end{proof}

			\begin{proof}[\bf Proof of Lemma \ref{lemma:key-lemma}] Since
				$\bm X_1,\bm X_2$ are symmetric about $\muvec$, the characteristic function of $\bm X_1$ is of the form $\phi(\bm t) = \exp\{i\langle \bm t, \muvec \rangle\} g(\bm t)$, where $g(.)$ is some real-valued function with $g(\bm t) = g(-\bm t)$ for all $\bm t$. Note that if $\bm X_1,\bm X_2$ are spherically symmetric about $\muvec$, then it is trivial to show that $\bm X_1-\bm X_2$ is spherically symmetric about zero. Therefore, we only prove the if part.
				
				If $\bm X_1-\bm X_2$ is spherically symmetric about zero, then its characteristic function is of the form $f(\|\bm t\|)$ where $f(.)$ is some real-valued function. Also, note that
				\begin{align*}
					\phi_{\bm X_1-\bm X_2}(\bm t) = \phi_{\bm X_1}(\bm t) \phi_{-\bm X_2}(\bm t) = \phi_{\bm X_1}(\bm t)\phi_{\bm X_1}(-\bm t) = g^2(\bm t).
				\end{align*}
				Hence, $f(.)$ is non-negative and $g^2(\bm t) = f(\|\bm t\|)~\forall \bm t\in \R^d$. Therefore, 
				$\phi_{\bm X_1}(\bm t) = \exp\{i\langle t,\muvec \rangle\} h(\|\bm t\|),$
				where $|h(\|\bm t\|)|=f^{1/2}(\|\bm t\|)$. 
				This gives us the desired result.
			\end{proof}
			
			\section{Appendix B: Some additional mathematical details}
			
			\subsection{Some auxiliary results are their proofs}
			
			\begin{lemma}
				\label{prop:concentration-1}
				If $\bm X_1$ and $\bm X_2$ are two independent realizations of a $d$-dimensional random vector $\bm X \sim \Pr$. and $\bm X_1^\prime = \|\bm X_1\| \bm U_1,\bm X_2^\prime = \|\bm X_2\|\bm U_2$ (where $\bm U_1,\bm U_2\stackrel{i.i.d.}{\sim}$ Unif$(S^{d-1})$) are their respective spherically symmetric variants, then $\langle \bm X_1, \bm X_2^\prime\rangle \stackrel{D}{=} \langle \bm X_1^\prime, \bm X_2^\prime\rangle$.
			\end{lemma}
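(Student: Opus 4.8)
The plan is to show that, conditionally on the pair of norms $(\|\bm X_1\|,\|\bm X_2\|)$, both inner products have the same law, namely that of $\|\bm X_1\|\,\|\bm X_2\|\,U_1$, where $U_1$ denotes the first coordinate of a $\mathrm{Unif}(S^{d-1})$ random vector; integrating out the norms then gives the claim. Note that spherical symmetry of $\Pr$ is not needed.

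First I would write $\langle \bm X_1, \bm X_2^\prime\rangle = \|\bm X_2\|\,\langle \bm X_1, \bm U_2\rangle$ and $\langle \bm X_1^\prime, \bm X_2^\prime\rangle = \|\bm X_1\|\,\|\bm X_2\|\,\langle \bm U_1, \bm U_2\rangle$, using $\bm X_i^\prime = \|\bm X_i\|\bm U_i$. The only elementary fact I would invoke is the rotational invariance of the uniform law on the sphere, already used in the proof of Theorem \ref{thm:HDLSS-consistency}: for every \emph{fixed} $\bm a\in\R^d$ one has $\langle \bm a, \bm U_2\rangle \stackrel{D}{=} \|\bm a\|\,U_1$, since $\bm H\bm U_2 \stackrel{D}{=} \bm U_2$ for any orthogonal matrix $\bm H$.

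Next I would condition on the triple $(\bm X_1,\bm X_2,\bm U_1)$, which is independent of $\bm U_2$. Given this triple, $\bm X_1$ is a fixed vector, so $\langle \bm X_1,\bm U_2\rangle \stackrel{D}{=} \|\bm X_1\|\,U_1$; and $\bm U_1$ is a fixed unit vector, so $\langle \bm U_1,\bm U_2\rangle \stackrel{D}{=} \|\bm U_1\|\,U_1 = U_1$. Consequently, conditionally on $(\|\bm X_1\|,\|\bm X_2\|)$, the random variables $\langle \bm X_1,\bm X_2^\prime\rangle$ and $\langle \bm X_1^\prime,\bm X_2^\prime\rangle$ both have the distribution of $\|\bm X_1\|\,\|\bm X_2\|\,U_1$, with $U_1$ independent of the conditioning variables. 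Since $\|\bm X_i^\prime\| = \|\bm X_i\|$, the joint law of the conditioning pair $(\|\bm X_1\|,\|\bm X_2\|)$ is identical in the two expressions, so integrating over it yields $\langle \bm X_1, \bm X_2^\prime\rangle \stackrel{D}{=} \langle \bm X_1^\prime, \bm X_2^\prime\rangle$.

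I do not expect a substantive obstacle; the only point requiring care is the bookkeeping of the conditioning — ensuring that $\bm U_2$ is independent of both $\bm X_1$ and $\bm U_1$, and that $U_1$ is independent of the conditioning $\sigma$-field — so that the conditional distributional identities can be legitimately integrated against the common mixing measure. In fact this argument delivers the slightly stronger statement that each of the two inner products has the unconditional law of $\|\bm X_1\|\,\|\bm X_2\|\,U_1$ with $(\|\bm X_1\|,\|\bm X_2\|)$ independent of $U_1$.
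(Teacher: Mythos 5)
Your proposal is correct and follows essentially the same route as the paper: the paper also reduces both inner products to $\|\bm X_1\|\,\|\bm X_2\|\,U_{2,1}$ via the identity $\bm a^\top\bm U_i \stackrel{D}{=} \|\bm a\|U_{i,1}$, merely packaging the conditioning-and-integration step through equality of characteristic functions rather than an explicit conditional-law argument. Your observation that spherical symmetry of $\Pr$ is not needed is also consistent with the paper's statement and proof.
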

			
			\begin{proof}[{\bf Proof}]
				We shall prove this result using the fact that for any $\bm a\in \R^{d}$, $\bm a^\top\bm U_i \stackrel{D}{=}\|\bm a\| U_{i,1} ~(i=1,2)$, where $U_{i,1}$ is the first component of ${\bm U}_i$. Now, note that for any $t\in \R$
				\begin{align*}
					& \E[\exp\{it \langle \bm X_1, \bm X_2^\prime\rangle\}] = \E[\exp\{i t \|\bm X_2\| \langle \bm X_1, \bm U_2\rangle\}] = \E[\exp\{i t \|\bm X_2\| \|\bm X_1\| U_{2,1}\}] ~~\text{ and}\\
					&\E[\exp\{it \langle \bm X_1^\prime, \bm X_2^\prime\rangle\}] = \E[\exp\{i t \|\bm X_1\|\|\bm X_2\| \langle \bm U_1, \bm U_2\rangle\}] = \E[\exp\{i t \|\bm X_1\|\|\bm X_2\| U_{2,1}\}].
				\end{align*}
				
				The equality of these two characteristic functions proves the result.
			\end{proof}

			\begin{lemma}\label{lem:expectations}
				Let $\bm X_1,\bm X_2$ be independent copies of $\bm X \sim\Pr$, which has mean zero and variance covariance matrix $\bm \Sigma$. Let $\bm X_1^\prime$ and $\bm X_2^\prime$ be the spherically symmetric variants of $\bm X_1$ and $\bm X_2$, respectively. Then,
				\begin{align*}
					\E\left\{\frac{1}{d} (\bm X_1^\top\bm X_2)^2\right\} = \frac{1}{d} \textnormal{Tr}(\bm\Sigma^2)\text{ and } \E\left\{\frac{1}{d} (\bm X_1^\top\bm X_2^\prime)^2\right\} = \E\left\{\frac{1}{d} (\bm X_1^{\prime\top}\bm X_2^\prime)^2\right\} = \left(\frac{1}{d} \textnormal{Tr}(\bm \Sigma)\right)^2. 
				\end{align*}
			\end{lemma}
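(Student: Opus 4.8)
The plan is to compute the three expectations one at a time, using nothing beyond the second-moment structure of $\bm X$ and the elementary moment identities for $\bm U\sim\textnormal{Unif}(\mathcal S^{d-1})$; there is no deep step, so the emphasis is on organizing the bookkeeping cleanly.

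First I would handle $\E[(\bm X_1^\top\bm X_2)^2]$. Writing $\bm X_1^\top\bm X_2=\sum_{i=1}^d X_{1i}X_{2i}$ and squaring gives the finite double sum $\sum_{i,j} X_{1i}X_{1j}X_{2i}X_{2j}$, so expectation and summation interchange; integrability is guaranteed by the Cauchy--Schwarz bound $(\bm X_1^\top\bm X_2)^2\le\|\bm X_1\|^2\|\bm X_2\|^2$, whose expectation is $(\textnormal{Tr}\,\bm\Sigma)^2<\infty$. Independence of $\bm X_1,\bm X_2$ and $\E[\bm X]=\bm 0$ give $\E[X_{1i}X_{1j}X_{2i}X_{2j}]=\E[X_{1i}X_{1j}]\,\E[X_{2i}X_{2j}]=\Sigma_{ij}^2$, and summing yields $\sum_{i,j}\Sigma_{ij}^2=\textnormal{Tr}(\bm\Sigma^2)$ by symmetry of $\bm\Sigma$; dividing by $d$ gives the first identity. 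It is worth noting that this step needs only the existence of $\bm\Sigma$, not any fourth moment.

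Next, for $\E[(\bm X_1^\top\bm X_2^\prime)^2]$ I would write $\bm X_2^\prime=\|\bm X_2\|\bm U_2$ with $\bm U_2$ independent of $(\bm X_1,\bm X_2)$, so that $(\bm X_1^\top\bm X_2^\prime)^2=\|\bm X_2\|^2(\bm X_1^\top\bm U_2)^2$. Conditioning on $(\bm X_1,\bm X_2)$ and using $\bm a^\top\bm U_2\stackrel{D}{=}\|\bm a\|\,U_{2,1}$ for fixed $\bm a$ together with $\E[U_{2,1}^2]=1/d$ (since $\sum_k U_{2,k}^2=1$ and the coordinates are exchangeable) gives $\E[(\bm X_1^\top\bm X_2^\prime)^2\mid\bm X_1,\bm X_2]=\|\bm X_1\|^2\|\bm X_2\|^2/d$. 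Taking expectations and using $\E[\|\bm X\|^2]=\sum_i\Sigma_{ii}=\textnormal{Tr}(\bm\Sigma)$ for the two independent factors yields $\E[(\bm X_1^\top\bm X_2^\prime)^2]=(\textnormal{Tr}\,\bm\Sigma)^2/d$, and one further division by $d$ gives $(\textnormal{Tr}(\bm\Sigma)/d)^2$. For the third identity I would simply invoke Lemma \ref{prop:concentration-1}, which gives $\langle\bm X_1,\bm X_2^\prime\rangle\stackrel{D}{=}\langle\bm X_1^\prime,\bm X_2^\prime\rangle$ and hence equality of the two expectations; alternatively one repeats the previous computation with $\bm X_1^\prime=\|\bm X_1\|\bm U_1$, noting $\E[(\bm U_1^\top\bm U_2)^2]=\E[U_{2,1}^2]=1/d$ after conditioning on $\bm U_1$.

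Honestly, I do not anticipate a real obstacle here: the lemma is a routine second-moment calculation. The only points deserving a sentence of justification are the interchange of expectation with the (finite) summation, the Cauchy--Schwarz integrability bound, and the sphere identity $\E[U_{2,1}^2]=1/d$; the mildly pleasant feature, which I would flag, is that the first identity holds under a weaker moment assumption than one might at first expect.
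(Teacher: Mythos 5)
Your proposal is correct and follows essentially the same route as the paper's proof: the paper computes the first expectation via the trace identity $(\bm X_1^\top\bm X_2)^2=\textnormal{Tr}(\bm X_2\bm X_2^\top\bm X_1\bm X_1^\top)$ and the second via $\E[\bm U_2\bm U_2^\top]=d^{-1}\mathrm{\bf I}_d$, which are just repackagings of your coordinate-wise expansion and your use of $\bm a^\top\bm U_2\stackrel{D}{=}\|\bm a\|U_{2,1}$ with $\E[U_{2,1}^2]=1/d$, and it likewise disposes of the third identity by citing Lemma \ref{prop:concentration-1}. Your added remarks on integrability and on the first identity requiring only second moments are correct but not needed beyond what the paper records.
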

			
			\begin{proof}[{\bf Proof}]
				Note that $(\bm X_1^\top \bm X_2)^2 = \textnormal{Tr}(\bm X_2\bm X_2^\top \bm X_1\bm X_1^\top)$, and hence we have
				\begin{align*}
					\E\left\{\frac{1}{d} (\bm X_1^\top \bm X_2^\top)^2\right\} = \E\left\{\frac{1}{d} \textnormal{Tr}(\bm X_2\bm X_2^\top \bm X_1\bm X_1^\top)\right\} = \frac{1}{d} \textnormal{Tr}\big(\E\{\bm X_2\bm X_2^\top\} \E\{\bm X_1\bm X_1^\top\}\big) = \frac{1}{d} \textnormal{Tr}(\bm \Sigma^2).
				\end{align*}
				
				Now, note that $(\bm X_1^\top \bm X_2^\prime)^2 = \|\bm X_2\|^2 (\bm X_1^\top \bm U_2)^2 =  \|\bm X_2\|^2 \textnormal{Tr}(\bm U_2\bm U_2^\top \bm X_1\bm X_1^\top)$ where $\bm U_2\sim $ Unif$(S^{d-1})$ independent of $\bm X_1$ and $\bm X_2$. It is also important to note that $\E\{\bm U_2 \bm U_2^\top\} = \frac{1}{d} \mathrm {\bf I}_d$, where $\mathrm {\bf I}_d$ is the $d\times d$ identity matrix, and $\E\{\|\bm X_2\|^2\} = \sum_{i=1}^d \E\{X_{2i}^2\} = \textnormal{Tr}(\bm \Sigma)$. Then, taking expectations, we get
				\begin{align*}
					\vspace{-0.1in}
					\E\left\{\frac{1}{d} (\bm X_1^\top \bm X_2^\prime)^2\right\} = \E\|\bm X_2\|^2 \E\left\{\frac{1}{d} \textnormal{Tr}(\bm U_2\bm U_2^\top \bm X_1\bm X_1^\top)\right\}
					& = \textnormal{Tr}(\bm\Sigma)~ \frac{1}{d} \textnormal{Tr}\big(\E\{\bm U_2\bm U_2^\top\} \E\{\bm X_1\bm X_1^\top\}\big)\\
					& = \textnormal{Tr}(\bm\Sigma)~ \frac{1}{d} \textnormal{Tr}( \frac{1}{d} \bm {\mathrm I}_d \bm\Sigma) = \left(\frac{1}{d} \textnormal{Tr}(\bm\Sigma)\right)^2.
				\end{align*}
				Now, the proof follows from the fact that $\bm X_1^\top \bm X_2^\prime \stackrel{D}{=} \bm {X_1^\prime}^\top \bm X_2^\prime$.
				(see Lemma \ref{prop:concentration-1}).
			\end{proof}

				
				\begin{lemma}
					Let $\bm  X_1,\ldots, \bm X_n$ be independent copies of  $\bm X \sim \Pr$ and $\bm X_1^\prime,\ldots, \bm X_n^\prime$ be their respective spherically symmetric variants. For $s\in\{0,1\}^n$ define $\bm Y_{s,i} = s_i \bm X_i + (1-s_i)\bm X_i^\prime$ for each $i=1,\ldots,n$. Then, for the sign statistic $T_S$ (defined in Section \ref{sec:definition}) based on $\bm X_1,\ldots, \bm X_n$, we have 
					\begin{align*}\Big|\frac{1}{n}E(T_S) - \P\left[\theta(\bm Y_{S_1,1},\bm X_2)+\theta(\bm X_2,\bm Y_{S_3,3})\leq \theta(\bm Y_{S_1,1},\bm X_2^\prime)+\theta(\bm X_2^\prime,\bm Y_{S_3,3})\right]\Big| \rightarrow 0 \mbox { as } n,d \rightarrow \infty,
					\end{align*}
					where $S_1,S_3$ are i.i.d. Unif$(\{0,1\})$.
					\label{lemma:sign-stat-convergence}
				\end{lemma}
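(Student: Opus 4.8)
The plan is to rewrite $\frac1n E(T_S)$ as an average, over the interior vertices of the shortest covering path, of the probability that the vertex carries an original observation, and then to identify the common limiting value of these probabilities with $p^\ast := \P\bigl[\theta(\bm Y_{S_1,1},\bm X_2)+\theta(\bm X_2,\bm Y_{S_3,3})\le \theta(\bm Y_{S_1,1},\bm X_2^\prime)+\theta(\bm X_2^\prime,\bm Y_{S_3,3})\bigr]$. Since $\{S_1,\dots,S_n\}=\{S_{\pi_1},\dots,S_{\pi_n}\}$ as multisets, $E(T_S)=\sum_{j=1}^n\P[S_{\pi_j}=1]$; the four positions $j\in\{1,2,n-1,n\}$ contribute at most $4$ to this sum, so $\frac1n E(T_S)=\frac1n\sum_{j=3}^{n-2}\P[S_{\pi_j}=1]+O(1/n)$, with the remainder vanishing as $n\to\infty$. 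It therefore suffices to prove that $\sup_{3\le j\le n-2}\bigl|\P[S_{\pi_j}=1]-p^\ast\bigr|\to 0$ as $n,d\to\infty$.

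First I would condition on $\bm\Pi$, which by Theorem \ref{thm:sign-rank-dist} is uniform on $\mathcal S_n$ for every $d$. The proof of that theorem shows that on $\{\bm\Pi=\bm\pi\}$, for any interior index $j$,
\begin{align*}
S_{\pi_j}&=\mathrm I\Bigl\{\theta(\bm Y_{S_{\pi_{j-1}},\pi_{j-1}},\bm X_{\pi_j})+\theta(\bm X_{\pi_j},\bm Y_{S_{\pi_{j+1}},\pi_{j+1}})\\
&\qquad\le\theta(\bm Y_{S_{\pi_{j-1}},\pi_{j-1}},\bm X_{\pi_j}^\prime)+\theta(\bm X_{\pi_j}^\prime,\bm Y_{S_{\pi_{j+1}},\pi_{j+1}})\Bigr\}=:g_{S_{\pi_{j-1}},S_{\pi_{j+1}}},
\end{align*}
where $g_{s_1,s_3}$ denotes the same comparison indicator with the two neighbouring points frozen at $\bm Y_{s_1,\pi_{j-1}}$ and $\bm Y_{s_3,\pi_{j+1}}$. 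Consequently $\P[S_{\pi_j}=1]=\sum_{s_1,s_3\in\{0,1\}}\P\bigl[S_{\pi_{j-1}}=s_1,\,S_{\pi_{j+1}}=s_3,\,g_{s_1,s_3}=1\bigr]$, whereas, writing out the fair coins, $p^\ast=\frac14\sum_{s_1,s_3\in\{0,1\}}\P\bigl[g_{s_1,s_3}=1\bigr]$ computed with $(\bm X_1,\bm X_1^\prime),(\bm X_2,\bm X_2^\prime),(\bm X_3,\bm X_3^\prime)$ i.i.d. The entire content of the lemma is that the law of the path-neighbourhood of an interior vertex is, in the high-dimensional regime, negligibly different from this idealised configuration.

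To close the gap between these two expressions I would establish two approximations, uniform in $j$ for $3\le j\le n-2$. First, using the exchangeability of the i.i.d.\ pairs $\{(\bm X_i,\bm X_i^\prime)\}$, the uniformity of $\bm\Pi$, and the two-neighbour Markov structure of the string signs along $\bm\Pi$ from Theorem \ref{thm:sign-rank-dist}(b), control the discrepancy between the actual joint law of the three pairs carried by positions $j-1,j,j+1$ (together with the neighbour signs) and an idealised version in which the middle pair is an independent copy of $(\bm X,\bm X^\prime)$. Second, show that as $d\to\infty$ the comparison event $\{g_{s_1,s_3}=1\}$ becomes asymptotically insensitive to the neighbour-sign pair $(s_1,s_3)$ --- its probability tends to a common limit $\ell$ regardless of $(s_1,s_3)$ --- and asymptotically independent of $(S_{\pi_{j-1}},S_{\pi_{j+1}})$; this is the high-dimensional phenomenon, and it should follow from the concentration of the three types of inner products $\bm X_i^\top\bm X_j$, $\bm X_i^\top\bm X_j^\prime$, $\bm X_i^{\prime\top}\bm X_j^\prime$ recorded in Lemma \ref{lem:expectations} and in the estimates in the proof of Theorem \ref{thm:HDLSS-consistency}, together with the boundedness of $\theta$ (its values lie in $(0,1]$). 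Granting these two facts, $\P[S_{\pi_j}=1]=\sum_{s_1,s_3}\P[S_{\pi_{j-1}}=s_1,S_{\pi_{j+1}}=s_3]\,(\ell+o(1))=\ell+o(1)$ and $p^\ast=\frac14\sum_{s_1,s_3}(\ell+o(1))=\ell+o(1)$, so the two agree up to $o(1)$; averaging over $j$ and restoring the $O(1/n)$ boundary contribution finishes the proof.

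The hard part is exactly these two approximations in combination. On the shortest covering path the pairs sitting on consecutive vertices are not independent --- the construction deliberately clusters points with large squared inner products --- and the neighbour signs $S_{\pi_{j-1}},S_{\pi_{j+1}}$ are entangled with the middle pair through the optimisation \eqref{eq:optimization} that produced $\bm\Pi$; moreover under the alternative these signs are far from uniform (they concentrate at $1$), so one cannot simply replace them by coins. What must be shown is that the decision at the middle vertex is robust in high dimension: the sign of $\theta(\bm Y_{s_1,1},\bm X_2)+\theta(\bm X_2,\bm Y_{s_3,3})-\theta(\bm Y_{s_1,1},\bm X_2^\prime)-\theta(\bm X_2^\prime,\bm Y_{s_3,3})$ is, in the limit, governed by a quantity that does not depend on $(s_1,s_3)$, so that averaging the decision against the true (dependent, non-uniform) neighbour-sign law produces the same limit as averaging it against independent fair coins. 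Quantifying this robustness with rates in $n$ and $d$ that are uniform over the interior index $j$ is the technical heart of the argument.
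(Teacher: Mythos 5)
Your opening reduction coincides with the paper's: write $E(T_S)=\sum_{i}\P[S_{\pi_i}=1]$, observe that the boundary positions contribute $O(1/n)$, and try to identify each interior term with the target probability $p^\ast$. The gap is in how you propose to close the argument. You treat the identification of $\P[S_{\pi_j}=1]$ with $p^\ast$ as an asymptotic-in-$d$ statement requiring (i) a decoupling of the middle pair from its path-neighbours and (ii) asymptotic insensitivity of the comparison event to the neighbour signs $(s_1,s_3)$, and you leave both unproven, explicitly flagging them as the technical heart. This is a genuine gap rather than a routine omission: the lemma is stated for an arbitrary continuous $\Pr$, with none of the structural hypotheses (such as condition \eqref{eq:HDLSS-condition} or Assumption (A3)) under which the paper establishes high-dimensional concentration of the inner products, so nothing forces the probability of the comparison event to converge to a limit free of $(s_1,s_3)$ as $d\to\infty$. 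Even if you supplied proofs of (i) and (ii), you would have proved a weaker statement under extra assumptions, and the claimed uniformity over the interior index $j$ would still need an argument.

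The paper's proof closes the argument without any high-dimensional analysis. Using the conditional representation of $S_{\pi_k}$ from the proof of Theorem \ref{thm:sign-rank-dist}(b) (the middle sign is the indicator of a cost comparison involving only its two path-neighbours), the uniformity of $\bm\Pi$, and the exchangeability of the i.i.d.\ pairs $(\bm X_i,\bm X_i^\prime)$, it shows that $\P[S_{\pi_k}=1]$ equals the target probability \emph{exactly} for every interior $k=2,\ldots,n-1$ and every fixed $d$: the neighbour signs are averaged against their own joint law, which is precisely what the random variables $S_1,S_3$ in the target expression are meant to encode, so there is no residual dependence to control. The entire discrepancy is then the two boundary terms, bounded by $2/n$, and only $n\to\infty$ is used. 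In short, you correctly isolated the $O(1/n)$ boundary contribution, but you replaced the single exact exchangeability identity that makes the lemma work with two approximations that are both unestablished and, in the stated generality, not available.
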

				
				\begin{proof}[{\bf Proof}]
					Recall the vectors of string signs  $\bm S = (S_1,\ldots, S_n)$, string ranks  $\bm R = (R_1,\ldots, R_n)$ and anti-ranks  $\bm \Pi = (\pi_1,\ldots,\pi_n)$ as defined in Section  \ref{sec:definition}. Now,
					\begin{align}
						\label{eq:signs-expectations}
						&\E\left[\frac{T_S}{n}\right] = \E\left[\frac{1}{n}\sum_{i=1}^n S_i\right] = \E\left[\frac{1}{n}\sum_{i=1}^n S_{\pi_i}\right]= \frac{1}{n}\Big(\P[S_{\pi_1} = 1] + \sum_{i=2}^{n-1} \P[S_{\pi_i}=1] + \P[\pi_n = 1]\Big)
					\end{align}
				    From the proof of Theorem \ref{thm:sign-rank-dist} we have,
					\begin{align*}
						\P[S_{\pi_1}=1\mid {\bm S}_{-\pi_1}] & = \P\big[\theta(\bm X_{\pi_1},\bm Y_{S_{\pi_2},\pi_2})\leq \theta(\bm X_{\pi_1}^\prime,\bm Y_{S_{\pi_2},\pi_2})\big| S_{\pi_2}\big],\\
						\P[S_{\pi_k}=1\mid {\bm S}_{-\pi_k}] & = \P\big[\theta(\bm Y_{S_{\pi_{k-1}},\pi_{k-1}},\bm X_{\pi_k})+\theta(\bm X_{\pi_k},\bm Y_{S_{\pi_{k+1}},\pi_{k+1}})\\
						& \hspace{0.5in}\leq \theta(\bm Y_{S_{\pi_{k-1}},\pi_{k-1}},\bm X_{\pi_k}^\prime)+\theta(\bm X_{\pi_k}^\prime,\bm Y_{S_{\pi_{k+1}},\pi_{k+1}})\big| S_{\pi_{k-1}},S_{\pi_{k+1}}\big],\\
						\P[S_{\pi_n}=1\mid{\bm S}_{-\pi_n}] & = \P\big[\theta(\bm Y_{S_{\pi_{n-1}},\pi_{n-1}},\bm X_{\pi_n})\leq \theta(\bm Y_{S_{\pi_{n-1}},\pi_{n-1}},\bm X_{\pi_n}^\prime)\big| S_{\pi_{n-1}}].
					\end{align*}
					Note that the distribution of $\bm \Pi$ does not depend on the distribution $\Pr$ and it follows Unif$(\mathcal{S}_n)$. Therefore, 
					\begin{align*}
						 \P[S_{\pi_1}=1]
						& = \E\left[\P[S_{\pi_1}=1\mid S_{-\pi_1}]\right]\\
						& = \E\left[\P[\theta(\bm X_{\pi_1}, \bm Y_{S_{\pi_2}, \pi_2}) \leq \theta(\bm X_{\pi_1}^\prime, \bm Y_{S_{\pi_2}, \pi_2})\mid S_{\pi_2}]\right]\\
						& = \left[\P[S_{\pi_2}=1] ~\P[\theta(\bm X_{\pi_1}, \bm Y_{1, \pi_2})\leq \theta(\bm X_{\pi_1}^\prime, \bm Y_{1, \pi_2})] + \P[S_{\pi_2}=0] ~\P[\theta(\bm X_{\pi_1}, \bm Y_{0, \pi_2})\leq \theta(\bm X_{\pi_1}^\prime, \bm Y_{0, \pi_2})]\right] \\
						& = \P[S_{2}=1] ~\P[\theta(\bm X_{1}, \bm Y_{1, 2})\leq \theta(\bm X_{1}^\prime, \bm Y_{1, 2})] + \P[S_{2}=0] ~\P[\theta(\bm X_{1}, \bm Y_{0,2})\leq \theta(\bm X_{1}^\prime, \bm Y_{0, 2})] \\
						& = \P[\theta(\bm X_{1}, \bm Y_{S_2, 2})\leq \theta(\bm X_{1}^\prime, \bm Y_{S_2, 2})]
					\end{align*}
					
                    Similarly, we can also show that					\begin{align}\label{eq:sign-probs}
					    \P[S_{\pi_k}=1] & = \P\big[\theta(\bm Y_{S_{1},1)},\bm X_{2})+\theta(\bm X_{2},\bm Y_{S_{3},3})\leq \theta(\bm Y_{S_{1},1},\bm X_{2}^\prime)+\theta(\bm X_{2}^\prime,\bm Y_{S_{3},3})\big], \textnormal{ for $k = 2,\ldots, n-1$}\nonumber\\
						\P[S_{n}=1] & = \P\big[\theta(\bm Y_{S_{n-1},n-1},\bm X_{n})\leq \theta(\bm Y_{S_{n-1},n-1},\bm X_{n}^\prime)].
					\end{align}
					Therefore, combining \eqref{eq:signs-expectations} and \eqref{eq:sign-probs} we get,
					\begin{align*}
						& \Big|\E\left[\frac{T_S}{n}\right]- \P\big[\theta(\bm Y_{S_{1},1},\bm X_{2})+\theta(\bm X_{2},\bm Y_{S_{3},3})\leq \theta(\bm Y_{S_{1},1},\bm X_{2}^\prime)+\theta(\bm X_{2}^\prime,\bm Y_{S_{3},3})\big] \Big|\\
						& = \Big|\frac{1}{n}\big( \P[S_{\pi_1}=1] + \P[S_{\pi_n}=1] \big)\Big| \rightarrow 0,
					\end{align*}
					as $n$ and $d$ diverge to infinity simultaneously. This completes the proof.
				\end{proof}

				\subsection{Pitman efficiency of the linear rank statistic}
				
				We know that the linear rank tests for univariate data are Pitman efficient \citep[see][]{hajekranktest}. So, one may wonder whether the linear rank tests defined in Section \ref{sec:tests} have the same property. In the following theorem, we address this issue for finite dimensional data. 
				
				{\begin{thm}\label{thm:pitman-efficiency}
						Let $\Pr$ be a spherically symmetric distribution  and $\mathrm Q$ be a non-spherical distribution, which are mutually absolutely continuous and have densities $p(.)$ and $q(.)$ such that $\int \left|q(\bm x)/p(\bm x)-1\right|^3 p(\bm x)\mathrm d\bm x$ is finite. For any  $\delta>0$, consider the contamination model
						$$F_n = \left(1-\frac{\delta}{\sqrt{n}}\right) \Pr + \frac{\delta}{\sqrt{n}}\mathrm Q,$$
						as a local asymptotically normal contiguous alternative \citep[see Proposition 3.1][]{banerjee2024consistent}. 
						Let $\bm X_1,\bm X_2,\ldots,\bm X_n$ be $n$ independent realizations of $\bm X \sim F_n$ and $\bm X_i^\prime = \|\bm X_i\|\bm U_i$ for $i=1,2,\ldots,n$, where $\bm U_1,\bm U_2,\ldots, \bm U_n$ are independent Unif$(S^{d-1})$ random variables. Consider a sequence of uniformly bounded scores $\{a(i)\}_{1\leq i\leq n}$ with the following properties
						\begin{align}\label{eq:score-conditions-pitman}
							\frac{1}{n}\sum_{i=1}^n a^2(i)\rightarrow \sigma^2~ (\sigma^2>0), ~~~~\frac{1}{n}\sum_{i=1}^n a(i)\rightarrow \tau~~~~  ~~\textnormal{and}~~~~ \max_{1\leq i\leq n}\frac{a^2(i)}{\sum_{i=1}^n a^2(i)}\rightarrow 0 \mbox{ ~~~~~~as } n\rightarrow\infty.
						\end{align}
						Then, as $n\rightarrow\infty$ we have
						$$\frac{T_{LR} - \frac{1}{2}\sum_{i=1}^n a(i)}{\sqrt{\sum_{i=1}^n a^2(i)}} \stackrel{D}{\rightarrow}\mathcal{N}\left(\delta\frac{\tau}{\sigma}\Big(p-\frac{1}{2}\Big),\frac{1}{4}\right),$$
						where
						$	p= \P\left[\theta(\bm X_{1},\bm V)+\theta(\bm V,\bm X_{2}) < \theta(\bm X_{1},\bm V^\prime)+\theta(\bm V^\prime,\bm X_{2})\right]
						$ 
						for $\bm X_{1},\bm X_{2}\stackrel{i.i.d.}{\sim} \mathrm P$, $\bm V\sim \mathrm Q$ and $\bm V^\prime$ is a spherically symmetric variant of $\bm V$.    
				\end{thm}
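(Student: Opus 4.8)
The plan is to transfer the known behaviour of $T_{LR}$ under $\Pr$ to the contiguous alternative $F_n$ by means of Le Cam's third lemma. Write $T_n^{\ast}=\big(T_{LR}-\tfrac12\sum_{i=1}^{n}a(i)\big)\big/\sqrt{\sum_{i=1}^{n}a^2(i)}$ and $\Lambda_n=\log\!\big(\mathrm{d}F_n^{\otimes n}/\mathrm{d}\Pr^{\otimes n}\big)$. Since $f_n=p\,\big(1+\tfrac{\delta}{\sqrt n}(q/p-1)\big)$, a Taylor expansion of $\log(1+u)$ together with the law of large numbers and the moment bound $\int|q/p-1|^3\,p<\infty$ gives the LAN expansion $\Lambda_n=\delta\Delta_n-\tfrac12\delta^2 v+o_{\Pr}(1)$ under $\Pr^{\otimes n}$, where $\Delta_n=n^{-1/2}\sum_{i=1}^{n}\big(q(\bm X_i)/p(\bm X_i)-1\big)$ and $v=\int (q/p)^2 p\,\mathrm d\bm x-1\in(0,\infty)$ (finite by H\"older, positive because $\mathrm Q\neq\Pr$); this is precisely the content of Proposition~3.1 of \cite{banerjee2024consistent}, and it yields the mutual contiguity of $\{F_n^{\otimes n}\}$ and $\{\Pr^{\otimes n}\}$. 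By Theorem~\ref{thm:limit-null-distribution}, $T_n^{\ast}\stackrel{D}{\rightarrow}\mathcal N(0,\tfrac14)$ under $\Pr^{\otimes n}$, and the exchangeability of $(\bm X_i,\bm X_i^{\prime})$ gives $\E_{\Pr}[T_{LR}]=\tfrac12\sum_i a(i)$ exactly, so $\E_{\Pr}[T_n^{\ast}]=0$.

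The first substantial step is to upgrade these two marginal limits to the joint convergence $(T_n^{\ast},\Lambda_n)\stackrel{D}{\rightarrow}\mathcal N_2$ with mean $(0,-\tfrac12\delta^2 v)$, variances $\tfrac14$ and $\delta^2 v$, and some covariance $c$, under $\Pr^{\otimes n}$. Using the LAN expansion it suffices to handle $(T_n^{\ast},\Delta_n)$, and by the Cram\'er--Wold device it is enough to prove a univariate CLT for $\alpha T_n^{\ast}+\beta\Delta_n$ for every $\alpha,\beta$. I would order the observations along the shortest covering path $\mathcal P$, write $T_{LR}-\tfrac12\sum a(i)=\sum_{k=1}^{n}a(k)\big(S_{\pi_k}-\tfrac12\big)$, and exploit the fact that, given $\bm\Pi$, the sign sequence has the $2$-dependent Markov structure of Theorem~\ref{thm:sign-rank-dist}(b), while $\Delta_n$ is a sum along the same ordering; a martingale-difference decomposition with respect to the filtration generated by $\bm\Pi$ and the pairs $(\bm X_{\pi_j},\bm X_{\pi_j}^{\prime})$, $j\le t$, together with the $\rho$-mixing regularity already used in the proof of Theorem~\ref{thm:large-dist-runs}, then gives the required CLT. \textbf{This joint-normality step is the main obstacle}, because $T_n^{\ast}$ is not a sum of independent summands but a linear rank statistic read off a data-dependent path; an equivalent route is to replace $T_n^{\ast}$ by its H\'ajek projection (a sum of i.i.d.\ terms) and show the remainder is $o_{\Pr}(1)$, hence, by contiguity, $o_{F_n}(1)$.

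Granting this, Le Cam's third lemma gives $T_n^{\ast}\stackrel{D}{\rightarrow}\mathcal N(c,\tfrac14)$ under $F_n^{\otimes n}$, and it remains to identify $c$. Rather than evaluate $c=\lim\mathrm{Cov}_{\Pr}(T_n^{\ast},\Lambda_n)$ as an integral, I would compute $\E_{F_n}[T_n^{\ast}]$ directly. From the proof of Theorem~\ref{thm:sign-rank-dist} (see also Lemma~\ref{lemma:sign-stat-convergence}), for an interior index $k$ one has $\P_{F_n}[S_{\pi_k}=1]=\P\big[\theta(\bm Y_{S_1,1},\bm X_2)+\theta(\bm X_2,\bm Y_{S_3,3})\le\theta(\bm Y_{S_1,1},\bm X_2^{\prime})+\theta(\bm X_2^{\prime},\bm Y_{S_3,3})\big]$ with $\bm X_1,\bm X_2,\bm X_3\stackrel{iid}{\sim}F_n$ and $S_1,S_3\stackrel{iid}{\sim}\mathrm{Unif}(\{0,1\})$ independent of the $\bm X_j$'s. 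Conditioning on which of $\bm X_1,\bm X_2,\bm X_3$ are the (rare, probability $\delta/\sqrt n$) draws from $\mathrm Q$: if none, or only a side observation $\bm X_1$ or $\bm X_3$, is a $\mathrm Q$-draw, then the remaining draws are from $\Pr$, so $\bm X_2\stackrel{D}{=}\bm X_2^{\prime}$ and $\bm Y_{S_1,1},\bm Y_{S_3,3}$ are marginally $\Pr$-distributed and independent of $(\bm X_2,\bm X_2^{\prime})$; the swap $\bm X_2\leftrightarrow\bm X_2^{\prime}$ therefore preserves the conditional law and maps the event to its complement, so the conditional probability equals $\tfrac12$. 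Only the event $\{\bm X_2\text{ is the }\mathrm{Q}\text{-draw}\}$ contributes at first order, and on it the conditional probability is exactly the constant $p$ of the statement. Collecting the terms, $\P_{F_n}[S_{\pi_k}=1]=\tfrac12+\tfrac{\delta}{\sqrt n}\big(p-\tfrac12\big)+O(n^{-1})$ uniformly over interior $k$.

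Summing, and using that the two boundary terms $a(1)\P_{F_n}[S_{\pi_1}=1]+a(n)\P_{F_n}[S_{\pi_n}=1]$ are $O(\max_i|a(i)|)$, the $O(n^{-1})$ errors contribute $O(n^{-1})\sum_i|a(i)|=O(1)$, and $\max_i|a(i)|=o(\sqrt n)$, $\sum_i a(i)=n\tau+o(n)$ by \eqref{eq:score-conditions-pitman}, one gets $\E_{F_n}[T_{LR}]=\tfrac12\sum_i a(i)+\delta\big(p-\tfrac12\big)\tau\sqrt n+o(\sqrt n)$, hence $\E_{F_n}[T_n^{\ast}]\to\delta\tfrac{\tau}{\sigma}\big(p-\tfrac12\big)$. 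Finally, $\{(T_n^{\ast})^2\}$ is uniformly integrable under $F_n^{\otimes n}$: since $\P_{F_n}[S_{\pi_i}=1]\in[0,1]$ gives $\var_{F_n}(S_{\pi_i})\le\tfrac14$, the covariances of the $S_{\pi_i}$ vanish beyond lag one (Theorem~\ref{thm:sign-rank-dist}(b)), and $2\sum_{i=1}^{n-1}|a(i)a(i+1)|\le 2\sum_i a^2(i)$, we obtain $\var_{F_n}(T_{LR})\le\tfrac34\sum_i a^2(i)$, so $\sup_n\var_{F_n}(T_n^{\ast})\le\tfrac34$. Uniform integrability then forces $\E_{F_n}[T_n^{\ast}]\to c$, whence $c=\delta\tfrac{\tau}{\sigma}\big(p-\tfrac12\big)$, and the third lemma delivers the stated $\mathcal N\big(\delta\tfrac{\tau}{\sigma}(p-\tfrac12),\,\tfrac14\big)$ limit.
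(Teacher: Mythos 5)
Your overall strategy coincides with the paper's: establish the LAN expansion of the likelihood ratio, prove joint asymptotic normality of the standardized statistic and the log-likelihood ratio under $\Pr$, and invoke Le Cam's third lemma. But the step you yourself flag as ``the main obstacle'' --- the joint CLT for $(T_n^{\ast},\Delta_n)$ under the null --- is exactly the step you do not carry out: you sketch a martingale decomposition along the path and, alternatively, a H\'ajek projection, without executing either. The paper's resolution is much more direct than what you envision: by Theorem \ref{thm:sign-rank-dist}(a), under $H_0$ the vector $\bm S$ is uniform on $\{0,1\}^n$ and independent of $\bm\Pi$, and the paper treats the array $W_{ni}=t_1\,a(i)(S_{\pi_i}-\tfrac12)/\sqrt{\sum_i a^2(i)}+t_2\,\delta n^{-1/2}\{q(\bm X_{\pi_i})/p(\bm X_{\pi_i})-1\}$ as row-wise i.i.d.\ with finite third moments, so that Lyapunov's CLT plus Cram\'er--Wold gives the bivariate limit at once; no mixing or projection machinery is needed. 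The only delicate ingredient is the cross-covariance $\E_{\Pr}\bigl[S_{\pi_2}\{q(\bm X_{\pi_2})/p(\bm X_{\pi_2})-1\}\bigr]$, which the paper evaluates as $p-\tfrac12$ by conditioning on $(S_{\pi_1},S_{\pi_3},\pi_1,\pi_3)$ and recognizing the change of measure $\E_{\Pr}[S_{\pi_2}\,q(\bm X_{\pi_2})/p(\bm X_{\pi_2})]=\P[\cdot\,]$ with the middle observation drawn from $\mathrm Q$.

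Where you genuinely diverge is in identifying the drift. Instead of computing the null covariance $c=\lim\mathrm{Cov}_{\Pr}(T_n^{\ast},\Lambda_n)$, you expand $\P_{F_n}[S_{\pi_k}=1]=\tfrac12+\tfrac{\delta}{\sqrt n}(p-\tfrac12)+O(n^{-1})$ by conditioning on which of the three relevant observations is the $\mathrm Q$-contaminant, and then recover $c$ from $\E_{F_n}[T_n^{\ast}]$ via a uniform-integrability argument. This is mathematically the same computation in disguise (your contamination expansion is the paper's change-of-measure identity read under $F_n$ rather than under $\Pr$), and it is a legitimate alternative, but it costs you two extra pieces of work the paper avoids: (i) you must justify that the reduction of $\P[S_{\pi_k}=1]$ to the three-observation probability of Lemma \ref{lemma:sign-stat-convergence} remains valid, with uniform $O(n^{-1})$ error, when the data are drawn from $F_n$ rather than $\Pr$ (under $F_n$ the neighbouring signs are only approximately uniform and approximately independent of the data, and you need the resulting error to be $o(n^{-1/2})$); and (ii) the uniform-integrability step, for which your variance bound leans on the assertion $\mathrm{Cov}(S_{\pi_i},S_{\pi_j})=0$ for $|i-j|>1$, which the paper states only for the null/alternative analysis of Theorem \ref{thm:limit-alt-convergence} and which does not follow automatically from the conditional-independence structure of Theorem \ref{thm:sign-rank-dist}(b). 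In short: the skeleton is right and the drift identification is a workable variant, but the joint-normality step must actually be supplied --- the paper's i.i.d.-array Lyapunov argument is the intended (and simplest) way to do it.
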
}
				
				\begin{proof}[\bf Proof of Theorem \ref{thm:pitman-efficiency}]
					By Proposition 3.1 in \cite{banerjee2024consistent} we have
					\begin{align*}
						\log\bigg\{\prod_{i=1}^n\Big(1+\frac{\delta}{\sqrt{n}}\Big\{\frac{q({\bf X}_i)}{p({\bf X}_i)}-1\Big\}\Big)\bigg\}=\frac{\delta}{\sqrt{n}}\sum_{i=1}^n \bigg\{\frac{q({\bf X}_i)}{p({\bf X}_i)}-1\bigg\}-\frac{\delta^2}{2}\E\bigg\{\frac{q({\bf X}_1)}{p({\bf X}_1)}-1\bigg\}^2 + o_P(1).
					\end{align*} 
					Note that by Jensen's inequality, the finiteness of $\int |q(\bm u)/p(\bm u)-1|^2 p(\bm u)\mathrm d\bm u$ follows from the finiteness of $\int |q(\bm u)/p(\bm u)-1|^3 p(\bm u)\mathrm d\bm u$. Now, let us look at the triangular array 
					$$\left\{W_{ni} = t_1 \frac{a(i)(S_{\pi_i}-\frac{1}{2})}{\sqrt{\sum_{i=1}^n a^2(i)}} + t_2 \frac{\delta}{\sqrt{n}} \bigg\{\frac{q({\bf X}_{\pi_i})}{p({\bf X}_{\pi_i})}-1\bigg\}\right\}_{1\leq i\leq n, n\geq 1}.$$
					Under $H_0$, this is an array of i.i.d. random variables with finite third moments. Now under the assumptions $\frac{1}{n}\sum_{i=1}^n a^2(i)\rightarrow \sigma^2$ and $\frac{1}{n}\sum_{i=1}^n a(i) \rightarrow \tau$, we have
					\begin{align}\label{eq:sn-convergence}
						s_n^2  = \sum_{i=1}^n \var(W_{ni}) 
						& = \frac{t_1^2}{4} + t_2^2\delta^2 ~\E\left\{\frac{q(\bm X_1)}{p(\bm X_1)}-1\right\}^2 + 2 t_1t_2~ \frac{\delta}{\sqrt{n}}\sum_{i=1}^n \E\left\{\frac{a(i)(S_{\pi_i}-\frac{1}{2})}{\sqrt{\sum_{i=1}^n a^2(i)}}\Big\{\frac{q({\bf X}_{\pi_i})}{p({\bf X}_{\pi_i})}-1\Big\}\right\}\nonumber\\
						& = \frac{t_1^2}{4} + t_2^2 \delta^2~ \E\left\{\frac{q(\bm X_1)}{p(\bm X_1)}-1\right\}^2 + 2 t_1t_2~ \frac{\delta}{n}\sum_{i=1}^n \E\left\{\frac{a(i)(S_{\pi_i}-\frac{1}{2})}{\sqrt{\frac{1}{n}\sum_{i=1}^n a^2(i)}}\Big\{\frac{q({\bf X}_{\pi_i})}{p({\bf X}_{\pi_i})}-1\Big\}\right\}\nonumber\\
						& = \frac{t_1^2}{4} + t_2^2 \delta^2~ \E\left\{\frac{q(\bm X_1)}{p(\bm X_1)}-1\right\}^2 + 2 t_1t_2 ~\frac{\delta}{n}\sum_{i=2}^{n-1} \E\left\{\frac{a(i)(S_{\pi_i})}{\sigma}\Big\{\frac{q({\bf X}_{\pi_i})}{p({\bf X}_{\pi_i})}-1\Big\}\right\} + o(1)\nonumber\\
						& = \frac{t_1^2}{4} + t_2^2 \delta^2 ~\E\left\{\frac{q(\bm X_1)}{p(\bm X_1)}-1\right\}^2 + 2 t_1t_2 ~\frac{\delta}{n}\sum_{i=2}^{n-1} \frac{a(i)}{\sigma} \E\left\{S_{\pi_2}\Big\{\frac{q({\bf X}_{\pi_2})}{p({\bf X}_{\pi_2})}-1\Big\}\right\} + o(1)\nonumber\\
						& = \frac{t_1^2}{4} + t_2^2 \delta^2~ \E\left\{\frac{q(\bm X_1)}{p(\bm X_1)}-1\right\}^2 + 2 t_1t_2~ \delta\frac{\tau}{\sigma} \E\left\{S_{\pi_2}\Big\{\frac{q({\bf X}_{\pi_2})}{p({\bf X}_{\pi_2})}-1\Big\}\right\} + o(1).
					\end{align}
					Now note that
					$	\mathrm I\Big\{S_{\pi_2}=1\Big\}  = \mathrm I\Big\{ \theta(\bm Y_{S_{\pi_1},\pi_1},\bm X_{\pi_2})+\theta(\bm X_{\pi_2},\bm Y_{S_{\pi_3},\pi_3})
						\leq \theta(\bm Y_{S_{\pi_1},\pi_1},\bm X^\prime_{\pi_2})+\theta(\bm X^\prime_{\pi_2},\bm Y_{S_{\pi_3},\pi_3}) \Big\}
					$
					 where $\bm Y_{S,i} = S\bm X_i + (1-S)\bm X_i^\prime$ ($i = \pi_1,\pi_2,\pi_3$). Clearly, $\E[S_{\pi_2}]={1}/{2}$ under spherical symmetry and 
					\begin{align*}
						& \E\left\{S_{\pi_2}\Big\{\frac{q({\bf X}_{\pi_2})}{p({\bf X}_{\pi_2})}\Big\}\Big| S_{\pi_1},S_{\pi_3},\pi_1,\pi_3\right\}\\
						& = \P\Big[\theta(\bm Y_{S_{\pi_1},\pi_1},\bm V)+\theta(\bm V,\bm Y_{S_{\pi_3},\pi_3})\leq \theta(\bm Y_{S_{\pi_1},\pi_1},\bm V^\prime)+\theta(\bm V^\prime,\bm Y_{S_{\pi_3},\pi_3})\Big| S_{\pi_1},S_{\pi_3},\pi_1,\pi_3\Big],
					\end{align*}
					where $\bm V\sim \mathrm Q$ and $\bm V^\prime = \|\bm X\|\bm U$ where $\bm U\sim$ Unif$(S^{d-1})$ independent of $\bm V$. Here $\bm Y_{S,i}$ are as defined above. Now, under $H_0$, $S_{\pi_1},S_{\pi_3}$ are i.i.d.  Unif$(\{0,1\})$ and $\pi_1,\pi_3$ are simple random samples without replacement from $\{1,2,\ldots,n\}$ independent of $\bm S$. So, taking expectations with respect to $S_{\pi_1},S_{\pi_3}$ and $\pi_1,\pi_3$ we get
					\begin{align}\label{eq:exact-expr-off-diagonal}
						& \E\left\{S_{\pi_2}\Big\{\frac{q({\bf X}_{\pi_2})}{p({\bf X}_{\pi_2})}\Big\}\right\}\nonumber\\
						& = \sum_{S_1,S_2\in \{0,1\}} \sum_{\pi_1,\pi_3 \in \mathcal{S}_n} \frac{1}{2^2} \frac{1}{n(n-1)} \P\Big[\theta(\bm Y_{S_{\pi_1},\pi_1},\bm V)+\theta(\bm V,\bm Y_{S_{\pi_3},\pi_3})\leq \theta(\bm Y_{S_{\pi_1},\pi_1},\bm V^\prime)+\theta(\bm V^\prime,\bm Y_{S_{\pi_3},\pi_3})\Big]\nonumber\\
						& = \frac{1}{4} \sum_{S_1,S_2\in \{0,1\}} \P\Big[\theta(\bm Y_{S_1,1},\bm V)+\theta(\bm V,\bm Y_{S_2,2})\leq \theta(\bm Y_{S_{1},1},\bm V^\prime)+\theta(\bm V^\prime,\bm Y_{S_2,2})\Big] = p, ~~~~~~\mbox{(as $n\rightarrow\infty$)}.
					\end{align}
					 However, since $\bm X_1$ and $\bm X_2$ are spherically symmetric under $H_0$, the probabilities in the right hand side of \eqref{eq:exact-expr-off-diagonal} are all equal. Hence, the last equality follows. Therefore, under $H_0$, 
					$$s_n^2 \rightarrow\frac{t_1^2}{4} + t_2^2 \delta^2~ \E\left\{\frac{q(\bm X_1)}{p(\bm X_1)}-1\right\}^2 + 2 t_1t_2\delta \frac{\tau}{\sigma} \big(p-\frac{1}{2}\big) \text{ as } n \rightarrow \infty.$$
					 Also, note that
					\begin{align}\label{eq:Lyapunov-pitman}
						\frac{1}{s_n^3} \sum_{i=1}^n \E\left[\Big|W_{ni}- \E[W_{ni}]\Big|^3\right] & = 	\frac{1}{s_n^3} \sum_{i=1}^n \E\left[\Big|t_1\Big(\frac{a(i)(S_{\pi_i}-\frac{1}{2})}{\sqrt{\sum_{i=1}^n a^2(i)}}\Big) + t_2 \Big(\frac{\delta}{\sqrt{n}} \big\{\frac{q({\bf X}_{\pi_i})}{p({\bf X}_{\pi_i})}-1\big\}\Big) \Big|^3 \right]\nonumber\\
						&\leq 2^2 |t_1|^3 \frac{1}{s_n^3} \sum_{i=1}^n \E\left[\Big|\Big(\frac{a(i)(S_{\pi_i}-\frac{1}{2})}{\sqrt{\sum_{i=1}^n a^2(i)}}\Big)\Big|^3\right] + 2^2 |t_2|^3 \frac{1}{s_n^3} \sum_{i=1}^n \E\left[\Big|\Big(\frac{\delta}{\sqrt{n}} \big\{\frac{q({\bf X}_i)}{p({\bf X}_i)}-1\big\}\Big) \Big|^3 \right],
					\end{align}
					where the last inequality follows using $|a+b|^p\leq 2^{p-1}(|a|^p + |b|^p)$ with $p = 3$. The first term in \eqref{eq:Lyapunov-pitman} goes to zero using \eqref{eq:suff-derivation}. The second term in \eqref{eq:Lyapunov-pitman} is of order $\mathcal{O}(n^{-1/2})$ by assumption $\int \big|q(\bm x)/p(\bm x)-1\big|^3\mathrm d \bm x<\infty$. Therefore, using Lyapunov's CLT we also have 
						$
						\sum_{i=1}^n W_{ni} \stackrel{D}{\rightarrow} N(0,\sigma^2),
					$
					where $\sigma^2 = \frac{t_1^2}{4} + t_2^2 \delta^2~ \E\left\{\frac{q(\bm X_1)}{p(\bm X_1)}-1\right\}^2 + 2 t_1t_2\delta \frac{\tau}{\sigma} \big(p-\frac{1}{2}\big)$. Now applying Cramer-Wold device we can conclude
					\begin{align*}
						\Big(\frac{\sum_{i=1}^n a(i)(S_{\pi_i}-\frac{1}{2})}{\sqrt{\sum_{i=1}^n a^2(i)}}, \frac{\delta}{\sqrt{n}}\sum_{i=1}^n \Big\{\frac{q(\bm X_i)}{p(\bm X_i)}-1\Big\}\Big)\stackrel{D}{\rightarrow} \mathcal{N}(0,\sigmat),
					\end{align*}
					where $\sigmat$ is a $2\times 2$ matrix with diagonal entries $1/4$ and $ \delta^2~ \E\left\{\frac{q(\bm X_1)}{p(\bm X_1)}-1\right\}^2$, and off-diagonal entry $\delta \frac{\tau}{\sigma} \big(p-\frac{1}{2}\big)$, respectively. Now using Le Cam's third lemma \citep[see][]{van2000asymptotic} we get 
					\begin{align*}
						\frac{\sum_{i=1}^n a(i)(S_{\pi_i}-\frac{1}{2})}{\sqrt{\sum_{i=1}^n a^2(i)}} \stackrel{D}{\rightarrow} \mathcal{N}\Big(\delta \frac{\tau}{\sigma} \Big(p-\frac{1}{2}\Big), \frac{1}{4}\Big)
					\end{align*}
					as $n$ diverges to infinity. This gives us our desired result.
				\end{proof}
				
				Theorem \ref{thm:pitman-efficiency} establishes that for a sequence of contiguous alternatives of the form $\{F_n:~n\ge 1\}$ for which with $p\not= 0.5$, the limiting distribution of the linear rank test introduced in Section \ref{sec:definition} is a non-centered normal distribution. Therefore, if the score functions satisfy assumption \ref{eq:score-conditions-pitman} and $p\not=0.5$, the corresponding tests are Pitman efficient. Several distributions satisfy this assumption (see Section \ref{sec:empirical-analysis}). 

			\end{document}